\documentclass[twoside,11pt,reqno]{amsart}
\usepackage{amsmath,amssymb,amscd,mathrsfs,epic,wasysym,latexsym,tikz,mathrsfs,cite,hyperref,enumerate}
\usepackage{pb-diagram}
\usepackage{tikz-cd}
\usepackage[all]{xy}
\usepackage{mathdots}

\makeatletter

\hfuzz 3pt
\vfuzz 2pt

\textheight 217mm
\textwidth 142mm


\synctex=1
\numberwithin{equation}{section}

\allowdisplaybreaks

\newtheorem{Proposition}[equation]{Proposition}
\newtheorem{Lemma}[equation]{Lemma}
\newtheorem{Theorem}[equation]{Theorem}
\newtheorem{Corollary}[equation]{Corollary}
\newtheorem{MainTheorem}{Theorem}

\theoremstyle{definition}  

\newtheorem{Remark}[equation]{Remark}

\newtheorem{Example}[equation]{Example}

\let\<\langle
\let\>\rangle

\newcommand\Comment[2][\relax]{\space\par\medskip\noindent%
   \fbox{\begin{minipage}{\textwidth}\textbf{Comment\ifx\relax#1\else---#1\fi}\newline%
        #2\end{minipage}}\medskip
}


\def\bone{\text{\boldmath$1$}}

\def\pmod#1{\text{ }(\text{\rm mod } #1)\,}

\newcommand{\Hom}{\operatorname{Hom}}

\newcommand{\End}{\operatorname{End}}
\newcommand{\ind}{\operatorname{ind}}
\newcommand{\im}{\operatorname{im}}
\newcommand{\id}{\operatorname{id}}
\def\sgn{\mathtt{sgn}}

\newcommand{\res}{\operatorname{res}}
\newcommand{\soc}{\operatorname{soc}}
\newcommand{\head}{\operatorname{head}}

\newcommand{\Z}{\mathbb{Z}}

\def\eps{{\varepsilon}}
\def\phi{{\varphi}}

\newcommand{\F}{{\mathbb F}}

\newcommand{\ga}{\gamma}

\newcommand{\la}{\lambda}

\newcommand{\al}{\alpha}
\newcommand{\be}{\beta}

\newcommand{\si}{\sigma}

\newcommand{\Om}{\Omega}
\newcommand{\vare}{\varepsilon}

\newcommand{\de}{\delta}

\newcommand{\ka}{\kappa}


\newcommand{\Mull}{{\tt M}}

\newcommand{\Ker}{\operatorname{Ker}}
\newcommand{\Aut}{{\mathrm {Aut}}}

\newcommand{\Irr}{{\mathrm {Irr}}}

\def\id{\mathop{\mathrm {id}}\nolimits}

\def\rank{\mathop{\mathrm{ rank}}\nolimits}

\newcommand{\C}{{\mathbb C}}

\newcommand{\EE}{{\mathcal E}}

\newcommand{\SSS}{{\sf S}}
\newcommand{\AAA}{{\sf A}}
\newcommand{\MMM}{{\sf M}}
\newcommand{\CCC}{{\sf C}}

\newcommand{\da}{{\downarrow}}
\newcommand{\ua}{{\uparrow}}

\renewcommand{\mod}{\bmod \,}

\newcommand{\I}{{\mathcal I}}

\newcommand{\Sub}{{\mathcal S}}

\def\Parp{{\mathscr P}_p}
\def\Par{{\mathscr P}}

\def\k{\Bbbk}

\def\im{{\mathrm{im}\,}}

\def\mod#1{#1\!\operatorname{-mod}}

\def\col{{\tt col}}
\def\row{{\tt row}}


{\catcode`\|=\active
  \gdef\set#1{\mathinner{\lbrace\,{\mathcode`\|"8000%
  \let|\midvert #1}\,\rbrace}}
}
\def\midvert{\egroup\mid\bgroup}

\colorlet{darkgreen}{green!50!black}
\tikzset{dots/.style={very thick,loosely dotted},
         greendot/.style={fill,circle,color=darkgreen,inner sep=1.5pt,outer sep=0},
         blackdot/.style={fill,circle,color=black,inner sep=1.5pt,outer sep=0},
         graydot/.style={fill,circle,color=gray,inner sep=1.1pt,outer sep=0}
}
\def\greendot(#1,#2){\node[greendot] at(#1,#2){}}
\def\blackdot(#1,#2){\node[blackdot] at(#1,#2){}}
\def\graydot(#1,#2){\node[graydot] at(#1,#2){}}

\newenvironment{braid}{
  \begin{tikzpicture}[baseline=6mm,black,line width=1pt, scale=0.32,
                      draw/.append style={rounded corners},
                      every node/.append style={font=\fontsize{5}{5}\selectfont}]%
  }{\end{tikzpicture}
}

\def\Grid(#1,#2){
  \draw[very thin,gray,step=2mm] (0,0)grid(#1,#2);
  \draw[very thin,darkgreen,step=10mm] (0,0)grid(#1,#2);
}

\newcommand\Tableau[2][\relax]{
  \begin{tikzpicture}[scale=0.5,draw/.append style={thick,black}]
    \ifx\relax#1\relax%
    \else 
      \foreach\box in {#1} { \filldraw[blue!30]\box+(-.5,-.5)rectangle++(.5,.5); }
    \fi
    \newcount\row\newcount\col
    \row=0
    \foreach \Row in {#2} {
       \col=1
       \foreach\k in \Row {
          \draw(\the\col,\the\row)+(-.5,-.5)rectangle++(.5,.5);
          \draw(\the\col,\the\row)node{\k};
          \global\advance\col by 1
       }
       \global\advance\row by -1
    }
  \end{tikzpicture}
}

\newcommand\YoungDiagram[2][\relax]{
  \begin{tikzpicture}[scale=0.5,draw/.append style={thick,black}]
    \ifx\relax#1\relax%
    \else 
    \foreach\box in {#1} {
      \filldraw[blue!30]\box rectangle ++(1,1);
    }
    \fi
    \newcount\row
    \row=0
    \foreach \col in {#2} {
       \draw(1,\the\row)grid ++(\col,1);
       \global\advance\row by -1
    }
  \end{tikzpicture}
}


\newdimen\hoogte    \hoogte=12pt    
\newdimen\breedte   \breedte=14pt  
\newdimen\dikte     \dikte=0.5pt 


\newenvironment{Young}{\begingroup
       \def\vr{\vrule height0.89\hoogte width\dikte depth 0.2\hoogte}
       \def\fbox##1{\vbox{\offinterlineskip
                    \hrule height\dikte
                    \hbox to \breedte{\vr\hfill##1\hfill\vr}
                    \hrule height\dikte}}
       \vbox\bgroup \offinterlineskip \tabskip=-\dikte \lineskip=-\dikte
            \halign\bgroup &\fbox{##\unskip}\unskip  \crcr }
       {\egroup\egroup\endgroup}
\def\Youngdiagram#1{\relax\ifmmode\vcenter{\,\begin{Young}#1\end{Young}\,}\else%
              $\vcenter{\,\begin{Young}#1\end{Young}\,}$\fi}

\begin{document}

\title[Irreducible restrictions of representations of symmetric groups]{{\bf Irreducible restrictions of representations of symmetric groups in small characteristics: Reduction
theorems}}

\author{\sc Alexander Kleshchev}
\address{Department of Mathematics\\ University of Oregon\\Eugene\\ OR 97403, USA}
\email{klesh@uoregon.edu}

\author{\sc Lucia Morotti}
\address
{Institut f\"{u}r Algebra, Zahlentheorie und Diskrete Mathematik\\ Leibniz Universit\"{a}t Hannover\\ 30167 Hannover\\ Germany} 
\email{morotti@math.uni-hannover.de}

\author{\sc Pham Huu Tiep}
\address
{Department of Mathematics\\ Rutgers University\\ Piscataway\\ NJ~08854, USA} 
\email{tiep@math.rutgers.edu}

\subjclass[2010]{20C20, 20C30, 20E28}

\thanks{The first author was supported by the NSF grant DMS-1700905 and the DFG Mercator program through the University of Stuttgart. The second author was supported by the DFG grant MO 3377/1-1 and the DFG Mercator program through the University of Stuttgart. The third author was supported by the NSF grants DMS-1839351 and DMS-1840702.
This work was also supported by the NSF grant DMS-1440140 and Simons Foundation while all three authors were in residence at the MSRI during the Spring 2018 semester.}

\begin{abstract}
We study irreducible restrictions of modules over symmetric groups to subgroups. We get reduction results which substantially restrict the classes of subgroups and modules for which this is possible. Such results are known when the characteristic of the ground field is greater than $3$, but the small characteristics cases require a substantially more delicate analysis and new ideas. This work fits into the Aschbacher-Scott program on maximal subgroups of finite classical groups. 
\end{abstract}


\maketitle

\section{Introduction}
Let $\F$ be an algebraically closed field of characteristic $p\geq 0$, and $H$ be a finite almost quasi-simple group. 
This paper is a contribution to the following

\vspace{2mm}
\noindent
{\bf Irreducible Restriction Problem.}
{\em
Classify the subgroups $G <H$ and $\F H$-modules $V$ of dimension greater than $1$ such that the restriction $V\da_G$ is irreducible.
}
\vspace{2mm}

A major application of the Irreducible Restriction Problem is to the Aschbacher-Scott program on maximal subgroups of finite classical groups, see \cite{Asch,Scott,Magaard,KlL,BDR} for more details on this.  We point out that for the purposes of the applications to the Aschbacher-Scott program we may assume that $G$ is  almost quasi-simple, but  we will not be making this additional assumption. 

Suppose now that $\soc(H/Z(H))=\AAA_n$. We assume that  $n\geq 8$ to avoid small special cases. Then $H$ is one of $\AAA_n,\SSS_n$ or their double covers. 
If $p=0$ and $H$  is a symmetric or alternating group, the Irreducible Restriction Problem has been solved by Saxl \cite{Saxl}. If $p=0$ and $H$ is a double cover of symmetric or alternating groups, the problem was essentially solved by Kleidman and Wales \cite{KlW}. 

Let us assume from now on that $p>0$. We point out that it is the positive characteristic case which is important for the Aschbacher-Scott program. The positive characteristic analogues of the results of Saxl and Kleidman-Wales mentioned in the previous paragraph are currently available only for $p>3$, see  \cite{BK} for symmetric groups, \cite{KSAlt} for the alternating groups, and \cite{KT} for the double covers. It is very important to extend the classification to the case of characteristics $2$ and $3$. 

However, there are formidable technical obstacles which make the small characteristic cases much more complicated. The most  serious difficulty is that the submodule structure of certain important permutation modules over symmetric groups gets very complicated for $p=2$ and $3$. This in turn necessitates a rather  detailed study of branching for symmetric groups. 

The main result of this paper extends reduction theorems obtained in \cite{KS2Tran} and \cite{BK} and strengthens the main results of \cite{KST}. These reduction theorems were crucial for the eventual resolution of the Irreducible Restriction Problem for the cases $p>3$, and their small characteristic analogues will also play a key role in our future work \cite{KMT}. 

To formulate our main result we recall that the irreducible $\F \SSS_n$-modules are labeled by the $p$-regular partitions of $n$. If $\la$ is such a partition, we denote by $D^\la$ the corresponding irreducible $\F\SSS_n$-module, and define $\la^\Mull$ from $D^{\la^\Mull}\cong D^\la\otimes \sgn$. If 
$\la=(\la_1\geq \la_2\geq \dots\geq \la_k>0)$, we write $h(\la)$ for $k$. It is known that $D^\la\da_{\SSS_{n-1}}$ is  irreducible  if and only if $\la$ is in the explicitly defined class of {\em Jantzen-Seitz} (or {\em JS}) partitions which go back to \cite{JS,k2}. There is a special irreducible $\F\SSS_n$-module in characteristic $2$ called the {\em basic spin module $D^{\be_n}$}. Finally, recall that a subgroup of $\SSS_n$ is called {\em $k$-transitive} (resp. {\em $k$-homogeneous}) if  it acts transitively on the set of all ordered (resp. unordered) $k$-tuples of different elements in $\{1,2,\dots,n\}$. We refer the reader to the main body of the paper for more details on all of this. 

It is convenient to formulate our main result for all characteristics, although it is only new for $p=2$ and $3$:

\begin{MainTheorem}\label{TA}
Let $n\geq 8$ and $D^\la$ be an irreducible representation of $\F\SSS_n$ with $\dim D^\la>1$. If $G\leq \SSS_n$ is a subgroup such that the restriction $D^\la\da_G$ is irreducible, then one of the following holds:
\begin{enumerate}
\item[{\rm (i)}] $G$ is $3$-homogeneous.
\item[{\rm (ii)}] $G$ is $2$-transitive and $\min(h(\la),h(\la^\Mull))=2$;

\item[{\rm (iii)}] $G\leq \SSS_{n-1}$ and $\la$ is JS;

\item[{\rm (iv)}] $p=2$, $n$ is even, $G$ is $2$-transitive, $h(\la)\geq 3$ and there exists $1\leq j\leq h(\la)$ with $\la_j=\la_{j+1}+2$ and 
\[\la_1\equiv\ldots\equiv\la_{j-1}\not\equiv\la_j\equiv\la_{j+1}\not\equiv\la_{j+2}\equiv\ldots\equiv\la_{h(\la)}\pmod{2}\]

\item[{\rm (v)}] $p=2$, $n\equiv 2\pmod{4}$, $\la=(n-1,1)$, $G\leq \SSS_{n/2}\wr\SSS_2$ and $G\not\leq\SSS_{n/2}\times\SSS_{n/2}$. 

\item[{\rm (vi)}] $p=2$ and $D^\la$ is the basic spin module. 
\end{enumerate}
\end{MainTheorem}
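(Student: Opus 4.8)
The plan is to follow the overall strategy of the reduction theorems in \cite{KS2Tran,BK,KST}, but with the delicate low-characteristic branching analysis forced upon us by $p=2,3$. First I would reduce to the case that $G$ is transitive: if $G$ has an orbit of size $m$ with $1\le m<n$, then $G\le \SSS_m\times\SSS_{n-m}$, and one analyzes the restriction of $D^\la$ to a Young subgroup using the Littlewood--Richardson-type branching and dimension comparisons (Mackey theory, or the known results on restrictions to $\SSS_{n-1}$ iterated). The conclusion is that either $\la$ is JS and $G\le\SSS_{n-1}$ — giving case (iii) — or $\la=(n-1,1)$ with $p=2$ and $n\equiv 2\pmod 4$, where a more careful look at the permutation module $\F^n$ and its unique nontrivial composition factor forces $m=n/2$ and yields case (v). The second step is to assume $G$ is transitive but imprimitive, so $G\le\SSS_a\wr\SSS_b$ with $n=ab$, $a,b>1$; here one restricts further to the base group and uses the fact that an irreducible restriction to a subgroup of $\SSS_a^{\times b}$ would have to be a tensor product, which on dimension and self-duality grounds is impossible for $\dim D^\la>1$ unless we are in one of the listed exceptional situations (the basic spin module in case (vi), or again $(n-1,1)$-type phenomena feeding into (v)).

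The third and main step is the primitive case. If $G\le\SSS_n$ is primitive then by the O'Nan--Scott theorem combined with the classification of finite simple groups, $G$ is either affine, almost simple, or of (twisted) product type; the key classical input (as in \cite{BK}) is that a primitive group which is \emph{not} $2$-transitive has order bounded well below what is needed to support an irreducible restriction of a module of dimension $\ge$ the minimal faithful dimension of $D^\la$. Using the known lower bounds for $\dim D^\la$ in terms of $n$ (distinguishing the "near the end" partitions $(n-1,1)$, $(n-2,2)$, $(n-2,1,1)$, the basic spin module, and the generic partitions) against upper bounds for $|G|$ for primitive non-$2$-transitive $G$, one shows that an irreducible restriction forces $G$ to be $2$-transitive, except in the basic-spin case (vi) and possibly the $h(\la)\le 2$ cases. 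This reduces everything to $G$ being $2$-transitive, and then one invokes the classification of $2$-transitive groups to see they are $3$-homogeneous (case (i)) unless $G$ is on the short explicit list of $2$-transitive but not $3$-homogeneous groups (essentially $\mathrm{A\Gamma L}_1(q)$ and a few sporadic families); for those, the irreducibility of $D^\la\da_G$ is analyzed directly, producing the self-dual/small-height constraint $\min(h(\la),h(\la^\Mull))=2$ of case (ii), and — only when $p=2$ and $n$ even — the extra $2$-transitive family of case (iv), which comes from the explicit branching condition on $\la$ ensuring $D^\la\da_{\SSS_{n-2}}$ has a composition factor of small dimension that can survive the further restriction.

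The heart of the $p=2,3$ novelty, and the step I expect to be the main obstacle, is controlling the submodule structure of the permutation modules $M^{(n-k,k)}$ and of $D^\la\da_{\SSS_{n-2}}$ in characteristic $2$ and $3$: in large characteristic these are semisimple or nearly so, and the reduction arguments of \cite{BK} exploit this, whereas for $p=2,3$ one needs the detailed branching results — iterated $i$-restriction, the structure of $\JS$ partitions, and the basic spin module's restrictions — to pin down exactly which composition factors appear and with what multiplicity. In particular, case (iv) is genuinely new and will require showing both that the displayed parity condition on $\la$ is \emph{necessary} (via a character/composition-factor obstruction when it fails) and that it is compatible with irreducibility for the relevant $2$-transitive $G$ (via a dimension count matching $\dim D^\la$ against $|G|/|\Stab|$ and the structure of the corresponding module over the point stabilizer). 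The affine case of the O'Nan--Scott analysis in characteristic $2$ is also delicate because $G$ can be quite large, and separating genuinely $2$-transitive affine groups from the $3$-homogeneous ones requires care; I would handle it by combining the order bounds with the known list of affine $2$-transitive groups and checking the residual cases against the minimal dimension bounds for $D^\la$ one at a time.
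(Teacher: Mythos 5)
There is a genuine gap at the heart of your proposal: you never invoke the mechanism that actually drives all of these reduction theorems, namely the identity $\End_G(D^\la\da_G)\cong\Hom_{\SSS_n}(\I(G),\EE(\la))$ for $\I(G)=\bone_G\ua^{\SSS_n}$ and $\EE(\la)=\End_\F(D^\la)$, together with the factorization of such homomorphisms through the small permutation modules $M_k=M^{(n-k,k)}$, $k\le 3$. The substitute you offer for the primitive step --- comparing an upper bound for $|G|$ (for primitive, non-$2$-transitive $G$) against a lower bound for $\dim D^\la$ --- does not work: a primitive non-$2$-transitive group can be as large as $\SSS_m$ in its action on $2$-subsets of $\{1,\dots,m\}$, where $n=\binom{m}{2}$ and $|G|^{1/2}$ grows super-polynomially in $n$, vastly exceeding James's quadratic lower bound for $\dim D^\la$. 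The correct argument is combinatorial on the other side: if $G$ is not $k$-homogeneous then $i_k(G)=\dim\Hom_{\SSS_n}(\I(G),M_k)\ge 2$, and one must then push a second, non-trivial homomorphism through $M_k\to\EE(\la)$. This is exactly where the two ingredients you did not supply are indispensable: (1) the precise Loewy/filtration structure of $M_2$ and $M_3$ in characteristics $2$ and $3$ (so that the extra homomorphism $\I(G)\to M_3$ is seen to hit the factor $D^{(n-3,3)}$ rather than dying in the ``trivial part''), and (2) the non-vanishing of the explicit homomorphisms $\zeta_k:M_k\to\EE(\la)$ on the Specht submodule $S^{(n-k,k)}$, which reduces to showing $x_kD^\la\ne 0$ for explicit elements $x_2,x_3\in\F\SSS_{2k}$ and is proved by induction on $n$ via branching-recognition lemmas. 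Without these, ``$G$ is $2$-transitive'' and ``$G$ is $3$-homogeneous'' simply cannot be extracted.

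A second, related misreading: the exceptional cases (ii) and (iv) are not obtained by running through the classification of $2$-transitive groups and testing irreducibility group by group (the paper deliberately defers that analysis to a sequel); they are the conditions on $\la$ under which the Hom-counting machinery breaks down. Case (ii) records precisely when the induction proving $x_3D^\la\ne 0$ fails (it needs $h(\la),h(\la^\Mull)\ge 3$), and the parity/$\la_j=\la_{j+1}+2$ condition of case (iv) comes from the combinatorics of good and cogood nodes (when $(\la_B)^C$ is $2$-singular), which controls $\dim\Hom_{\SSS_n}(S^{(n-1,1)},\EE(\la))$ and hence whether $\dim\Hom_{\SSS_n}(\I(G),\EE(\la))\ge 2$ can be forced for merely $2$-homogeneous $G$. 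Your skeleton (intransitive $\to$ imprimitive wreath products $\to$ primitive, with basic spin and $(n-1,1)$ as exceptions) matches the paper, and you correctly locate the difficulty in the submodule structure of the permutation modules in small characteristic; but as written the proposal identifies the obstacle without supplying the tool that overcomes it, and the tool it does propose fails.
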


In case (v) of Theorem~\ref{TA}, we have a complete classification of subgroups giving irreducible restrictions (see Example \ref{imprim} for some examples of such subgroups $G$):

\begin{MainTheorem}\label{TNat}
Let $6 \leq n \equiv 2 (\bmod\ 4)$, $p=2$, and let $G \leq W:=\SSS_{n/2} \wr \SSS_2$. Then $D^{(n-1,1)}\da_G$ is irreducible if and only 
if both of the following two conditions hold.
\begin{enumerate}[\rm(i)]
\item $G$ is transitive on $\{1,2 \ldots,n\}$.
\item If $B = \SSS_{n/2} \times \SSS_{n/2}$ is the base subgroup of $W$, then the projection of $G \cap B$ onto each factor 
$\SSS_{n/2}$ of $B$ induces a $2$-transitive subgroup of $\SSS_{n/2}$ over which $D^{(n/2-1,1)}$ is irreducible, and the restrictions
of the two modules $D^{(n/2-1,1)} \boxtimes \bone_{\SSS_{n/2}}$ and $\bone_{\SSS_{n/2}} \boxtimes D^{(n/2-1,1)}$
to $G \cap B$ are non-isomorphic.
\end{enumerate}
\end{MainTheorem}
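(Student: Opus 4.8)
The module $D^{(n-1,1)}$ is the heart of the natural permutation module $\F^n$ over $\SSS_n$; in characteristic $2$ with $n$ even it is realized as $\{(a_1,\dots,a_n) : \sum a_i = 0\}/\langle(1,\dots,1)\rangle$, of dimension $n-2$. The whole argument is a careful analysis of this $(n-2)$-dimensional module restricted first to $B = \SSS_{n/2}\times\SSS_{n/2}$ and then to $G\cap B$ and $G$. First I would set up the $B$-module structure of $D^{(n-1,1)}{\da}_B$: using the analogous realization of the natural modules for the two $\SSS_{n/2}$-factors, one shows there is a short exact sequence of $B$-modules relating $D^{(n-1,1)}{\da}_B$ to $(D^{(n/2-1,1)}\boxtimes\bone)\oplus(\bone\boxtimes D^{(n/2-1,1)})$ together with a trivial composition factor (coming from the quotient by the all-ones vector on each block and then identifying the two block-sum functionals); making this decomposition series precise, including which extensions split, is the technical core. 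Then for the ``if'' direction I would assume (i) and (ii), show that $D^{(n-1,1)}{\da}_{G\cap B}$ has exactly the two nonisomorphic irreducible constituents $D^{(n/2-1,1)}{\da}$ coming from the two factors (plus possibly trivials), and that the element of $G$ swapping the two blocks (which exists by transitivity (i) since $G\not\leq B$) interchanges these two constituents; a Clifford-theory/Mackey argument then forces $D^{(n-1,1)}{\da}_G$ to be irreducible.

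For the ``only if'' direction I would argue contrapositively. Assume $D^{(n-1,1)}{\da}_G$ is irreducible. Since $\dim D^{(n-1,1)} = n-2 > 1$, a disconnected (intransitive) $G$ cannot act irreducibly on what is essentially a deleted permutation module — more precisely if $G$ is intransitive it lies in some $\SSS_a\times\SSS_{n-a}$ and $D^{(n-1,1)}$ restricted there has the trivial module as a quotient or submodule, contradicting irreducibility and $\dim>1$ — so (i) holds. For (ii), suppose $D^{(n-1,1)}{\da}_G$ is irreducible; restrict further to $G\cap B$, which has index $2$ in $G$ (again by (i) and $G\not\leq B$). By Clifford's theorem $D^{(n-1,1)}{\da}_{G\cap B}$ is either irreducible or a sum of two conjugate irreducibles. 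Combining this with the $B$-module structure computed in the first step (which shows the constituents of $D^{(n-1,1)}{\da}_{G\cap B}$ are governed by the two projections $\pi_1,\pi_2$ of $G\cap B$ into the two $\SSS_{n/2}$'s acting on their natural modules $D^{(n/2-1,1)}$), one deduces: each $\pi_i(G\cap B)$ must act irreducibly on $D^{(n/2-1,1)}$ (otherwise extra constituents appear and the module is too reducible), that this forces $\pi_i(G\cap B)$ to be $2$-transitive on $n/2$ points (since irreducibility of $D^{(m-1,1)}$ for the deleted permutation module is equivalent to $2$-transitivity — a standard fact I would cite/recall), and finally that the two resulting constituents must be nonisomorphic as $G\cap B$-modules, for otherwise $D^{(n-1,1)}{\da}_{G\cap B}$ would not be multiplicity-free of the required shape and the swap in $G/(G\cap B)$ could not glue them into a single irreducible $G$-module.

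**Main obstacle.** The delicate point is the first step: pinning down the exact submodule/quotient structure of $D^{(n-1,1)}{\da}_B$ in characteristic $2$, including the trivial composition factors and whether the relevant extensions split, because in characteristic $2$ the natural module $\F^n$ is genuinely non-semisimple and $\Ext^1$ between the trivial module and $D^{(m-1,1)}$ is nonzero. One must track carefully how the all-ones vectors on the two blocks and the global all-ones vector interact, and identify precisely the $B$-module lattice so that the Clifford-theoretic counting of constituents is airtight. A secondary subtlety is handling the interaction with condition (i): one needs that transitivity of $G$ together with $G\leq W$ and $G\not\leq B$ is exactly what guarantees an element of $G$ realizing the block swap, and that conversely without transitivity one can always exhibit a trivial sub or quotient; the cases where $G\cap B$ projects onto a proper transitive-but-not-$2$-transitive subgroup must be excluded by the reducibility of $D^{(n/2-1,1)}$ there, which is where the equivalence ``$D^{(m-1,1)}$ irreducible $\iff$ $2$-transitive'' (in characteristic $2$, $m$ even, using the deleted permutation module) does the real work.
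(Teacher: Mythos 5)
Your overall strategy---decompose $D^{\al_n}\da_B$ into the natural modules of the two factors and then run Clifford theory for the index-$2$ subgroup $G\cap B\leq G$---is exactly the paper's. The paper executes it at the level of Brauer characters: with $\varphi$ the Brauer character of $D^{\al_n}$ and $1+\chi$ the permutation character on $n$ points, one has $\varphi=\chi^\circ-1$ and $\varphi\da_B=\varphi_1+\varphi_2$, where $\varphi_i$ is the Brauer character of $D^{\al_{n/2}}\boxtimes\bone_{\SSS_{n/2}}$, resp.\ $\bone_{\SSS_{n/2}}\boxtimes D^{\al_{n/2}}$ (irreducible because $2\nmid n/2$). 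This sidesteps the extension/splitting worries you single out as the main obstacle.

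One concrete correction to your ``technical core'': there is \emph{no} extra trivial composition factor in $D^{\al_n}\da_B$. Since $n\equiv 2\pmod{4}$, $n/2$ is odd, so $\dim D^{\al_{n/2}}=n/2-1$ and $2(n/2-1)=n-2=\dim D^{\al_n}$ leaves no room for one. Concretely, in your model $\{v\in\F^n:\sum_i v_i=0\}/\langle e_1+\dots+e_n\rangle$, the sum $W_1\oplus W_2$ of the two blockwise sum-zero subspaces has dimension $n-2$ and meets $\langle e_1+\dots+e_n\rangle$ trivially (each block-sum of the all-ones vector is $n/2$, which is odd), so $D^{\al_n}\da_B\cong(D^{\al_{n/2}}\boxtimes\bone_{\SSS_{n/2}})\oplus(\bone_{\SSS_{n/2}}\boxtimes D^{\al_{n/2}})$ is a clean direct sum of two non-isomorphic irreducibles. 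Getting this right is essential: were trivial constituents actually present, your Clifford argument in the ``if'' direction would not yield irreducibility. Finally, your contrapositive treatment of (i) (intransitive $G$ lies in some $\SSS_a\times\SSS_{n-a}$ and sees a trivial sub or quotient) is not immediate when $a$ is odd; the paper instead derives transitivity of $G$ directly from $[G:G\cap B]=2$ together with the $2$-transitivity of the two block projections of $G\cap B$, which is the cleaner route.
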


In case (vi) of Theorem~\ref{TA}, we can also say much more:

\begin{MainTheorem}\label{TB}
Let $n\geq 5$, $p=2$, $D^{\be_n}$ be the irreducible basic spin module over $\SSS_n$,  and $G< \SSS_n$ be a subgroup of $\SSS_n$ such that $D^{\be_n}\da_G$ is irreducible. Then one of the following happens:

\begin{enumerate}
\item[{\rm (i)}]
$G\leq \SSS_a\wr\SSS_b$ with $n=ab$, $a,b\in\Z_{>1}$ and $a$ is odd. Moreover if $b>2$ then $G\not\leq\SSS_{a}\times\dots\times \SSS_{a}$. In fact,   
$$D^{\be_n}\da_{\SSS_a\wr\SSS_b}\cong D^{\be_a}\wr D^{\be_b}
$$
is indeed irreducible. 

\item[{\rm (ii)}]
$G\leq \SSS_{n-k}\times\SSS_{k}$ with $n-k$ and $k$ odd. In fact, 
$$D^{\be_n}\da_{\SSS_{n-k}\times\SSS_{k}}\cong D^{\be_{n-k}}\boxtimes D^{\be_{k}}$$
is indeed irreducible. 

\item[{\rm (iii)}]
$G$ is primitive, in which case $D^{\be_n}\da_G$ is irreducible if and only if one of the following happens:

\begin{enumerate}
\item[{\rm (a)}] $n\equiv 2\pmod{4}$ and $G=\AAA_n$;

\item[{\rm (b)}] $n=5$, $G=\CCC_5\rtimes \CCC_4$;

\item[{\rm (c)}] $n=6$, $G=\SSS_5$;

\item[{\rm (d)}] $n=10$, $G=\SSS_6$, $\MMM_{10}$ or $\Aut(\AAA_6)$;

\item[{\rm (e)}]  $n=12$, $G=\MMM_{12}$.
\end{enumerate}

\end{enumerate}
\end{MainTheorem}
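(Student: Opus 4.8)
The plan is to split along the standard trichotomy for a subgroup $G\leq\SSS_n$: either $G$ is intransitive, or transitive imprimitive, or primitive. For the first two cases the engine is the trivial observation that if $D^{\be_n}\da_H$ is reducible for some $H$, then $D^{\be_n}\da_G$ is reducible for every $G\leq H$, since a proper nonzero submodule of $D^{\be_n}\da_H$ remains one after restriction. The real content is then the structural input, which I would extract from the Clifford-algebra (super tensor) model of the basic spin module developed earlier and which already contains the isomorphisms asserted in parts~(i) and~(ii): namely that $D^{\be_n}\da_{\SSS_{n-k}\times\SSS_k}$ is irreducible if and only if $k$ and $n-k$ are both odd (and is then $D^{\be_{n-k}}\boxtimes D^{\be_k}$), and that $D^{\be_n}\da_{\SSS_a\wr\SSS_b}$ is irreducible if and only if $a$ is odd (and is then $D^{\be_a}\wr D^{\be_b}$).

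Granting this, the intransitive case is immediate. If $G$ has orbits of sizes $m_1,\dots,m_r$ with $r\geq 2$, then $G\leq\SSS_{s_I}\times\SSS_{n-s_I}$ for $s_I:=\sum_{i\in I}m_i$ and every proper nonempty $I\subseteq\{1,\dots,r\}$, so irreducibility of $D^{\be_n}\da_G$ forces $s_I$ and $n-s_I$ to both be odd for all such $I$. Taking $I=\{i\}$ shows every $m_i$ is odd; taking $I=\{1,2\}$ then forces $m_1+m_2$ to be odd, which is impossible unless $r=2$. Hence $n$ is even and $G$ lies in a Young subgroup $\SSS_{n-k}\times\SSS_k$ with $n-k$, $k$ odd, which is case~(ii). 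For the transitive imprimitive case, $G\leq\SSS_a\wr\SSS_b$ with $n=ab$ and $a,b\geq 2$: if $a$ were even then $D^{\be_n}\da_{\SSS_a\wr\SSS_b}$ is already reducible, so $a$ is odd; and if $b>2$ while $G$ lies in the base subgroup $\SSS_a\times\dots\times\SSS_a$, then $G\leq\SSS_{2a}\times\SSS_{a(b-2)}$ (two blocks versus the remaining $b-2$), whose parts are not both odd, so $D^{\be_n}\da_G$ is reducible. This is case~(i), the displayed isomorphism also giving the asserted irreducibility of the restriction to $\SSS_a\wr\SSS_b$ itself.

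The bulk of the argument is the primitive case. Here I would combine the classification of finite simple groups, the O'Nan--Scott description of primitive permutation groups, lower bounds for dimensions of irreducible modular representations, and the earlier analysis of the basic spin module (in particular \cite{KST}). Since $\dim D^{\be_n}=2^{\lfloor(n-1)/2\rfloor}$ grows exponentially in $n$, while a primitive subgroup of $\SSS_n$ other than $\AAA_n$ or $\SSS_n$ has order only quasi-polynomial in $n$ and hence all of its irreducible modules have dimension far below $2^{\lfloor(n-1)/2\rfloor}$ for large $n$, only $G=\AAA_n$ and finitely many primitive groups can possibly satisfy $D^{\be_n}\da_G$ irreducible. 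Running through the O'Nan--Scott types (affine; almost simple of alternating, Lie, or sporadic type; diagonal and product types) with the sharpest available dimension bounds cuts the surviving list down to a handful of groups of degree $n\in\{5,6,10,12\}$, which are then checked against the known Brauer characters, yielding the groups in (iii)(b)--(e). For $G=\AAA_n$ one applies the known criterion for irreducibility of the restriction to $\AAA_n$ of a $2$-modular irreducible of $\SSS_n$ with $\la=\be_n$; tracking the relevant invariant of the underlying quadratic $\F_2$-space shows the restriction is irreducible precisely when $n\equiv 2\pmod 4$, which is (iii)(a).

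The principal obstacle is the primitive case: one needs dimension lower bounds sharp enough, and uniform enough across all O'Nan--Scott types and both defining and cross characteristics, to bring the infinitely many primitive groups down to $\AAA_n$ plus a genuinely short finite list; the small-degree survivors must then be settled individually, and the exact congruence $n\equiv 2\pmod 4$ governing $\AAA_n$ must be pinned down via the arithmetic of the Clifford model.
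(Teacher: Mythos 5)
Your overall architecture matches the paper's up to the primitive case: the intransitive and transitive--imprimitive subgroups are eliminated exactly as in the paper, namely via the classification of when $D^{\be_n}\da_{\SSS_{n-k,k}}$ is irreducible (Proposition~\ref{p8}, resting on Jantzen--Seitz and Phillips) and the wreath-product identification $D^{\be_n}\da_{\SSS_a\wr\SSS_b}\cong D^{\be_a}\wr D^{\be_b}$ for $a$ odd (Lemma~\ref{LBS}); your orbit-parity argument and the $\SSS_{2a}\times\SSS_{a(b-2)}$ trick for excluding the base group when $b>2$ are precisely what is needed. Where you genuinely diverge is the primitive case. The paper runs no O'Nan--Scott or order-bound analysis there: it uses the fact that $D^{\be_n}$ is the reduction modulo $2$ of the complex basic spin module $B_0$ of the double cover $\hat\SSS_n$, so irreducibility of $D^{\be_n}\da_G$ forces irreducibility of $B_0\da_{\hat G}$; the primitive $G$ with the latter property were classified by Kleidman--Wales (quoted via \cite[Theorem B]{KT}), and one only has to check which of finitely many explicit cases survive reduction modulo $2$, with $G=\AAA_n$ handled separately by Benson's criterion \cite{Benson}. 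Your route is workable in principle but much heavier, and as stated it has a soft spot: a primitive group of degree $n$ not containing $\AAA_n$ need \emph{not} have quasi-polynomial order --- the ``standard'' actions ($\SSS_m$ or $\AAA_m$ on $k$-subsets, and product actions built from them) are genuine exceptions to the CFSG-based $n^{1+\log_2 n}$-type bounds, so they must be treated separately (they are still eliminated for large $n$ because $m!$ is eventually dwarfed by $2^{2\lfloor(n-1)/2\rfloor}$ when $n\sim\binom{m}{k}$, but this has to be argued). More importantly, essentially all of the content of (iii)(b)--(e) lies in producing and verifying the short list of small-degree survivors, which your sketch defers to ``checking against known Brauer characters''; that is exactly the work the paper's lifting trick outsources to the existing characteristic-zero classification. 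What your approach buys is independence from the Kleidman--Wales result; what the paper's buys is the replacement of the entire primitive-case analysis by one standard observation about Brauer characters of liftable modules, and I would recommend adopting it.
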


We give some additional comments on the statements of our main results. First of all, taking into account Theorems~\ref{TNat} and \ref{TB}, let us exclude the cases of the natural and basic spin modules for $p=2$ as appear in parts (v) and (vi) of Theorem~\ref{TA}. Then, we obtain the 
statement that the restriction $D^\la\da_G$ is irreducible only if either (A) $G\leq \SSS_{n-1}$ or (B) $G$ is $2$-transitive.

In case (A), the restriction $D^\la\da_{\SSS_{n-1}}$ must be irreducible, so $\la$ must be JS. Moreover, then $D^\la\da_{\SSS_{n-1}}\cong D^\mu$ for the partition $\mu$ of $n-1$ which is obtained from $\la$ by removing the top removable node. So in this case one can proceed by induction on $n$. 

In case (B), one can use the classification of doubly transitive permutation groups \cite{Cam,Kantor}. In fact, parts (ii) and (iv) of Theorem~\ref{TA} often allow us to assume that $G$ is even $3$ homogeneous, and there are very few such permutation groups. The exceptional cases are mostly related to $2$-row partitions. For example, the exceptions in case (ii) correspond to the cases where either $\la$ is a $2$-row partition or $D^\la\otimes \sgn$ corresponds to a $2$-row partition. 
In a forthcoming paper \cite{KMT} we will analyze case (B) further.

We now outline the proof of the main results and the contents of the paper. Section~\ref{SPrel} is preliminary. In particular, in \S\ref{SSPar} we discuss combinatorics of good and normal nodes which will be crucial for branching results obtained later. In \S\ref{SSIrr}, we discuss irreducible $\F\SSS_n$-modules, and obtain in Lemma~\ref{LBasic2} our main general tool for proving  reducibility of $D^\la\da_G$. Basic facts on Specht, Young and permutation modules are discussed in \S\ref{SSMoreMod}. The information on the $G$-invariant spaces in some dual Specht modules is obtained in \S\ref{SSInv}. 

Section~\ref{SBr} is on branching. After recording the basic branching rules in \S\ref{SSGenBrR}, we study in \S\ref{SSFilt} some important filtrations that arise in the restriction $D^\la\da_{\SSS_{n-1}}$. The technical \S\ref{SSJSBr} is devoted to the study of restrictions of JS modules in characteristic $2$ to the natural subgroups $\SSS_{n-k}$. In \S\ref{SSRecognition} we  obtain characterizations of certain classes of irreducible modules via their branching properties. 

Section~\ref{SPerm} is on the submodule structure of the permutation modules $M_k=M^{(n-k,k)}$ in characteristics $2$ and $3$ for $k=1,2,3$. Section~\ref{SP2NEVEN} is on the submodule structure of the module $\EE(\la):=\End_F(D^\la)\cong D^\la\otimes D^\la$. We show that some quotients of the permutation modules $M_k$ for $k=1,2,3$ arise as submodules of $\EE(\la)$. 
Section~\ref{SSpecialHoms} gives an alternative way of constructing interesting homomorphisms from $M_k$ to $\EE(\la)$, which develops the ideas of \cite[Theorem 3.3]{KS2Tran} and \cite[\S3]{BK}. Finally, in Section~\ref{SMain} we establish the main results. 

\subsection*{Acknowledgement} We are grateful to the anonymous referee and Gunter Malle for careful reading of the paper and multiple useful remarks. 

\section{Preliminaries}\label{SPrel}

\subsection{Groups and modules}
Throughout the paper we work over a fixed algebraically closed ground field $\F$ of characteristic $p>0$. We do not yet assume that $p=2$ or $3$ but will do this when necessary. 

For a finite group $G$, we denote by $\mod{\F G}$ the category of finite dimensional $\F G$-modules. For $U,V\in\mod{\F G}$ we denote by $\Hom_G(U,V)$ the space of all $\F G$-module homomorphisms from $U$ to $V$, and by $\Hom_\F(U,V)$ the space of all linear maps considered as an $\F G$-module via $(g\cdot f)(u)=gf(g^{-1}u)$ for all $f\in \Hom_\F(U,V)$, $u\in U$ and $g\in G$.

We denote by $\bone_G$  the trivial $\F G$-module. Let $G$ be a subgroup of a group $H$, $V$ be an $\F H$-module and $W$ be an $\F G$-module. We denote by $V{\da}_G$ or $V{\da}^H_G$ the {\em restriction} of $V$ from $H$ to $G$, and by $W{\ua}^H$ or $W{\ua}^H_G$ the {\em induction} of $W$ from $G$ to $H$. As a special case, for a subgroup $G\leq \SSS_n$, we will often be using the permutation  module 
\begin{equation}\label{EIG}
\I(G):=\bone_G{\ua}^{\SSS_n}.
\end{equation}
If 
$$\SSS_\mu:= \SSS_{\mu_1}\times\dots\times \SSS_{\mu_a}\leq \SSS_n$$ is a Young subgroup corresponding to a composition $\mu=(\mu_1,\dots,\mu_a)$ of $n$, then we write $M^\mu$ instead of $\I(\SSS_\mu)$. 

Let $V$ be an $\F G$-module. We denote by $V^G$ the set of {\em $G$-invariant vectors} in $V$. We write $\soc V$ and $\head V$ for the socle and head of $V$, respectively.

If $L_1,\dots,L_a$ are irreducible $\F G$-modules, we denote by $L_1|\cdots| L_a$ a {\em uniserial} $\F G$-module with composition factors $L_1,\dots,L_a$ listed from socle to head. If $V$ is an $\F G$-module, we use the notation 
$$
V\cong L_1|\cdots| L_a\ \oplus\,\cdots\, \oplus\ K_1|\cdots| K_b
$$
to indicate that $V$ is (isomorphic to) a direct sum of the uniserial modules $L_1|\cdots| L_a$, \dots, $K_1|\cdots| K_b$. On the other hand, if $V_1,\dots,V_a$ are any $\F G$-modules, we write 
$$
V\sim V_1|\dots|V_a
$$
to indicate that $V$ has a filtration with subquotients $V_1,\dots,V_a$ listed from bottom to top. 
We use the notation 
$$
V\sim V_1|\cdots| V_a\ \oplus\,\cdots\, \oplus\ W_1|\cdots| W_b
$$
to indicate that $V\cong X\oplus\dots\oplus Y$ for $X\sim L_1|\cdots| L_a$, \dots, $Y\sim K_1|\cdots| K_b$.

\begin{Lemma} \label{L200418} 
Let $L$ be an irreducible $\F G$-module, and $M$ be an $\F G$-module with submodules $X\subseteq Y\subseteq M$ such that $\Hom_G(L,Y)=0$ and $\soc(M/X)\subseteq Y/X$. Then $\Hom_G(L,M)=0$. 
\end{Lemma}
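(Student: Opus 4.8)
The plan is to argue by contradiction: suppose $\Hom_G(L,M)\neq 0$, so there is a nonzero homomorphism $f\colon L\to M$. Since $L$ is irreducible, $f$ is injective, and its image $f(L)$ is a copy of $L$ sitting inside $M$. The first step is to locate this copy relative to the submodule $X$.

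Next I would distinguish two cases according to whether $f(L)\subseteq X$ or not. If $f(L)\subseteq X\subseteq Y$, then $f$ factors through $Y$, giving a nonzero element of $\Hom_G(L,Y)$, contradicting the hypothesis $\Hom_G(L,Y)=0$. If instead $f(L)\not\subseteq X$, then the composite $L\xrightarrow{f} M\to M/X$ is nonzero; since $L$ is irreducible this composite is injective, so $M/X$ contains a copy of the irreducible module $L$. Any copy of an irreducible submodule lies in the socle, so this copy of $L$ is contained in $\soc(M/X)$, which by hypothesis is contained in $Y/X$. Hence the image of $L$ in $M/X$ actually lies in $Y/X$, which means $f(L)\subseteq Y$. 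Then again $f$ defines a nonzero element of $\Hom_G(L,Y)=0$, a contradiction. Either way we reach a contradiction, so $\Hom_G(L,M)=0$.

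I do not expect any serious obstacle here; the only points requiring a little care are the standard facts that a nonzero map out of an irreducible module is injective, and that an irreducible submodule of any module is contained in its socle (so that the image of $L$ in $M/X$ lands in $\soc(M/X)$). The argument is purely formal and uses no special structure of $\F\SSS_n$-modules, only the definition of socle and irreducibility. If one prefers to avoid the case division, one can phrase it uniformly: the image of $f(L)$ in $M/X$ is either zero, in which case $f(L)\subseteq X\subseteq Y$, or it is a nonzero irreducible submodule of $M/X$, hence contained in $\soc(M/X)\subseteq Y/X$, again forcing $f(L)\subseteq Y$; in both cases $\Hom_G(L,Y)\neq 0$, contradiction.
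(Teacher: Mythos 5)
Your proof is correct and is essentially the same argument as the paper's: a nonzero map from the irreducible $L$ has simple image, which (if not already inside $X\subseteq Y$) maps to a simple submodule of $M/X$, hence lands in $\soc(M/X)\subseteq Y/X$, forcing the image into $Y$ and contradicting $\Hom_G(L,Y)=0$. The paper merely compresses your two cases by observing at the outset that $\psi(L)\not\subseteq Y$ (hence $\not\subseteq X$); no further comment is needed.
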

\begin{proof}
If $\psi:L\to M$ is a non-zero homomorphism, then $\psi(L)$ is simple and $\psi(L) \not\subseteq Y$. In particular, $\psi(L) \not\subseteq X$, so $(\psi(L)+X)/X$ is a simple submodule of $M/X$ and so $\psi(L)+X \subseteq Y$, a contradiction.
\end{proof}

\subsection{Partitions}\label{SSPar}
We denote by $\Par(n)$ the set of all {\em partitions} of $n$ and by $\Parp(n)$ the set of all {\em $p$-regular} partitions of $n$, see \cite[10.1]{JamesBook}. We identify a partition $\la=(\la_1,\la_2,\dots)$ with its {\em Young diagram} 
$
\{(r,s)\in\Z_{>0}\times\Z_{>0}\mid s\leq\la_r\}.
$
We have a {\em dominance order} $\unrhd$ on partitions, see \cite[3.2]{JamesBook}. 
The number of non-zero parts of a partition $\la$ is denoted by $h(\la)$. 
The following $2$-row partitions will play a special role in this paper:
\begin{eqnarray}\label{ENat}
\al_n&:=&(n-1,1)
\\
\label{ESpin}
\be_n&:=&
\left\{
\begin{array}{ll}
(n/2+1,n/2-1) &\hbox{if $n$ is even,}\\
((n+1)/2,(n-1)/2) &\hbox{if $n$ is odd.}
\end{array}
\right.
\end{eqnarray}

We set
$$
I:=\Z/p\Z
$$
identified with $\{0,1,\dots,p-1\}$. 
Given a node $A=(r,s)$ in row $r$ and column $s$, we consider its {\em residue}
$$
\res A:=s-r\pmod{p}\in I.
$$
The {\em residue content} of a partition $\la$ is the tuple 
$$\operatorname{cont}(\la):=(a_i)_{i\in I}$$ such that $\la$ has exactly $a_i$ nodes of residue $i$ for each $i\in I$. For $j\in I$, let $\ga_j$ be the tuple $(a_i)_{i\in I}$ with $a_i=\de_{i,j}$. We consider the tuples $(a_i)_{i\in I}$ as elements of $\Theta:=\sum_{i\in I}\Z\cdot\ga_i$, the free $\Z$-module with basis $\{\ga_i\mid i\in I\}$. Let 
\begin{equation}\label{ETheta}
\Theta_n:=\left\{\theta=\sum_{i\in I}a_i\ga_i\in\Theta\ \bigl{|}\ a_i\geq 0,\ \sum_{i\in I} a_i=n\right\}.
\end{equation}
Partitions $\la,\mu\in\Par(n)$ have the same residue contents if and only if they have the same $p$-cores, see \cite[2.7.41]{JK}. 

Let $i \in I$ and $\la\in\Par(n)$. 
A node $A 
\in \la$ (resp. $B\not\in\la$) is called {\em $i$-removable} (resp. {\em $i$-addable}) for $\la$ 
if $\res A=i$ and $\la_A:=\la\setminus\{A\}$ (resp. $\la^B:=\la\cup\{B\}$) is a Young diagram of a partition. 
A node is called {\em removable} (resp. {\em addable}) if it is $i$-removable (resp. $i$-addable) for some $i$. Labeling the $i$-addable
nodes of $\la$ by $+$ and the $i$-removable nodes of $\la$ by $-$, the {\em $i$-signature} of 
$\la$ is the sequence of pluses and minuses obtained by going along the 
rim of the Young diagram from bottom left to top right and reading off
all the signs.
The {\em reduced $i$-signature} of $\la$ is obtained 
from the $i$-signature
by successively erasing all neighbouring 
pairs of the form $-+$. 
The nodes corresponding to  $-$'s (resp. $+$'s) in the reduced $i$-signature are
called {\em $i$-normal} (resp. {\em $i$-conormal}) for $\la$. There are equivalent definition of normal (resp. conormal) nodes involving the $i$-removable and $i$-addable nodes above (resp. below) a given node; for example an $i$-removable node $A$ is normal if and only if for any $i$-addable node $B$ above $A$ there exists an $i$-removable node $C_B$ between $A$ and $B$ with the property that if $B_1$ and $B_2$ are distinct $i$-addable nodes above $A$ then $C_{B_1}\not=C_{B_2}$.

The leftmost $i$-normal (resp. rightmost $i$-conormal) node is called {\em $i$-good} (resp. {\em $i$-cogood}) for $\la$. A node is called {\em normal} (resp. {\em conormal}, {\em good}, {\em cogood}) if it is $i$-normal (resp. $i$-conormal, $i$-good, $i$-cogood) for some $i$.
We denote 
\begin{align*}
\eps_i(\la)&:=\sharp\{\text{$i$-normal nodes of $\la$}\},
\\
\phi_i(\la)&:=\sharp\{\text{$i$-conormal nodes of $\la$}\}.
\end{align*}
There exists an $i$-good (resp. $i$-cogood) node for $\la$ if and only if $\eps_i(\la)>0$ (resp. $\phi_i(\la)>0$).  

Let $\la\in\Parp(n)$. If $\eps_i(\la)>0$, we denote by $A$ the $i$-good node of $\la$ and set
$$
\tilde e_i \la:=\la_A.
$$
If $\phi_i(\la)>0$, we denote by $B$ the $i$-cogood node for $\la$ and set
$$
\tilde f_i\la:=\la^B.
$$
We will repeatedly use the known fact that $\tilde e_i \la$ and $\tilde f_i\la$ are $p$-regular, whenever $\la$ is so. 
The following three known statements follow easily from the definitions:

\begin{Lemma}\label{L2.8} {\cite[Lemma 2.8]{M}}
Any partition has one more conormal node than it has normal nodes.
\end{Lemma}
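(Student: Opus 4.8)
The statement to prove is Lemma~\ref{L2.8}: any partition has one more conormal node than it has normal nodes. The natural approach is to work with the $i$-signature and reduced $i$-signature for each fixed residue $i\in I$, and then sum over all $i$.

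\textbf{Plan of proof.} Fix a residue $i\in I$ and a partition $\la$. After erasing all cancelling pairs $-+$ from the $i$-signature, the reduced $i$-signature is a string of the form $+^a-^b$ for some $a,b\geq 0$ (all pluses come after all minuses, since any $-$ immediately followed by $+$ would have been erased). By definition the $i$-normal nodes are those marked $-$ and the $i$-conormal nodes those marked $+$ in this reduced signature, so $\eps_i(\la)=b$ and $\phi_i(\la)=a$. Hence $\phi_i(\la)-\eps_i(\la)=a-b$ equals the total number of $+$'s minus the total number of $-$'s in the \emph{un}reduced $i$-signature (cancellation removes one of each). The unreduced $i$-signature records one $+$ for every $i$-addable node of $\la$ and one $-$ for every $i$-removable node of $\la$, so
\[
\phi_i(\la)-\eps_i(\la)=\sharp\{i\text{-addable nodes of }\la\}-\sharp\{i\text{-removable nodes of }\la\}.
\]

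\textbf{Summing over residues.} Summing this identity over all $i\in I$ gives
\[
\sum_{i\in I}\bigl(\phi_i(\la)-\eps_i(\la)\bigr)=\sharp\{\text{addable nodes of }\la\}-\sharp\{\text{removable nodes of }\la\},
\]
and the left side is exactly (number of conormal nodes of $\la$) $-$ (number of normal nodes of $\la$). So it remains to show that every partition has exactly one more addable node than removable node. This is the combinatorial heart of the argument, though it is elementary: reading along the rim of the Young diagram of $\la$ from the bottom-left infinite vertical edge to the top-right infinite horizontal edge, addable and removable nodes alternate strictly (between two consecutive addable nodes there is exactly one removable node and vice versa), and the sequence both begins and ends with an addable node — the addable node at the bottom of the first column and the addable node at the end of the first row (for $\la=\varnothing$ these coincide and there are no removable nodes, consistent with $1-0=1$). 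Hence there is exactly one more addable node than removable node.

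\textbf{Main obstacle.} The only step requiring care is the last one: making precise the ``alternation along the rim'' claim and the identification of the first and last entries as addable. I would phrase it via the standard description of the boundary path of $\la$ as a sequence of unit steps, or simply by induction on $|\la|$ — removing a removable node changes the addable/removable counts in a controlled way, or adding a node to $\varnothing$ and building up. Everything else (the shape $+^a-^b$ of the reduced signature, the bookkeeping of cancellation, and the sum over $i$) is immediate from the definitions recalled in \S\ref{SSPar}. Given that the paper cites \cite[Lemma 2.8]{M}, I expect the authors' proof to be similarly short, most likely just invoking the $+^a-^b$ normal form together with the addable-minus-removable count.
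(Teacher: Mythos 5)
Your argument is correct. The paper itself gives no proof of this lemma---it is simply quoted from \cite[Lemma 2.8]{M} as one of the statements that ``follow easily from the definitions''---and your argument is exactly the standard one: the reduced $i$-signature has the form $+^{a}-^{b}$, so $\phi_i(\la)-\eps_i(\la)$ equals the number of $i$-addable minus the number of $i$-removable nodes, and summing over $i$ reduces everything to the elementary fact that a partition has one more addable than removable node (e.g.\ a partition with $k$ distinct part sizes has $k$ removable and $k+1$ addable nodes). One trivial slip: since the reduced signature contains no substring $-+$, all pluses come \emph{before} all minuses, not after as your parenthetical says; your displayed form $+^{a}-^{b}$ and all subsequent counting are nevertheless correct.
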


\begin{Lemma}\label{l2}
Let $\lambda\in \Par(n)$ and $i\in I$. Assume that $A$ is $i$-normal and $B$ is $i$-conormal for $\la$. Then $B$ is conormal  for $\la_A$. 
\end{Lemma}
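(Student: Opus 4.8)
The plan is to argue directly from the reduced $i$-signature. Recall that the $i$-normal and $i$-conormal nodes of $\la$ are exactly the $-$'s and $+$'s surviving in the reduced $i$-signature, obtained from the full $i$-signature of $\la$ by repeatedly cancelling adjacent pairs $-+$ (reading the rim from bottom-left to top-right). So I would first write down how the $i$-signature of $\la_A$ differs from that of $\la$. Removing the node $A$ turns the corresponding $-$ in the $i$-signature of $\la$ into a $+$ (the site formerly occupied by $A$ becomes $i$-addable), and it may also affect the status of at most the two rim-neighbours of $A$ of residue $i$; but the crucial point is that the only change to the sequence of signs in a neighbourhood of the position of $A$ is that one $-$ becomes $+$ (the other potentially affected positions either lie at $A$'s immediate neighbours and carry the same residue behaviour, or are unaffected because $A$ is $i$-normal). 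The cleanest route: since $A$ is $i$-normal, in the reduced $i$-signature of $\la$ the sign at $A$'s position is a surviving $-$; passing to $\la_A$ replaces this $-$ by a $+$ and leaves all other entries of the (unreduced) $i$-signature essentially intact up to the local bookkeeping at $A$'s neighbours.

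Next I would track what happens to $B$. Since $B$ is $i$-conormal for $\la$, its $+$ survives the cancellation procedure for $\la$. I want to show the $+$ at position $B$ still survives when we reduce the $i$-signature of $\la_A$. The only new $-+$ cancellations that could consume it must involve a $-$ appearing to the left of $B$'s $+$ that was not there before — but we have just turned a $-$ into a $+$, so if anything the modification can only destroy cancellation opportunities, not create them. More precisely, I'd compare the two reduction processes in parallel: run the cancellation on $\la$ and on $\la_A$ simultaneously, and show by an exchange/matching argument that the set of $+$'s surviving for $\la_A$ contains (the image of) every $+$ that survived for $\la$ and sat strictly to the left of, or not cancelled against, the flipped position. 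Since $B$'s $+$ survived for $\la$, it survives for $\la_A$, i.e.\ $B$ is $i$-conormal — hence conormal — for $\la_A$. One has to be slightly careful about the position of $A$ relative to $B$ (whether $A$ is above or below $B$ on the rim) and about the local residue-$i$ behaviour at $A$'s two rim-neighbours after deleting $A$; I'd split into the cases $A$ above $B$ and $A$ below $B$.

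Alternatively — and this may be the slicker write-up — I would use the equivalent characterisation of normal/conormal nodes in terms of counting $i$-addable and $i$-removable nodes above a given node, stated in the excerpt: a $+$ at $B$ (i.e.\ $B$ is $i$-addable) is $i$-conormal iff the $i$-addable nodes weakly above $B$ cannot all be injectively matched to $i$-removable nodes strictly above $B$ and below the respective addable node (equivalently, there is a ``surplus'' addable node $B$). Deleting the $i$-normal node $A$ removes one $i$-removable node from the pool and (generically) adds one $i$-addable node, so the matching relevant to $B$ becomes no easier to complete; hence the surplus at $B$ persists and $B$ stays $i$-conormal. This avoids the parallel-reduction bookkeeping at the cost of verifying the ``$+1$ addable, $-1$ removable'' statement carefully near $A$'s neighbours.

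The main obstacle is the local analysis at the node $A$ itself: when $A$ is removed, its two rim-neighbours of residue $i$ can change between removable/addable/neither, so the naive statement ``one $-$ becomes a $+$'' needs to be supplemented by checking that these neighbour changes are themselves either cancellations within the pair or do not introduce a new uncancelled $-$ to the left of $B$. Concretely I'd handle this by noting that because $A$ is $i$-normal its $-$ survives reduction, which already constrains the signs immediately to its left, and then verifying case-by-case that the neighbour adjustments at $A$ only produce locally cancelling $-+$ or $+-$ patterns, so they cannot affect the survival of $B$'s $+$. Once that local lemma is pinned down, the global conclusion that $B$ remains ($i$-)conormal for $\la_A$ is immediate.
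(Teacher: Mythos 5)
Your first (parallel-reduction) route is exactly the paper's proof: one observes that the $i$-signature of $\la_A$ is obtained from that of $\la$ by flipping the surviving $-$ at $A$'s position to a $+$, applies the same sequence of $-+$ cancellations, and reads off from the resulting word $+\cdots+\,-\cdots-\,+\,-\cdots-$ that $B$'s $+$ still survives. The local analysis you worry about is in fact vacuous: the only nodes other than $A=(r,s)$ whose addable/removable status can change upon deleting $A$ are at positions such as $(r,s\pm1)$ and $(r\pm1,s)$, all of residue $i\pm1\neq i$, so the set of $i$-addable and $i$-removable nodes is literally unchanged except that $A$ turns from $i$-removable into $i$-addable --- which is precisely the paper's opening observation.
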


\begin{proof}
Notice first that the set of $i$-removable and $i$-addable nodes of $\la$ is equal to the set of $i$-removable and $i$-addable nodes of $\la_A$. We can obtain the reduced $i$-signature of $\la_A$ as follows: start by deleting a sequence of pairs $-+$ which is deleted from the $i$-signature of $\la$ to obtain the reduced $i$-signature of $\la$. The reduced $i$-signature of $\la$ and the partly reduced $i$-signature of $\la_A$ look as follows:
\[\begin{array}{lcccccc}
&&B&&&A\\
\la:&+\cdots +&+&+\cdots +&-\cdots -&-&-\cdots-,\\
\la_A:&+\cdots +&+&+\cdots +&-\cdots -&+&-\cdots-.
\end{array}\]
It is then easy to see that $B$ is conormal in $\la_A$.
\end{proof}

\begin{Lemma} \label{LGoodCogood} 
Let $i\in I$ and $\la\in\Parp(n)$. 
\begin{enumerate}
\item[{\rm (i)}] If $\eps_i(\la)>0$ then $\phi_i(\tilde e_i\la)>0$ and $\tilde f_i\tilde e_i\la=\la$. 
\item[{\rm (ii)}] If $\phi_i(\la)>0$ then $\eps_i(\tilde f_i\la)>0$ and $\tilde e_i\tilde f_i\la=\la$. 
\end{enumerate}
\end{Lemma}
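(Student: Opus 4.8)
The plan is to prove both statements by a symmetric argument, deducing (ii) from (i) by a ``dual'' version of the same reasoning (or, if one prefers, by applying (i) to the conjugate partition and using the fact that conjugation swaps the roles of normal/conormal nodes and of $\tilde e_i$/$\tilde f_i$ with $i\mapsto -i$). So let me concentrate on (i). Assume $\eps_i(\la)>0$, let $A$ be the $i$-good node of $\la$, i.e. the leftmost $i$-normal node, and set $\mu:=\tilde e_i\la=\la_A$. First I would recall from the proof of Lemma~\ref{l2} that the set of $i$-removable and $i$-addable nodes of $\la$ coincides with that of $\la_A$ except near $A$ itself: in $\la$ the node $A$ contributes a $-$ to the $i$-signature, whereas in $\la_A$ that same position contributes a $+$ (the node $A$ becomes $i$-addable for $\mu$), and possibly one neighbouring removable/addable node changes status, but in the controlled way described there.

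The key step is to track how the reduced $i$-signature changes when passing from $\la$ to $\mu=\la_A$. Since $A$ is the \emph{leftmost} $i$-normal node of $\la$, in the reduced $i$-signature of $\la$ the node $A$ corresponds to the first (leftmost) $-$; everything strictly to its left in the reduced $i$-signature is a $+$. When we remove $A$, its $-$ turns into a $+$. After re-reducing (erasing neighbouring $-+$ pairs), this new $+$ cannot be cancelled on its left (there are only $+$'s there) so it survives as a $+$ in the reduced $i$-signature of $\mu$. The rightmost such $+$ is precisely the $i$-cogood node, and I would check that the node $A$, now $i$-conormal for $\mu$, is in fact the $i$-cogood node of $\mu$ --- indeed it is the rightmost surviving $+$, because every position to its right in the reduced $i$-signature of $\la$ was a $-$, and removing a $-$ on the left cannot create surviving $+$'s to the right of $A$. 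Hence $\phi_i(\mu)>0$ and the $i$-cogood node of $\mu$ is $A$, so $\tilde f_i\mu=\mu^A=\la$, which is exactly (i).

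For (ii) I would run the mirror image: assuming $\phi_i(\la)>0$, let $B$ be the $i$-cogood node (rightmost $i$-conormal), set $\nu:=\tilde f_i\la=\la^B$; adding $B$ turns its $+$ in the $i$-signature into a $-$, and since $B$ corresponded to the rightmost $+$ of the reduced $i$-signature of $\la$ (everything to its right being $-$), after re-reduction this new $-$ survives and is the leftmost surviving $-$, i.e. $B$ is the $i$-good node of $\nu$, whence $\eps_i(\nu)>0$ and $\tilde e_i\nu=\nu_B=\la$. Throughout one also uses the already-cited fact that $\tilde e_i\la$ and $\tilde f_i\la$ stay $p$-regular when $\la$ is, so all the objects live in $\Parp(n)$ as required.

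I do not expect a serious obstacle here: the whole content is bookkeeping with the $i$-signature, and the picture displayed in the proof of Lemma~\ref{l2} already does most of the work. The one point that needs a little care --- and which I would state explicitly rather than wave at --- is the claim that ``removing the leftmost $-$ and turning it into a $+$ produces a $+$ that is the \emph{rightmost} surviving $+$ after re-reduction''; this is where one must argue that no position to the right of $A$, which was a $-$ in the reduced $i$-signature of $\la$, can become an uncancelled $+$ in $\mu$. This follows because passing from $\la$ to $\la_A$ changes the $i$-signature only in the one slot at $A$ (plus the harmless local change already noted), so the suffix of $-$'s to the right of $A$ is essentially unchanged and still consists of $-$'s. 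With that observation in place, both parts of the Lemma are immediate.
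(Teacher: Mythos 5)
Your argument is correct, and since the paper offers no proof of this lemma (it is listed among the statements that ``follow easily from the definitions''), what you have written is exactly the intended bookkeeping: the $i$-signature of $\tilde e_i\la$ differs from that of $\la$ only in the sign at the good node $A$ (the neighbouring nodes whose addable/removable status changes have residue $i\pm1$), so the partly reduced signature $+^{a}-^{b}$ of $\la$ becomes $+^{a+1}-^{b-1}$, which is already reduced and exhibits $A$ as the rightmost $+$, i.e.\ as the $i$-cogood node of $\tilde e_i\la$; the mirror argument gives (ii). The only remark I would drop is the parenthetical suggestion to deduce (ii) from (i) by conjugation, since conjugation does not preserve $p$-regularity and $\tilde f_i$ is only defined on $\Parp(n)$ here -- but you do not rely on it, as your actual proof of (ii) is the direct mirror of (i).
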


We will need more results on combinatorics of normal nodes. 

\begin{Lemma}\label{l19}
Let $\la\in\Parp(n)$ and $i\in I$ with $\vare_i(\la),\phi_i(\la)>0$. Let $B=(a,b)$ and $C=(c,d)$ be the $i$-good and $i$-cogood nodes of $\la$, respectively. Then $(\la_B)^C$ is $p$-singular if and only if $c=a+p-1$ and $d=b-1$.
\end{Lemma}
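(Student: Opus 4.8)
The plan is to prove this purely combinatorially, reducing it to bookkeeping with the rows $a,a+1,\dots$ of $\la$ together with the description of $i$‑(co)normal nodes recalled just before the lemma. I would begin with two observations. First, by Lemma~\ref{l2} the node $C$ is $i$‑conormal, hence $i$‑addable, for $\la_B$, so $\mu:=(\la_B)^C$ is always a genuine partition, and $\mu$ agrees with $\la$ outside the two distinct rows $a$ and $c$, where $\mu_a=\la_a-1=b-1$ and $\mu_c=\la_c+1=d$. Consequently, if $\mu$ is $p$‑singular then any run of $p$ equal non‑zero parts of $\mu$ must meet row $a$ or row $c$, since otherwise $\la$ itself would already be $p$‑singular. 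Second, $C$ and $B$ correspond to the last $+$ and the first $-$ of the reduced $i$‑signature $\underbrace{+\cdots+}_{\phi_i(\la)}\underbrace{-\cdots-}_{\vare_i(\la)}$, so they are adjacent in it; since the rim is read from bottom left to top right, a larger row index sits further left along the rim, and it follows that $c>a$.

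For the direction $(\Leftarrow)$, assume $c=a+p-1$ and $d=b-1$. Then $\la_a=b$, and $\la_c=d-1=b-2$ because $C$ is addable for $\la$. In $\mu$, rows $a$ and $a+p-1$ both have length $b-1$, while $\mu_r=\la_r$ for $a<r<a+p-1$; since $\mu$ is weakly decreasing, this forces $\la_{a+1}=\dots=\la_{a+p-2}=b-1$. Hence rows $a,\dots,a+p-1$ of $\mu$ all equal $b-1=d\ge1$, so $\mu$ is $p$‑singular.

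For the direction $(\Rightarrow)$, assume $\mu$ is $p$‑singular and fix a run $R$ of $p$ equal non‑zero parts of $\mu$. Suppose first $a\notin R$, so $c\in R$; as row $c$ has strictly smaller $\la$‑length ($=d-1$) than every other (unchanged) row of $R$ ($=\mu_c=d$), monotonicity gives $c=\max R$, $R=\{c-p+1,\dots,c\}$ and $\la_{c-p+1}=\dots=\la_{c-1}=d$. Then $p$‑regularity of $\la$ forces $\la_{c-p}\ge d+1$ (with the convention $\la_0=+\infty$ if $c=p$), so $C':=(c-p+1,d+1)$ is $i$‑addable; a short residue check shows $C$ and $C'$ are the only $i$‑addable or $i$‑removable nodes in rows $c-p+1,\dots,c$, whence the dual characterisation of conormal nodes, applied relative to $C$, gives that $C'$ is $i$‑conormal. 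But $C'$ lies above $C$, hence strictly to the right of $C$ along the rim, contradicting that $C$ is $i$‑cogood. Therefore $a\in R$; since row $a$ has strictly larger $\la$‑length ($=b$) than every other row of $R$, we get $a=\min R$, $R=\{a,\dots,a+p-1\}$, and every row of $R$ has $\mu$‑length $b-1\ge1$. If in addition $c\notin R$, then $\la_{a+1}=\dots=\la_{a+p-1}=b-1$, so $p$‑regularity of $\la$ forces $\la_{a+p}\le b-2$ and $B':=(a+p-1,b-1)$ is $i$‑removable; as before $B$ and $B'$ are the only $i$‑addable or $i$‑removable nodes in rows $a,\dots,a+p-1$, so the characterisation of normal nodes, applied relative to $B$, gives that $B'$ is $i$‑normal. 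But $B'$ lies below $B$, hence strictly to the left of $B$ along the rim, contradicting that $B$ is $i$‑good. Hence $c\in R$, so $\mu_c=b-1$, i.e.\ $d=b-1$ and $\la_c=b-2$; as $b-2$ is strictly less than every other $\la$‑length in $R$, monotonicity forces $c=\max R=a+p-1$.

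The partition bookkeeping in both directions is routine; the crux, and the place calling for care, is the two ``propagation'' steps producing a second normal node $B'$ (resp.\ conormal node $C'$). These work precisely because $B,B'$ (resp.\ $C,C'$) are consecutive along the rim with no $i$‑addable (resp.\ $i$‑removable) node in between, so the cancellation producing the reduced $i$‑signature cannot leave $B$ unmatched without also leaving $B'$ unmatched; one must also track the orientation carefully — with the rim read bottom‑left to top‑right, a larger row index is further left — to conclude that the newly produced node genuinely violates leftmost‑ness of $B$ (resp.\ rightmost‑ness of $C$).
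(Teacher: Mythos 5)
Your proof is correct, but it takes a longer and more self-contained route than the paper's. The paper's argument is essentially two lines: since $B$ is $i$-good and $C$ is $i$-cogood, both $\la_B=\tilde e_i\la$ and $\la^C=\tilde f_i\la$ are $p$-regular (the known fact recorded in \S\ref{SSPar} and used throughout), and $(\la_B)^C$ differs from $\la_B$ only in row $c$ and from $\la^C$ only in row $a$; hence any run of $p$ equal non-zero parts of $(\la_B)^C$ must contain both rows $a$ and $c$, which immediately forces $d=b-1$, all intermediate rows equal to $b-1$, and then $c=a+p-1$ from the $p$-regularity of $\la$ itself. You instead use only the $p$-regularity of $\la$ and exclude the two one-sided configurations by hand, manufacturing a second $i$-normal node $B'$ below $B$ (resp.\ a second $i$-conormal node $C'$ above $C$) and contradicting the leftmost-ness of the good node (resp.\ the rightmost-ness of the cogood node). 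In effect you re-prove, in exactly the two special cases needed here, the standard fact that removing a good node or adding a cogood node preserves $p$-regularity; your adjacency argument in the $i$-signature for why $B'$ (resp.\ $C'$) must survive the reduction is sound, and the bookkeeping (orientation of the rim reading, $c>a$, identification of the exact run of equal parts, the use of Lemma~\ref{l2} to know $(\la_B)^C$ is a partition) checks out. The paper's route buys brevity by citing the regularity of $\tilde e_i\la$ and $\tilde f_i\la$; yours buys independence from that fact at the cost of a substantially longer case analysis.
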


\begin{proof}
Notice that $a<c$ and that
\begin{align*}
\la_B&=(\la_1,\ldots,\la_{a-1},\la_{a}-1,\la_{a+1},\ldots),\\
\la^C&=(\la_1,\ldots,\la_{c-1},\la_{c}+1,\la_{c+1},\ldots),\\
(\la_B)^C&=(\la_1,\ldots,\la_{a-1},\la_{a}-1,\la_{a+1},\ldots,\la_{c-1},\la_{c}+1,\la_{c+1},\ldots).
\end{align*}
Since $\la_B$ and $\la^C$ are $p$-regular, we have that $(\la_B)^C$ is $p$-singular if and only if $c=a+p-1$ and $b-1=\la_{a}-1=\la_{c}+1=d$.
\end{proof}

\begin{Lemma} \label{L6.1} {\rm \cite[Lemma 6.1]{M}} 
Let $p=2$ and $\lambda\in\Par_2(n)$ satisfy $\eps_0(\lambda)+\eps_1(\lambda)=2$. For $1\leq k\leq h(\lambda)$ let $a_k$ be the residue of the removable node in the $k$-th row of $\lambda$. Further let $1<b_1<\ldots<b_t\leq h(\lambda)$ be the set of indices $k$ for which $a_k=a_{k-1}$. Then the normal nodes of $\lambda$ are in rows 1 and $b_1$, while the conormal nodes of $\lambda$ are in rows $b_t-1$, $h(\lambda)$ and $h(\lambda)+1$. Further $a_{b_k}\not=a_{b_{k-1}}$ for all $1<k\leq t$.
\end{Lemma}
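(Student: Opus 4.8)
The plan is to attach to $\la$ a single $\pm1$ lattice path and to translate both the count $\eps_0(\la)+\eps_1(\la)$ and the locations of the normal and conormal nodes into statements about records of that path. The key structural feature of the $p=2$ case is that a $2$-regular partition has, in each row $k$ with $1\le k\le h:=h(\la)$, exactly one removable node $(k,\la_k)$ and exactly one addable node $(k,\la_k+1)$, together with a single further addable node $(h+1,1)$; thus each row contributes exactly one symbol to each of the two residue signatures. Since the content $s-r$ increases by one at every step along the rim, reading the rim from bottom left to top right lists these nodes precisely in the order
\[(h+1,1),\ (h,\la_h),\ (h,\la_h+1),\ (h-1,\la_{h-1}),\ (h-1,\la_{h-1}+1),\ \dots,\ (1,\la_1),\ (1,\la_1+1),\]
and, with $a_k$ the residue of $(k,\la_k)$ as in the statement, the node $(k,\la_k+1)$ has residue $1-a_k$ while $(h+1,1)$ has residue $h\bmod 2$.

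Now set $U(0):=0$ and $U(k):=U(k-1)+1$ if $a_k=1$, $U(k):=U(k-1)-1$ if $a_k=0$. I would feed the rim order just displayed into the characterisation of normal nodes recalled before the statement (and into its evident dual for conormal nodes); the resulting bipartite matching problem runs over a nested system of intervals, so Hall's condition collapses to a single monotone inequality. Carrying this out should show, for $1\le k\le h$, that row $k$ contains a normal node if and only if $U(k)$ is a strict new record of $U$, i.e. $U(k)\notin[\min_{j<k}U(j),\max_{j<k}U(j)]$; that row $k$ contains a conormal node if and only if $U(k-1)\notin[\min_{k\le m\le h}U(m),\max_{k\le m\le h}U(m)]$; and that row $h+1$ always contains a conormal node. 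As each row has at most one removable and one addable node, summing records gives $\eps_0(\la)+\eps_1(\la)=\max_k U(k)-\min_k U(k)$, and by Lemma~\ref{L2.8} there are exactly $\eps_0(\la)+\eps_1(\la)+1$ conormal nodes.

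Imposing the hypothesis $\eps_0(\la)+\eps_1(\la)=2$ forces $U$ to have range exactly $2$: it lives in three consecutive levels $\{\mu,\mu+1,\mu+2\}$ with $\mu\le 0\le\mu+2$ and attains both ends. For such a path, two consecutive steps in the same direction (at positions $k-1,k$) occur precisely when $a_k=a_{k-1}$, i.e. when $k\in\{b_1,\dots,b_t\}$, and each such pair must be a full crossing of the window, either $\mu\to\mu+1\to\mu+2$ or $\mu+2\to\mu+1\to\mu$; moreover the path must cross back before it can cross again in the same direction. Three consequences then fall out. Before the first defect $b_1$ the steps alternate, so $U$ merely oscillates between $0$ and $U(1)=\pm1$ and rows $2,\dots,b_1-1$ carry no normal node, whereas the crossing at $b_1$ sends $U(b_1)$ to a new extreme; as $\eps_0(\la)+\eps_1(\la)=2$ these are all the normal nodes, so they lie in rows $1$ and $b_1$. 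Symmetrically, reading $U$ backwards from index $h$, rows $h$ and $h+1$ always carry conormal nodes and the last defect $b_t$ makes $U(b_t-2)$ a new reverse record; since these three rows are distinct ($b_t\le h$) the three conormal nodes lie in rows $b_t-1$, $h$, $h+1$. Finally, after an upward crossing $U$ sits at the top level, so the next crossing (= the next defect) must be downward, and vice versa; hence consecutive defects cross the window in opposite directions, which is exactly $a_{b_k}\ne a_{b_{k-1}}$ for $1<k\le t$.

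The step I expect to be the real work is the middle one — extracting the record-conditions on $U$ from the (co)normal-node characterisations. It requires keeping the two opposing conventions straight (``above/below along the rim'' versus ``smaller/larger row index''), identifying the rim order exactly as above, and checking that the associated matching problem genuinely has the nested-interval shape that reduces Hall's condition to a monotone inequality. Once this dictionary is in place, what remains is only the elementary combinatorics of a width-$2$ lattice path.
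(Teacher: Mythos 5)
The paper offers no proof of this statement at all --- it is imported verbatim from \cite[Lemma 6.1]{M} --- so there is nothing internal to compare with; I will therefore assess your argument on its own terms. Your lattice-path strategy is sound, and the dictionary you assert is in fact correct. For a $2$-regular $\la$ every row $k\le h:=h(\la)$ carries exactly one removable node $(k,\la_k)$ and one addable node $(k,\la_k+1)$, of opposite residues, and the rim order is as you list it. Unwinding the nested-interval Hall condition for the removable node in row $k$ gives: for every $j<k$ with $a_j=1-a_k$, the interval $[j,k-1]$ must contain at least as many indices $m$ with $a_m=a_k$ as with $a_m=1-a_k$, i.e.\ $\pm\bigl(U(k-1)-U(j-1)\bigr)\ge 0$ with the sign determined by $a_k$; since a maximal index $m_0<k-1$ violating $U(m_0)\le U(k-1)$ (resp.\ $\ge$) is necessarily followed by a step in the violating direction, the binding constraints occur exactly at extremal $U(j-1)$, and the condition becomes $U(k-1)=\max_{0\le m\le k-1}U(m)$ when $a_k=1$ and $=\min$ when $a_k=0$ --- precisely your strict-new-record criterion. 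The dual computation gives $U(k)=\max_{k\le m\le h}U(m)$ (resp.\ $\min$) for conormality of the addable node in row $k$, equivalent to your formula, and the addable nodes in rows $h$ and $h+1$ are vacuously conormal since no removable node of the relevant residue precedes them on the rim. With this dictionary your width-$2$ path analysis is correct and, together with Lemma~\ref{L2.8}, delivers all four assertions.

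That said, as written the proposal does not prove its central claim: the sentence ``Carrying this out should show\dots'' is where all the work lives, and you correctly identify it as such. The reduction is genuinely routine --- the sets in the matching problem are totally ordered by inclusion, so Hall collapses to $|S_{B_r}|\ge r$, and the step from ``the inequality holds for the relevant $j$'' to ``it holds for all $m<k$'' is the one-line argument above --- but it must be written out, with the rim order and the above/below conventions pinned down exactly as you anticipate. One further point you must make explicit: the hypothesis $\eps_0(\la)+\eps_1(\la)=2$ forces $t\ge 1$ (an alternating path has a single record, hence a single normal node), and without this $b_1$ and $b_t$ are undefined. Once these items are supplied the proof is complete.
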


\begin{Lemma}\label{L6.2}
Let $p=2$, $\lambda\in\Par_2(n)$ and $i\in I$.
If $\eps_i(\lambda)=2$, $\eps_{1-i}(\lambda)=0$ and $\phi_i(\lambda)=0$ then $n$ is odd. 
\end{Lemma}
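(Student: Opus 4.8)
The plan is to use the combinatorial description of normal and conormal nodes in characteristic $2$ together with a residue-counting argument modulo $2$. First I would record the hypotheses in usable form: $p = 2$, so $I = \{0, 1\}$, and we are assuming $\eps_i(\la) = 2$, $\eps_{1-i}(\la) = 0$, and $\phi_i(\la) = 0$. By Lemma~\ref{L2.8}, $\lambda$ has exactly one more conormal node than normal nodes, so $\phi_0(\la) + \phi_1(\la) = \eps_0(\la) + \eps_1(\la) + 1 = 3$; combined with $\phi_i(\la) = 0$ this forces $\phi_{1-i}(\la) = 3$. So the picture is: all three conormal nodes have residue $1-i$, and the two normal nodes have residue $i$.

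Next I would pass to the residue content $\operatorname{cont}(\la) = (a_0, a_1)$ and express $a_i - a_{1-i}$ (or directly $a_0 + a_1 = n \pmod 2$) in terms of the signature data. The key computational input is that in the $i$-signature of $\la$ the number of $+$'s minus the number of $-$'s equals the number of $i$-conormal minus $i$-normal nodes (neighbouring $-+$ pairs cancel), and likewise for $1-i$. Here the $i$-signature has (number of $+$) $-$ (number of $-$) $= \phi_i(\la) - \eps_i(\la) = 0 - 2 = -2$, and the $(1-i)$-signature has (number of $+$) $-$ (number of $-$) $= \phi_{1-i}(\la) - \eps_{1-i}(\la) = 3 - 0 = 3$, which is odd. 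The plan is then to relate these signature balances to the parity of the number of nodes of each residue: for $p = 2$, going along the rim, an $i$-addable node sits where the rim would gain a node of residue $i$ and an $i$-removable node where it would lose one, and the total count of residue-$i$ nodes in $\la$ is controlled by how the rim alternates. The clean way to see the conclusion is: the difference (number of $i$-addable) $-$ (number of $i$-removable) nodes of $\la$ has the same parity as one of $a_0, a_1$ via the standard "abacus"/first-column-residue bookkeeping, and since that difference is $-2$ for residue $i$ (even) but $3$ for residue $1-i$ (odd), we get $a_{1-i} \equiv a_i + 1 \pmod 2$, whence $n = a_0 + a_1$ is odd.

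Alternatively, and perhaps more in the spirit of the paper's lemmas, I would invoke Lemma~\ref{L6.1}: since $\eps_0(\la) + \eps_1(\la) = 2$, that lemma describes the normal nodes as lying in rows $1$ and $b_1$ and the conormal nodes in rows $b_t - 1$, $h(\lambda)$, and $h(\lambda)+1$, where the $b_k$'s index the "repeats" in the sequence of first-column-residues $a_k$ (the residue of the removable node in row $k$). Under our extra hypotheses $\eps_{1-i}(\la) = 0$ and $\phi_i(\la) = 0$, both normal nodes have residue $i$ and all three conormal nodes have residue $1-i$; I would translate this into a statement about the sequence $a_1, a_2, \ldots, a_{h(\lambda)}$ of row-residues of removable nodes and its behaviour at the top and bottom, read off $n \bmod 2$ from the alternation pattern of that sequence together with the parities of $\lambda_1$ and $h(\lambda)$, and check it comes out odd. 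This avoids re-deriving signature arithmetic from scratch.

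The main obstacle will be bookkeeping the parity of $n$ correctly from the signature/first-column-residue data — in characteristic $2$ the residues alternate along the rim, so $n \bmod 2$ is genuinely sensitive to exactly where the addable and removable nodes sit and to the parity of $\lambda_1$, and one has to be careful that the cancellation of $-+$ pairs in forming the reduced signature does not change the relevant parity (it doesn't, since each cancelled pair contributes one $+$ and one $-$). I expect the cleanest route is the Lemma~\ref{L6.1} one, reducing everything to the explicit small set of rows where (con)normal nodes occur and a short case-check on the residues $a_1$ and $a_{h(\lambda)}$; the signature-arithmetic route is conceptually transparent but requires stating precisely how the $i$-signature balance determines $a_i \bmod 2$, which is the one genuinely fiddly point.
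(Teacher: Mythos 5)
The paper does not prove this lemma internally: its entire proof is to combine Lemma~\ref{L2.8} (which yields $\phi_{1-i}(\la)=3$, exactly your first step) with a citation to \cite[Lemma 6.2]{M}. So any self-contained argument is necessarily "different from the paper". Your second route --- Lemma~\ref{L6.1} plus a block-parity analysis of the row-residue sequence $a_1,\dots,a_{h(\la)}$ --- is essentially the computation the paper itself carries out for the sibling statement Lemma~\ref{l3}, and it would work here too, but you leave it as a plan rather than executing it.

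Your first route is the more interesting one and does give a short conceptual proof, but \emph{not} via parities. The correct input is the exact linear identity
$$\#\{j\text{-addable}\}-\#\{j\text{-removable}\}=\delta_{j,0}+2\,(a_{1-j}-a_j)\qquad(j\in\{0,1\},\ \operatorname{cont}(\la)=(a_0,a_1)),$$
proved by induction on $|\la|$ (adding a node of residue $j$ changes the two differences by $\mp2$), or read off from the $\widehat{\mathfrak{sl}}_2$-weight of $\la$. Your reduction of the left-hand side to $\phi_j(\la)-\eps_j(\la)$ is fine, since each cancelled $-+$ pair removes one $+$ and one $-$. But the step "the difference has the same parity as one of $a_0,a_1$, \dots\ we get $a_{1-i}\equiv a_i+1$" cannot work as stated: the identity shows these differences have parities $(\text{odd},\text{even})$ for \emph{every} partition, independently of $a_0,a_1$, so a parity-only argument would "prove" that every $n$ is odd. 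What you must use is the exact value: $\phi_i(\la)-\eps_i(\la)=-2$ is even, which forces $i=1$, and then $2(a_0-a_1)=-2$ gives $a_0-a_1=-1$, whence $n=a_0+a_1$ is odd. With that one identity stated and proved, your route 1 is complete and arguably cleaner than the reduced-signature case analysis hidden in the paper's external reference.
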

\begin{proof}
This follows from Lemma~\ref{L2.8} and \cite[Lemma 6.2]{M}.
\end{proof}

\begin{Lemma}\label{l3}
Let $p=2$, $\la\in\Par_2(n)$ and let $i$ be the residue of the bottom normal node of $\la$. If $\vare_0(\la)=\vare_1(\la)=1$ and $\phi_i(\la)=3$ then $n$ is odd. 
\end{Lemma}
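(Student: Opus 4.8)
The plan is to reduce the statement to a congruence for the residue content of $\la$, mimicking the proof of Lemma \ref{L6.2}. Write $d_0,d_1$ for the numbers of nodes of $\la$ of residue $0$ and $1$, so that $n=d_0+d_1$, and $n$ is odd if and only if $d_0-d_1$ is odd. The key input is the identity
\[\phi_j(\la)-\vare_j(\la)=\#\{j\text{-addable nodes of }\la\}-\#\{j\text{-removable nodes of }\la\}\qquad(j\in I),\]
immediate from the construction of the reduced $j$-signature (each cancelled pair $-+$ deletes exactly one $j$-removable and one $j$-addable node), together with the fact that for $p=2$ the right-hand side is determined by the residue content:
\[\#\{1\text{-addable}\}-\#\{1\text{-removable}\}=2(d_0-d_1),\qquad\#\{0\text{-addable}\}-\#\{0\text{-removable}\}=1+2(d_1-d_0).\]
In particular $\phi_1(\la)-\vare_1(\la)$ is always even and $\phi_0(\la)-\vare_0(\la)$ is always odd, and $n$ is odd exactly when $\phi_1(\la)-\vare_1(\la)\equiv 2\pmod 4$. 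This is the content of \cite[Lemma 6.2]{M}, which already underlies the proof of Lemma \ref{L6.2}; if a self-contained derivation is wanted, it follows by a short induction on $n$ tracking how both sides change upon adding an addable node of a prescribed residue (or as the pairing of $\Lambda_0-d_0\alpha_0-d_1\alpha_1$ with $\alpha_j^\vee$ in the $\widehat{\mathfrak{sl}}_2$-crystal $B(\Lambda_0)$ on $2$-regular partitions).

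Granting this, the proof is immediate. Since $\vare_0(\la)=\vare_1(\la)=1$, we have $\vare_i(\la)=1$ whichever of the two residues $i$ is, and $\phi_i(\la)=3$ by hypothesis, so $\phi_i(\la)-\vare_i(\la)=2$. This is even, so $i=0$ is impossible (for then $\phi_0(\la)-\vare_0(\la)$ would be odd); hence $i=1$. Then $2(d_0-d_1)=\phi_1(\la)-\vare_1(\la)=2$, so $d_0-d_1=1$ and $n=d_0+d_1=2d_1+1$ is odd.

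The only ingredient of substance is the residue-content formula for $\phi_j-\vare_j$, so the ``main obstacle'' is really just deciding how much of that to reprove here versus cite from \cite{M}; the deduction itself is a one-liner. Should a purely combinatorial argument be preferred, one can instead run through Lemma \ref{L6.1}: since $\vare_0(\la)+\vare_1(\la)=2$, that lemma places the two normal nodes in rows $1$ and $b_1$ and the three conormal nodes in rows $b_t-1$, $h(\la)$, and $h(\la)+1$; expressing the residues of these three addable nodes via the sequence $(a_k)$ of Lemma \ref{L6.1} and imposing that all three equal $i$ (equivalently $\phi_{1-i}(\la)=0$, by Lemma \ref{L2.8}) forces $h(\la)$ odd, $t$ even, $a_1=0$ (hence $i=1$, as the two normal nodes have distinct residues), and every $b_j$ odd; a direct count then gives $n\equiv\#\{k:\la_k\text{ odd}\}=\#\{k:a_k+k\text{ odd}\}\equiv 1\pmod 2$, using that $(a_k)$ alternates outside $b_1<\dots<b_t$. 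This works but is longer, and its only fiddly points (that $h(\la)\ge2$ and that the three conormal rows are genuinely distinct) are easily dealt with.
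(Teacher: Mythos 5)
Your argument is correct, and it takes a genuinely different route from the paper. The paper proves this lemma by a row-by-row combinatorial analysis: it invokes Lemma~\ref{L6.1} to locate the two normal and three conormal nodes, splits the rows of $\la$ into blocks on which the residues of the removable nodes alternate, shows each block has controlled parity of length and constant parity of parts, and sums up to get $n$ odd. You instead reduce everything to the identity $\phi_j(\la)-\vare_j(\la)=\#\{j\text{-addable}\}-\#\{j\text{-removable}\}$ (immediate from the reduced signature, as you say) together with the residue-content formula for the right-hand side, which for $p=2$ gives $\phi_0-\vare_0=1+2(d_1-d_0)$ and $\phi_1-\vare_1=2(d_0-d_1)$; I checked these on examples and they are the standard Cartan-pairing computation (they can be sourced from \cite{KBook} if the citation of \cite[Lemma 6.2]{M} turns out not to match exactly). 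From there $\phi_i-\vare_i=2$ forces $i=1$ by parity and then $d_0-d_1=1$, so $n=d_0+d_1$ is odd. This is shorter and arguably more conceptual; it also proves slightly more, namely that $\phi_0(\la)=3$ is outright impossible under $\vare_0(\la)=\vare_1(\la)=1$, so the hypothesis that $i$ is the residue of the \emph{bottom} normal node is never used. What the paper's approach buys is that it stays entirely within the elementary signature combinatorics (Lemma~\ref{L6.1}) already set up for the surrounding lemmas, at the cost of a longer case analysis; your secondary ``purely combinatorial'' sketch is essentially a compressed version of that same argument.
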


\begin{proof}
Let $j:=1-i\in I$. For $1\leq k\leq h(\la)$ let $a_k$ be the residue of the removable node in the $k$-th row of $\la$. Also let $1<b_1<\ldots<b_t\leq h(\la)$ be the set of indices $k$ for which $a_k=a_{k-1}$. 
The top removable node is always normal, so it must have residue $j$. Moreover, by Lemma~\ref{L6.1}, the removable node in row $b_1$ is $i$-normal, and the conormal nodes for $\la$ are the addable nodes on rows $b_t-1$, $h(\la)$ and $h(\la)+1$. As $\phi_i(\la)=3$ it follows that $a_{b_t-1}=a_{h(\la)}=j$ and $h(\la)\equiv  i\pmod{2}$.

Notice that by definition of $b_1$, the residues $a_k$ alternate for $1\leq k\leq b_1-1$. Also we have that $a_1=j\not=i=a_{b_1}=a_{b_1-1}$. So
\[\la_1\equiv\ldots\equiv\la_{b_1-1}\pmod{2}\]
and $b_1-1$ is even.

For $1\leq m< t$ we similarly have that the residues $a_k$ alternate for $b_m\leq k\leq  b_{m+1}-1$ and by Lemma~\ref{L6.1}, we have  $a_{b_m}\not=a_{b_{m+1}-1}$, so that
\[\la_{b_m}\equiv \ldots\equiv \la_{b_{m+1}-1}\pmod{2}\]
and $b_{m+1}-b_m$ is even.

Further, the residues $a_k$ alternate for $b_t\leq k\leq h(\la)$ and $a_{b_t}=a_{b_{t-1}}=a_{h(\la)}$ by the first paragraph, so
\[\la_{b_t}\equiv \ldots\equiv \la_{h(\la)}\pmod{2}\]
and $h(\la)-b_t+1$ is odd.

It follows that 
$$h(\la)=(b_1-1)+\sum_{m=1}^{t-1}(b_{m+1}-b_m)+(h(\la)-b_t+1)$$
 is odd and then $i=1$, $a_{h(\la)}=0$ by the first paragraph. Hence $\la_{h(\la)}$ is odd. So
\[n\equiv \la_1\cdot(b_1-1)+\sum_{m=1}^{t-1}\la_{b_{m}}\cdot(b_{m+1}-b_m)+\la_{h(\la)}\cdot(h(\la)-b_t+1) \pmod{2},
\]
and we deduce that $n$ is odd.
\end{proof}

\begin{Lemma}\label{l22}
Let $p=2$, $i\in I$, and $\la\in\Par_2(n)$ satisfy $\vare_0(\la)+\vare_1(\la)=2$. Assume that $\vare_i(\la),\phi_i(\la)>0$, and let $B$ be the $i$-good and $C$ be the $i$-cogood nodes of $\la$, respectively. If $(\la_B)^C$ is $2$-singular then one of the following holds: 
\begin{enumerate}
\item[{\rm (a)}]
$h(\la)\geq 3$ and there exists $1\leq j<h(\la)$ such that $\la_j=\la_{j+1}+2$ and 
\[\la_1\equiv\ldots\equiv\la_{j-1}\not\equiv\la_j\equiv\la_{j+1}\not\equiv\la_{j+2}\equiv\ldots\equiv\la_{h(\la)}\pmod{2}.\]

\item[{\rm (b)}]
$\la_1,\dots,\la_{h(\la)-1}$ are odd and $\la_{h(\la)}=2$.
\end{enumerate}
\end{Lemma}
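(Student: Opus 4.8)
The plan is to combine the structural information about normal and conormal nodes in characteristic $2$ provided by Lemmas~\ref{L6.1}, \ref{L6.2}, \ref{l3}, with the sharp combinatorial criterion for $2$-singularity in Lemma~\ref{l19}. By Lemma~\ref{l19}, $(\la_B)^C$ is $2$-singular precisely when $C$ sits $p-1=1$ rows below $B$ and one column to the left, i.e.\ if $B=(a,b)$ then $C=(a+1,b-1)$ and $\la_a=\la_{a+1}+2$. So at the outset I record that $j:=a$ satisfies $1\leq j<h(\la)$ and $\la_j=\la_{j+1}+2$; the task is to show that the residues of the rows behave in one of the two prescribed patterns.

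First I would split into the two cases $\eps_i(\la)=2$ and $\vare_0(\la)=\vare_1(\la)=1$ allowed by the hypothesis $\vare_0(\la)+\vare_1(\la)=2$. In the case $\eps_i(\la)=2$, $\eps_{1-i}(\la)=0$, one applies Lemma~\ref{L6.1}: the two normal nodes are in rows $1$ and $b_1$, so $B$ (being the $i$-good, i.e.\ topmost $i$-normal, node) is the removable node in row $1$ — but then $a=1$ and $C=(2,b-1)$. The conormal nodes lie in rows $b_t-1$, $h(\la)$, $h(\la)+1$, and $C$ is the $i$-cogood (rightmost $i$-conormal) node; since $\phi_i(\la)\geq 1$ and we need $C=(2,b-1)$, this forces $b_t-1=2$, hence $t=1$, $b_1=3$. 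Now the alternation of residues within the blocks $1\leq k\leq b_1-1=2$ and $b_1\leq k\leq h(\la)$ dictated by the definition of $b_1$ together with $a_{b_1}=a_{b_1-1}$ gives exactly the congruence chain $\la_1\not\equiv\la_2\equiv\la_3\not\equiv\la_4\equiv\cdots$, which is pattern~(a) with $j=1$ — unless $h(\la)=2$, which is excluded since then there is no room for $b_1=3$; and one must separately treat $h(\la)$ small, where Lemma~\ref{L6.2} forces $n$ odd but the conclusion must still be shown to hold or the case be vacuous. Care is needed when $h(\la)=2$: then there is only one block and one shows directly that $(\la_B)^C$ cannot be $2$-singular, or that we land in pattern~(b) with $\la_{h(\la)}=2$.

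In the case $\vare_0(\la)=\vare_1(\la)=1$, I would let $i$ be the residue of the unique bottom normal node and apply Lemma~\ref{L6.1} with the fact that the top removable node (row $1$) is always normal, so it is the $(1-i)$-normal one. The $i$-good node $B$ is then the removable node in row $b_1$, so $a=b_1$ and $C=(b_1+1,b-1)$ with $\la_{b_1}=\la_{b_1+1}+2$. The conormal nodes are the addable nodes in rows $b_t-1$, $h(\la)$, $h(\la)+1$; since $C$ is the rightmost $i$-conormal node and $C$ is in row $b_1+1$, and since $\phi_i(\la)\in\{1,2,3\}$, I split on $\phi_i(\la)$. If $\phi_i(\la)=3$ then Lemma~\ref{l3} already gives structural information; combined with $C$ being in a specific row one identifies which of the three conormal rows equals $b_1+1$ and reads off the residue alternation in each block $b_m\le k\le b_{m+1}-1$ via Lemma~\ref{L6.1} (with $a_{b_m}\ne a_{b_{m+1}-1}$), assembling pattern~(a) with $j=b_1$. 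If $\phi_i(\la)\le 2$, fewer conormal rows are available and one does the same bookkeeping; the degenerate subcase where $C$ lies in the very last row $h(\la)+1$ forces $b_1+1=h(\la)+1$, i.e.\ the $i$-good node is the second-to-last row and $\la_{h(\la)}=\la_{b_1+1}=\la_{b_1}-2$; tracking that $C$ is an \emph{addable} node in column $b-1$ of row $h(\la)+1$ with $C$ $2$-cogood pins $\la_{h(\la)}$ down to $2$ (since an addable node in a new bottom row has column $1$, forcing $b-1=1$), landing in pattern~(b). I expect the main obstacle to be exactly this careful matching: for each distribution of the two normal nodes and each value of $\phi_i(\la)$, one must correctly identify which conormal row $C$ occupies and then translate "residues alternate within each $b$-block and jump at block boundaries" into the single long congruence displayed in (a), while isolating the boundary case that produces (b). It is purely combinatorial but fiddly, and one must not forget to check that none of the sub-configurations is genuinely impossible (which would make that branch vacuous) — Lemma~\ref{l19}'s equality $\la_a=\la_{a+1}+2$ is the key rigidity that rules most of them out.
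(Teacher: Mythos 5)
Your overall strategy --- pin down $B=(a,b)$, $C=(a+1,b-1)$ with $\la_a=\la_{a+1}+2$ via Lemma~\ref{l19} and then read off the parity pattern from Lemma~\ref{L6.1} --- is the right one and is essentially what the paper does. But there is a genuine error in how you locate the good node, and it derails the case analysis. With the conventions of this paper the $i$-signature is read from bottom left to top right, so the $i$-good node (the \emph{leftmost} $i$-normal node) is the \emph{lowest} $i$-normal node in the diagram, not the topmost; this is forced by Lemma~\ref{Lemma39}(iii),(vi), since $[e_iD^\la:D^{\tilde e_i\la}]=\eps_i(\la)$ must equal one plus the number of $i$-normal nodes \emph{above} the good node. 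Consequently, in your case $\eps_i(\la)=2$ the node $B$ sits in row $b_1\geq 2$, not in row $1$, and everything you deduce from ``$a=1$, $C=(2,b-1)$'' there (the forcing of $b_t-1=2$, $t=1$, $b_1=3$, and the resulting chain) rests on a false premise. Symptomatically, the chain you arrive at, $\la_1\not\equiv\la_2\equiv\cdots$, is incompatible with $\la_1=\la_2+2$, which forces $\la_1\equiv\la_2$. In the other case $\eps_0(\la)=\eps_1(\la)=1$ you are not free to ``let $i$ be the residue of the bottom normal node'': $i$ is fixed by the hypothesis, and if it is the residue of the normal node in row $1$ then $B$ is in row $1$, a subcase your analysis omits. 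Also, your opening claim that $1\leq j<h(\la)$ always holds is false: $a=h(\la)$ occurs and is exactly what produces case (b).

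The clean way to organize the argument (and the paper's way) is to split not on $\eps_i(\la)$ but on the row of $B$. If $B=(1,\la_1)$, one first rules out $C$ lying in the first column or in the last row of $\la$ (either would make $\la$ equal to $(2)$ or a JS partition, contradicting $\eps_0(\la)+\eps_1(\la)=2$), so $h(\la)\geq 3$; then $C=(2,\la_1-1)$ being conormal forces $b_t=3$, and since $\la_1\equiv\la_2$ gives $b_1\neq 2$, one gets $t=1$, $b_1=3$ and pattern (a) with $j=1$. If $B=(j,\la_j)$ with $j\geq 2$, Lemma~\ref{L6.1} gives $j=b_1$ and $\la_1\equiv\cdots\equiv\la_{j-1}\not\equiv\la_j$, and the subcases $j=h(\la)$, $j=h(\la)-1$, $j<h(\la)-1$ yield (b), (a), (a) respectively. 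Your ``alternation within blocks, jump at each $b_m$'' bookkeeping is the correct mechanism for the last subcase, but as written the proposal does not constitute a correct proof.
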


\begin{proof}
By Lemma \ref{l19}, we can write $B=(a,b)$, $C=(a+1,b-1)$. Let $b_1,\dots, b_t$ be as in Lemma~\ref{L6.1}.

Assume first that $B$ is in the first row. Then $C$ is not in the first column, for otherwise $\la=(2)$ which contradicts the assumption $\eps_0(\la)+\eps_1(\la)=2$. If $C$ is in the last row of $\la$ then $h(\la)=2$ and $\la$ is a JS-partition, which again contradicts the assumption $\eps_0(\la)+\eps_1(\la)=2$. 
So we may now assume that $h(\la)\geq 3$ and 
by Lemma~\ref{L6.1} we are in case (1) for $j=1$. 

Assume now that $B=(j,\la_j)$ with $2\leq j\leq h(\la)$. Since $B$ is normal in $\la$ we have by Lemma~\ref{L6.1} that $b_1=j$ and then 
\[\la_1\equiv\ldots\equiv\la_{j-1}\not\equiv\la_j\pmod{2}.\]
If $j=h(\la)$ then we are in case (b).
If $j=h(\la)-1$, then we are in case (a). Finally, if $2\leq j< h(\la)-1$   then by Lemma~\ref{L6.1} we have 
\[\la_{j+1}\not\equiv\la_{j+2}\equiv\ldots\equiv\la_{h(\la)}\pmod{2}.\]
So we are in case (a).
\end{proof}

We now define the Mullineux bijection referring the reader to \cite{FK,BO} for more details. 
Let $\la\in\Parp(n)$. The {\em rim} of $\la$ is defined to be the set of all nodes $(r, s) \in\la$ such that $(r + 1, s + 1)\not\in \la$. The {\em $p$-rim} of $\la$ is the union of the {\em $p$-segments}  which are defined as follows. The first $p$-segment is the first $p$ nodes of the rim, reading from top-right to bottom-left. The next $p$-segment is then obtained by reading off the next $p$ nodes of the rim, but starting from the row immediately below the last node of the first $p$-segment. The remaining p-segments are obtained by repeating this process. All but the last $p$-segment contain exactly $p$ nodes, while the last may contain less. Set $\la^{(1)}=\la$, and define $\la^{(t)}$ to be $\la^{(t-1)}\setminus\{\text{the $p$-rim of $\la^{(t-1)}$}\}$. Let $m$ be the largest number such that $\la^{(m)}\neq \varnothing$. The {\em Mullineux symbol} of $\la$ is defined to be the array
$$G(\la):=
\left(
\begin{matrix}
a_1 & a_2 &\dots & a_m  \\
r_1 & r_2 &\dots & r_m
\end{matrix}
\right),
$$
where $a_t$ is the number of the nodes of the $p$-rim of $\la^{(t)}$ and $r_t:=h(\la^{(t)})$. The $t$th column 
$\left(
\begin{matrix}
a_t   \\
r_t 
\end{matrix}
\right)$ of $G(\la)$ is denoted $G_t(\la)$.  The partition can be uniquely reconstructed from its Mullineux symbol. 
The {\em Mullineux bijection} $\la\mapsto \la^\Mull$ on $\Par_p(n)$ is defined from 
$$G(\la^\Mull):=
\left(
\begin{matrix}
a_1 & a_2 &\dots & a_m  \\
a_1+x_1-r_1 & a_2+x_2-r_2 &\dots & a_m+x_m-r_m
\end{matrix}
\right),
$$
where $x_t:=0$ if $p\mid a_t$ and $x_t:=1$ otherwise.

\subsection{Irreducible modules over symmetric groups}
\label{SSIrr}
We use James' notation 
$$\{D^\la\mid \la\in\Par_p(n)\}$$ for the set of the irreducible $\F\SSS_n$-modules up to isomorphism, see \cite[\S11]{JamesBook}. 
For example, $D^{(n)}\cong\bone_{\SSS_n}$.
By \cite[11.5]{JamesBook}, we have $(D^\la)^*\cong D^\la$ for all $\la\in\Parp(n)$. We denote by $\sgn$ the sign module over $\SSS_n$. Then by \cite{FK} (see also \cite{BO}), we have
$$D^\la \otimes \sgn \cong D^{\la^\Mull}.$$ 

\begin{Lemma} \label{LPlus} {\rm \cite[Lemma 1.11]{BK}} 
If $\la\in\Parp(n)$ and $\mu\in\Parp(m)$ then $D^\la\boxtimes D^\mu$ is a composition factor of $D^{\la+\mu}\da_{\SSS_{n,m}}$, where $\la+\mu$ is the partition $(\la_1+\mu_1,\la_2+\mu_2,\dots)$.  
\end{Lemma}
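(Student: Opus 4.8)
**Proof plan for Lemma 2.22 (the statement "$D^\la\boxtimes D^\mu$ is a composition factor of $D^{\la+\mu}\da_{\SSS_{n,m}}$").**

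The plan is to compare the partition $\la+\mu$ with $\la$ and $\mu$ inside the appropriate permutation/Young module machinery and use the transitivity of the dominance order together with the Mackey-type decomposition of the restriction of a Specht or Young module to a Young subgroup. First I would recall that $D^{\la+\mu}$ is the head of the Specht module $S^{\la+\mu}$ (or appears in $S^{\la+\mu}$ with multiplicity one, being the unique "top" composition factor in the dominance order), so it suffices to locate $D^\la\boxtimes D^\mu$ in $S^{\la+\mu}\da_{\SSS_{n,m}}$. The key classical input is the Littlewood--Richardson / branching rule for Specht modules restricted to a Young subgroup: $S^{\nu}\da_{\SSS_{n,m}}$ has a filtration by modules $S^{\al}\boxtimes S^{\bt}$ with multiplicities given by Littlewood--Richardson coefficients $c^{\nu}_{\al\bt}$, and in particular $S^\la\boxtimes S^\mu$ occurs exactly once in $S^{\la+\mu}\da_{\SSS_{n,m}}$ because $c^{\la+\mu}_{\la\mu}=1$ (the partition $\la+\mu$ is obtained by "stacking rows", and this is the unique LR filling).

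Next I would pass from Specht modules to the irreducibles: since $D^\la$ is a composition factor of $S^\la$ and $D^\mu$ of $S^\mu$, the module $D^\la\boxtimes D^\mu$ is a composition factor of $S^\la\boxtimes S^\mu$, hence — via the LR filtration of the previous paragraph — a composition factor of $S^{\la+\mu}\da_{\SSS_{n,m}}$. The remaining point is to lift this from the Specht module to the irreducible $D^{\la+\mu}$: one argues by downward induction on the dominance order. If $D^\la\boxtimes D^\mu$ occurred only in composition factors $D^\kappa\da_{\SSS_{n,m}}$ with $\kappa\ldom\la+\mu$ (strictly smaller) in the Jordan--Hölder series of $S^{\la+\mu}$, then by the inductive description each such $D^\kappa$ would already force some $D^{\al}\boxtimes D^{\bt}$ with $(\al,\bt)$ not equal to $(\la,\mu)$ in the LR sense; a bookkeeping argument with residue contents (using that $\operatorname{cont}(\la)+\operatorname{cont}(\mu)=\operatorname{cont}(\la+\mu)$, cf. the discussion of $\Theta_n$ above) and the uniqueness $c^{\la+\mu}_{\la\mu}=1$ rules this out, leaving $D^{\la+\mu}$ itself as the only composition factor of $S^{\la+\mu}$ that can contribute $D^\la\boxtimes D^\mu$ after restriction.

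The step I expect to be the main obstacle is precisely this last lifting from $S^{\la+\mu}$ to $D^{\la+\mu}$: controlling the contributions of the lower composition factors $D^\kappa$, $\kappa\ldom\la+\mu$, of the Specht module. The cleanest way around it is probably to avoid Specht filtrations on the restriction side altogether and instead use the adjunction $\Hom_{\SSS_{n,m}}(D^\la\boxtimes D^\mu,\, X\da_{\SSS_{n,m}})\cong\Hom_{\SSS_n}(\,(D^\la\boxtimes D^\mu)\ua^{\SSS_n},\,X\,)$ together with a dimension/socle count, or to invoke directly the known result (this is exactly \cite[Lemma 1.11]{BK}, whose proof uses that $(D^\la\boxtimes D^\mu)\ua^{\SSS_n}$ surjects onto a module whose head is $D^{\la+\mu}$, so that $D^{\la+\mu}$ lies in the image of the induced module, equivalently $D^\la\boxtimes D^\mu\hookrightarrow D^{\la+\mu}\da_{\SSS_{n,m}}$ up to duality). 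Since $(D^\la)^*\cong D^\la$ and $(D^\mu)^*\cong D^\mu$ and $(D^{\la+\mu})^*\cong D^{\la+\mu}$ by \cite[11.5]{JamesBook}, "submodule of the restriction" and "quotient of the restriction" are interchangeable, and either gives the asserted statement that $D^\la\boxtimes D^\mu$ is a composition factor of $D^{\la+\mu}\da_{\SSS_{n,m}}$.
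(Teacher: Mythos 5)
The paper offers no argument for this lemma at all: it is stated with the citation \cite[Lemma 1.11]{BK} and never proved internally, so there is nothing to compare your proposal against except the external reference. Judged on its own terms, your outline is correct up to and including the observation that $c^{\la+\mu}_{\la\mu}=1$ and hence $[S^{\la+\mu}\da_{\SSS_{n,m}}:D^\la\boxtimes D^\mu]\geq 1$, but it has a genuine gap at exactly the step you identify as the main obstacle, and neither of the repairs you sketch works. First, a directional error: the composition factors $D^\kappa$ of $S^{\la+\mu}$ other than $D^{\la+\mu}$ satisfy $\kappa\rhd\la+\mu$, not $\kappa\ldom\la+\mu$. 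More importantly, the proposed ``bookkeeping with residue contents'' cannot succeed: every composition factor $D^\kappa$ of $S^{\la+\mu}$ has $\operatorname{cont}(\kappa)=\operatorname{cont}(\la+\mu)=\operatorname{cont}(\la)+\operatorname{cont}(\mu)$, so each restriction $D^\kappa\da_{\SSS_{n,m}}$ has nonzero components in exactly the same collection of blocks of $\F\SSS_{n,m}$, including the block of $D^\la\boxtimes D^\mu$; residue contents therefore see no difference between the contribution of $D^{\la+\mu}$ and that of the other $D^\kappa$. The final fallback, ``invoke directly the known result,'' is circular, since the assertion about $(D^\la\boxtimes D^\mu)\ua^{\SSS_n}$ is equivalent to (indeed stronger than) the lemma being proved.

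The gap is closed by a dominance (leading-term) argument rather than a block argument. Suppose $D^\al\boxtimes D^\be$ is a composition factor of $D^\kappa\da_{\SSS_{n,m}}$. Since $D^\kappa$ is a composition factor of $M^\kappa$, and the Mackey decomposition gives $M^\kappa\da_{\SSS_{n,m}}\cong\bigoplus M^{\si}\boxtimes M^{\tau}$ over all row-splittings $\si+\tau=\kappa$ with $|\si|=n$, $|\tau|=m$, James's result that $D^{\si'}$ occurs in $M^{\rho}$ only if $\si'\unrhd\rho$ (see \cite[\S 12]{JamesBook}) yields $\al\unrhd\overline{\si}$ and $\be\unrhd\overline{\tau}$ for some such splitting, where the bar denotes the associated partition. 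Since partial sums of the $k$ largest parts only grow under sorting, $\overline{\si}+\overline{\tau}\unrhd\si+\tau=\kappa$, whence $\al+\be\unrhd\kappa$. Taking $(\al,\be)=(\la,\mu)$ shows that any $\kappa$ with $[S^{\la+\mu}:D^\kappa]\neq 0$ and $[D^\kappa\da_{\SSS_{n,m}}:D^\la\boxtimes D^\mu]\neq 0$ satisfies both $\la+\mu\unrhd\kappa$ and $\kappa\unrhd\la+\mu$, so $\kappa=\la+\mu$, and the occurrence of $D^\la\boxtimes D^\mu$ you produced in $S^{\la+\mu}\da_{\SSS_{n,m}}$ must come from the factor $D^{\la+\mu}$. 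With this substitution for your residue-content step, the outline becomes a complete proof.
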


Recalling \eqref{ENat}, $D^{\al_n}$ is the {\em heart of the natural module} of dimension $n-1-\de_{p|n}$, where we have put $\de_{p|n}:=1$ if $p\mid n$ and $\de_{p|n}:=0$ otherwise. 
Recalling \eqref{ESpin}, $D^{\be_n}$ is the {\em basic spin module} if $p=2$. It often plays a special role as indicated for example by the following result:

\begin{Proposition}\label{p8}
Let $\la\in\Par_2(n)$ with $\dim D^\la>1$. If $2\leq k\leq n/2$ then  $D^\la\da_{\SSS_{n-k,k}}$ is irreducible if and only if $p=2$, $n$ is even, $k$ is odd and $\la=\be_n$. In the exceptional case, we have 
$D^{\be_n}\da_{\SSS_{n-k,k}}\cong D^{\be_{n-k}}\boxtimes D^{\be_k}.$
\end{Proposition}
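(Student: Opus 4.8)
The plan is to reduce the statement about $D^\la\da_{\SSS_{n-k,k}}$ to the already-available classification of when $D^\la\da_{\SSS_{n-1}}$ is irreducible, i.e.\ to the JS condition, via an induction on $k$. First I would observe that if $D^\la\da_{\SSS_{n-k,k}}$ is irreducible, then a fortiori $D^\la\da_{\SSS_{n-k}\times\SSS_{k-1}}$ (and also $D^\la\da_{\SSS_{n-k-1}\times\SSS_k}$, using $2\le k\le n/2$) is irreducible; so by descending through the chain $\SSS_{n-k,k}\le\SSS_{n-k,k-1}\le\dots\le\SSS_{n-1,1}$ it follows that $D^\la\da_{\SSS_{n-1}}$ is irreducible, hence $\la$ must be JS. This already restricts $\la$ drastically: a JS partition in characteristic $2$ has a single (good/normal) node, its removal describes $D^\la\da_{\SSS_{n-1}}\cong D^\mu$ for an explicitly determined $\mu$, and $\mu$ is again $2$-regular.

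Next I would set up the induction: assuming $D^\la\da_{\SSS_{n-k,k}}$ is irreducible, write $D^\la\da_{\SSS_{n-1}}\cong D^\mu$ with $\mu$ obtained from $\la$ by removing its top (good) removable node, and then restrict once more to get $D^\la\da_{\SSS_{n-k,k-1}}\cong D^\mu\da_{\SSS_{n-k,k-1}}$. For this to be irreducible — which it must be, since $\SSS_{n-k,k-1}\le\SSS_{n-k,k}$ — two things are needed: $D^\mu\da_{\SSS_{n-1-(k-1),k-1}}$ irreducible (apply the inductive hypothesis to $\mu\in\Par_2(n-1)$, which forces $p=2$, $n-1-k$'s parity conditions, and $\mu=\be_{n-1}$), and compatibility of the node removed from $\la$ with the resulting $\be$-shape. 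Working backwards from $\mu=\be_{n-1}$, the only $2$-regular JS partition $\la$ of $n$ whose good node removal yields $\be_{n-1}$, and which is itself consistent with the $\SSS_{n-k,k}$-restriction being irreducible, is $\la=\be_n$ with $n$ even; the parity bookkeeping (which node is good, where it sits, residues alternating along rows) is exactly the kind controlled by Lemmas~\ref{L6.1}, \ref{L6.2}, \ref{l3}. The base case $k=2$ of the induction has to be checked directly: $D^\la\da_{\SSS_{n-2,2}}$ irreducible, $\la$ JS, forces (again by the signature combinatorics) $p=2$, $n$ even, $\la=\be_n$, and one computes $D^{\be_n}\da_{\SSS_{n-2,2}}$ — here $\SSS_2$ has $D^{\be_2}=\bone_{\SSS_2}$ — is indeed $D^{\be_{n-2}}\boxtimes\bone_{\SSS_2}=D^{\be_{n-2}}\boxtimes D^{\be_2}$.

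For the converse (the ``if'' direction and the identification of the restriction), I would prove $D^{\be_n}\da_{\SSS_{n-k,k}}\cong D^{\be_{n-k}}\boxtimes D^{\be_k}$ for $n$ even, $k$ odd directly, again by induction on $k$ (or on $n$). Use Lemma~\ref{LPlus}: since $\be_n=(n/2+1,n/2-1)$ and, for $k$ odd and $n$ even, $\be_{n-k}+\be_k=\big((n-k+1)/2+ (k+1)/2,\ (n-k-1)/2+(k-1)/2\big)=(n/2+1,n/2-1)=\be_n$, the module $D^{\be_{n-k}}\boxtimes D^{\be_k}$ is a composition factor of $D^{\be_n}\da_{\SSS_{n-k,k}}$; then a dimension count, comparing $\dim D^{\be_n}$ with $\dim D^{\be_{n-k}}\cdot\dim D^{\be_k}$ (the basic spin dimensions are $2^{\lfloor (m-1)/2\rfloor}$ or a small variant thereof, and the arithmetic of floors of $(m-1)/2$ over $m=n,n-k,k$ matches precisely when $n$ is even and $k$ is odd), shows equality, giving both irreducibility and the stated isomorphism. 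The main obstacle I expect is the combinatorial bookkeeping in the forward direction: extracting from ``$D^\la\da_{\SSS_{n-k,k}}$ irreducible'' enough information to pin $\la=\be_n$ and the parities of $n$ and $k$ — this is where one must carefully iterate the JS/good-node analysis and invoke Lemmas~\ref{L6.1}--\ref{l3} to kill every partition other than $\be_n$ and every parity combination other than ($n$ even, $k$ odd). The dimension computation in the converse is routine once the floor arithmetic is organized correctly.
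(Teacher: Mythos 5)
Your forward (``only if'') direction rests on a false implication. You assert that if $D^\la\da_{\SSS_{n-k,k}}$ is irreducible then ``a fortiori'' $D^\la\da_{\SSS_{n-k,k-1}}$ is irreducible, and you descend through a chain to conclude $\la$ is JS. But irreducibility is inherited by restriction in the \emph{opposite} direction: $\SSS_{n-k}\times\SSS_{k-1}\leq\SSS_{n-k}\times\SSS_{k}$, so irreducibility over the smaller group implies it over the larger one, never conversely. The proposition itself supplies a counterexample to your claim: for $p=2$, $n=8$, $\la=\be_8=(5,3)$, the restriction $D^{(5,3)}\da_{\SSS_{5,3}}$ is irreducible while $D^{(5,3)}\da_{\SSS_{5,2}}$ is not (since $k=2$ is even). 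For the same reason your base case is false: $D^{\be_n}\da_{\SSS_{n-2,2}}$ is \emph{never} irreducible ($k=2$ is even; indeed $\dim D^{\be_n}=2^{n/2-1}$ is twice $\dim D^{\be_{n-2}}\cdot\dim\bone_{\SSS_2}$, and $\be_{n-2}+\be_2=(n/2+2,n/2-2)\neq\be_n$). The alternation in the parity of $k$ of which restrictions are irreducible makes any induction of the form ``$k\Rightarrow k-1$'' via restriction structurally impossible, so the whole inductive scheme, not just one step, collapses. The paper does not attempt such an argument: it obtains the entire classification by citing Jantzen--Seitz and Phillips, and only the identification $D^{\be_n}\da_{\SSS_{n-k,k}}\cong D^{\be_{n-k}}\boxtimes D^{\be_k}$ is derived in-house via Lemma~\ref{LPlus}.

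Your converse direction is essentially sound and matches the paper's method for the identification: $\be_{n-k}+\be_k=\be_n$ when $n$ is even and $k$ is odd, so Lemma~\ref{LPlus} produces $D^{\be_{n-k}}\boxtimes D^{\be_k}$ as a composition factor, and the dimension count $2^{(n-k-1)/2}\cdot 2^{(k-1)/2}=2^{n/2-1}=\dim D^{\be_n}$ (using Wales's formula, which the paper invokes later for Lemma~\ref{LBS}) gives both irreducibility and the isomorphism. That half can stand as a self-contained proof of the ``if'' direction; but the ``only if'' direction needs either the external classification the paper cites or a genuinely different argument.
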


\begin{proof}
By \cite[Theorem 5.1]{JS} and \cite[Theorem 10]{P}, $D^\la\da_{\SSS_{n-k,k}}$ is irreducible if and only if $p=2$, $n$ is even, $k$ is odd and $\la=\be_n$. The second statement then follows for example from Lemma~\ref{LPlus}.
\end{proof}

For $\la\in\Parp(n)$, we consider the $\F  \SSS_n$-module 
\begin{equation}\label{EE}
\EE(\la):=\End_\F (D^\la). 
\end{equation}
Recall the notation $\I(G)$ from \eqref{EIG}. A fundamental trick that will be used to prove that $D^\la\da_{G}$ is reducible for a subgroup $G<\SSS_n$, is as  follows:

\begin{Lemma} \label{LBasic} 
Let $\la\in\Parp(n)$, and $G\leq \SSS_n$ be a subgroup such that 
$$
\dim\Hom_{\SSS_n}(\I(G),\EE(\la))>1.
$$
Then $D^\la{\da}_G$ is reducible. 
\end{Lemma}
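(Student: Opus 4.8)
The plan is to use Frobenius reciprocity together with the self-duality of $D^\la$. First I would observe that, by the definition \eqref{EIG} and Frobenius reciprocity,
\[
\Hom_{\SSS_n}(\I(G),\EE(\la)) = \Hom_{\SSS_n}(\bone_G\!\ua^{\SSS_n},\EE(\la)) \cong \Hom_G(\bone_G,\EE(\la)\da_G) = \EE(\la)\da_G^{\,G}.
\]
Next I would identify $\EE(\la)=\End_\F(D^\la)$ with $D^\la\otimes (D^\la)^*$, and using $(D^\la)^*\cong D^\la$ from \cite[11.5]{JamesBook}, with $D^\la\otimes D^\la$; under this identification the space of $G$-invariants is $\Hom_G(D^\la\da_G, D^\la\da_G)=\End_{\F G}(D^\la\da_G)$ (using self-duality once more to move one tensor factor into the Hom). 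So the hypothesis says $\dim\End_{\F G}(D^\la\da_G)>1$.

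Then the conclusion is immediate: if $D^\la\da_G$ were irreducible, then by Schur's lemma (valid since $\F$ is algebraically closed) $\End_{\F G}(D^\la\da_G)$ would be $1$-dimensional, contradicting the hypothesis. Hence $D^\la\da_G$ is reducible.

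The proof is essentially a string of standard natural isomorphisms, so there is no real obstacle; the only point requiring a little care is making the identification $\EE(\la)\cong D^\la\otimes D^\la$ as $\F\SSS_n$-modules precise and checking that the $\SSS_n$-action used in defining $\Hom_\F(D^\la,D^\la)$ (namely $(g\cdot f)(u)=gf(g^{-1}u)$, as set up in \S2.1) matches the tensor action under the isomorphism $\Hom_\F(U,V)\cong V\otimes U^*$, which in turn is what lets Frobenius reciprocity land on $\End_{\F G}(D^\la\da_G)$ rather than on some twisted version. All of this is routine, so I would state it briefly and cite self-duality of $D^\la$ where needed.
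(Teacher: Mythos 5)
Your proof is correct and is essentially the paper's own argument: Frobenius reciprocity identifies $\Hom_{\SSS_n}(\I(G),\EE(\la))$ with $\End_G(D^\la\da_G)$, and Schur's lemma finishes. (The detour through $D^\la\otimes(D^\la)^*$ and self-duality is harmless but unnecessary, since the $G$-invariants of $\End_\F(D^\la)$ under the conjugation action are directly the $\F G$-module endomorphisms.)
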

\begin{proof}
This follows from 
$$
\Hom_{\SSS_n}(\I(G),\EE(\la))=\Hom_{\SSS_n}(\bone_G{\ua}^{\SSS_n},\EE(\la))\cong \Hom_{G}(\bone_G,\EE(\la)\da_G)\cong\End_G(D^\la{\da}_G)
$$
and Schur's lemma. 
\end{proof}

\begin{Lemma} \label{LBasic2} 
Let $\la\in\Parp(n)$, and $G\leq \SSS_n$ be a subgroup such that there exists $\psi:\I(G)\to \EE(\la)$ with $\psi$ non-zero and such that $\im\psi\not\cong\bone_{\SSS_n}$. Then $D^\la{\da}_G$ is reducible. 
\end{Lemma}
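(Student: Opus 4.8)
The plan is to reduce Lemma~\ref{LBasic2} to Lemma~\ref{LBasic}. We are given a non-zero homomorphism $\psi\colon\I(G)\to\EE(\la)$ whose image is not isomorphic to $\bone_{\SSS_n}$, and we must produce a second, linearly independent homomorphism, so that $\dim\Hom_{\SSS_n}(\I(G),\EE(\la))>1$ and Lemma~\ref{LBasic} applies. The natural candidate for the extra homomorphism is the "trivial" one: the composite $\I(G)\onto\bone_{\SSS_n}\into\EE(\la)$, where $\I(G)=\bone_G{\ua}^{\SSS_n}\onto\bone_{\SSS_n}$ is the canonical augmentation (coming from Frobenius reciprocity applied to $\id\colon\bone_G\to\bone_G\da_G$ — equivalently the counit), and $\bone_{\SSS_n}\into\EE(\la)=\End_\F(D^\la)$ is the inclusion of scalar endomorphisms, which is an $\F\SSS_n$-module map because $\SSS_n$ acts on $\EE(\la)$ by conjugation and scalars are central. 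Call this composite $\psi_0$; it is visibly non-zero, with image $\bone_{\SSS_n}$.

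First I would check that $\psi$ and $\psi_0$ are linearly independent in $\Hom_{\SSS_n}(\I(G),\EE(\la))$. If not, then $\psi=c\,\psi_0$ for some $c\in\F^\times$, whence $\im\psi=\im\psi_0=\bone_{\SSS_n}$, contradicting the hypothesis $\im\psi\not\cong\bone_{\SSS_n}$. Therefore $\dim\Hom_{\SSS_n}(\I(G),\EE(\la))\geq 2>1$, and Lemma~\ref{LBasic} immediately yields that $D^\la\da_G$ is reducible.

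There is essentially no hard step here: the only thing requiring a moment's care is confirming that the scalar inclusion $\bone_{\SSS_n}\to\EE(\la)$ really is a morphism of $\F\SSS_n$-modules under the conjugation action $(g\cdot f)(v)=g f(g^{-1}v)$ — which is clear since $g\cdot(\lambda\,\id)=\lambda\,\id$ — and checking that the augmentation map $\I(G)\to\bone_{\SSS_n}$ is surjective and $\SSS_n$-equivariant, which is standard. One should also note, for completeness, the degenerate possibility that $\EE(\la)$ contains no copy of $\bone_{\SSS_n}$ other than the scalars, but this is irrelevant: we only need the two explicit maps above to be independent, which we have shown. Hence the statement follows.
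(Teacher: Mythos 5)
Your proof is correct and is essentially the paper's own argument: the paper likewise produces the homomorphism with image $\bone_{\SSS_n}$ (via the augmentation followed by the scalar inclusion), observes that $\psi$ is linearly independent from it since $\im\psi\not\cong\bone_{\SSS_n}$, and concludes by Lemma~\ref{LBasic}. Your write-up just spells out the standard verifications in more detail.
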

\begin{proof}
This follows from Lemma \ref{LBasic}, since there always exists a homomorphism $\phi:\I(G)\to\EE(\la)$ with image $\bone_{\SSS_n}$, and so $\phi$ and $\psi$ are linearly independent. 
\end{proof}

We will need one more general result on reducibility of $D^\la\da_G$:

\begin{Lemma} \label{LInvOp} 
Let $n\geq 5$, $H=\SSS_n$ or $\AAA_n$, $L$ be an irreducible 
$\F H$-module of dimension greater than $1$, 
and $G\leq H$ be a subgroup with $O_p(G)\neq 1$. Then $L\da_G$ is reducible. 
\end{Lemma}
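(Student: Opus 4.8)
The plan is to exploit the nontrivial normal $p$-subgroup $N:=O_p(G)$ of $G$ by examining its fixed subspace $L^N$ inside $L$. First I would record two standard facts. Since $N$ is a $p$-group and $\cha\F=p$, every nonzero $\F N$-module has nonzero fixed points, so $L^N\neq 0$ (as $\dim L>1$). And since $N\trianglelefteq G$, the subspace $L^N$ is $G$-invariant: for $g\in G$, $x\in N$, $v\in L^N$ one has $x(gv)=g(g^{-1}xg)v=gv$ because $g^{-1}xg\in N$. Consequently, if $L\da_G$ were irreducible, then $L^N$, being a nonzero $G$-submodule, would equal all of $L$; that is, $N$ would act trivially on $L$.

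The next step is to rule this out. If $N$ acts trivially on $L$, then $N$ is contained in the kernel $K$ of the representation $\rho\colon H\to GL(L)$ afforded by $L$, and $K$ is a normal subgroup of $H$. Here I would invoke the classification of normal subgroups of $H$ for $n\geq 5$: if $H=\AAA_n$ then $\AAA_n$ is simple, so $K\in\{1,\AAA_n\}$; if $H=\SSS_n$ then $K\in\{1,\AAA_n,\SSS_n\}$. In every case with $K\neq 1$ the map $\rho$ factors through a quotient of order at most $2$ (namely $\SSS_n/\AAA_n$, or the trivial quotient), which forces $\dim L=1$ — contradicting the hypothesis, since the only $\F H$-modules of dimension $1$ are the trivial module $\bone_H$ and, when $p\neq 2$, the sign module $\sgn$, both of which have $\AAA_n$ in their kernel. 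Hence $K=1$, so $N\subseteq K=1$, contradicting $O_p(G)\neq 1$. This contradiction shows $L\da_G$ is reducible.

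I do not anticipate a genuine obstacle: the proof is a short assembly of three standard ingredients — nonvanishing of fixed points of $p$-groups in characteristic $p$, $G$-invariance of $L^N$ coming from normality of $N$, and the description of normal subgroups of $\SSS_n$ and $\AAA_n$ together with the list of their $\leq 1$-dimensional irreducibles. The one point needing a little care is to use the hypothesis $\dim L>1$ in exactly the right place, namely to exclude the sign (and trivial) module and thereby force $K=1$; this is also where the assumption $n\geq 5$ enters, ensuring $\SSS_n$ has no further normal subgroups such as the Klein four-group in $\SSS_4$.
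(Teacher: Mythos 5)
Your proof is correct and follows essentially the same route as the paper: the paper notes that $n\geq 5$ and $\dim L>1$ force $L$ to be faithful, so that $L^{O_p(G)}$ is a non-trivial proper $G$-submodule, which is exactly the argument you have written out in detail (with the faithfulness step unpacked via the normal subgroup structure of $\SSS_n$ and $\AAA_n$). The only cosmetic remark is that $L^N\neq 0$ needs only $L\neq 0$, not $\dim L>1$; the hypothesis $\dim L>1$ is used, as you correctly identify, to rule out the kernel containing $\AAA_n$.
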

\begin{proof}
The assumptions $n\geq 5$ and $\dim L>1$ guarantee that $L$ is faithful. Hence the invariants $L^{O_p(G)}$ form a non-trivial proper submodule of $L\da_G$.
\end{proof}

\subsection{More modules over symmetric groups} 
\label{SSMoreMod}
As in \cite[\S4]{JamesBook}, we have {\em Specht modules}  $S^\la$ and permutation modules $M^\la$ over $\SSS_n$ for all $\la\in\Par(n)$. The module $M^\la$ is the permutation module on the set of $\la$-tabloids $\{t\}$, which are row-equivalence classes of $\la$-tableaux $t$, while $S^\la\subseteq M^\la$ is spanned by the polytabloids 
\begin{equation}\label{EPolyTab}
e_t:=\sum_{\si\in C_t}(\sgn\, \si)\, \si\cdot\{t\}\in M^\la, 
\end{equation}
where $C_t$ denotes the column stabilizer of  the $\la$-tableau $t$. In fact, any $e_t$ generates $S^\la$ as an $\F\SSS_n$-module. 
It is well-known that $(M^\la)^*\cong M^\la$. 

We will also use {\em Young modules $Y^\la$} which can be defined using the following well-known facts  contained for example in \cite{JamesArcata} and \cite[\S4.6]{Martin}:

\begin{Lemma} \label{LYoung} 
There exist indecomposable $\F \SSS_n$-modules $\{Y^\la\mid \la\in\Par(n)\}$ such that $M^\la\cong Y^\la\,\oplus\, \bigoplus_{\mu\rhd\la}(Y^\mu)^{\oplus m_{\mu,\la}}$ for some $m_{\mu,\la}\in\Z_{\geq 0}$. Moreover, $Y^\la$ can be characterized as the unique indecomposable direct summand of $M^\la$ such that  $S^\la\subseteq Y^\la$. Finally, we have $(Y^\la)^*\cong Y^\la$ for all $\la\in\Par(n)$.
\end{Lemma}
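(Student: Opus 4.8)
The plan is to deduce everything from the classical theory of the permutation modules $M^\la$ together with the Krull--Schmidt theorem. First I would recall that $\SSS_n$ acts on $M^\la$ by permuting a basis (the $\la$-tabloids), so by the Krull--Schmidt theorem $M^\la$ decomposes uniquely (up to isomorphism and reordering) into indecomposable summands. The key input is the well-known computation of $\End_{\F\SSS_n}(M^\la)$, or equivalently of $\Hom_{\F\SSS_n}(M^\mu,M^\la)$, via the ``permutation matrices'' / semistandard homomorphisms of James: $\Hom_{\F\SSS_n}(M^\mu,M^\la)$ has a basis indexed by certain combinatorial data, and one reads off that if an indecomposable summand $Z$ occurs both in $M^\mu$ and in $M^\la$ then $\mu$ and $\la$ are comparable in the dominance order, with a summand of ``type $\la$'' (the one containing $S^\la$) occurring in $M^\mu$ only when $\mu\unlhd\la$. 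I would set this up by downward induction on $\la$ in the dominance order.

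The induction runs as follows. For $\la$ maximal in $\unrhd$ (e.g.\ $\la=(n)$), $M^\la\cong\bone_{\SSS_n}$ is itself indecomposable and contains $S^\la$, so we set $Y^\la:=M^\la$. For general $\la$, assume $Y^\mu$ has been constructed for all $\mu\rhd\la$. Since $S^\la\subseteq M^\la$ and $S^\la$ is indecomposable (it is generated by any single polytabloid $e_t$, and in fact has simple head or at least no nontrivial idempotent decomposition compatible with the filtration — more simply: $S^\la$ is contained in a unique indecomposable summand of $M^\la$ because its image under the projection to any summand is either $0$ or all of $S^\la$ by indecomposability of $S^\la$), there is a unique indecomposable summand of $M^\la$ containing $S^\la$; call it $Y^\la$. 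Writing $M^\la\cong Y^\la\oplus N$, I would show every indecomposable summand of $N$ is isomorphic to some $Y^\mu$ with $\mu\rhd\la$: by the $\Hom$-computation above, any indecomposable summand $Z$ of $M^\la$ has the property that $Z$ also occurs as a summand of $M^\mu$ for the unique minimal $\mu$ (in dominance) such that $S^\mu$ embeds in $Z$; one checks $S^\la\not\subseteq Z$ forces $\mu\rhd\la$, whence by induction $Z\cong Y^\mu$. Collecting multiplicities gives the displayed decomposition $M^\la\cong Y^\la\oplus\bigoplus_{\mu\rhd\la}(Y^\mu)^{\oplus m_{\mu,\la}}$, and uniqueness of $Y^\la$ as the indecomposable summand containing $S^\la$ is built into the construction. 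Self-duality is immediate: $(M^\la)^*\cong M^\la$ since $M^\la$ is a permutation module, so dualizing the Krull--Schmidt decomposition permutes the set $\{Y^\mu\}$; since $\la\mapsto\mu$ in $\mu\rhd\la$ and multiplicities are preserved, and $Y^\la$ is characterized by containing $S^\la$ while $S^\la\hookrightarrow M^\la$ dualizes to $M^\la\twoheadrightarrow (S^\la)^*$ (a quotient, not obviously a sub) — here I would instead invoke that $(Y^\la)^*$ is indecomposable, is a summand of $(M^\la)^*\cong M^\la$, and use the minimality characterization: $(Y^\la)^*$ has $S^\mu$ as a quotient precisely when $S^\mu{}^*$ is a sub of $Y^\la$, and the minimal such $\mu$ in the appropriate order is $\la$ again, forcing $(Y^\la)^*\cong Y^\la$.

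The main obstacle I anticipate is not any single hard estimate but rather assembling the precise form of James's $\Hom$-calculation in the modular setting and extracting exactly the dominance constraint ``a summand of type $\la$ appears in $M^\mu$ only if $\mu\unlhd\la$'' in a way that is clean enough to drive the induction; equivalently, pinning down that $S^\la$ sits inside a \emph{unique} indecomposable summand and that the complement involves only strictly larger $\mu$. Since all of this is standard and recorded in \cite{JamesArcata} and \cite[\S4.6]{Martin}, in the paper I would simply cite those references for the existence and characterization, and restrict the written proof to the self-duality statement, which follows from $(M^\la)^*\cong M^\la$, the Krull--Schmidt theorem, and the uniqueness characterization of $Y^\la$.
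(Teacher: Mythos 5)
The paper gives no proof of this lemma at all: it is presented purely as a package of well-known facts with a pointer to \cite{JamesArcata} and \cite[\S4.6]{Martin}, which is exactly what you say you would do in the end. So at the level of what actually gets written, you match the paper.

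Since you do sketch an argument, one step in it is genuinely wrong and would fail if written out. You justify the uniqueness of the indecomposable summand containing $S^\la$ by saying that the image of $S^\la$ under the projection to any summand ``is either $0$ or all of $S^\la$ by indecomposability of $S^\la$.'' Indecomposability gives no such dichotomy: a homomorphism out of an indecomposable module can have a proper nonzero image (any indecomposable module with a proper nonzero quotient does this), and even if every nonzero projection were an isomorphism onto its image this would not place $S^\la$ inside a single summand --- the diagonal copy of a simple module $D$ inside $D\oplus D$ is indecomposable and has both projections nonzero. The standard repair uses the column antisymmetriser: write $e_t=\kappa_t\{t\}$ with $\kappa_t=\sum_{\si\in C_t}(\sgn \si)\,\si$, decompose $M^\la=\bigoplus_i Z_i$ with projections $\pi_i$, and note $\pi_i(e_t)=\kappa_t\pi_i(\{t\})$; James's lemma that $\kappa_t u\in\F e_t$ for every $u\in M^\la$ \cite[4.6]{JamesBook} gives $\pi_i(e_t)=c_ie_t$ with $\sum_ic_i=1$, and since distinct summands intersect trivially exactly one $c_i$ is nonzero, so $e_t$, and hence the cyclic module $S^\la$ it generates, lies in that one summand. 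With that step repaired, your downward induction on the dominance order together with Krull--Schmidt (including the self-duality argument, which reduces to dualizing the decomposition and cancelling the inductively self-dual $Y^\mu$ with $\mu\rhd\la$) is the standard route taken in the cited sources.
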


\begin{Lemma} \label{LNak} {\rm \cite[6.1.21]{JK}} 
The irreducible $\F\SSS_n$-modules $D^\la$ and $D^\mu$ are in the same block if and only if $\operatorname{cont}(\la)=\operatorname{cont}(\mu)$. All composition factors of $S^\nu$ and $Y^\nu$ are of the form $D^\kappa$ where $\operatorname{cont}(\kappa)=\operatorname{cont}(\nu)$. 
\end{Lemma}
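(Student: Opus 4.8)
The plan is to reduce everything to the classical block theory of symmetric groups. First I would recall the statement of the Nakayama Conjecture (a theorem): two ordinary irreducible characters $S^\la$ and $S^\mu$ lie in the same $p$-block of $\SSS_n$ if and only if $\la$ and $\mu$ have the same $p$-core. Since a partition's $p$-core and its residue content determine each other — this is exactly the fact recorded after \eqref{ETheta}, citing \cite[2.7.41]{JK} — the condition ``same $p$-core'' is equivalent to ``$\operatorname{cont}(\la)=\operatorname{cont}(\mu)$''. So the first clause, that $D^\la$ and $D^\mu$ are in the same block iff $\operatorname{cont}(\la)=\operatorname{cont}(\mu)$, follows: $D^\la$ is a composition factor of $S^\la$ (for $\la$ $p$-regular), hence lies in the block of $S^\la$, and conversely every block contains some $D^\la$ with $\la$ $p$-regular.

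For the second clause I would argue that all composition factors of $S^\nu$ lie in a single block, namely the block of $S^\nu$ itself. This is immediate from the general principle that the composition factors of an indecomposable module — or more basically, of any module lying in a single block — all lie in that block; and $S^\nu$, being indecomposable (or at least block-homogeneous, which is all we need), lies in the block determined by the $p$-core of $\nu$. Then any composition factor $D^\kappa$ of $S^\nu$ lies in that block, so by the first clause $\operatorname{cont}(\kappa)=\operatorname{cont}(\nu)$. For $Y^\nu$, I would invoke Lemma~\ref{LYoung}: $Y^\nu$ is a direct summand of $M^\nu$, and $M^\nu = \bone_{\SSS_\nu}\ua^{\SSS_n}$ has a Specht filtration with factors $S^\mu$ for $\mu\gedom\nu$. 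However, not all such $\mu$ have the same $p$-core as $\nu$, so I cannot simply say $M^\nu$ is block-homogeneous. Instead I would use that $S^\nu\subseteq Y^\nu$ and that $Y^\nu$ is indecomposable (Lemma~\ref{LYoung}), hence block-homogeneous, so $Y^\nu$ lies in the block of $S^\nu$, i.e.\ the block with $p$-core equal to that of $\nu$; therefore every composition factor $D^\kappa$ of $Y^\nu$ satisfies $\operatorname{cont}(\kappa)=\operatorname{cont}(\nu)$ by the first clause.

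The main (very minor) obstacle is bookkeeping: making sure the equivalence ``same block $\Leftrightarrow$ same $p$-core $\Leftrightarrow$ same residue content'' is cleanly assembled from the Nakayama Conjecture plus \cite[2.7.41]{JK}, and citing the correct source for the Nakayama Conjecture itself (e.g.\ \cite[6.1.21]{JK} as already indicated). Since the statement is attributed directly to \cite[6.1.21]{JK}, the honest proof is essentially just: ``This is \cite[6.1.21]{JK}, combined with the fact that partitions have the same $p$-core iff they have the same residue content (\cite[2.7.41]{JK}), together with the indecomposability of $Y^\nu$ and the containment $S^\nu\subseteq Y^\nu$ from Lemma~\ref{LYoung}.'' I would present it at roughly that level of detail rather than reproving the Nakayama Conjecture.
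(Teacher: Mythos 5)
The paper gives no proof of this lemma at all --- it is stated purely as a citation to \cite[6.1.21]{JK} --- so there is nothing to compare against except the citation itself, and your reconstruction is the standard, correct deduction: Nakayama's Conjecture plus the equivalence of ``same $p$-core'' with ``same residue content'' (\cite[2.7.41]{JK}) gives the first clause, block theory of decomposition numbers gives the claim for $S^\nu$, and indecomposability of $Y^\nu$ together with $S^\nu\subseteq Y^\nu$ (Lemma~\ref{LYoung}) gives the claim for $Y^\nu$. Your proposed level of detail (cite, don't reprove) matches what the authors actually did.
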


In view of the lemma, blocks of $\F \SSS_n$ are determined by the residue contents of irreducible modules contained in the block, which are elements of $\Theta_n$, see (\ref{ETheta}). The block of $\F \SSS_n$ corresponding to $\theta\in \Theta_n$ will be denoted $B_\theta$. If $\theta\in\Theta_n$ does not arise as a residue content of any $\la\in \Par(n)$, we set $B_\theta:=0$, so that we have 
\begin{equation}\label{EBlocks}
\F\SSS_n=\bigoplus_{\theta\in\Theta_n}B_\theta.
\end{equation}

Two-row partitions will play a special role in this paper, so it is convenient to introduce the following notation. Let $0\leq k\leq n/2$. We denote  
$$
M_k:=M^{(n-k,k)},\quad S_k:=S^{(n-k,k)},\quad D_k:=D^{(n-k,k)}, \quad Y_k:=Y^{(n-k,k)}.
$$
Strictly speaking, when $p=2$ and $n$ is even, $D_k$ is only  defined if $k<n/2$. 
We denote by $\Om_k$ the set of all $k$-elements subsets of $\{1,\dots,n\}$ so that $M_k$ is the permutation module on $\Om_k$. 

For $0\leq k,l\leq n/2$, we will use special homomorphisms between permutation modules:
$$\eta_{k,l}:M_k\to M_l,\ X\mapsto \sum_{Y\in \Om_l,\,Y\, \text{incident to}\, X}Y,
$$
where $Y$ is incident to $X$ means $Y\subseteq X$ or $X\subseteq Y$.

\begin{Lemma} \label{LWilson} {\rm \cite[Theorem 1]{Wil}} 
If $0\leq k\leq l\leq n/2$ then 
$$\rank(\eta_{k,l})=\rank(\eta_{l,k})=\sum \binom{n}{r}-\binom{n}{r-1},$$
where the sum is over all $r=0,\dots,k$ such that $\binom{l-r}{k-r}$ is not divisible by $p$. 
\end{Lemma}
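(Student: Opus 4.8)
The plan is to deduce this from Wilson's diagonal form for inclusion matrices \cite{Wil}; here is how that argument runs. Since $k\le l$, for $X\in\Om_k$ the sets incident to $X$ are exactly the $Y\in\Om_l$ with $X\subseteq Y$, so the matrix of $\eta_{k,l}$ in the standard bases of $\Om_k$ and $\Om_l$ is the transpose of the matrix of $\eta_{l,k}$. Transposition does not change rank over any field, so $\rank(\eta_{k,l})=\rank(\eta_{l,k})$ and it suffices to compute one of them, say $\rank(\eta_{k,l})$.

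Next, recall the rational picture. Over $\QQ$ one has $M_k\otimes\QQ\cong\bigoplus_{i=0}^{k}V_i$ with $V_i\cong S^{(n-i,i)}_\QQ$ and $\dim V_i=\binom ni-\binom n{i-1}$, and the $\SSS_n$-equivariant map $\eta_{k,l}$ carries the copy of $V_i$ in $M_k\otimes\QQ$ to the copy of $V_i$ in $M_l\otimes\QQ$ (present there since $i\le k\le l$), hence by Schur's lemma acts on it by a scalar $c_i$. One computes $c_i=\binom{l-i}{k-i}$: indeed $\eta_{k,k}=\id$ gives $c_i=1=\binom{k-i}{k-i}$ for $l=k$, and in general $\eta_{l-1,l}\circ\eta_{k,l-1}=(l-k)\,\eta_{k,l}$ (count the $(l-1)$-sets between a $k$-set and an $l$-set containing it) forces $c_i=\binom{l-i}{k-i}$ by induction on $l-k$. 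In particular $\rank_\QQ(\eta_{k,l})=\sum_{i=0}^k\bigl(\binom ni-\binom n{i-1}\bigr)$, and the expected $p$-rank is obtained by discarding the $i$ with $p\mid\binom{l-i}{k-i}$.

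The substantive step is to upgrade this to an integral statement: one must produce $\ZZ_{(p)}$-bases of $M_k$ and $M_l$ in which the matrix of $\eta_{k,l}$ is the diagonal matrix $\bigoplus_{i=0}^k\binom{l-i}{k-i}\,I_{\binom ni-\binom n{i-1}}$, equivalently exhibit unimodular integral row and column operations bringing the inclusion matrix to this Smith normal form. Wilson does this by an explicit recursive construction: fix the flag $\varnothing\subset\{1\}\subset\{1,2\}\subset\cdots$, sort $k$- and $l$-subsets by how they meet this flag, and assemble the change-of-basis matrices from repeated use of Pascal's identity $\binom mj=\binom{m-1}j+\binom{m-1}{j-1}$, so that the transformed matrix becomes block-triangular with the claimed binomials on the diagonal. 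Reducing the diagonal form modulo $p$ then yields
\[
\rank_\FF(\eta_{k,l})=\sum_{\substack{0\le i\le k\\ p\,\nmid\,\binom{l-i}{k-i}}}\Bigl(\binom ni-\binom n{i-1}\Bigr),
\]
which is exactly the assertion of the lemma.

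The main obstacle is precisely this integral diagonalization. The naive shortcut — hoping that $\FF\Om_k\cong\bigoplus_{i=0}^k\overline{S^{(n-i,i)}}$ so that the $p$-rank could be read off from the rational eigenvalues $\binom{l-i}{k-i}$ alone — fails in general, because the integral Specht lattices do not split $\ZZ\Om_k$ as an $\SSS_n$-module and their mod-$p$ reductions can share composition factors. Hence one genuinely needs Wilson's explicit unimodular transforms (or an equivalent combinatorial device), and that bookkeeping is the heart of the proof; everything else is the soft reduction and rational computation described above.
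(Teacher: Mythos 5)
This lemma is not proved in the paper at all: it is quoted verbatim as \cite[Theorem 1]{Wil}, so the ``paper's proof'' is simply the citation of Wilson's diagonal form for the inclusion matrix. Your write-up is an accurate reconstruction of exactly that route: the transpose symmetry giving $\rank(\eta_{k,l})=\rank(\eta_{l,k})$, the rational decomposition $M_k\otimes\QQ\cong\bigoplus_{i\le k}S^{(n-i,i)}_\QQ$ with $\eta_{k,l}$ acting by the scalar $\binom{l-i}{k-i}$ on the $i$-th constituent (your derivation via $\eta_{l-1,l}\circ\eta_{k,l-1}=(l-k)\eta_{k,l}$ is correct), and the observation that passing from rational eigenvalues to the $p$-rank requires an integral Smith-type diagonalization rather than a naive reduction mod $p$. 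That last point is well taken and is precisely why the result is a theorem of Wilson rather than a routine consequence of Young's rule. Since you describe but do not execute the unimodular row/column operations, what you have is a correct proof outline that, like the paper, ultimately defers the combinatorial heart of the matter to \cite{Wil}; as such there is nothing to object to.
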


Let $0\leq k\leq n/2$, $G\leq \SSS_n$ and $\la\in\Parp(n)$. We denote by $i_k(G)$ the number of $G$-orbits on $\Om_k$. Note that 
\begin{equation}\label{EItoM}
i_k(G)=\dim M_k^G=\dim\Hom_{\SSS_n}(\I(G),M_k). 
\end{equation}
Define also 
\begin{equation}\label{EMtoE}
m_k(\la):=\dim\Hom_{\SSS_n}(M_k,\EE(\la))=\dim \End_{\SSS_{n-k,k}}(D^\la{\da}_{\SSS_{n-k,k}}). 
\end{equation}
Our main tools are Lemmas~\ref{LBasic} and \ref{LBasic2}, which motivate us to study homomorphisms from $\I(G)$ to $\EE(\la)$. We plan to do it by studying homomorphisms from $\I(G)$ to $M_k$ and then from $M_k$ to $\EE(\la)$ for appropriate small $k$'s. This is why we need dimensions defined in (\ref{EItoM}) and (\ref{EMtoE}). 

\begin{Lemma}\label{L4.14} {\rm \cite[Lemma 4.14]{M}} 
If $p=2$ and $V$ is an $\SSS_n$-module then
\[\dim\End_{\SSS_{n-2}}(V\downarrow_{\SSS_{n-2}})\leq 2\dim\End_{\SSS_{n-2,2}}(V\downarrow_{\SSS_{n-2,2}}).\]
\end{Lemma}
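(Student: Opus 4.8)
The plan is to exploit the fact that, for $p=2$, the group $\SSS_2$ is a $2$-group, so that $\F\SSS_2$ is a $2$-dimensional local algebra, and to reduce the statement to a one-line piece of linear algebra about taking $\SSS_2$-fixed points. First I would fix notation: write $\SSS_{n-2,2}=\SSS_{n-2}\times\SSS_2$ with $\SSS_2=\langle s\rangle$, $s=(n-1\ n)$, noting that $s$ centralizes $\SSS_{n-2}$ inside $\SSS_n$. Put $W:=V\da_{\SSS_{n-2}}$ and $A:=\End_{\SSS_{n-2}}(W)$. Since $s$ commutes with $\SSS_{n-2}$, the map $f\mapsto sfs^{-1}$ is an $\F$-linear action of $\SSS_2$ on $A$, giving $A$ the structure of a finite-dimensional $\F\SSS_2$-module; and an $\SSS_{n-2}$-endomorphism of $V$ commutes with the full group $\SSS_{n-2,2}$ precisely when it is fixed by this action. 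Hence
\[
\End_{\SSS_{n-2,2}}(V\da_{\SSS_{n-2,2}})=A^{\SSS_2}.
\]

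The key step is then the elementary claim: for any finite-dimensional $\F\SSS_2$-module $U$ with $p=2$ one has $\dim U\leq 2\dim U^{\SSS_2}$. To see this, set $N:=1+s\in\F\SSS_2$. Because $p=2$ and $s^2=1$, we have $N^2=1+2s+s^2=2+2s=0$, so $NU\subseteq\ker(N|_U)$; and since $s-1=s+1=N$ in characteristic $2$, we have $\ker(N|_U)=U^{\SSS_2}$. By rank--nullity,
\[
\dim U=\dim\ker(N|_U)+\dim(NU)\leq \dim U^{\SSS_2}+\dim U^{\SSS_2}=2\dim U^{\SSS_2}.
\]
(Equivalently, $\F\SSS_2\cong\F[x]/(x^2)$ is a local uniserial algebra with exactly two indecomposables, the trivial module and the regular module, and the bound is visible on each.)

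Applying this to $U=A$ and using the identifications above gives
\[
\dim\End_{\SSS_{n-2}}(V\da_{\SSS_{n-2}})=\dim A\leq 2\dim A^{\SSS_2}=2\dim\End_{\SSS_{n-2,2}}(V\da_{\SSS_{n-2,2}}),
\]
as required. I do not expect any genuine obstacle here: the only points needing a word of justification are that the $\SSS_2$-conjugation action on $A$ is well defined and $\F$-linear (immediate from $s$ centralizing $\SSS_{n-2}$) and that $A^{\SSS_2}$ is exactly the $\SSS_{n-2,2}$-endomorphism algebra (immediate from the definition of commuting with a generating set of $\SSS_{n-2,2}$). The whole content is the observation that $\F\SSS_2$ is a $2$-dimensional local algebra, which forces the space of fixed points of any module to have dimension at least half the total.
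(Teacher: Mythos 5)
Your argument is correct and complete. Note that the paper itself gives no proof of this lemma -- it is quoted from \cite[Lemma 4.14]{M} -- so there is nothing internal to compare against; but your reduction is exactly the right one. The identification $\End_{\SSS_{n-2,2}}(V\da_{\SSS_{n-2,2}})=A^{\SSS_2}$ for the conjugation action of $s=(n-1\ n)$ on $A=\End_{\SSS_{n-2}}(V\da_{\SSS_{n-2}})$ is valid because $s$ centralizes $\SSS_{n-2}$, and the inequality $\dim U\leq 2\dim U^{\SSS_2}$ for an $\F\SSS_2$-module $U$ in characteristic $2$ follows exactly as you say from $N=1+s$, $N^2=0$, $\ker(N|_U)=U^{\SSS_2}$ and rank--nullity. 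The more common formulation of this argument in the literature (and the likely shape of the cited proof) runs through Frobenius reciprocity: $\End_{\SSS_{n-2}}(V\da_{\SSS_{n-2}})\cong\Hom_{\SSS_{n-2,2}}\bigl(V\da_{\SSS_{n-2,2}},(V\da_{\SSS_{n-2}})\ua^{\SSS_{n-2,2}}\bigr)$, and $(V\da_{\SSS_{n-2}})\ua^{\SSS_{n-2,2}}\cong V\da_{\SSS_{n-2,2}}\otimes\F\SSS_2$ has a two-step filtration with both subquotients isomorphic to $V\da_{\SSS_{n-2,2}}$, whence the factor $2$ by left-exactness of $\Hom$. Your fixed-point version is equivalent -- both rest on $\F\SSS_2$ being a $2$-dimensional local algebra in characteristic $2$ -- and is arguably more elementary, since it avoids induction functors entirely.
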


\subsection{Invariants}
\label{SSInv}

In this section, for various transitive $G\leq \SSS_n$, we will study the invariants $(S_1^*)^G$ of the dual Specht module $S_1=S^{(n-1,1)}$. Our goal is to establish that $(S_1^*)^G=0$ in many situations. The following lemma will allow us to reduce to the case $p \mid n$.

\begin{Lemma} \label{LEasy} 
If $p \nmid n$ and $G\leq \SSS_n$ is transitive then $(S_1^*)^G=0$. 
\end{Lemma}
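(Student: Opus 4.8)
The plan is to exploit the splitting of the natural permutation module that becomes available when $p\nmid n$. Recall that $S_1=S^{(n-1,1)}$ sits inside $M_1=M^{(n-1,1)}$, and that the hypothesis $p\nmid n$ forces the standard decomposition $M_1\cong\bone_{\SSS_n}\oplus S_1$ of $\F\SSS_n$-modules, in which moreover $S_1\cong D_1=D^{(n-1,1)}$ is irreducible (this is the classical description of $M^{(n-1,1)}$, cf. \cite{JamesBook}). Since $G\leq\SSS_n$, this is a fortiori a decomposition of $\F G$-modules.

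Next I would take $G$-invariants. By \eqref{EItoM} the dimension of $M_1^G$ equals the number $i_1(G)$ of $G$-orbits on $\{1,\dots,n\}$, which is $1$ because $G$ is transitive. Applying the functor $(-)^G$ to the decomposition above gives $M_1^G=\bone_{\SSS_n}^G\oplus S_1^G$, and as $\bone_{\SSS_n}^G$ is already one-dimensional we conclude that $S_1^G=0$.

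It remains to transfer this to the dual module. Since $p\nmid n$ we have $S_1\cong D^{(n-1,1)}$, which is self-dual by \cite[11.5]{JamesBook}; hence $S_1^*\cong S_1$ as $\F\SSS_n$-modules, and therefore as $\F G$-modules, so $(S_1^*)^G\cong S_1^G=0$. If one prefers to avoid invoking self-duality of $D^\la$, one may argue instead that $(S_1^*)^G\cong\Hom_G(\bone_G,S_1^*)\cong\Hom_G(S_1,\bone_G)$, and that $\dim\Hom_G(M_1,\bone_G)=\dim M_1^G=1$ using $(M^\la)^*\cong M^\la$; combining this with $\Hom_G(M_1,\bone_G)=\Hom_G(\bone_G,\bone_G)\oplus\Hom_G(S_1,\bone_G)$ forces $\Hom_G(S_1,\bone_G)=0$. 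There is no genuine obstacle in this argument: the only ingredients requiring justification are the two standard facts that $M_1\cong\bone_{\SSS_n}\oplus S_1$ when $p\nmid n$ and that $M_1$ is self-dual, both of which are classical. The reason the lemma is stated separately is precisely that it lets the later arguments reduce the study of $(S_1^*)^G$ to the harder case $p\mid n$.
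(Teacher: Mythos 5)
Your proof is correct and follows essentially the same route as the paper: the paper also decomposes $M_1\cong\bone_{\SSS_n}\oplus S_1^*$ (using $p\nmid n$) and notes that transitivity forces $\dim M_1^G=1$. Your extra discussion of passing from $S_1$ to $S_1^*$ is harmless but unnecessary, since under $p\nmid n$ the complement of $\bone_{\SSS_n}$ in $M_1$ is already $S_1^*\cong S_1\cong D^{(n-1,1)}$.
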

\begin{proof}
Since $G$ is transitive, we have $\dim M_1^G=1$, and the result follows since under the assumption $p \nmid n$ we have $M_1\cong \bone_{\SSS_n}\oplus S_1^*$. 
\end{proof}

If $p\mid n$ we can use the following criteria for $(S_1^*)^G=0$. 

\begin{Lemma} \label{L160817} 
If $G$ is a transitive subgroup of $\SSS_n$ with $G=O^p(G)$ then $(S_1^*)^G=0$. 
\end{Lemma}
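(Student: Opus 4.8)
The plan is to reduce the statement to the case already handled by Lemma~\ref{LEasy}. Consider the permutation module $M_1 = \bone_{\SSS_n}\upharpoonright \dots$; more precisely, recall that $S_1^*$ is, up to extensions, the ``heart'' of $M_1$, and that $(S_1^*)^G = \Hom_{\SSS_n}(\I(G), S_1^*)$. The key point is that a transitive group $G$ with $G = O^p(G)$ has no nontrivial $p$-power quotient, so every $\F G$-homomorphism from $\bone_G$ that factors through a quotient of $p$-power order must be trivial; this is what rules out the ``extra'' invariant coming from the trivial submodule of $M_1$ when $p \mid n$.

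Concretely, first I would recall the submodule structure of $M_1$ when $p \mid n$: there is a short exact sequence $0 \to \bone_{\SSS_n} \to M_1 \to S_1^* \to 0$ (the trivial module sits inside $M_1$ as the span of the all-ones vector, and the quotient is $S_1^*$), together with the dual statement $0 \to S_1 \to M_1 \to \bone_{\SSS_n} \to 0$, so that $M_1$ is uniserial of the shape $\bone \mid D_1 \mid \bone$ (with $D_1 = D^{(n-1,1)}$ when $\dim D_1 > 0$). Then I would apply the left-exact functor $(-)^G = \Hom_{\SSS_n}(\I(G), -)$ to the first exact sequence to get
\[
0 \to \bone_{\SSS_n}^G \to M_1^G \to (S_1^*)^G \to \Ext^1_{\SSS_n}(\I(G), \bone_{\SSS_n}).
\]
Since $G$ is transitive, $\dim M_1^G = i_1(G) = 1$ and $\dim \bone_{\SSS_n}^G = 1$, so the map $\bone_{\SSS_n}^G \to M_1^G$ is an isomorphism and hence $(S_1^*)^G$ injects into $\Ext^1_{\SSS_n}(\I(G), \bone_{\SSS_n})$. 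By Frobenius reciprocity (Eckmann--Shapiro), $\Ext^1_{\SSS_n}(\I(G), \bone_{\SSS_n}) = \Ext^1_{\SSS_n}(\bone_G\upharpoonright^{\SSS_n}, \bone_{\SSS_n}) \cong \Ext^1_G(\bone_G, \bone_G) \cong H^1(G, \F)$. Finally, $H^1(G, \F) = \Hom(G, \F) = \Hom(G/O^p(G)[G,G], \F)$ vanishes precisely because $G = O^p(G)$: the only homomorphisms from $G$ to the additive group of $\F$ (which is an elementary abelian $p$-group) factor through the largest elementary abelian $p$-quotient of $G$, which is trivial. Hence $(S_1^*)^G = 0$.

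An alternative, perhaps cleaner, route avoids $\Ext$ entirely: write $\bar G := O^p(G)$, which equals $G$ by hypothesis, and instead argue directly on vectors. A $G$-invariant vector $v \in S_1^*$ lifts to a vector in $M_1$; the obstruction to the lift being $G$-invariant is a $1$-cocycle $G \to \bone_{\SSS_n} = \F$, i.e.\ a homomorphism $G \to \F$, which is zero since $G$ has no nontrivial $p$-quotient. Either way, the single genuinely new input beyond Lemma~\ref{LEasy} is the vanishing of $H^1(G,\F)$ under the condition $G = O^p(G)$, and the main (minor) obstacle is simply being careful about the submodule structure of $M_1$ in the degenerate case $p\mid n$ and correctly identifying the relevant $\Ext^1$ with $\Hom(G,\F)$. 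Since everything reduces to that standard cohomological fact, the proof should be short.
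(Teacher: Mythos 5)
Your proof is correct and follows essentially the same route as the paper: apply the long exact sequence in cohomology to $0\to\bone\to M_1\to S_1^*\to 0$, use transitivity to get $\dim M_1^G=1$, and kill the connecting map via $H^1(G,\F)=\Hom(G,\F)=0$, which is exactly where $G=O^p(G)$ enters. The only cosmetic difference is that you phrase the vanishing as $\Ext^1_{\SSS_n}(\I(G),\bone)$ plus Eckmann--Shapiro, whereas the paper restricts to $G$ and quotes $H^1(G,\bone_G)=0$ directly.
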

\begin{proof}
Since $G$ is transitive, we have $\dim M_1^G=1$. Now the result follows by considering the long exact sequence in cohomology corresponding to the short exact sequence $0\to \bone_G\to M_1\to S_1^*\to 0$ and using $H^1(G,\bone_G)=0$, which comes from the assumption $G=O^p(G)$. 
\end{proof}

\begin{Corollary} \label{COP} 
Let $G$ be a subgroup of $\SSS_n$ such that $O^p(G)$ is transitive. Then $(S_1^*)^G =0$. 
\end{Corollary}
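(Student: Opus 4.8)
The plan is to deduce Corollary~\ref{COP} from Lemma~\ref{L160817} by a standard restriction/inflation argument. Set $N:=O^p(G)$. By hypothesis $N$ is transitive on $\{1,\dots,n\}$, and $N=O^p(N)$ since $O^p$ is idempotent on finite groups (the quotient $N/O^p(N)$ is a $p$-group quotient of $N$ that is also a section of the $p$-group quotient $G/O^p(G)$... more simply: $O^p(N)$ is characteristic in $N$, hence normal in $G$, and $G/O^p(N)$ has $G/N$ as a quotient with $p'$... the cleanest statement is just that $O^p(O^p(G))=O^p(G)$, which is immediate from the fact that $O^p(G)$ is the smallest normal subgroup with $p$-power index). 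So Lemma~\ref{L160817} applies with $N$ in place of $G$ and gives $(S_1^*)^N=0$.

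Then I would finish with the trivial observation that invariants only shrink under passing to a larger group: since $G\supseteq N$, every $G$-invariant vector of $S_1^*$ is in particular an $N$-invariant vector, so
\[
(S_1^*)^G\subseteq (S_1^*)^N=0,
\]
whence $(S_1^*)^G=0$. That is the whole argument; there is essentially no obstacle here, as the corollary is a formal consequence of the lemma together with the functoriality of fixed points. The only point that requires a (one-line) justification is that $N=O^p(G)$ satisfies $N=O^p(N)$, and even that is standard group theory rather than a genuine difficulty.

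Thus the write-up will be: let $N:=O^p(G)$; note $O^p(N)=N$; apply Lemma~\ref{L160817} to the transitive group $N$ to get $(S_1^*)^N=0$; conclude $(S_1^*)^G\subseteq (S_1^*)^N=0$ since $N\leq G$.
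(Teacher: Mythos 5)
Your argument is correct and is exactly the paper's proof: note $O^p(O^p(G))=O^p(G)$, apply Lemma~\ref{L160817} to $O^p(G)$, and use $(S_1^*)^G\subseteq (S_1^*)^{O^p(G)}=0$. Nothing further is needed.
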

\begin{proof}
Since $O^p(O^p(G))=O^p(G)$, the previous lemma applies to show that $(S_1^*)^{O^p(G)}=0$, which implies the result.
\end{proof}

The following result shows that we can apply Corollary~\ref{COP} to primitive subgroups with non-abelian socle:

\begin{Lemma} \label{LOP} 
Let $G$ be a primitive subgroup of $\SSS_n$ with non-abelian socle $S$. Then $S$ and $O^p(G)$ are transitive. If, in addition, $G$ is $2$-transitive then either $S$ and $O^p(G)$ are $2$-transitive or $(n,G,S) = (28, SL_2(8).3,SL_2(8))$.
\end{Lemma}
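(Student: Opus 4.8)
The plan is to reduce to the known classification of primitive permutation groups with non-abelian socle and then check $2$-transitivity component by component. First I would recall the structure theory: since $G$ is primitive with non-abelian socle $S$, the group $S$ is a direct product $T^k$ of copies of a non-abelian simple group $T$, and $S$ is itself transitive on $\{1,\dots,n\}$ (this is standard for primitive groups — the socle of a primitive group is always transitive, except when $G$ is of affine type, which is excluded here since $S$ is non-abelian). Transitivity of $S$ immediately gives transitivity of $O^p(G)$, because $S = O^p(S) \leq O^p(G)$ (a non-abelian simple group, and hence any direct product of them, equals its own $O^p$). So the first sentence of the lemma is essentially immediate.

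For the $2$-transitive assertion, I would invoke the classification of $2$-transitive groups (via \cite{Cam,Kantor}), which splits them into the affine type and the almost simple type; since $S$ is non-abelian we are in the almost simple case, so $n$, $S$, and the action are on an explicit finite list. In almost all entries of that list the socle $S$ is already $2$-transitive in the given action, and then $O^p(G) \geq S$ is $2$-transitive as well. So the real content is to go through the list and isolate the entries where $S$ is $2$-transitive as an abstract group but the point stabilizer intersects $S$ in something of index $2$ inside its normalizer, i.e. where $S$ itself fails to be $2$-transitive on the $n$ points while $G$ is. I would go through the standard families: $\mathrm{PSL}_d(q)$ acting on projective space, $\mathrm{Sp}_{2d}(2)$ on the two classes of quadratic forms, unitary, Suzuki and Ree groups, and the sporadic examples — and in each case record whether the socle is $2$-transitive. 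The outcome singled out in the statement is the one exception: $S = SL_2(8)$ acting on $n=28$ points, with $G = SL_2(8).3 = {}^2G_2(3)'$ of degree $28$, where $S$ has two orbits of length... (more precisely the Ree group ${}^2G_2(3) \cong \mathrm{P}\Gamma\mathrm{L}_2(8)$ of degree $28$ is $2$-transitive but its socle $\mathrm{PSL}_2(8)$ is not, having a suborbit structure that is not doubly transitive).

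The key steps, in order: (1) cite that a primitive group with non-abelian socle has transitive socle and deduce transitivity of $S$ and $O^p(G)$; (2) under the extra hypothesis that $G$ is $2$-transitive, apply the classification of $2$-transitive groups to put $(n, S)$ on the known list with $S$ almost simple; (3) scan the list and verify that in every case except $(28, SL_2(8).3, SL_2(8))$ the socle $S$ already acts $2$-transitively, hence so does $O^p(G) \geq S = O^p(S)$; (4) note that in the exceptional case $O^p(G) = O^p(SL_2(8).3)$: since $SL_2(8) = O^p(SL_2(8).3)$ when $p \neq 3$ and $O^3(SL_2(8).3)=SL_2(8).3$ otherwise — in either case the relevant conclusion is that no larger $2$-transitive group is forced, so the exception genuinely occurs and must be recorded.

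The main obstacle I anticipate is step (3): it is not a proof in the usual sense but a finite case check against the published tables of $2$-transitive permutation groups, and one must be careful about the degenerate/small-degree entries (e.g. the sporadic Mathieu, Higman–Sims, Conway examples, and the low-rank classical groups in characteristic $p$) where the socle's action can behave unexpectedly, as well as about the subtlety that $O^p(G)$ depends on which prime $p$ we are working with — one must confirm that $S \leq O^p(G)$ in all these cases, which holds precisely because $S$ is a product of non-abelian simple groups and hence perfect with no quotient of $p$-power order. I would organize the case check by the socle type and keep only the single surviving exception in the statement.
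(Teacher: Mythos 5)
Your proposal is correct and follows essentially the same route as the paper: transitivity of the socle from primitivity, $S\leq O^p(G)$ because $S$ is a perfect (product of non-abelian simple) group and hence has trivial image in the $p$-group $G/O^p(G)$, and an inspection of the classified list of $2$-transitive groups to isolate the single exception $(28,SL_2(8).3,SL_2(8))$. (Only a cosmetic slip: the socle of a primitive group of affine type is also transitive — it is regular — so your "except" clause is unnecessary, though harmless here.)
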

\begin{proof}
Since $S$ is normal in $G$, then $G$ permutes the $S$-orbits on $\{1,2,\ldots,n\}$. 
But $G$ is primitive, so there is only one $S$-orbit. 
Further, by inspection of the list of $2$-transitive groups, see \cite[Note 2, p. 9]{Cam}, we see that if $G$ is $2$-transitive then either $S$ is $2$-transitive or $(n,G,S) = (28, SL_2(8).3,SL_2(8))$.

Finally, by the O'Nan-Scott Theorem, see e.g. \cite[Theorem 4.1]{Cam}, $S$ is a direct product of non-abelian simple groups. But 
$$S/(S\cap O^p(G))\cong O^p(G)S/O^p(G)\leq G/O^p(G)$$ 
is a $p$-group, so $O^p(G)\geq S$, and the statements on $O^p(G)$ also follow. 
\end{proof}

\begin{Corollary} \label{C2TrInv} 
If $G$ is a primitive subgroup of $\SSS_n$ with non-abelian socle then $(S_1^*)^G =0$. 
\end{Corollary}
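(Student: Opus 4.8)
The plan is to combine Corollary~\ref{COP} with Lemma~\ref{LOP}. Since $G$ is a primitive subgroup of $\SSS_n$ with non-abelian socle, Lemma~\ref{LOP} tells us that $O^p(G)$ is transitive on $\{1,2,\dots,n\}$. But then $G$ satisfies the hypothesis of Corollary~\ref{COP}, which immediately yields $(S_1^*)^G=0$.

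So the proof is essentially a one-line deduction: \emph{This follows from Lemma~\ref{LOP} and Corollary~\ref{COP}.} The only thing worth checking is that there is no hidden case obstruction: Lemma~\ref{LOP} has an exceptional case $(n,G,S)=(28,SL_2(8).3,SL_2(8))$ for the $2$-transitivity statement, but the transitivity of $O^p(G)$ holds unconditionally, and transitivity is all that Corollary~\ref{COP} requires. Hence no case analysis is needed here. There is no real obstacle; the corollary is a straightforward packaging of the two preceding results, set up precisely so that the primitive non-abelian socle case is handled uniformly. I would simply write the short proof invoking those two statements.
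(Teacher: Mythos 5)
Your proposal is correct and is exactly the paper's argument: the paper's proof of this corollary is the one-line deduction from Lemma~\ref{LOP} and Corollary~\ref{COP}. Your observation that the exceptional case $(28,SL_2(8).3,SL_2(8))$ only affects the $2$-transitivity clause of Lemma~\ref{LOP}, while only transitivity of $O^p(G)$ is needed here, is accurate.
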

\begin{proof}
Follows from Corollary~\ref{COP} and Lemma~\ref{LOP}. 
\end{proof}

For primitive subgroups with abelian socle we have:

\begin{Lemma} \label{LAbSoc} 
Let $G$ be a primitive subgroup of $\SSS_n$ with abelian socle. Then either $(S_1^*)^G=0$ or $O_p(G)\neq 1$. 
\end{Lemma}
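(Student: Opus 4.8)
The plan is to use the structure of primitive permutation groups with abelian socle, which are exactly the affine groups: if $G \leq \SSS_n$ is primitive with abelian socle, then $n = \ell^d$ for a prime $\ell$, the socle is an elementary abelian group $V \cong (\Z/\ell\Z)^d$ acting regularly, and $G \leq \AFF(d,\ell) = V \rtimes \Ga L_d(\ell)$, with $V = \Stab_G(\text{point})$-complement in the usual way. So first I would invoke the O'Nan–Scott theorem (as cited earlier, \cite[Theorem 4.1]{Cam}) to put $G$ into this affine form. The key dichotomy is then on the prime $\ell$: either $\ell = p$ or $\ell \neq p$.

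In the case $\ell = p$, the socle $V$ is a normal $p$-subgroup of $G$, so $O_p(G) \supseteq V \neq 1$ and we are in the second alternative immediately. So the substance of the proof is the case $\ell \neq p$. Here I would aim to show $(S_1^*)^G = 0$, and the natural tool is Corollary~\ref{COP}: it suffices to prove that $O^p(G)$ is transitive on $\{1,\dots,n\}$. Since $V$ is the socle and is a normal elementary abelian $\ell$-group with $\ell \neq p$, the group $V$ has no nontrivial $p$-quotient, so the argument in the proof of Lemma~\ref{LOP} applies verbatim: $V/(V \cap O^p(G)) \cong O^p(G)V/O^p(G)$ is both a quotient of $V$ (hence an $\ell$-group) and a subgroup of the $p$-group $G/O^p(G)$, forcing $V = V \cap O^p(G)$, i.e. $V \leq O^p(G)$. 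Since $V$ is already transitive (indeed regular), $O^p(G)$ is transitive, and Corollary~\ref{COP} gives $(S_1^*)^G = 0$.

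The main point to be careful about is the borderline $p \mid n$ versus $p \nmid n$: when $\ell \neq p$ we have $n = \ell^d$ coprime to $p$, so in fact Lemma~\ref{LEasy} already gives $(S_1^*)^G = 0$ directly from transitivity of $G$, without needing the $O^p$ machinery at all — this is the cleanest route and I would use it. So the proof reduces to the short observation: if $p \nmid n$, apply Lemma~\ref{LEasy}; if $p \mid n$, then since $n = \ell^d$ is a prime power we must have $\ell = p$, whence $O_p(G) \supseteq \soc(G) = V \neq 1$. I do not anticipate a serious obstacle here; the only thing requiring attention is making sure the O'Nan–Scott classification is quoted in exactly the form that guarantees "primitive with abelian socle $\Rightarrow$ socle is elementary abelian of prime-power order acting regularly," which is standard and is the form in \cite{Cam}. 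If one preferred not to lean on that, one could alternatively argue that an abelian transitive permutation group is regular, so the abelian socle has order $n$ and is a direct product of its Sylow subgroups; primitivity forces it to be a single elementary abelian $\ell$-group, and the same prime-power dichotomy applies.
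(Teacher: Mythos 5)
Your proof is correct and, in its final streamlined form, is exactly the paper's argument: O'Nan--Scott gives $n=r^m$ with elementary abelian socle, and one splits into $r=p$ (so $O_p(G)\supseteq\soc G\neq 1$) versus $r\neq p$ (so $p\nmid n$ and Lemma~\ref{LEasy} applies). The detour through Corollary~\ref{COP} is unnecessary but harmless.
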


\begin{proof}
By the O'Nan-Scott Theorem, 
$n=r^m$ and $S:=\soc G$ is an elementary abelian $r$-group of order $r^m$ for a prime $r$. If $r=p$ we have $O_p(G)\geq S\neq 1$. Otherwise $p\nmid n$, and we are done by Lemma~\ref{LEasy}.  
\end{proof}

The following result will allow us to assume that $(S_1^*)^G=0$ for primitive subgroups $G\leq \SSS_n$. 

\begin{Corollary} \label{CThree} 
Let $n\geq 5$, $G\leq \SSS_n$ be a primitive subgroup, and $D^\la$ be an irreducible $\F\SSS_n$-module of dimension greater than $1$. If $D^\la\da_G$ is irreducible then $(S_1^*)^G=0$. 
\end{Corollary}
\begin{proof}
This follows from Lemmas~\ref{LInvOp}, \ref{LAbSoc} and Corollary~\ref{C2TrInv}.
\end{proof}

For imprimitive subgroups we will be using the following lemma:

\begin{Lemma}\label{l18}
Let $n=ab$ for some $a,b\in\Z_{>1}$. Then $(S_1^*)^{\SSS_a\wr\SSS_b}=0$ unless $p=b=2$ in which case $\dim(S_1^*)^{\SSS_a\wr\SSS_b}=1$.
\end{Lemma}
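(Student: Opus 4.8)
The plan is to identify $(S_1^*)^G$ with the kernel of a restriction map between character groups and then evaluate that kernel explicitly. Put $G:=\SSS_a\wr\SSS_b$, acting imprimitively on $\{1,\dots,n\}$ with $b$ blocks of size $a$; note $G$ is transitive. Since $G$ is transitive, Lemma~\ref{LEasy} disposes of the case $p\nmid n$, so I may assume $p\mid n$. Choose notation so that the point $1$ lies in the first block, and set $G_1:=\stab_G(1)$, so that $G_1\cong\SSS_{a-1}\times(\SSS_a\wr\SSS_{b-1})$, the first factor acting inside the first block and the second factor on the remaining $b-1$ blocks. I would apply the long exact cohomology sequence to the short exact sequence $0\to\bone_{\SSS_n}\to M_1\to S_1^*\to 0$ (already used in the proof of Lemma~\ref{L160817}), restricted to $G$. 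The inclusion $\bone_{\SSS_n}\hookrightarrow M_1$ induces an isomorphism on $G$-invariants, both sides being $\F(e_1+\dots+e_n)$ by transitivity, so the connecting map identifies $(S_1^*)^G$ with the kernel of $H^1(G,\bone_G)\to H^1(G,M_1\da_G)$. Since $M_1\da_G\cong\bone_{G_1}\ua^G$, Shapiro's lemma identifies $H^1(G,M_1\da_G)$ with $H^1(G_1,\bone_{G_1})$ and this map with restriction, whence
$$(S_1^*)^G\ \cong\ \ker\bigl(\res^G_{G_1}\colon H^1(G,\bone_G)\to H^1(G_1,\bone_{G_1})\bigr),$$
where $H^1(G,\bone_G)$ is the space of group homomorphisms $G\to\F^+$ into the additive group of $\F$. (Alternatively, this identification can be obtained by hand, by analysing which vectors $w=\sum_ic_ie_i$ of the natural permutation module $M_1$ satisfy $gw-w\in\F(e_1+\dots+e_n)$ for all $g\in G$.)

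Next I would compute $H^1(G,\bone_G)$. For $a,b\geq 2$ the abelianisation of $\SSS_a\wr\SSS_b$ is $\CCC_2\times\CCC_2$, with the two nontrivial linear characters the \emph{internal sign} $\sigma_{\mathrm{int}}\colon(g_1,\dots,g_b;\pi)\mapsto\prod_i\sgn(g_i)$ and the \emph{block sign} $\sigma_{\mathrm{blk}}\colon(g_1,\dots,g_b;\pi)\mapsto\sgn(\pi)$. Hence $H^1(G,\bone_G)=0$ when $p$ is odd, which already gives $(S_1^*)^G=0$ in that case; so I may assume $p=2$, and then $H^1(G,\bone_G)$ is the $2$-dimensional $\F$-space spanned by $\sigma_{\mathrm{int}}$ and $\sigma_{\mathrm{blk}}$ (regarded as maps into $\F^+$).

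It remains, for $p=2$, to decide which $\chi=c_1\sigma_{\mathrm{int}}+c_2\sigma_{\mathrm{blk}}$ vanish on $G_1\cong\SSS_{a-1}\times(\SSS_a\wr\SSS_{b-1})$. The observations I expect to verify are: on the base subgroup $\SSS_a\times\dots\times\SSS_a$ of the factor $\SSS_a\wr\SSS_{b-1}$ the character $\sigma_{\mathrm{int}}$ restricts to $\prod_i\sgn(g_i)\neq 0$ (as $a\geq 2$) while $\sigma_{\mathrm{blk}}$ restricts trivially, so $\sigma_{\mathrm{int}}\da_{G_1}\neq 0$ always and $\chi\da_{G_1}=0$ forces $c_1=0$; if $b\geq 3$ then $\SSS_{b-1}$ is nontrivial and $\sigma_{\mathrm{blk}}\da_{G_1}$ is the nonzero pullback of the sign character of $\SSS_{b-1}$, so $c_2=0$ too and $\ker\res=0$, giving $(S_1^*)^G=0$; and if $b=2$ then $G_1$ induces no permutation of the blocks, so $\sigma_{\mathrm{blk}}\da_{G_1}=0$ and $\ker\res=\F\sigma_{\mathrm{blk}}$, giving $\dim(S_1^*)^G=1$. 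In this last case $p=b=2$ indeed forces $p\mid n=2a$, consistently with the opening reduction, and combining the three steps yields the stated dichotomy.

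The step needing the most care is the first one: correctly setting up the isomorphism $(S_1^*)^G\cong\ker(\res^G_{G_1})$, in particular the use of Shapiro's lemma and the check that the induced map is genuinely restriction — or, in the hands-on version, that the assignment $w\mapsto\chi_w$ really produces group homomorphisms and has image exactly $\{\chi:\chi\da_{G_1}=0\}$, which is precisely where transitivity of $G$ and the hypothesis $p\mid n$ are used. After that, all that is left is a short computation with linear characters of wreath products; the degenerate shapes $a=2$ and $b=2$ (where one symmetric-group factor of $G_1$ collapses to a trivial group) should be checked directly, and the shape $b=2$ is exactly the origin of the stated exception.
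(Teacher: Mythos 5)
Your argument is correct, but it takes a genuinely different route from the paper. The paper proves Lemma~\ref{l18} by a direct computation in the basis $\bar v_1,\dots,\bar v_{n-1}$ of $S_1^*$: invariance under the base group $\SSS_a\times\dots\times\SSS_a$ forces the coefficients to be constant on blocks and zero on the last block, and invariance under the block permutations then forces $p=b=2$. You instead identify $(S_1^*)^G$ cohomologically with $\ker\bigl(\res^G_{G_1}\colon H^1(G,\F)\to H^1(G_1,\F)\bigr)$ via the long exact sequence and Eckmann--Shapiro (valid because transitivity makes $\bone^G\to M_1^G$ an isomorphism), and then compute with the abelianisation $(\SSS_a\wr\SSS_b)^{\mathrm{ab}}\cong\CCC_2\times\CCC_2$. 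All the steps check out: the composite of the map induced by $1\mapsto\sum_i e_i$ with the Shapiro isomorphism is indeed restriction, $H^1(G,\F)$ vanishes for $p$ odd and is spanned by $\sigma_{\mathrm{int}},\sigma_{\mathrm{blk}}$ for $p=2$, and the analysis of which combinations die on $G_1\cong\SSS_{a-1}\times(\SSS_a\wr\SSS_{b-1})$ is right, with the $b=2$ case producing exactly the one-dimensional exception. Your approach is more conceptual and isolates the general principle (for transitive $G$ with point stabilizer $G_1$, $(S_1^*)^G\cong\ker\res^G_{G_1}$ on $H^1$), which subsumes the paper's Lemmas~\ref{LEasy} and \ref{L160817} as well; note in particular that your opening reduction to $p\mid n$ is not actually needed, since the cohomological identification already holds for any transitive $G$. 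The price is the reliance on Shapiro's lemma and the wreath-product abelianisation, where the paper's five-line explicit check is entirely self-contained.
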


\begin{proof}
This is an explicit check. We use the standard basis $v_1,\dots,v_n$ in $M_1$ and the corresponding elements $\bar v_1,\dots\bar v_n\in S_1^*=M_1/\langle \sum_{j=1}^nv_j\rangle$. 
Then $\{\bar v_1,\ldots,\bar v_{n-1}\}$ is a basis of $S_1^*$. Suppose that a non-trivial linear combination $\sum_{i=1}^{n-1}c_i\bar v_i$ is $(\SSS_a\wr\SSS_b)$-invariant. The $(\SSS_a\times\dots\times \SSS_a)$-invariance is equivalent to $c_{ka+1}=\ldots=c_{(k+1)a}$ for all $0\leq k\leq b-2$ and $c_{(b-1)a+1}=\ldots=c_{n-1}=0$. Action of $\SSS_b$ which permutes the blocks of size $a$ leaves such a vector invariant if and only if all $c_1=\dots=c_{n-a}$, $p=2$ and $b=2$. 
\end{proof}

\section{Results on branching}\label{SBr}

\subsection{Modular branching rules}\label{SSGenBrR}
Here we review some results from \cite{KBrII,KBrIII,KBook}. 
Let $V$ be an $\F\SSS_n$-module in a block $B_\theta$ for some $\theta\in \Theta_n$, cf. (\ref{EBlocks}). For any $i\in I$, we define $e_iV$ to be the projection of $V\da_{\SSS_{n-1}}$ to the block $B_{\theta-\ga_i}$ and $f_iV$ to be the projection of $V\ua^{\SSS_{n+1}}$ to the block $B_{\theta+\ga_i}$. We then extend the definition of $e_iV$ and $f_iV$ to arbitrary $\F\SSS_n$-modules additively, yielding the functors 
$$
e_i:\mod{\F\SSS_n}\to \mod{\F\SSS_{n-1}},\quad f_i:\mod{\F\SSS_n}\to \mod{\F\SSS_{n+1}}.
$$

More generally, for any $r\in\Z_{\geq 1}$ we have {\em divided power functors} 
$$e_i^{(r)}:\mod{\F \SSS_n}\rightarrow \mod{\F \SSS_{n-r}},\quad f_i^{(r)}:\mod{\F \SSS_n}\rightarrow\mod{\F \SSS_{n+r}},$$ 
see \cite[\S11.2]{KBook}. 
The following is well-known, see e.g. \cite[Lemma 8.2.2(ii),  Theorems 8.3.2(i), 11.2.7,  11.2.8]{KBook}:

\begin{Lemma}\label{Lemma45}
For any $i\in I$ and $r\in \Z_{\geq 1}$, the functors $e_i^{(r)}$ and $f_i^{(r)}$ are biadjoint and commute with duality. Moreover,  
for any $\F \SSS_n$-module $V$ we have 
\[V\da_{\SSS_{n-1}}\cong e_0V\oplus\ldots\oplus e_{p-1}V\hspace{24pt}\mbox{and}\hspace{24pt}V\ua^{\SSS_{n+1}}\cong f_0V\oplus\ldots\oplus f_{p-1}V.\]
\end{Lemma}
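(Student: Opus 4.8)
\emph{Plan.} The statement is standard (the authors cite \cite{KBook}), but here is how one would prove it. Everything reduces to three ingredients: (a)~the biadjointness of ordinary restriction $\Res^{\SSS_n}_{\SSS_{n-1}}$ and induction $\Ind^{\SSS_n}_{\SSS_{n-1}}$; (b)~the characteristic-free branching filtration of Specht modules together with the identification of blocks of $\F\SSS_n$ via residue contents (Lemma~\ref{LNak}); and (c)~the self-duality of the $D^\mu$. For $r=1$ this gives a short elementary argument; the genuine content is in the divided-power refinement $r\ge 2$, for which one invokes the machinery of \cite[\S11.2]{KBook}.

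\emph{The direct sum decompositions.} Fix an $\F\SSS_n$-module $V$; by additivity we may assume $V$ lies in a single block $B_\theta$. Decomposing the $\F\SSS_{n-1}$-module $V\da_{\SSS_{n-1}}$ over the blocks of $\F\SSS_{n-1}$, it suffices to show that its $B_{\theta'}$-component vanishes unless $\theta'=\theta-\ga_i$ for some $i\in I$; that component is then by definition $e_iV$, and the first displayed isomorphism follows. Let $\mathcal C$ be the class of $V\in B_\theta$ satisfying this vanishing. Since block projection is exact and additive, $\mathcal C$ is closed under submodules, quotients and extensions. By the Specht-module branching rule \cite[Theorem~9.3]{JamesBook}, $S^\la\da_{\SSS_{n-1}}$ has a filtration whose subquotients are the $S^{\la_A}$ for $A$ a removable node of $\la$, with $\operatorname{cont}(\la_A)=\theta-\ga_{\res A}$; hence $S^\la\in\mathcal C$ whenever $\operatorname{cont}(\la)=\theta$. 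As $D^\mu=\head S^\mu$ for $\mu\in\Parp(n)$ with $\operatorname{cont}(\mu)=\theta$, we get $D^\mu\in\mathcal C$, and since every module in $B_\theta$ is an iterated extension of such $D^\mu$ (by Lemma~\ref{LNak}), $\mathcal C=\mod{B_\theta}$. The decomposition of $V\ua^{\SSS_{n+1}}$ is obtained symmetrically, from the addable-node Specht filtration of $S^\la\ua^{\SSS_{n+1}}$, or directly from the restriction case via the adjunction $\Ind\dashv\Res$.

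\emph{Biadjointness and duality for $r=1$.} The functors $\Res^{\SSS_n}_{\SSS_{n-1}}$ and $\Ind^{\SSS_n}_{\SSS_{n-1}}$ are biadjoint: $\Ind$ is left adjoint to $\Res$ by tensor--hom, and since $[\SSS_n:\SSS_{n-1}]<\infty$ one has $\Ind\cong\Coind$, so $\Ind$ is also a right adjoint of $\Res$. Both $\Res$ and $\Ind$ commute with the contragredient duality $(-)^*$, the latter via $(\Ind W)^*\cong\Coind(W^*)\cong\Ind(W^*)$. Now $e_i$ (resp.\ $f_i$) is $\Res$ (resp.\ $\Ind$) pre- and post-composed with projection onto suitable blocks, so for $V\in B_\theta$ and $W\in B_{\theta'}$ the relevant Hom-spaces vanish unless $\theta'=\theta-\ga_i$; and when $\theta'=\theta-\ga_i$, $e_iV$ is a block summand of $V\da_{\SSS_{n-1}}$ and $f_iW$ a block summand of $W\ua^{\SSS_{n+1}}$, while there are no nonzero maps between distinct blocks, so $\Hom_{\SSS_{n-1}}(e_iV,W)=\Hom_{\SSS_{n-1}}(V\da_{\SSS_{n-1}},W)$ and $\Hom_{\SSS_n}(V,f_iW)=\Hom_{\SSS_n}(V,W\ua^{\SSS_{n+1}})$, and likewise with the two arguments interchanged. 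The biadjunction of $\Res$ and $\Ind$ then supplies the natural isomorphisms exhibiting $(e_i,f_i)$ as a biadjoint pair. That $e_i$ and $f_i$ commute with $(-)^*$ holds because $V$ and $V^*$ have the same composition factors, all of the form $D^\mu$ with $(D^\mu)^*\cong D^\mu$ by \cite[11.5]{JamesBook}, hence lie in the same block; so block projection commutes with $(-)^*$, as do $\Res$ and $\Ind$.

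\emph{Divided powers and the main obstacle.} For $r\ge 2$ one uses the construction of $e_i^{(r)}$ and $f_i^{(r)}$ in \cite[\S11.2]{KBook}: there is a natural action of $\F\SSS_r$ on the iterated functor $e_i^{\circ r}$ (and on $f_i^{\circ r}$), coming from Jucys--Murphy-type operators, under which $e_i^{\circ r}\cong (e_i^{(r)})^{\oplus r!}$ and $f_i^{\circ r}\cong(f_i^{(r)})^{\oplus r!}$, the divided power being the image of a fixed primitive idempotent. Biadjointness of $(e_i^{\circ r},f_i^{\circ r})$ and its compatibility with $(-)^*$ are automatic from the $r=1$ case (a composite of biadjoint pairs is biadjoint). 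The remaining point is that, under this biadjunction and under $(-)^*$, the idempotent carving out $e_i^{(r)}$ inside $\End(e_i^{\circ r})$ is matched with the idempotent carving out $f_i^{(r)}$ inside $\End(f_i^{\circ r})$; this compatibility is \cite[Theorems~11.2.7, 11.2.8]{KBook}, and biadjointness and duality-invariance of $(e_i^{(r)},f_i^{(r)})$ follow from it by transport of structure. This last matching of idempotents is the real crux --- it is conceptual rather than computational, and rests on the affine-Hecke-algebra formalism; for $r=1$ there is no difficulty whatsoever.
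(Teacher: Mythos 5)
Your argument is correct. Note, though, that the paper does not prove this lemma at all: it is stated as ``well-known'' with a pointer to \cite[Lemma 8.2.2(ii), Theorems 8.3.2(i), 11.2.7, 11.2.8]{KBook}, so there is no in-paper proof to compare against. What you have written is a faithful reconstruction of the standard argument behind that citation: the block-component vanishing via exactness of block projection plus the removable/addable-node Specht filtrations and Lemma~\ref{LNak}; biadjointness of $e_i,f_i$ from $\Ind\cong\Coind$ for the Frobenius extension $\F\SSS_{n-1}\subseteq\F\SSS_n$ together with the observation that block truncation does not change the relevant Hom-spaces; and duality-compatibility from self-duality of the $D^\mu$ (hence of blocks). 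All of these steps check out. You are also candid that for $r\geq 2$ the identification of the divided-power summand inside $e_i^{\circ r}$, and its compatibility with the biadjunction and with $(-)^*$, is exactly the content you must import from \cite[\S 11.2]{KBook} --- so your proof of the $r\geq 2$ part of the first sentence is, like the paper's, ultimately a citation; only the $r=1$ statements and the two direct-sum decompositions are proved from scratch. That is a reasonable division of labour and matches where the genuine difficulty lies.
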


Recall $\tilde e_i, \tilde f_i, \eps_i, \phi_i$ from \S\ref{SSPar}. The following two results are contained in \cite[Theorems 11.2.10,  11.2.11]{KBook}, \cite[Theorem 1.4]{KDec} and \cite[Theorems E(iv), E$'$(iv)]{BrK1}.

\begin{Lemma}\label{Lemma39}
Let $\lambda\in\Parp(n)$, $i\in I$ and $r\in\Z_{\geq 0}$. Then:
\begin{enumerate}
\item[{\rm (i)}] $e_i^rD^\lambda\cong(e_i^{(r)}D^\lambda)^{\oplus r!}$;
\item[{\rm (ii)}]  $e_i^{(r)}D^\lambda\not=0$ if and only if $r\leq \eps_i(\lambda)$, in which case $e_i^{(r)}D^\lambda$ is a self-dual indecomposable module with socle and head both isomorphic to $D^{\tilde e_i^r\la}$.  
\item[{\rm (iii)}]  $[e_i^{(r)}D^\lambda:D^{\tilde e_i^r\la}]=\binom{\eps_i(\lambda)}{r}=\dim\End_{\SSS_{n-r}}(e_i^{(r)}D^\lambda)$;
\item[{\rm (iv)}] if $D^\mu$ is a composition factor of $e_i^{(r)}D^\lambda$ then $\eps_i(\mu)\leq \eps_i(\lambda)-r$, with equality holding if and only if $\mu=\tilde e_i^r\la$;
\item[{\rm (v)}] 
$\dim\End_{\SSS_{n-1}}(D^\lambda\da_{\SSS_{n-1}})=\sum_{j\in I}\eps_j(\lambda)$.
\item[{\rm (vi)}] Let $A$ be a removable node of $\la$ such that $\la_A$ is $p$-regular. Then $D^{\la_A}$ is a composition factor of $e_i D^\la$ if and only if $A$ is $i$-normal, in which case $[e_i D^\la:D^{\la_A}]$ is one more than the number of $i$-normal nodes for $\la$ above $A$. 
\end{enumerate}
\end{Lemma}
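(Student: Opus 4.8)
The plan is to assemble the statement from the results cited immediately before it, matching up the combinatorial conventions along the way; there is no new mathematical content beyond this bookkeeping. Part (i) is the standard identity relating the $r$-fold composite $e_i^r$ to the divided power functor $e_i^{(r)}$ on a block, as recorded in \cite[\S11.2]{KBook}: on the relevant block one has $e_i^r\cong(e_i^{(r)})^{\oplus r!}$. This also lets one transfer every multiplicity statement about $e_i^r$ to one about $e_i^{(r)}$, which I will use for (v).

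Parts (ii), (iii) and (iv) are precisely \cite[Theorems 11.2.10 and 11.2.11]{KBook} together with \cite[Theorem 1.4]{KDec}: the nonvanishing criterion that $e_i^{(r)}D^\lambda\neq 0$ if and only if $r\leq\eps_i(\lambda)$; the self-duality and indecomposability of $e_i^{(r)}D^\lambda$ with $\soc$ and $\head$ both isomorphic to $D^{\tilde e_i^r\la}$; the multiplicity formula $[e_i^{(r)}D^\lambda:D^{\tilde e_i^r\la}]=\binom{\eps_i(\lambda)}{r}$ and its identification with $\dim\End_{\SSS_{n-r}}(e_i^{(r)}D^\lambda)$; and the bound $\eps_i(\mu)\leq\eps_i(\lambda)-r$ for composition factors $D^\mu$ with equality characterising $\mu=\tilde e_i^r\la$. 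Here the only thing to confirm is that the operator $\tilde e_i$ defined via good nodes in \S\ref{SSPar} coincides with the crystal operator used in those references, which it does since the reduced-signature/good-node description is the standard one.

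For (v) I would combine Lemma~\ref{Lemma45}, which gives $D^\lambda\da_{\SSS_{n-1}}\cong\bigoplus_{j\in I}e_jD^\lambda$ with the summands lying in distinct blocks, with parts (i)--(iii) taken at $r=1$: then $\dim\End_{\SSS_{n-1}}(e_jD^\lambda)=[e_jD^\lambda:D^{\tilde e_j\la}]=\eps_j(\lambda)$, and summing over $j\in I$ (there are no cross-homomorphisms between the $e_jD^\lambda$ because they lie in different blocks) yields $\dim\End_{\SSS_{n-1}}(D^\lambda\da_{\SSS_{n-1}})=\sum_{j\in I}\eps_j(\lambda)$; alternatively this is stated outright in \cite[Theorem 11.2.8]{KBook}. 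Finally, (vi) is a reformulation of \cite[Theorems E(iv), E$'$(iv)]{BrK1}: for a removable node $A$ of $\la$ with $\la_A$ $p$-regular, $D^{\la_A}$ occurs in $e_iD^\lambda$ exactly when $A$ is $i$-normal, with multiplicity one more than the number of $i$-normal nodes of $\lambda$ strictly above $A$.

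The main (and essentially the only) obstacle is the absence of genuine difficulty: one must take care that the several sources all use the conventions for residues, crystal operators $\tilde e_i$, and normal/conormal nodes fixed in \S\ref{SSPar}, so that their statements can be quoted verbatim rather than re-derived. Once this compatibility of conventions is checked, the lemma follows by citation.
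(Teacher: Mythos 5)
Your proposal matches the paper exactly: the paper gives no proof of this lemma, simply asserting that parts (i)--(v) are contained in \cite[Theorems 11.2.10, 11.2.11]{KBook} and \cite[Theorem 1.4]{KDec} and part (vi) in \cite[Theorems E(iv), E$'$(iv)]{BrK1}, which is precisely the citation-assembly you describe. Your extra derivation of (v) from the block decomposition of Lemma~\ref{Lemma45} together with (i)--(iii) at $r=1$ is a correct (and standard) way to see that part, consistent with the reference \cite[Theorem 11.2.8]{KBook} the paper points to.
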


\begin{Lemma}\label{Lemma40}
Let $\lambda\in\Parp(n)$, $i\in I$ and $r\in\Z_{\geq 0}$. Then:
\begin{enumerate}
\item[{\rm (i)}] $f_i^rD^\lambda\cong(f_i^{(r)}D^\lambda)^{\oplus r!}$;
\item[{\rm (ii)}] $f_i^{(r)}D^\lambda\not=0$ if and only if $r\leq \phi_i(\lambda)$, in which case $f_i^{(r)}D^\lambda$ is a self-dual indecomposable module with socle and head both isomorphic to $D^{\tilde f_i^r\la}$.  
\item[{\rm (iii)}]  $[f_i^{(r)}D^\lambda:D^{\tilde f_i^r\la}]=\binom{\phi_i(\lambda)}{r}=\dim\End_{\SSS_{n+r}}(f_i^{(r)}D^\lambda)$;
\item[{\rm (iv)}] if $D^\mu$ is a composition factor of $f_i^{(r)}D^\lambda$ then $\phi_i(\mu)\leq \phi_i(\lambda)-r$, with equality holding if and only if $\mu=\tilde f_i^r\la$.
\item[{\rm (v)}]
$\dim\End_{\SSS_{n+1}}(D^\lambda\ua^{\SSS_{n+1}})=\sum_{j\in I}\phi_j(\lambda)$.
\item[{\rm (vi)}] Let $B$ be an addable node for $\la$ such that $\la^B$ is $p$-regular. Then $D^{\la^B}$ is a composition factor of $f_i D^\la$ if and only if $B$ is $i$-conormal, in which case $[f_i D^\la:D^{\la^B}]$ is one more than the number of $i$-conormal nodes for $\la$ below~$B$. 
\end{enumerate}
\end{Lemma}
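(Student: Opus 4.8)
The plan is to treat this as the $f_i$-counterpart of Lemma~\ref{Lemma39}; both are assembled from \cite[Theorems 11.2.10, 11.2.11]{KBook}, \cite[Theorem 1.4]{KDec} and \cite[Theorems E(iv), E$'$(iv)]{BrK1}, so one route is simply to cite these, and what follows only indicates how the pieces fit and where the real work lies. First I would dispatch (i) as the general divided-power identity for the functors $f_i^{(r)}$, valid for an arbitrary module (see \cite[\S11.2]{KBook}).

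Next I would handle the qualitative content of (ii) by biadjointness and duality. By Lemma~\ref{Lemma45}, $e_i^{(r)}$ and $f_i^{(r)}$ are biadjoint, so Frobenius reciprocity gives $\Hom_{\SSS_{n+r}}(D^\mu,f_i^{(r)}D^\lambda)\cong\Hom_{\SSS_n}(e_i^{(r)}D^\mu,D^\lambda)$ for every $\mu\in\Parp(n+r)$, which by Lemma~\ref{Lemma39}(ii) is non-zero precisely when $\eps_i(\mu)\geq r$ and $\lambda=\tilde e_i^r\mu$, i.e.\ when $\mu=\tilde f_i^r\lambda$ (iterating Lemma~\ref{LGoodCogood}). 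Thus $\soc(f_i^{(r)}D^\lambda)\cong D^{\tilde f_i^r\lambda}$ when $\tilde f_i^r\lambda$ is defined and $f_i^{(r)}D^\lambda=0$ otherwise; since $\tilde f_i^r\lambda$ is defined iff $\phi_i(\lambda)\geq r$ (a short $i$-signature computation, cf.\ the proof of Lemma~\ref{l2}), this yields the non-vanishing criterion. Because $f_i^{(r)}$ commutes with duality (Lemma~\ref{Lemma45}) and $(D^\lambda)^*\cong D^\lambda$, the module $f_i^{(r)}D^\lambda$ is self-dual, so its head is also $D^{\tilde f_i^r\lambda}$ and a simple socle forces indecomposability. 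Granting (iii), part (v) follows at once: $D^\lambda\ua^{\SSS_{n+1}}\cong\bigoplus_{j\in I}f_jD^\lambda$ by Lemma~\ref{Lemma45}, the summands lie in pairwise distinct blocks (the residue contents $\operatorname{cont}(\lambda)+\ga_j$ being pairwise distinct), and $\dim\End_{\SSS_{n+1}}(f_jD^\lambda)=\binom{\phi_j(\lambda)}{1}=\phi_j(\lambda)$ by (iii) with $r=1$.

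The substance --- and the step I expect to be the main obstacle --- is in (iii), (iv) and (vi): the exact composition multiplicities and the $\phi_i$-bounds on composition factors. These are not formal consequences of Lemma~\ref{Lemma39}: Hom-adjunctions transport only socle/head information, not whole composition series, and $\eps_i$ and $\phi_i$ do not transform into one another under the adjunctions (for instance $\eps_i(\tilde f_i\lambda)=\eps_i(\lambda)+1$, and the count of $i$-normal nodes above a removable node governing Lemma~\ref{Lemma39}(vi) differs in general from the count of $i$-conormal nodes below an addable node governing (vi) here --- as one sees already for $p=2$, $\lambda=(2)$). I would therefore obtain (iii), (iv), (vi) exactly as their $e_i$-analogues are obtained, via the affine Hecke algebra / Jantzen-filtration branching machinery of \cite{KBook,KDec,BrK1}, and then translate the output from normal- to conormal-node language using the purely combinatorial fact that the $i$-signatures of $\lambda$ and $\lambda^B$ differ only by changing the single $+$ at the entry of $B$ into a single $-$ --- a computation in the spirit of the proof of Lemma~\ref{l2}, but one that must be carried through with care.
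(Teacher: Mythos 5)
Your proposal matches the paper: Lemma~\ref{Lemma40} is stated there without proof, as a direct citation of \cite[Theorems 11.2.10, 11.2.11]{KBook}, \cite[Theorem 1.4]{KDec} and \cite[Theorems E(iv), E$'$(iv)]{BrK1} --- exactly the sources you invoke for the substantive parts (iii), (iv), (vi). Your supplementary derivations of (i), (ii) and (v) from Lemma~\ref{Lemma39} via biadjointness, duality and the block decomposition are correct, as is your identification of which parts do not follow formally from the $e_i$-statements.
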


\begin{Lemma}\label{l4} {\rm \cite[Lemma 8.5.4(ii)]{KBook}}
Let $i\in I$ and $\la\in\Parp(n)$. Then $\soc(f_ie_iD^\la)\cong (D^\la)^{\oplus \vare_i(\la)}$.
\end{Lemma}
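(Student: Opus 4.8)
The plan is to pin down $\soc(f_ie_iD^\la)$ by combining the biadjointness of $e_i$ and $f_i$ (Lemma~\ref{Lemma45}) with the structural information in Lemmas~\ref{Lemma39} and~\ref{LGoodCogood}. First dispose of the trivial case: if $\vare_i(\la)=0$ then $e_iD^\la=0$ by Lemma~\ref{Lemma39}(ii), hence $f_ie_iD^\la=0$ and the claimed isomorphism reads $0\cong 0$. So assume $\vare_i(\la)>0$; then, again by Lemma~\ref{Lemma39}(ii), $e_iD^\la$ is a nonzero indecomposable $\F\SSS_{n-1}$-module whose socle and head are both isomorphic to $D^{\tilde e_i\la}$.

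For the multiplicity of $D^\la$ in $\soc(f_ie_iD^\la)$ I would use the adjunction between $e_i$ and $f_i$ directly: $\Hom_{\SSS_n}(D^\la,f_ie_iD^\la)\cong\Hom_{\SSS_{n-1}}(e_iD^\la,e_iD^\la)=\End_{\SSS_{n-1}}(e_iD^\la)$, and the latter has dimension $\binom{\vare_i(\la)}{1}=\vare_i(\la)$ by Lemma~\ref{Lemma39}(iii). Since $\End_{\SSS_n}(D^\la)=\F$, this says exactly that $D^\la$ occurs in $\soc(f_ie_iD^\la)$ with multiplicity $\vare_i(\la)$.

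It then remains to show no other simple occurs in the socle. Suppose $D^\mu\hookrightarrow f_ie_iD^\la$ for some $\mu\in\Parp(n)$. By adjunction this produces a nonzero homomorphism $\phi\colon e_iD^\mu\to e_iD^\la$; in particular $e_iD^\mu\neq 0$, so $\vare_i(\mu)>0$ and, by Lemma~\ref{Lemma39}(ii), $e_iD^\mu$ has simple head $D^{\tilde e_i\mu}$ while $e_iD^\la$ has simple socle $D^{\tilde e_i\la}$. Hence $\im\phi$, being a nonzero quotient of $e_iD^\mu$ and a nonzero submodule of $e_iD^\la$, has head $D^{\tilde e_i\mu}$ and socle $D^{\tilde e_i\la}$; consequently $D^{\tilde e_i\mu}$ is a composition factor of $e_iD^\la$, and $D^{\tilde e_i\la}$ is a composition factor of $e_iD^\mu$. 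Now apply Lemma~\ref{Lemma39}(iv) with $r=1$: its equality clause (applied with base partition $\mu$, then $\la$, to the socles of $e_iD^\mu$, $e_iD^\la$) gives $\vare_i(\tilde e_i\mu)=\vare_i(\mu)-1$ and $\vare_i(\tilde e_i\la)=\vare_i(\la)-1$, and its inequality part applied to the two composition-factor statements above yields $\vare_i(\mu)\le\vare_i(\la)$ and $\vare_i(\la)\le\vare_i(\mu)$ respectively. Therefore $\vare_i(\mu)=\vare_i(\la)$, which forces the equality clause of Lemma~\ref{Lemma39}(iv) to hold for the composition factor $D^{\tilde e_i\mu}$ of $e_iD^\la$, i.e.\ $\tilde e_i\mu=\tilde e_i\la$; then Lemma~\ref{LGoodCogood}(i) gives $\mu=\tilde f_i\tilde e_i\mu=\tilde f_i\tilde e_i\la=\la$. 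Together with the multiplicity count this yields $\soc(f_ie_iD^\la)\cong(D^\la)^{\oplus\vare_i(\la)}$.

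The one delicate point is the bookkeeping with the invariant $\vare_i$: one has to invoke the equality case of Lemma~\ref{Lemma39}(iv) in exactly the right places (first to evaluate $\vare_i(\tilde e_i\nu)$, then to conclude $\tilde e_i\mu=\tilde e_i\la$) and keep straight the two ``opposite'' composition-factor inclusions coming from the head and the socle of $\im\phi$. Everything else---the reduction to $\vare_i(\la)>0$, the two adjunction isomorphisms, and the final appeal to the reversibility of $\tilde e_i$ in Lemma~\ref{LGoodCogood}(i)---is routine.
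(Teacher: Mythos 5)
Your proof is correct: the reduction to $\vare_i(\la)>0$, the computation $\dim\Hom_{\SSS_n}(D^\la,f_ie_iD^\la)=\dim\End_{\SSS_{n-1}}(e_iD^\la)=\vare_i(\la)$ via the biadjunction of Lemma~\ref{Lemma45} and Lemma~\ref{Lemma39}(iii), and the exclusion of any other $D^\mu$ from the socle by playing the head/socle of $\im\phi$ against the equality clause of Lemma~\ref{Lemma39}(iv) and then recovering $\mu=\la$ from Lemma~\ref{LGoodCogood}(i) are all valid. The paper gives no proof of this lemma (it is quoted from \cite[Lemma 8.5.4(ii)]{KBook}), and your argument is essentially the standard one found there.
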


\begin{Lemma}\label{l24}
Let $\la\in\Parp(n)$ and $i\in I$. Then 
$$[f_ie_iD^\la:D^\la]=\vare_i(\la)(\phi_i(\la)+1)\quad \text{and}\quad
[e_if_iD^\la:D^\la]=\phi_i(\la)(\vare_i(\la)+1).
$$
\end{Lemma}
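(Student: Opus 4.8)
The plan is to compute $[f_ie_iD^\la:D^\la]$ and $[e_if_iD^\la:D^\la]$ by combining dimension counts of endomorphism algebras of the modules $e_iD^\la\ua^{\SSS_{n}}$ and $e_iD^\la$ with the structural information about $e_iD^\la$ and $f_iD^\la$ recorded in Lemmas \ref{Lemma39} and \ref{Lemma40}. The key observation is that $f_i$ and $e_i$ are biadjoint (Lemma \ref{Lemma45}, or rather Lemma \ref{Lemma39}/\ref{Lemma40} with $r=1$), so
\[
[f_ie_iD^\la:D^\la]=\dim\Hom_{\SSS_n}(f_ie_iD^\la,\ ?\,)\ \text{with a suitable injective/projective-like cover argument,}
\]
but more directly: $\dim\Hom_{\SSS_n}(f_ie_iD^\la,D^\la)$ and $\dim\Hom_{\SSS_n}(D^\la,f_ie_iD^\la)$ both equal, by adjunction, $\dim\Hom_{\SSS_{n-1}}(e_iD^\la,e_iD^\la)=\dim\End_{\SSS_{n-1}}(e_iD^\la)$, which by Lemma \ref{Lemma39}(iii) (with $r=1$) equals $\binom{\eps_i(\la)}{1}=\eps_i(\la)$. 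So the head and socle multiplicities of $D^\la$ in $f_ie_iD^\la$ are each $\eps_i(\la)$; indeed Lemma \ref{l4} already gives $\soc(f_ie_iD^\la)\cong(D^\la)^{\oplus\eps_i(\la)}$.

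First I would pin down the total multiplicity via a different adjunction. We have $[f_ie_iD^\la:D^\la]=\dim\Hom_{\SSS_n}(P,f_ie_iD^\la)/(\dim\End D^\la)$ where $P$ is the projective cover of $D^\la$ — but cleaner is to use that $e_i$ is exact and $f_i$ is exact, so $[f_ie_iD^\la:D^\la]$ can be read off from composition-factor bookkeeping: $[f_ie_iD^\la:D^\la]=\sum_{\mu}[e_iD^\la:D^\mu]\,[f_iD^\mu:D^\la]$. Now $[f_iD^\mu:D^\la]\neq 0$ forces $\la=\mu^B$ for an $i$-conormal node $B$ of $\mu$, equivalently $\mu=\la_A$ for an $i$-normal node $A$ of $\la$ (Lemma \ref{Lemma40}(vi) combined with Lemma \ref{l2}/\ref{LGoodCogood}-type reasoning). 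For such $\mu=\la_A$, Lemma \ref{Lemma39}(vi) gives $[e_iD^\la:D^{\la_A}]=$ (one plus the number of $i$-normal nodes strictly above $A$), and Lemma \ref{Lemma40}(vi) gives $[f_iD^{\la_A}:D^\la]=$ (one plus the number of $i$-conormal nodes of $\la_A$ strictly below $B$). The main computational step is then to show that as $A$ ranges over the $\eps_i(\la)$ $i$-normal nodes of $\la$, the sum of the products of these two quantities telescopes to $\eps_i(\la)(\phi_i(\la)+1)$. The crucial combinatorial input — and the step I expect to be the main obstacle — is understanding precisely how passing from $\la$ to $\la_A$ changes the reduced $i$-signature: removing the $i$-good-type node $A$ turns a "$-$" into a "$+$" in the appropriate (partly reduced) $i$-signature, as in the proof of Lemma \ref{l2}, so if $A$ is the $k$-th normal node from the top (so there are $k-1$ normal nodes above it), then $\la_A$ has $k-1$ of the original normal "$-$"s still unmatched above the position of $A$'s slot, which after re-reduction pair off with conormal "$+$"s, and one computes that the number of conormal nodes of $\la_A$ below $B$ is $\phi_i(\la)+1-k+1 = \phi_i(\la)-k+2$... one must track this carefully. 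Summing $k\cdot(\phi_i(\la)-k+2)$ is not obviously $\eps_i(\la)(\phi_i(\la)+1)$, so I expect the correct bookkeeping is slightly different and requires drawing the signature picture as in Lemma \ref{l2}; the honest statement is that $\sum_{k=1}^{\eps_i(\la)}[e_iD^\la:D^{\la_{A_k}}][f_iD^{\la_{A_k}}:D^\la]=\eps_i(\la)(\phi_i(\la)+1)$ and verifying this identity from the signature combinatorics is the heart of the argument.

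For the second formula, $[e_if_iD^\la:D^\la]=\phi_i(\la)(\eps_i(\la)+1)$, I would apply the first formula to the module $\mu:=\tilde f_i\la$ (which exists if $\phi_i(\la)>0$; if $\phi_i(\la)=0$ then $f_iD^\la=0$ and both sides vanish). By Lemma \ref{LGoodCogood}(ii), $\tilde e_i\mu=\la$, and by Lemma \ref{Lemma39}/\ref{Lemma40} one has $\eps_i(\mu)=\eps_i(\la)+1$ and $\phi_i(\mu)=\phi_i(\la)-1$ for the cogood/good nodes (this is the standard $\mathfrak{sl}_2$-string behaviour of $\eps_i,\phi_i$ under $\tilde f_i$). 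Alternatively — and this is cleaner — I would use self-duality and biadjointness symmetrically: the pair of functors $(e_i,f_i)$ and $(f_i,e_i)$ are both adjoint pairs, and applying the tensor-with-sign / Mullineux symmetry, or simply rerunning the composition-factor computation above with the roles of $e_i$ and $f_i$ (hence of normal and conormal nodes, and of Lemmas \ref{Lemma39}(vi) and \ref{Lemma40}(vi)) interchanged, yields $[e_if_iD^\la:D^\la]=\sum_B[f_iD^\la:D^{\la^B}][e_iD^{\la^B}:D^\la]=\phi_i(\la)(\eps_i(\la)+1)$ by the mirror-image of the same telescoping identity. Throughout I would freely use that $\tilde e_i\la,\tilde f_i\la$ are $p$-regular so all the modules in sight are genuine irreducibles, and that $e_i,f_i$ commute with duality (Lemma \ref{Lemma45}) so socle and head multiplicities agree, which serves as a useful consistency check against the answer $\eps_i(\la)$ already supplied by Lemma \ref{l4}.
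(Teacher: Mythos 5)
Your route is genuinely different from the paper's: the paper disposes of this lemma in two lines by quoting \cite[Lemma 8.5.4(i), Corollary 8.5.7]{KBook} together with the identities $\eps_i(\tilde f_i\la)=\eps_i(\la)+1$ and $\phi_i(\tilde e_i\la)=\phi_i(\la)+1$, whereas you propose a from-scratch computation of $[f_ie_iD^\la:D^\la]=\sum_\mu[e_iD^\la:D^\mu][f_iD^\mu:D^\la]$ via Lemmas~\ref{Lemma39}(vi) and \ref{Lemma40}(vi). That strategy can be made to work, but as written it has two real gaps. First, your claim that $[f_iD^\mu:D^\la]\neq 0$ forces $\mu=\la_A$ is not what Lemma~\ref{Lemma40}(vi) says: that lemma only decides which factors \emph{of the form} $D^{\mu^B}$ occur in $f_iD^\mu$, and $f_iD^\mu$ can have plenty of composition factors not of that form. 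To restrict the sum to $\mu=\la_A$ you need the two-sided dominance squeeze: $D^\mu$ a factor of $e_iD^\la$ gives $\mu\unrhd\la_A$ by Lemma~\ref{LBrEasy}, $D^\la$ a factor of $f_iD^\mu$ gives $\la\unrhd\mu^B$ by the induction analogue, and the two inequalities together force $\mu=\la_C$ for a removable $C$. You assert the conclusion without this argument.

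Second, and more seriously, the "heart of the argument" that you explicitly leave unverified is in fact where your trial computation goes wrong. If $A_k$ is the $k$-th $i$-normal node from the top, then in the partly reduced $i$-signature of $\la_{A_k}$ the new $+$ at $A_k$'s slot is immediately preceded by the $-$ of the next normal node below it (whenever $k<\eps_i(\la)$), so it cancels upon re-reduction: $A_k$ is \emph{not} conormal for $\la_{A_k}$ and $[f_iD^{\la_{A_k}}:D^\la]=0$ by Lemma~\ref{Lemma40}(vi). Only the good node $A_{\eps_i(\la)}$ survives, it becomes the rightmost conormal node of $\tilde e_i\la$ with the $\phi_i(\la)$ old conormal nodes below it, and the single surviving term is $\eps_i(\la)\cdot(\phi_i(\la)+1)$. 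Your guess that every $A_k$ contributes $k(\phi_i(\la)-k+2)$ is therefore wrong term by term, which is exactly why the sum refused to come out to $\eps_i(\la)(\phi_i(\la)+1)$; you noticed the mismatch but did not resolve it, so the proof as proposed is incomplete. (The mirror computation for $e_if_i$, and the consistency checks against Lemma~\ref{l4}, are fine once this is fixed.)
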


\begin{proof}
This follows from \cite[Lemma 8.5.4(i), Corollary 8.5.7]{KBook} since $\eps_i(\tilde f_i\la)=\eps_i(\la)+1$ and $\phi_i(\tilde e_i\la)=\phi_i(\la)+1$.
\end{proof}

\begin{Lemma} \label{L230418} 
Let $p=2$, $n$ be even, and $\la\in\Par_2(n)$ have exactly two normal nodes. If $D^\la$ is a direct summand of $(D^\la\da_{\SSS_{n-1}})\ua^{\SSS_n}$ then $f_0e_0D^\la \oplus f_1e_1D^\la\cong D^\la\oplus X$, where $X$ is a self-dual $\F \SSS_n$-module with socle and head both isomorphic to $D^\la$ with $[X:D^\la]\geq 2$. 
\end{Lemma}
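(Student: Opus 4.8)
The plan is to translate everything into the language of the Kleshchev branching functors $e_i,f_i$ and then feed in the combinatorial facts about normal and conormal nodes recorded above.

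First I would pin down which part of $(D^\la\da_{\SSS_{n-1}})\ua^{\SSS_n}$ is relevant. Since $p=2$ we have $I=\{0,1\}$, so by Lemma~\ref{Lemma45} $D^\la\da_{\SSS_{n-1}}=e_0D^\la\oplus e_1D^\la$ and therefore $(D^\la\da_{\SSS_{n-1}})\ua^{\SSS_n}=\bigoplus_{i,j\in I}f_je_iD^\la$. If $D^\la$ lies in the block $B_\theta$, then $e_iD^\la\in B_{\theta-\ga_i}$ and $f_je_iD^\la\in B_{\theta-\ga_i+\ga_j}$, so the $B_\theta$-component of $(D^\la\da_{\SSS_{n-1}})\ua^{\SSS_n}$ is exactly $M:=f_0e_0D^\la\oplus f_1e_1D^\la$. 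As $D^\la$ is indecomposable and lies in $B_\theta$, the hypothesis that $D^\la$ is a summand of $(D^\la\da_{\SSS_{n-1}})\ua^{\SSS_n}$ forces $D^\la$ to be a summand of $M$, so we may write $M\cong D^\la\oplus X$ and all that remains is to verify the asserted properties of $X$.

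Next I would read off the structure of $X$. Because $D^\la$ is self-dual (\cite[11.5]{JamesBook}) and $e_i,f_i$ commute with duality (Lemma~\ref{Lemma45}), $M$ is self-dual, so by the Krull--Schmidt theorem $X$ is self-dual. By Lemma~\ref{l4}, $\soc(f_ie_iD^\la)\cong(D^\la)^{\oplus\vare_i(\la)}$ (which also holds trivially when $\vare_i(\la)=0$, since then $e_iD^\la=0$), hence $\soc M\cong(D^\la)^{\oplus(\vare_0(\la)+\vare_1(\la))}\cong(D^\la)^{\oplus 2}$ because $\la$ has exactly two normal nodes; thus $\soc X\cong D^\la$, and dually $\head X\cong D^\la$. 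For the composition multiplicity I would use Lemma~\ref{l24}, which gives $[M:D^\la]=\vare_0(\la)(\phi_0(\la)+1)+\vare_1(\la)(\phi_1(\la)+1)$, and hence, using $\vare_0(\la)+\vare_1(\la)=2$,
\[
[X:D^\la]=[M:D^\la]-1=\vare_0(\la)\phi_0(\la)+\vare_1(\la)\phi_1(\la)+1.
\]

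So the only remaining point is the inequality $[X:D^\la]\geq 2$, i.e.\ $\vare_0(\la)\phi_0(\la)+\vare_1(\la)\phi_1(\la)\geq 1$, and this is where I expect the only real content to lie. If $\vare_0(\la)=\vare_1(\la)=1$, then Lemma~\ref{L2.8} gives $\phi_0(\la)+\phi_1(\la)=3$ and the left-hand side equals $3$, so there is nothing to do. Otherwise $\{\vare_0(\la),\vare_1(\la)\}=\{0,2\}$, say $\vare_i(\la)=2$ and $\vare_{1-i}(\la)=0$; then the claim becomes $\phi_i(\la)\geq 1$, and if instead $\phi_i(\la)=0$ then Lemma~\ref{L6.2} would force $n$ to be odd, contradicting the hypothesis that $n$ is even. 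This finishes the argument. The main (and essentially the only non-formal) step is thus recognizing that in the unbalanced case $\vare_i(\la)=2$ the required inequality is precisely what Lemma~\ref{L6.2} delivers once one knows $n$ is even; the balanced case $\vare_0(\la)=\vare_1(\la)=1$ is free from Lemma~\ref{L2.8}, and the only care needed elsewhere is to observe that the displayed formulas remain valid in the degenerate situation $\vare_i(\la)=0$, where $f_ie_iD^\la=0$.
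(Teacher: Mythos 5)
Your proposal is correct and follows essentially the same route as the paper: isolate $f_0e_0D^\la\oplus f_1e_1D^\la$ as the block component containing $D^\la$, get self-duality and the simple socle/head from Lemmas~\ref{Lemma45}, \ref{Lemma39} and \ref{l4}, and obtain $[X:D^\la]\geq 2$ from Lemma~\ref{l24} together with $\phi_i(\la)>0$ (via Lemma~\ref{L2.8} in the balanced case and Lemma~\ref{L6.2} in the unbalanced one). The only cosmetic difference is that the paper verifies $\soc X\cong D^\la$ by computing $\dim\Hom_{\SSS_n}(D^\la,M)=\eps_0(\la)+\eps_1(\la)=2$ via adjointness, whereas you read it off directly from the socle formula of Lemma~\ref{l4}; both are valid.
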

\begin{proof}
By Lemma~\ref{Lemma45}, we have 
$$(D^\la\da_{\SSS_{n-1}})\ua^{\SSS_n}\cong f_0e_0D^\la \oplus f_1e_1D^\la\oplus f_0e_1 D^\la\oplus f_1e_0 D^\la$$ with $f_0e_1 D^\la\oplus f_1e_0 D^\la$ in different blocks from $D^\la$. So $D^\la$ is a direct summand of  $f_0e_0D^\la \oplus f_1e_1D^\la$, and we can write $f_0e_0D^\la \oplus f_1e_1D^\la\cong D^\la\oplus X$ for some self-dual module $X$. By Lemma~\ref{l4}, we only have to check that $\dim\Hom_{\SSS_n}(D^\la,X)=1$ and $[X:D^\la]\geq 2$. The first statement follows from
\begin{align*}
\dim\Hom_{\SSS_n}(D^\la,f_0e_0D^\la \oplus f_1e_1D^\la)
&=
\dim\End_{\SSS_n}(e_0D^\la) + \dim\End_{\SSS_n}(e_1D^\la)\\&=\eps_0(\la)+\eps_1(\la)=2,
\end{align*}
where we have used Lemmas~\ref{Lemma45} and \ref{Lemma39}(iii). To prove the second statement, we show that $[f_0e_0D^\la \oplus f_1e_1D^\la:D^\la]\geq 3$. 

If $\eps_0(\la)=\eps_1(\la)=1$ then, noting that $\phi_i(\la)>0$ for some $i\in I$ the second statement follows from Lemma~\ref{l24}. So we may assume that $\eps_i(\la)=2$ and $\eps_{1-i}(\la)=0$. Then by Lemma~\ref{L6.2}, we have $\phi_i(\la)>0$, and so we  again conclude by Lemma~\ref{l24}. 
\end{proof}

\begin{Lemma} \label{LBrEasy} 
Let $\lambda\in\Parp(n)$ and $i\in I$. If $D^\mu$ is a composition factor of $e_i D^\la$ then there exists a removable node $A$ for $\la$ with $\res A=i$ and $\mu\unrhd \la_A$. In particular, if  $D^\mu$ is a composition factor of $D^\la\da_{\SSS_{n-1}}$ then there exists a removable node $A$ for $\la$ with  $\mu\unrhd \la_A$. 
\end{Lemma}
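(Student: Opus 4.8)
The plan is to reduce the second assertion to the first via Lemma~\ref{Lemma45}, and then to prove the first assertion by combining Lemma~\ref{Lemma39}(vi) with the dominance bound supplied by Lemma~\ref{LNak}. First I would observe that the functor $e_i$ is, up to block projection, restriction to $\SSS_{n-1}$: by Lemma~\ref{Lemma45} we have $D^\la\da_{\SSS_{n-1}}\cong\bigoplus_{i\in I}e_iD^\la$, so any composition factor $D^\mu$ of $D^\la\da_{\SSS_{n-1}}$ is a composition factor of $e_iD^\la$ for some $i\in I$. Hence it suffices to prove the first statement; the "in particular" clause follows at once by letting that $i$ be the one for which $D^\mu\mid e_iD^\la$.

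For the first statement, suppose $D^\mu$ is a composition factor of $e_iD^\la$. By Lemma~\ref{LNak}, $D^\mu$ lies in the same block as all composition factors of $S^{\la_A}$ for any $i$-removable node $A$ (all such $\la_A$ have the same residue content, namely $\operatorname{cont}(\la)-\ga_i$), so the block-theoretic constraint is automatically satisfied; the real content is the dominance inequality. I would argue as follows: decompose $e_iD^\la$ according to its composition series and note that the composition factors of $e_iD^\la$ are among the composition factors of the projection of $D^\la\da_{\SSS_{n-1}}$ — equivalently of $S^\la\da_{\SSS_{n-1}}$ after passing through a Specht filtration of a suitable module — to the block $B_{\operatorname{cont}(\la)-\ga_i}$. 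More directly, the classical branching rule for Specht modules (the sequence $S^\la\da_{\SSS_{n-1}}\sim \bigoplus_A S^{\la_A}$ over all removable nodes $A$, cf.\ \cite[9.3]{JamesBook}) shows that every composition factor of $D^\la\da_{\SSS_{n-1}}$ — and $D^\la$ is a quotient of $S^\la$, but one needs the statement for $D^\la$ itself, so one instead uses that $D^\la\da_{\SSS_{n-1}}$ has the same composition factors distributed among the $e_iD^\la$ — is a composition factor of some $S^{\la_A}$, whence $\mu\unrhd\la_A$ because $D^\mu$ being a composition factor of $S^{\la_A}$ forces $\mu\unrhd\la_A$ by \cite[Corollary 12.2]{JamesBook}. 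Restricting to $i$-removable $A$ is then guaranteed by matching residue contents via Lemma~\ref{LNak}.

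The step I expect to be the main obstacle is the passage from "composition factor of $e_iD^\la$" to "composition factor of $\bigoplus_{A:\,\res A = i}S^{\la_A}$": one must be careful that $D^\la$ is not Specht, so one cannot simply quote the Specht branching rule for $D^\la$. The clean way around this is to use that $D^\la$ is a submodule (or quotient) of $S^\la$ (or its dual) together with exactness of $e_i$ — $e_i$ is exact, being a block component of the exact restriction functor — so $e_iD^\la$ embeds in $e_iS^\la$ (up to duality), and $e_iS^\la$ is the block-$(\operatorname{cont}(\la)-\ga_i)$ component of $S^\la\da_{\SSS_{n-1}}$, which by the Specht branching rule has a filtration by the $S^{\la_A}$ with $\res A = i$. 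Every composition factor of $e_iD^\la$ is therefore a composition factor of some such $S^{\la_A}$, and the dominance bound $\mu\unrhd\la_A$ follows from the standard fact that the composition factors $D^\nu$ of $S^{\la_A}$ satisfy $\nu\unrhd\la_A$. This completes the proof.
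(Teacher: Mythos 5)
Your proposal is correct and follows essentially the same route as the paper: pass from $e_iD^\la$ to $e_iS^\la$ (using that $D^\la$ is a quotient of $S^\la$ and $e_i$ is exact), invoke the Specht branching filtration of $e_iS^\la$ by the $S^{\la_A}$ with $\res A=i$ (via \cite[9.2]{JamesBook} and Lemma~\ref{LNak}), and conclude with the dominance property of composition factors of Specht modules from \cite[12.2]{JamesBook}. The paper's proof is exactly the three-line version of your final paragraph.
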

\begin{proof}
If $D^\mu$ is a composition factor of $e_i D^\la$ then it is a composition factor of $e_i S^\la$. By \cite[9.2]{JamesBook} and Lemma~\ref{LNak}, $e_i S^\la$ has a filtration with subquotients of the form $S^{\la_A}$ for removable nodes $A$ for $\la$ with $\res A=i$. The result now follows from \cite[12.2]{JamesBook}.
\end{proof}

A partition $\lambda\in\Parp(n)$ is called a {\em JS partition} and $D^\la$ is called a {\em JS module} 
if $D^\lambda\da_{\SSS_{n-1}}$ is irreducible. 
JS partitions were first studied in \cite{JS}. 
These can be explicitly classified, see \cite[Theorem D]{k2}. 
It is easy to see that $\la$ is JS if and only if $\la$ has exactly one normal node. In particular:

\begin{Lemma}\label{Lemma55}
Let $p=2$ and $\lambda\in\Par_2(n)$. Then $\lambda$ is JS if and only if all parts of $\lambda$ have the same parity, in which case $D^\la\da_{\SSS_{n-1}}\cong D^{(\la_1-1,\la_2,\la_3,\dots)}$. 
\end{Lemma}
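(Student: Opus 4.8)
Recall (from the paragraph preceding the lemma) that a $2$-regular partition is JS exactly when it has a single normal node, which is then automatically its good node. So I would prove the lemma by computing, for an arbitrary $\la=(\la_1>\la_2>\dots>\la_k>0)\in\Par_2(n)$, the total number $\eps_0(\la)+\eps_1(\la)$ of normal nodes, and, when that number is $1$, locating the node. The first step is to write the decorated rim down explicitly: the removable nodes of $\la$ are $A_r:=(r,\la_r)$ with $\res A_r\equiv a_r\pmod2$ where $a_r:=\la_r-r$, and — because the parts are distinct — the addable nodes are exactly $(r,\la_r+1)$ for $1\le r\le k$ together with $(k+1,1)$. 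Traversing the rim from bottom-left to top-right, the signs therefore appear in the order
\[+,\ \underbrace{-\ +}_{r=k},\ \underbrace{-\ +}_{r=k-1},\ \ \dots\ \ ,\ \underbrace{-\ +}_{r=1},\]
the pair contributed by row $r$ carrying residues $a_r$ (on the $-$) and $a_r+1$ (on the $+$), and the initial $+$ (from $(k+1,1)$) carrying residue $k\bmod2$. Hence for each $i\in\{0,1\}$ the $i$-signature of $\la$ has exactly one entry per row — a $-$ if $a_r=i$ and a $+$ if $a_r\ne i$ — preceded by one further $+$ exactly when $k\equiv i\pmod2$.

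\noindent\textbf{The ``if'' direction.} Suppose all parts of $\la$ have a common parity $c$. Then $a_r\equiv c-r\pmod2$, so the residues $a_1,\dots,a_k$ strictly alternate and each $i$-signature is an alternating word of $\pm$'s (optionally with one leading $+$). A short inspection of the four cases given by the parities of $c$ and $k$ — in each of which one simply cancels the visible $-\,+$ pairs of an alternating word — shows that the reduced $0$-signature and the reduced $1$-signature together retain exactly one $-$, namely the top entry of the $(\res A_1)$-signature, i.e. the node $A_1=(1,\la_1)$. Thus $\la$ has a unique normal node and is JS; that node is then the good node of residue $i:=(\la_1-1)\bmod2$, so by Lemmas~\ref{Lemma45} and \ref{Lemma39} the irreducible module $D^\la{\da}_{\SSS_{n-1}}$ equals $D^{\tilde e_i\la}=D^{\la_{A_1}}=D^{(\la_1-1,\la_2,\la_3,\dots)}$, which is again $2$-regular since distinct parts of the same parity differ by at least $2$.

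\noindent\textbf{The ``only if'' direction and the crux.} For the converse I would argue contrapositively. If not all parts share a parity, pick the smallest $m$ with $\la_m\not\equiv\la_{m-1}\pmod2$; equivalently $a_{m-1}=a_m=:i$ while $a_1,\dots,a_{m-1}$ alternate. Since the rows $1,\dots,m$ contribute the top $m$ entries of the $i$-signature (read row $m$ first, row $1$ last) and rows $m$ and $m-1$ contribute two consecutive $-$'s there, a direct count of the $+$'s and $-$'s coming from rows $1,\dots,m$ yields an excess of $-$'s over $+$'s equal to $2$ when $m$ is even and to $1$ when $m$ is odd. In the even case this already gives $\eps_i(\la)\ge2$; in the odd case it gives $\eps_i(\la)\ge1$, and since then $a_1\not\equiv i\pmod2$ while the top removable node $A_1$ is always normal, we also get $\eps_{a_1}(\la)\ge1$ for the other residue. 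Either way $\la$ has at least two normal nodes and is not JS, which completes the proof. I expect the genuinely careful points to be: getting the bottom-to-top orientation of the rim right (so that the surviving $-$ of the ``if'' direction is correctly pinned to $A_1=(1,\la_1)$), and checking in the ``only if'' direction that the rows below $m$ — which lie at the bottom of the $i$-signature and are not constrained by the minimality of $m$ — cannot erode the suffix excess produced by rows $1,\dots,m$.
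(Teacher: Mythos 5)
Your proof is correct. The paper offers no argument for this lemma beyond the preceding remark that $\la$ is JS if and only if it has exactly one normal node (deferring to the known classification of JS partitions), and your explicit computation of the reduced $0$- and $1$-signatures is precisely the verification that remark leaves implicit: the "if" case analysis correctly isolates the single surviving $-$ at $(1,\la_1)$, and the suffix-excess count in the "only if" direction is sound because a $-$ can only be cancelled against a $+$ lying to its right, so entries contributed by rows below $m$ cannot reduce the number of uncancelled $-$'s coming from rows $1,\dots,m$.
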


\subsection{Some general branching lemmas}
\label{SSFilt}
We will study some important filtrations that arise in the restriction $D^\la\da_{\SSS_{n-1}}$.

\begin{Lemma}\label{l8}
Let $\la\in\Parp(n)$, $i\in I$ and $\vare_i(\la)>0$. Then, for $1\leq a\leq \vare_i(\la)$, there exist quotients $V_a$ of $e_iD^\la$ such that the following hold:
\begin{enumerate}
\item[{\rm (i)}]
$[V_a:D^{\tilde{e}_i \la}]=a$,

\item[{\rm (ii)}]
$V_a$ has socle and head both isomorphic to $D^{\tilde{e}_i\la}$,

\item[{\rm (iii)}]
$V_a$ is a quotient of $V_{a+1}$ for $1\leq a<\vare_i(\la)$,

\item[{\rm (iv)}]
$V_a$ is self-dual.
\end{enumerate}
\end{Lemma}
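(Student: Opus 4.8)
The plan is to construct the quotients $V_a$ of $e_iD^\la$ directly from the structure of the module $e_i^{(a)}D^\la$ together with the divided power decomposition $e_i^aD^\la\cong(e_i^{(a)}D^\la)^{\oplus a!}$ from Lemma~\ref{Lemma39}(i). The key observation is that $e_i D^\la$ is a self-dual module (Lemma~\ref{Lemma45} says $e_i$ commutes with duality, and $D^\la$ is self-dual by \cite[11.5]{JamesBook}), and by Lemma~\ref{Lemma39}(ii) applied with $r=1$ it is indecomposable with socle and head both $D^{\tilde e_i\la}$. Its composition multiplicity of $D^{\tilde e_i\la}$ is $\binom{\eps_i(\la)}{1}=\eps_i(\la)$ by Lemma~\ref{Lemma39}(iii). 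So I need to produce, for each $1\le a\le\eps_i(\la)$, a self-dual quotient of $e_iD^\la$ with exactly $a$ copies of $D^{\tilde e_i\la}$ and socle and head isomorphic to $D^{\tilde e_i\la}$, compatibly nested.

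**Construction via a filtration of $e_iD^\la$.** First I would show that $e_iD^\la$ has a filtration $0=W_0\subseteq W_1\subseteq\cdots\subseteq W_{\eps_i(\la)}=e_iD^\la$ in which every subquotient $W_a/W_{a-1}$ has simple socle $D^{\tilde e_i\la}$ and contains exactly one composition factor isomorphic to $D^{\tilde e_i\la}$ — equivalently, a filtration whose layers each have $D^{\tilde e_i\la}$ at the bottom and are ``heart'' blocks. The natural way to get this is to use Lemma~\ref{Lemma39}(vi): the composition factor $D^{\tilde e_i\la}=D^{\la_A}$ (where $A$ is the $i$-good node) appears in $e_iD^\la$ with its multiplicity governed by the number of $i$-normal nodes, and one can peel off submodules by induction on $\eps_i(\la)$. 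Dually, taking the quotient $V_a:=e_iD^\la/W_{\eps_i(\la)-a}$ gives a module with exactly $a$ copies of $D^{\tilde e_i\la}$, simple head $D^{\tilde e_i\la}$, and $V_a$ is a quotient of $V_{a+1}$ — giving (i) and (iii) immediately, and half of (ii).

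**Self-duality and the socle.** The remaining points (ii) (that the socle is also $D^{\tilde e_i\la}$) and (iv) (self-duality) I would handle together. The cleanest route is to choose the filtration $W_\bullet$ so that it is ``self-dual'' in the sense that $W_a$ is the annihilator (under the self-duality form on $e_iD^\la$) of $W_{\eps_i(\la)-a}$; then $V_a=e_iD^\la/W_{\eps_i(\la)-a}\cong W_a^*$, and since $W_a$ has simple head $D^{\tilde e_i\la}$ (because $e_iD^\la$ has simple socle $D^{\tilde e_i\la}$ and $W_a$ is a submodule — wait, one must argue that each $W_a$ has simple head, which follows from the layer structure), $V_a$ has simple socle $D^{\tilde e_i\la}$. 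To get genuine self-duality $V_a\cong V_a^*$ rather than just $V_a\cong W_a^*$, I would instead realize $V_a$ as a subquotient $U_a/U_a'$ of $e_i^aD^\la\cong(e_i^{(a)}D^\la)^{\oplus a!}$ — more precisely take a single copy of $e_i^{(a)}D^\la$, which by Lemma~\ref{Lemma39}(ii) is self-dual indecomposable with socle and head $D^{\tilde e_i^a\la}$, and apply a chain of the biadjunctions/exactness of $e_i$ to ``collapse'' it onto a quotient of $e_iD^\la$; the self-duality is then inherited. Alternatively, and probably more simply, observe that $\dim\Hom(D^{\tilde e_i\la},e_iD^\la)=\dim\End(e_iD^\la)$ can be computed and the submodule lattice of $e_iD^\la$ near its socle/head is thin enough that a self-dual quotient with the prescribed multiplicity is forced to exist and be unique.

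**Main obstacle.** The technical heart is verifying that $e_iD^\la$ admits a filtration with layers each carrying exactly one $D^{\tilde e_i\la}$ at the socle, and — more delicately — that the resulting quotients $V_a$ are actually self-dual, not merely dual to the corresponding submodules $W_a$. Self-duality of the $V_a$ does not come for free from a filtration: one genuinely needs either a symmetry of the filtration under the duality pairing on $e_iD^\la$, or an independent realization of $V_a$ (e.g. as a quotient or subquotient of $e_i^{(a)}D^\la$ or $e_iD^{\tilde f_i^{a-1}\mu}$ for a suitable $\mu$) that is visibly self-dual. I expect the cleanest argument uses the self-dual indecomposable modules $e_i^{(a)}D^\la$ from Lemma~\ref{Lemma39}(ii) as building blocks and transports their self-duality along; making that transport precise, while keeping track of composition multiplicities via Lemma~\ref{Lemma39}(iii),(iv), is where the real work lies.
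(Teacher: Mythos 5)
There is a genuine gap: you correctly identify what needs to be proved, but your "Main obstacle" paragraph is in effect an admission that you have not proved it, and neither of your two suggested routes closes the hole. The missing ingredient is the structural fact the paper's proof is built on, namely that $\End_{\SSS_{n-1}}(e_iD^\la)$ is not merely $\eps_i(\la)$-dimensional but is isomorphic to the truncated polynomial algebra $\F[x]/(x^{\eps_i(\la)})$ (\cite[Theorem 11.2.7(ii)]{KBook}). Granting this, one takes a nilpotent generator $\psi$ with $\psi^{\eps-1}\neq 0$, $\psi^{\eps}=0$ (where $\eps=\eps_i(\la)$) and sets $V_a:=e_iD^\la/\Ker(\psi^{\eps-a})\cong\im(\psi^{\eps-a})$; being simultaneously a quotient and a submodule of $e_iD^\la$, each $V_a$ has simple head and simple socle $D^{\tilde e_i\la}$ by Lemma~\ref{Lemma39}(ii), the chain of kernels gives (iii), the strict inclusions force the multiplicities in (i), and self-duality follows because $\psi^*$ is again a nilpotent generator of the same local algebra, so $(\psi^r)^*=c^r\psi^r+(\text{higher powers})$ and hence $\im((\psi^r)^*)=\im(\psi^r)$, i.e. $V_a^*\cong V_a$. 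Without knowing the endomorphism algebra is local with one nilpotent generator, your filtration $W_\bullet$ with one copy of $D^{\tilde e_i\la}$ in the socle of each layer is only asserted ("peel off submodules by induction") and the self-duality of the quotients is left open.

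Your two fallback suggestions also do not work as stated. Realizing $V_a$ inside $e_i^{(a)}D^\la$ cannot be done directly: $e_i^{(a)}D^\la$ is an $\F\SSS_{n-a}$-module while $V_a$ must be an $\F\SSS_{n-1}$-module, so there is no sense in which the former's self-duality "transports" to a quotient of $e_iD^\la$ without a substantial additional argument. The other suggestion, that the submodule lattice near the socle is "thin enough" to force a unique self-dual quotient, is exactly the statement that needs proof; thinness of that lattice is equivalent to the endomorphism algebra being uniserial as a module over itself, which is again the content of \cite[Theorem 11.2.7(ii)]{KBook}. So the proposal is a reasonable outline of the statement but not a proof.
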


\begin{proof}
Set $\eps:=\eps_i(\la)$. By \cite[Theorem 11.2.7(ii)]{KBook}, the  algebra $\End_{\SSS_{n-1}}(e_iD^\la)$ is isomorphic to the truncated polynomial algebra $\F [x]/(x^{\eps})$, so there exists $\psi\in\End_{\SSS_{n-1}}(e_iD^\la)$ with $\psi^{\eps-1}\not=0$ and $\psi^{\eps}=0$. For $1\leq a\leq \eps$ let 
$$V_a:=e_iD^\la/\Ker(\psi^{\eps-a}).$$ 
Clearly such quotients $V_a$ satisfy (iii). Moreover, $\head V_a\cong D^{\tilde{e}_i \la}$ by Lemma~\ref{Lemma39}(ii). 
Since $\psi^{\eps-a}\not=0$ for $1\leq a\leq \eps$ by assumption on $\psi$, we have that
\[0\not=V_a\cong\im(\psi^{\eps-a})\subseteq e_iD^\la.\]
So $\soc V_a\cong D^{\tilde{e}_i\la}$ by Lemma~\ref{Lemma39}(ii), and (ii) holds.

From the assumption $\psi^{\eps-1}\not=0$ and $\psi^{\eps}=0$ we have that $V_a\not=V_{a+1}$ for each $1\leq a<\eps$. By (ii), (iii) and Lemma~\ref{Lemma39}(iii), we then have that
\[1\leq [V_1:D^{\tilde{e}_i \la}]<[V_2:D^{\tilde{e}_i\la}]<\ldots<[V_{\eps}:D^{\tilde{e}_i \la}]=\eps,\]
which implies (i).

We now prove (iv). As $e_iD^\la$ is self-dual by Lemma~\ref{Lemma39}(ii), we identify $e_iD^\la$ and $(e_iD^\la)^*$ so that $\psi$ and $\psi^*$ are both endomorphisms of $e_iD^\la$. Since $\psi$ has nilpotency degree $\eps$ and so does $\psi^*$, we must have 
$$
(\psi^r)^*=c^r\psi^r+(\text{a linear combination of terms $\psi^s$ with $s>r$})
$$
for some non-zero scalar $c$. Hence $\im((\psi^r)^*)=\im(\psi^r)$ for all $r$. Since $\im((\psi^r)^*)\cong (\im (\psi^r))^*$, we conclude that $V_a\cong\im(\psi^{\eps-a})\cong V_a^*$. 
\end{proof}

\begin{Remark} 
{\rm 
(i) Using Lemma~\ref{Lemma39}, one can easily see that we must have $V_1=\head(e_iD^\la)$ and $V_{\vare_i(\la)}=e_iD^\la$ in Lemma~\ref{l8}. 

(ii) In the proof of Lemma~\ref{l8}, we have used the fact that $\psi^*=c\psi+$(higher terms). One can use the explicit construction of $\psi$ in terms of a Murphy element in \cite{KBook} to deduce that $\psi^*=\psi$. 
}
\end{Remark}

A proof similar to that of Lemma~\ref{l8} yields:

\begin{Lemma}\label{l9}
Let $\la\in\Parp(n)$, $i\in I$ and $\phi_i(\la)>0$. Then, for $1\leq a\leq \phi_i(\la)$, there exist quotients $V_a$ of $f_iD^\la$ such that the following hold:
\begin{enumerate}
\item[{\rm (i)}]
$[V_a:D^{\tilde{f}_i \la}]=a$,

\item[{\rm (ii)}]
$V_a$ has socle and head both isomorphic to $D^{\tilde{f}_i\la}$,

\item[{\rm (iii)}]
$V_a$ is a quotient of $V_{a+1}$ for $1\leq a<\phi_i(\la)$,

\item[{\rm (iv)}]
$V_a$ is self-dual.
\end{enumerate}
\end{Lemma}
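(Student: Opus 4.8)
The plan is to mirror the proof of Lemma~\ref{l8}, replacing the functor $e_i$ by $f_i$ throughout. The key structural facts used for $e_i$ all have exact analogues for $f_i$: by \cite[Theorem 11.2.10(ii)]{KBook} (or the relevant version quoted in the excerpt), the algebra $\End_{\SSS_{n+1}}(f_iD^\la)$ is isomorphic to the truncated polynomial algebra $\F[x]/(x^\phi)$ where $\phi:=\phi_i(\la)$; and by Lemma~\ref{Lemma40}(ii), $f_iD^\la$ is a self-dual indecomposable module with socle and head both isomorphic to $D^{\tilde f_i\la}$, while by Lemma~\ref{Lemma40}(iii), $[f_iD^\la:D^{\tilde f_i\la}]=\phi$.

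First I would pick $\psi\in\End_{\SSS_{n+1}}(f_iD^\la)$ with $\psi^{\phi-1}\neq 0$ and $\psi^\phi=0$, using the isomorphism with $\F[x]/(x^\phi)$, and set $V_a:=f_iD^\la/\Ker(\psi^{\phi-a})\cong\im(\psi^{\phi-a})$ for $1\leq a\leq\phi$. Property (iii) is immediate from $\Ker(\psi^{\phi-a-1})\subseteq\Ker(\psi^{\phi-a})$. Since $V_a$ is a nonzero quotient of $f_iD^\la$, it has head $D^{\tilde f_i\la}$ by Lemma~\ref{Lemma40}(ii); since $V_a\cong\im(\psi^{\phi-a})$ is a nonzero submodule of $f_iD^\la$, it has socle $D^{\tilde f_i\la}$ by the same lemma, giving (ii). For (i), the chain of proper quotients $V_1,V_2,\dots,V_\phi=f_iD^\la$ (properness from $\psi^{\phi-1}\neq 0$, $\psi^\phi=0$) together with (ii) and Lemma~\ref{Lemma40}(iii) forces the strictly increasing sequence $1\leq[V_1:D^{\tilde f_i\la}]<\cdots<[V_\phi:D^{\tilde f_i\la}]=\phi$, hence $[V_a:D^{\tilde f_i\la}]=a$. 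For (iv), identify $f_iD^\la$ with its dual (self-duality via Lemma~\ref{Lemma40}(ii)); then $\psi^*$ is also an endomorphism of nilpotency degree $\phi$, so in $\F[x]/(x^\phi)$ we have $(\psi^r)^*=c^r\psi^r+(\text{terms }\psi^s,\,s>r)$ for some nonzero scalar $c$, whence $\im((\psi^r)^*)=\im(\psi^r)$; since $\im((\psi^r)^*)\cong(\im\psi^r)^*$, we get $V_a\cong V_a^*$.

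Since every ingredient has already been established verbatim for $f_i$ in Lemmas~\ref{Lemma40} and the cited results from \cite{KBook}, there is no genuine obstacle here; the only point requiring any care is the self-duality argument in (iv), where one must observe that conjugation by the duality is an algebra anti-automorphism of the commutative algebra $\F[x]/(x^\phi)$ preserving the maximal ideal, so it sends a generator to a generator and the leading-term scalar computation goes through. Because the argument is word-for-word parallel to Lemma~\ref{l8}, it is natural simply to say ``A proof similar to that of Lemma~\ref{l8} yields:'' and omit the details, as the authors presumably do.
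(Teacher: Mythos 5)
Your proposal is correct and matches the paper exactly: the paper gives no separate argument for this lemma, stating only that ``a proof similar to that of Lemma~\ref{l8} yields'' it, and your $f_i$-for-$e_i$ translation (using Lemma~\ref{Lemma40} and the truncated polynomial algebra structure of $\End_{\SSS_{n+1}}(f_iD^\la)$ from \cite{KBook}) is precisely the intended argument.
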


\begin{Lemma}\label{l12}
Let $p$ divide $n$ and $\la\in\Parp(n)$. Then
\[\dim\Hom_{\SSS_n}(S_1,\EE(\la))\leq \sum_{i\in I} \vare_i(\la).\]
If equality holds then there exists $i$ with $\vare_i(\la)>0$ and $D^\la\subseteq (f_iD^{\tilde{e}_i\la})/D^\la$.
\end{Lemma}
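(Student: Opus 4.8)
The plan is to relate $\Hom_{\SSS_n}(S_1,\EE(\la))$ to branching data using the short exact sequence $0\to\bone_{\SSS_n}\to M_1\to S_1\to 0$, which holds since $p\mid n$ (note $M_1$ is self-dual, so $S_1$ appears as the quotient $M_1/\bone_{\SSS_n}$ as well as the submodule $S_1^*=S_1$; here we want the quotient description). Applying $\Hom_{\SSS_n}(-,\EE(\la))$ yields a left-exact sequence
\[
0\to\Hom_{\SSS_n}(S_1,\EE(\la))\to\Hom_{\SSS_n}(M_1,\EE(\la))\to\Hom_{\SSS_n}(\bone_{\SSS_n},\EE(\la)).
\]
The last term is $\EE(\la)^{\SSS_n}=\End_{\SSS_n}(D^\la)\cong\F$, one-dimensional, and the restriction map to it is surjective (the identity endomorphism of $D^\la$, viewed as an element of $\EE(\la)^{\SSS_n}$, pulls back along $M_1\to\bone$ composed with $\bone\hookrightarrow\EE(\la)$ — or more simply, $\Hom_{\SSS_n}(M_1,\EE(\la))\to\Hom_{\SSS_n}(\bone,\EE(\la))$ is split by the inclusion $\bone\hookrightarrow M_1$). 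Hence
\[
\dim\Hom_{\SSS_n}(S_1,\EE(\la))=\dim\Hom_{\SSS_n}(M_1,\EE(\la))-1=m_1(\la)-1,
\]
recalling $m_1(\la)$ from \eqref{EMtoE}. By \eqref{EMtoE} and Lemma~\ref{Lemma45}, $m_1(\la)=\dim\End_{\SSS_{n-1}}(D^\la\da_{\SSS_{n-1}})=\sum_{i\in I}\dim\End_{\SSS_{n-1}}(e_iD^\la)+\sum_{i\ne j}\dim\Hom_{\SSS_{n-1}}(e_iD^\la,e_jD^\la)$. The cross terms vanish since $e_iD^\la$ and $e_jD^\la$ lie in different blocks for $i\ne j$, so $m_1(\la)=\sum_{i\in I}\dim\End_{\SSS_{n-1}}(e_iD^\la)$. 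By Lemma~\ref{Lemma39}(iii) (with $r=1$), $\dim\End_{\SSS_{n-1}}(e_iD^\la)=\binom{\eps_i(\la)}{1}=\eps_i(\la)$. Therefore $\dim\Hom_{\SSS_n}(S_1,\EE(\la))=\sum_{i\in I}\eps_i(\la)-1\leq\sum_{i\in I}\eps_i(\la)$, giving the inequality.

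Wait — the claimed bound is $\sum_i\eps_i(\la)$, not $\sum_i\eps_i(\la)-1$, so the above actually gives something slightly stronger, or the intended argument is looser. Let me reconsider: perhaps one should not use the splitting, and instead argue that $\dim\Hom_{\SSS_n}(S_1,\EE(\la))\le\dim\Hom_{\SSS_n}(M_1,\EE(\la))=\sum_i\eps_i(\la)$ directly from left-exactness of $\Hom(-,\EE(\la))$ applied to the surjection $M_1\twoheadrightarrow S_1$. This is cleaner and matches the stated bound exactly, so that is the route I would take for the inequality.

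For the equality case: equality $\dim\Hom_{\SSS_n}(S_1,\EE(\la))=\sum_i\eps_i(\la)=\dim\Hom_{\SSS_n}(M_1,\EE(\la))$ forces $\Hom_{\SSS_n}(\bone_{\SSS_n},\EE(\la))\to 0$ in the long exact sequence — more precisely, it forces the pullback map $\Hom_{\SSS_n}(S_1,\EE(\la))\to\Hom_{\SSS_n}(M_1,\EE(\la))$ to be an isomorphism, hence every homomorphism $M_1\to\EE(\la)$ kills the submodule $\bone_{\SSS_n}\subseteq M_1$. In particular $\sum_i\eps_i(\la)\ge 1$, so there is some $i$ with $\eps_i(\la)>0$. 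Now I want to produce the containment $D^\la\subseteq (f_iD^{\tilde e_i\la})/D^\la$. The idea is: a nonzero homomorphism $\varphi\in\Hom_{\SSS_n}(M_1,\EE(\la))$ with $\varphi(\bone_{\SSS_n})=0$ corresponds, via $\Hom_{\SSS_n}(M_1,\EE(\la))\cong\End_{\SSS_{n-1}}(D^\la\da_{\SSS_{n-1}})$ (Frobenius reciprocity plus self-duality of $M_1$ and $\EE(\la)$), to an endomorphism of $D^\la\da_{\SSS_{n-1}}$ that is \emph{not} a scalar multiple of the identity — because the identity endomorphism is the one that "comes from" $\bone_{\SSS_n}\subseteq M_1$. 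Choosing $i$ with $\eps_i(\la)>0$ and restricting attention to the block summand $e_iD^\la$, self-adjointness of $e_i$ (Lemma~\ref{Lemma45}) converts such a non-scalar endomorphism into a non-split extension datum: concretely, $\Hom_{\SSS_n}(D^\la,f_ie_iD^\la)\cong\End_{\SSS_{n-1}}(e_iD^\la)$ has dimension $\eps_i(\la)$ by Lemmas~\ref{Lemma39}(iii) and~\ref{l4}/\ref{l24}, and $f_ie_iD^\la$ contains $f_iD^{\tilde e_i\la}$ as a subquotient (since $D^{\tilde e_i\la}=\head e_iD^\la=\soc e_iD^\la$). The presence of a non-identity endomorphism in the relevant $e_i$-component forces the uniserial-type structure of $f_iD^{\tilde e_i\la}$ (Lemma~\ref{l9}) to have $[f_iD^{\tilde e_i\la}:D^\la]\ge 2$, and self-duality of $f_iD^{\tilde e_i\la}$ with socle and head $D^{\tilde e_i\la}\ne D^\la$ then yields a copy of $D^\la$ strictly between the bottom $D^\la$ and the top, i.e. $D^\la\subseteq (f_iD^{\tilde e_i\la})/D^\la$.

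The main obstacle is this last step — pinning down exactly which $i$ works and extracting the submodule statement $D^\la\subseteq(f_iD^{\tilde e_i\la})/D^\la$ from the mere existence of a non-scalar endomorphism of $D^\la\da_{\SSS_{n-1}}$. The delicate point is that the equality hypothesis is a global statement about $\sum_i\eps_i(\la)$, but the conclusion singles out one residue $i$; one needs to track, block by block, how the extra dimension (beyond the trivial-quotient contribution) is distributed among the $e_i$-components, and argue that in whichever component it appears, the composition $f_ie_i$ detects a self-extension of $D^\la$ lying inside $f_iD^{\tilde e_i\la}$. I would handle this using Lemma~\ref{l4} ($\soc(f_ie_iD^\la)\cong(D^\la)^{\oplus\eps_i(\la)}$) together with Lemma~\ref{l24} ($[f_ie_iD^\la:D^\la]=\eps_i(\la)(\phi_i(\la)+1)$) and the biadjunction of $e_i,f_i$ to match $\dim\Hom_{\SSS_n}(D^\la,f_ie_iD^\la)=\eps_i(\la)$ against the $\End$-dimension, then use Lemma~\ref{l9} to locate $D^\la$ inside $f_iD^{\tilde e_i\la}$.
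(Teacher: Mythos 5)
The core of your argument rests on a false structural claim: that $S_1$ is a quotient of $M_1$ (equivalently, that $M_1/\bone_{\SSS_n}\cong S_1$, or that $S_1\cong S_1^*$). When $p\mid n$ the module $M_1$ is uniserial, $M_1\cong D_0|D_1|D_0$, with $S_1\cong D_0|D_1$ (socle $D_0$, head $D_1$) and $S_1^*\cong D_1|D_0$; these two uniserial modules are \emph{not} isomorphic, $S_1$ is a submodule but not a quotient of $M_1$, and $M_1/\bone_{\SSS_n}\cong S_1^*$. So neither your short exact sequence $0\to\bone_{\SSS_n}\to M_1\to S_1\to 0$ nor your ``cleaner'' surjection $M_1\twoheadrightarrow S_1$ exists; left-exactness only yields the easy bound $\dim\Hom_{\SSS_n}(S_1^*,\EE(\la))\le m_1(\la)$, which concerns a genuinely different quantity (and is exactly how the paper bounds $\Hom(S_1^*,\EE(\la))$ in the proof of Lemma~\ref{l15}). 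If you apply $\Hom_{\SSS_n}(-,\EE(\la))$ to the correct sequence $0\to S_1\to M_1\to\bone_{\SSS_n}\to 0$, the error term you must control is $\Ext^1_{\SSS_n}(\bone_{\SSS_n},\EE(\la))\cong\Ext^1_{\SSS_n}(D^\la,D^\la)$, whose vanishing is not available here; this is precisely why the lemma is not a formal consequence of $m_1(\la)=\sum_i\eps_i(\la)$. Note also that your intermediate conclusion $\dim\Hom_{\SSS_n}(S_1,\EE(\la))=m_1(\la)-1$ would make the equality case of the lemma vacuous and would trivialize hypotheses used later (e.g.\ the assumption $\dim\Hom_{\SSS_n}(S_1,\EE(\la))<2$ when $\eps_0(\la)+\eps_1(\la)=2$ in Lemmas~\ref{l15} and~\ref{l17}); this should have been a red flag.

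The paper's actual proof quotes \cite[Lemma 4.12]{M}, which supplies the refined bound $\dim\Hom_{\SSS_n}(S_1,\EE(\la))\le\sum_{i}\eps_i(\la)-1+m$ with $m$ a min--max of the socle multiplicities $[\soc((f_iD^{\tilde e_i\la})/D^\la):D^\la]$, and then uses the self-dual filtration of $f_iD^{\tilde e_i\la}$ from Lemma~\ref{l9} to show each such multiplicity is at most $1$; both the inequality and the equality statement then fall out at once, since equality forces $m=1$, i.e.\ $D^\la\subseteq(f_iD^{\tilde e_i\la})/D^\la$ for some $i$ with $\eps_i(\la)>0$. Your treatment of the equality case is, as you acknowledge, only a sketch that leaves the key step (extracting the submodule $D^\la\subseteq(f_iD^{\tilde e_i\la})/D^\la$ from a non-scalar endomorphism of $D^\la\da_{\SSS_{n-1}}$) unexecuted; and since it is built on the incorrect identification of $S_1$ with $M_1/\bone_{\SSS_n}$, it cannot be repaired without replacing the first half of the argument by something like the input from \cite{M}.
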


\begin{proof}
By  \cite[Lemma 4.12]{M}, we have
$$
\dim\Hom_{\SSS_n}(S_1,\EE(\la))\leq \sum_{i\in I} \vare_i(\la)-1+m
$$
where
$$
m:=\min\left\{\max_{i\,:\,\eps_i(\la)> 0}[\soc((f_i\tilde e_i D^\la)/D^\la):D^\la],\ \max_{i\,:\,\phi_i(\la)> 0}[\soc((e_i\tilde f_i D^\la)/D^\la):D^\la]
\right\}.
$$
So it is enough to prove that if $i\in I$ with $\vare_i(\la)>0$, then $[\soc((f_iD^{\tilde{e}_i\la})/D^\la):D^\la]\leq 1$. By  Lemmas~\ref{Lemma40}(iii) and \ref{l9},
there exists a quotient $V_{\phi_i(\tilde{e}_i\la)-1}=f_iD^{\tilde{e}_i\la}/X$ such that $\soc V_{\phi_i(\tilde{e}_i\la)-1}\cong D^\la$, $\soc X\cong D^\la$, and $[X:D^\la]=1$. 
The inequality $[\soc((f_iD^{\tilde{e}_i\la})/D^\la):D^\la]\leq 1$ follows.
\end{proof}

\begin{Lemma}\label{l20}
Let $\la\in\Parp(n)$, $i\in I$, $\vare_i(\la)>0$ and $D^\la\subseteq (f_iD^{\tilde{e}_i\la})/D^\la$. Then $\phi_i(\la)>0$ and $(\la_B)^C$ is $p$-singular, where $B$ and $C$ are the $i$-good and $i$-cogood nodes of $\la$ respectively.
\end{Lemma}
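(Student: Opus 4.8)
The plan is to translate both assertions into statements about the composition factors and the socle layers of $M:=f_iD^{\tilde e_i\la}$, and then to settle these using the branching results of \S\ref{SSGenBrR} together with the signature combinatorics of \S\ref{SSPar}. Throughout, let $B$ denote the $i$-good node of $\la$ (which exists as $\vare_i(\la)>0$) and put $\mu:=\tilde e_i\la=\la_B$. Two facts will be used repeatedly: $\tilde f_i\mu=\la$ by Lemma~\ref{LGoodCogood}(i), and $\phi_i(\mu)=\phi_i(\la)+1$ --- this is the identity already invoked in the proof of Lemma~\ref{l24}, and it is immediate from the reduced $i$-signature, since passing from $\la$ to $\tilde e_i\la$ deletes the leftmost sign and converts one ``$-$'' into a ``$+$''. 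By Lemma~\ref{Lemma40}(ii), $M=f_iD^\mu$ is self-dual with simple socle and simple head, both isomorphic to $D^{\tilde f_i\mu}=D^\la$; and by Lemma~\ref{Lemma40}(iii) with $r=1$ we get $[M:D^\la]=\phi_i(\mu)$.

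\emph{First assertion.} The hypothesis gives a submodule $N\cong D^\la$ of $M$ (necessarily $N=\soc M$) such that $D^\la$ embeds in $M/N$; hence $[M:D^\la]\geq 2$, so $\phi_i(\mu)=\phi_i(\la)+1\geq 2$ and $\phi_i(\la)>0$. In particular the $i$-cogood node $C$ of $\la$ exists. Since $B$ is $i$-normal and $C$ is $i$-conormal for $\la$, Lemma~\ref{l2} shows $C$ is $i$-conormal for $\mu=\la_B$; moreover $B$ is the $i$-cogood node of $\mu$ because $\mu^B=\la=\tilde f_i\mu$. Thus among the $\phi_i(\mu)$ $i$-conormal nodes of $\mu$ the node $B$ is the topmost and $C$ is the second topmost, and a routine signature computation shows $\phi_i(\mu^C)=\phi_i(\mu)-2$ whenever $\mu^C$ is $p$-regular.

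\emph{Second assertion.} I would argue by contradiction: assume $(\la_B)^C=\mu^C$ is $p$-regular. Then $D^{\mu^C}$ is a genuine composition factor of $M$ by Lemma~\ref{Lemma40}(vi), and $\phi_i(\mu^C)=\phi_i(\mu)-2<\phi_i(\mu)-1=\phi_i(\la)$; by Lemma~\ref{Lemma40}(iv), $D^\la$ is the unique composition factor of $M$ whose value of $\phi_i$ attains the maximum $\phi_i(\mu)-1$. On the other hand, the content of the hypothesis is exactly that $M$ contains a uniserial submodule isomorphic to $D^\la\,|\,D^\la$: indeed $D^\la\subseteq M/\soc M$ says $D^\la$ lies in the second socle layer of $M$, which is equivalent to such a submodule existing, and --- as in the proof of Lemma~\ref{l12}, producing the submodule $X\subseteq M$ with $\soc X=D^\la$, $[X:D^\la]=1$ and $M/X$ of simple socle $D^\la$ --- this second socle layer contains at most one copy of $D^\la$. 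The plan is then to show that when $\mu^C$ is $p$-regular this copy cannot occur: using Lemma~\ref{l9} (the chain of self-dual quotients $V_1,\dots,V_{\phi_i(\mu)}=M$) together with the self-duality of $M$ to produce a dual chain of submodules $W_1\subseteq\dots\subseteq W_{\phi_i(\mu)}=M$ with $W_a\cong V_a$, one argues that $D^{\mu^C}$ must occupy the second socle layer of $M$ (being the ``next'' factor that $f_i$ produces below the $\phi_i$-extremal factor $D^\la$), so that the multiplicity bound above leaves no room for a second copy of $D^\la$ adjacent to the socle. This contradiction would prove $(\la_B)^C$ is $p$-singular.

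The hard part will be this last step: rigorously establishing that, when $\mu^C$ is $p$-regular, the factor $D^{\mu^C}$ genuinely separates the socle copy of $D^\la$ from a putative second one. I expect this to require a careful description of the bottom of the radical (equivalently socle) series of $f_iD^\mu$ in terms of the $i$-conormal nodes of $\mu$ and their $\phi_i$-values, via Lemma~\ref{Lemma40}(iv),(vi), combined with the self-dual quotients of Lemma~\ref{l9} and the $\End$-ring computation of Lemma~\ref{Lemma40}(iii); by contrast, the first assertion and the location of $C$ relative to $\mu$ are short.
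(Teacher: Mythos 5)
Your treatment of the first assertion is correct and matches the paper's (you argue $[f_iD^{\tilde e_i\la}:D^\la]=\phi_i(\tilde e_i\la)=\phi_i(\la)+1\geq 2$; the paper equivalently notes that $\phi_i(\la)=0$ forces $f_iD^{\tilde e_i\la}\cong D^\la$). You also correctly locate $B$ and $C$ as the top two $i$-conormal nodes of $\mu=\la_B$. But the second assertion has a genuine gap, precisely at the step you yourself flag as ``the hard part'': you never establish that, when $\mu^C$ is $p$-regular, the factor $D^{\mu^C}$ actually sits between the socle copy of $D^\la$ and any putative second copy. Moreover, the logic you sketch for extracting a contradiction is flawed even granting that placement: the bound $[\soc(M/\soc M):D^\la]\leq 1$ (from the argument in Lemma~\ref{l12}) leaves room for exactly the one copy of $D^\la$ that the hypothesis asserts, and the presence of $D^{\mu^C}$ in the second socle layer would not by itself exclude $D^\la$ from also lying there --- socle layers are semisimple and can contain several non-isomorphic constituents. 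What must be shown is that $D^\la$ does not occur in the relevant part of $\soc(M/\soc M)$ at all, and the tools you invoke (Lemma~\ref{Lemma40}(iv),(vi) and the self-dual quotients $V_a$ of Lemma~\ref{l9}) only control composition multiplicities and the $D^\la$-layers; they say nothing about where $D^{\mu^C}$ sits in the socle series.

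The paper closes this gap with an external input you do not have: by the Remark on p.~83 of \cite{BrK1}, $f_iD^{\tilde e_i\la}$ has a filtration by duals of quotients of Specht modules indexed by the $i$-conormal nodes of $\la_B$ from the top down, so $f_iD^{\tilde e_i\la}\sim(\bar S^{\la})^*|(\bar S^{(\la_B)^C})^*|\cdots$ with $[\bar S^{(\la_B)^C}:D^\la]=1$ and $\soc(S^{(\la_B)^C})^*\cong D^{(\la_B)^C}\not\cong D^\la$. This produces a submodule $Z\subseteq M$ with $[Z:D^\la]=1$ but $\Hom_{\SSS_n}(D^\la,Z)=0$; combining $Z$ with the submodule $X$ coming from the quotient $V_{\phi_i(\tilde e_i\la)-1}$ of Lemma~\ref{l9} (so that $[X:D^\la]=0$ and $M/X$ has simple socle $D^\la$), one sets $Y=X+Z$ and applies Lemma~\ref{L200418} to conclude $\Hom_{\SSS_n}(D^\la,M)=0$. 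Without the Specht filtration (or an equivalent description of the second layer of $f_iD^{\tilde e_i\la}$), your outline cannot be completed.
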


\begin{proof}
Set $M:=(f_i D^{\tilde{e}_i \la})/D^\la$. 
It suffices to prove that $D^\la\not\subseteq M$ if  $\phi_i(\la)=0$ or $(\la_B)^C$ is $p$-regular.
If $\phi_i(\la)=0$, then $\phi_i(\tilde{e}_i\la)=1$ and so $f_iD^{\tilde{e}_i\la}\cong D^\la$ by Lemma~\ref{Lemma40}. In particular, $M=0$, and we are done.
So we may assume that $\phi_i(\la)>0$ and $(\la_B)^C$ is $p$-regular. Note that $\tilde{e}_i \la=\la_B$, $B$ is the top $i$-conormal node for $\la_B$, and $C$ is the second  $i$-conormal node for $\la_B$ from the top. 

By \cite[Remark on p.83]{BrK1} and the self-duality of $f_i D^{\tilde{e}_i \la}$, we have that
$$
f_i D^{\tilde{e}_i \la} \sim (\bar S^\la)^*|(\bar S^{(\la_B)^C})^*|\cdots 
$$
where $\bar S^\la$ is a non-zero quotient of $S^\la$ and $\bar S^{(\la_B)^C}$ is a non-zero quotient $S^{(\la_B)^C}$ with $[\bar S^{(\la_B)^C}:D^\la]=1$. 

Let $\hat Z$ be the submodule $(\bar S^\la)^*|\bar (S^{(\la_B)^C})^*$ of $f_iD^{\tilde{e}_i \la}$, and $Z=\hat Z/D_\la$ be the corresponding submodule of $M$. Note that 
$[Z:D^\la]=1$, and 
$\Hom_{\SSS_n}(D^\la,Z)=0$ since $D^\la$ is not a composition factor of $(\bar S^\la)^*/D^\la$ and  $\soc (S^{(\la_B)^C})^*\cong D^{(\la_B)^C}\not\cong D^\la$. 

Let $V:=V_{\phi_i(\tilde{e}_i \la)-1}$ be as in Lemma \ref{l9}. Then $\soc V\cong D^\la$ and 
\[[V:D^\la]=\phi_i(\tilde{e}_i \la)-1=[M:D^\la],\]
where the second equality is by Lemma~\ref{Lemma40}(iii). Let $X\subseteq M$ be a submodule such that $M/X\cong V$. By the last equality, $[X:D^\la]=0$. So, setting $Y:=X+Z$, we now deduce from the previous paragraph that $\Hom_{\SSS_n}(D^\la,Y)=0$. 
Note that $Y\supsetneq X$ since $[X:D^\la]=0$, while $[Y:D^\la]\geq [Z:D^\la]=1$. Since $V$ has simple socle, it follows that $\soc M/X\subseteq Y/X$, and we can now apply Lemma~\ref{L200418}. 
\end{proof}

\subsection{Some branching for JS modules} 
\label{SSJSBr}

In this subsection we will always assume that $p=2$ and $\la$ is a JS partition. 
By definition, the top removable node $A$ of $\la$ is its only normal node, and $D^\la\da_{\SSS_{n-1}}\cong D^{\la_A}$. In this sense JS modules have very simple branching. However, we need to prove some results about their restrictions to other subgroups. 

\begin{Lemma}\label{l26}
Let $p=2$, $\la\in\Par_2(m+n)$, $\mu\in\Par_2(m)$ and $\nu \in\Par(n)$. If $\mu+\nu=\la$ and $(\la_1,\ldots,\la_{h(\nu)})$ is a JS-partition, then $D^\mu$ is a composition factor of $D^\la\da_{\SSS_{m}}$.
\end{Lemma}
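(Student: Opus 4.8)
The plan is to induct on $n=|\nu|$, peeling off the first column of $\nu$ at each step and chasing the resulting composition factor down through the restriction with the help of the modular branching rules. If $n=0$ then $\nu=\varnothing$, $\mu=\la$, and there is nothing to prove, so assume $n\geq 1$ and set $k:=h(\nu)\geq1$. Since $(\la_1,\dots,\la_k)$ is a JS-partition, Lemma~\ref{Lemma55} says that $\la_1>\dots>\la_k$ and that these parts all have a common parity $\delta$; in particular $\la_j-\la_{j+1}\geq2$ for $1\leq j<k$.

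Let $\la':=(\la_1-1,\dots,\la_k-1,\la_{k+1},\dots,\la_{h(\la)})$ and $\nu':=(\nu_1-1,\dots,\nu_k-1)$. A short check — using the parity gaps just mentioned, and the fact that $\mu$ being $2$-regular forces $\la_k-\nu_k=\mu_k>\mu_{k+1}=\la_{k+1}$ whenever $k<h(\la)$ — shows that $\la'$ is a $2$-regular partition with $\la'=\mu+\nu'$, that $|\nu'|=n-k<n$, and that $(\la'_1,\dots,\la'_{h(\nu')})=(\la_1-1,\dots,\la_{h(\nu')}-1)$ again has distinct parts of a single parity, hence is a JS-partition. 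By the inductive hypothesis, $D^\mu$ is a composition factor of $D^{\la'}\da_{\SSS_m}$, so it is enough to prove that $D^{\la'}$ is a composition factor of $D^\la\da_{\SSS_{|\la'|}}$.

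To obtain the latter I will strip the boxes $(1,\la_1),(2,\la_2),\dots,(k,\la_k)$ off $\la$ one at a time, in this order. Put $\la^{[0]}:=\la$ and $\la^{[j]}:=(\la_1-1,\dots,\la_j-1,\la_{j+1},\dots,\la_{h(\la)})$ for $1\leq j\leq k$, so that $\la^{[k]}=\la'$ and $\la^{[j]}$ is $\la^{[j-1]}$ with the removable node $A_j:=(j,\la_j)$ deleted; $A_j$ has residue $i_j\equiv\la_j-j\pmod 2$, and each $\la^{[j]}$ is again $2$-regular. By Lemma~\ref{Lemma45}, Lemma~\ref{Lemma39}(vi), and transitivity of ``is a composition factor of'' along restrictions, it suffices to show that $A_j$ is $i_j$-normal for $\la^{[j-1]}$ for all $j$: then $D^{\la^{[j]}}$ is a composition factor of $e_{i_j}D^{\la^{[j-1]}}$, hence of $D^{\la^{[j-1]}}\da_{\SSS_{|\la^{[j]}|}}$, and composing these restrictions over $j=1,\dots,k$ realises $D^{\la'}=D^{\la^{[k]}}$ as a composition factor of $D^\la\da_{\SSS_{|\la'|}}$.

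The one place where the JS-hypothesis is used in an essential way — and, I expect, the main (purely combinatorial) obstacle — is verifying that $A_j$ is $i_j$-normal for $\la^{[j-1]}=(\la_1-1,\dots,\la_{j-1}-1,\la_j,\la_{j+1},\dots)$. For this, read the $i_j$-signature of $\la^{[j-1]}$ from the bottom of the diagram upwards, and perform the cancellation of $-+$ pairs viewed as bracket matching (with $-$ an opening bracket and $+$ a closing bracket). Row $j$ contributes a single ``$-$'', namely $A_j$, because the addable node of row $j$ lies in the other residue class ($\la_j-j$ and $(\la_j+1)-j$ have opposite parities). The rows $j+1,\dots,h(\la)$ are read before row $j$, so whatever they contribute cannot close the bracket of $A_j$. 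Finally, for $1\leq i\leq j-1$ row $i$ of $\la^{[j-1]}$ has length $\la_i-1$ and contributes, in residue $i_j$, exactly one sign: a ``$-$'' (its removable node) when $i\equiv j-1\pmod 2$ and a ``$+$'' (its addable node) when $i\equiv j\pmod 2$ — this is exactly the step that uses that $\la_1,\dots,\la_j$ have a common parity. Read from row $j-1$ down to row $1$ these signs form the alternating string $-+-+\cdots$ beginning with ``$-$'', so they cancel among themselves (at worst leaving one unmatched ``$-$'' from row $1$) and never close the bracket of $A_j$. Hence the ``$-$'' of $A_j$ survives in the reduced $i_j$-signature, i.e. $A_j$ is $i_j$-normal, which finishes the induction.
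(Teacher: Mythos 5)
Your proof is correct and follows essentially the same route as the paper's: induct on $|\nu|$, pass to $\la-(1^{h(\nu)})$, and strip the nodes $(1,\la_1),\dots,(h(\nu),\la_{h(\nu)})$ in that order, checking at each stage that the node being removed is normal so that Lemma~\ref{Lemma39} applies. The only difference is that you spell out the signature/bracket-matching verification of normality that the paper dismisses as "easy to see"; your verification is accurate.
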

\begin{proof}
We apply induction on $n$, the case $n=0$ being clear. Let 
$$\ka:=\la-(1^{h(\nu)})=(\la_1-1,\dots,\la_{h(\nu)}-1,\la_{h(\nu)+1},\dots,\la_{m+n}).$$ 
Note that $\ka$ is $2$-regular, since $\la$ and $\mu$ are $2$-regular and by definition
\[\ka_{h(\nu)}=\la_{h(\nu)}-1\geq\mu_{h(\nu)}>\mu_{h(\nu)+1}
=\la_{h(\nu)+1}=\ka_{h(\nu)+1}.\]
Further $h(\nu-(1^{h(\nu)}))\leq h(\nu)$, $\ka=\mu+(\nu-(1^{h(\nu)}))$ and $(\ka_1,\ldots,\ka_{h(\nu)})$ is a JS-partition, see Lemma \ref{Lemma55}. By the inductive assumption, it suffices to prove that $D^\ka$ is a composition factor of $D^\la\da_{\SSS_{m+n-h(\nu)}}$. Let $B_s:=(s,\la_s)$ be the last node in row $s$ of $\la$, $s=1,2,\dots$. 
Using for example Lemma~\ref{Lemma55}, it is easy to see that 
the node $B_1$ is good for $\la$, $B_2$ is good for $\la_{B_1}$, $B_3$ is good for $(\la_{B_1})_{B_2}$, etc. By Lemma~\ref{Lemma39}(ii), we have that $D^{\la_{B_1}}$ is a composition factor of $D^\la\da_{\SSS_{m+n-1}}$, $D^{(\la_{B_1})_{B_2}}$ is a composition factor of $D^{\la_{B_1}}\da_{\SSS_{m+n-2}}$, etc., and the required result on $D^\ka$ follows since $\ka=(\dots(\la_{B_1})_{B_2}\dots)_{B_{h(\nu)}}$.
%
%
%
\end{proof}

\begin{Lemma}\label{l25}
Let $p=2$, $n$ be even and $\la\in\Par_2(n)$ be a JS-partition with odd parts. Then $D^\la{\da}_{\SSS_{n/2}}$ has at least three  non-isomorphic composition factors, unless one of the following holds:
\begin{itemize}
\item[{\rm (i)}]
$n\geq 4$ and $\la=\al_n$,

\item[{\rm (ii)}]
$n\geq 8$ with $n\equiv 0\pmod{4}$ and $\la=\be_n$,

\item[{\rm (iii)}]
$n\geq 24$ with $n\equiv 0\pmod{8}$ and $\la=(n/4+3,n/4+1,n/4-1,n/4-3)$,

\item[{\rm (iv)}]
$n\geq 22$ with $n\equiv 4\pmod{6}$ and $\la=((n-4)/3+3,(n-4)/3+1,(n-4)/3-1,1)$.
\end{itemize}
\end{Lemma}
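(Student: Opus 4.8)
The plan is to reduce the claim to statements about composition factors of restrictions $D^\la\da_{\SSS_{n-k,k}}$ for $k$ close to $n/2$, which we can control using the branching machinery of \S\ref{SSGenBrR} together with Lemma~\ref{l26}. First, since $\la$ is JS with odd parts, Lemma~\ref{Lemma55} tells us exactly how $D^\la$ restricts one step at a time, and more importantly Lemma~\ref{l26} gives us a supply of composition factors of $D^\la\da_{\SSS_{n/2}}$: for any $\mu\in\Par_2(n/2)$ and $\nu\in\Par(n/2)$ with $\mu+\nu=\la$ and $(\la_1,\dots,\la_{h(\nu)})$ a JS-partition, $D^\mu$ is a composition factor of $D^\la\da_{\SSS_{n/2}}$. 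Since all parts of $\la$ are odd, every initial segment $(\la_1,\dots,\la_s)$ automatically has all parts odd, hence is JS by Lemma~\ref{Lemma55}; so the only real constraint is that $\mu=\la-\nu$ be a $2$-regular partition of $n/2$ with $h(\mu)\le h(\la)$ and the parts weakly decreasing. The strategy is therefore: exhibit three pairwise non-isomorphic such $\mu$'s in general, and then analyze precisely when this fails --- those failures should be exactly the four exceptional families.

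Concretely, I would split into cases on $h:=h(\la)$. If $h=1$ then $\la=(n)$ and $D^\la=\bone$ is one-dimensional, excluded (or handled trivially --- but the lemma is about $D^\la$ with the usual conventions, and $\al_n$ already covers the first interesting case $h=2$; note $D^{\al_n}\da_{\SSS_{n/2}}$ indeed has few factors, consistent with (i)). For $h=2$, write $\la=(\la_1,\la_2)$ with $\la_1,\la_2$ odd, $\la_1>\la_2$; here $\mu$ ranges over two-part (or one-part) subpartitions of $n/2$ of the form $(\la_1-x, \la_2-y)$ with $x+y=n/2$, $0\le x\le \la_1-1$, $0\le y\le \la_2$, $\la_1-x\ge \la_2-y$ or $y=\la_2$. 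One counts the number of $2$-regular such $\mu$ and checks it is $\ge 3$ unless $\la_2=1$, i.e. $\la=\al_n$, giving exception (i). For $h=3$ one does the analogous but more involved count with $\mu=(\la_1-x_1,\la_2-x_2,\la_3-x_3)$, and I expect no exceptions arise (the count is always $\ge 3$), which is why $h=3$ does not appear in the list. For $h=4$ one gets exceptions (ii) (when $\la=\be_n$, which has $h(\be_n)=2$ actually --- so this needs care: $\be_n=(n/2+1,n/2-1)$ has two parts, so (ii) sits in the $h=2$ analysis and must be separated there by a parity/divisibility refinement), and (iii), (iv). For $h\ge 5$ one shows there are always at least three admissible non-isomorphic $\mu$, completing the generic case.

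The main obstacle --- and where most of the work lies --- is the exact boundary analysis for small $h$: it is not enough to produce three composition factors of $D^\la\da_{\SSS_{n/2}}$ via Lemma~\ref{l26}; for the exceptional cases (i)--(iv) one must show the opposite, namely that $D^\la\da_{\SSS_{n/2}}$ has at most two non-isomorphic composition factors, which Lemma~\ref{l26} alone cannot give. For that direction I would restrict in stages: $D^\la\da_{\SSS_{n-1}}\cong D^{\la_A}$ is irreducible (JS), so one can iterate, and combine with Proposition~\ref{p8} and Lemma~\ref{Lemma39}(vi) to count $\eps_i$'s and bound the number of composition factors at each step down to $\SSS_{n/2}$; in the $\be_n$ case one uses $D^{\be_n}\da_{\SSS_{n-k,k}}\cong D^{\be_{n-k}}\boxtimes D^{\be_k}$ directly. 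The divisibility conditions $n\equiv 0\ (4)$, $n\equiv 0\ (8)$, $n\equiv 4\ (6)$ should emerge as the arithmetic constraints forcing the relevant intermediate partitions to stay $2$-regular / JS, or forcing the counted multiplicities to collapse.

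Finally, I would double-check the four exceptional $\la$'s are genuinely $2$-regular JS partitions with odd parts and with $n$ in the stated range, and verify in each case by the staged-restriction argument above (or by a small explicit branching computation for the minimal $n$ in each family, then induction using Lemma~\ref{l26} and the self-duality/adjunction facts of Lemma~\ref{Lemma45}) that exactly two non-isomorphic composition factors occur; this pins down that the list is complete and not merely sufficient.
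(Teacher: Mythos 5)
Your core strategy coincides with the paper's: apply Lemma~\ref{l26} and try to exhibit three distinct $2$-regular $\mu\vdash n/2$ with $\la-\mu$ a partition, splitting on $h(\la)$. (A small simplification you miss: since $n$ is even and all parts of $\la$ are odd, $h(\la)$ is automatically even, so your $h=3$ and $h\geq 5$ cases are vacuous.) However, there are two genuine problems with the proposal. First, you have misread the logical content of the lemma. ``$P$ unless one of (i)--(iv)'' only asserts that $P$ holds when $\la$ is outside the list; it does \emph{not} assert that the exceptional $\la$ have at most two non-isomorphic composition factors, and the paper never proves that converse (the exceptional cases are dealt with later, in Lemma~\ref{p9}, by entirely different means). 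So what you identify as ``the main obstacle'' and ``where most of the work lies'' is not part of the statement at all, and the staged-restriction machinery you propose for it is not needed.

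Second, and more seriously for the direction that \emph{is} required: the plan ``count admissible subpartitions and declare the failures to be the exceptions'' does not work, because there are partitions outside the list (i)--(iv) for which Lemma~\ref{l26} genuinely supplies only two composition factors. Concretely, for $\la=(n/2+2,n/2-2)$ with $n\equiv 2\pmod 4$, $n\geq 10$, any admissible $\mu=(\mu_1,\mu_2)$ must satisfy $\mu_1+\mu_2=n/2$ (odd) and $\mu_1-\mu_2\leq 4$, forcing $\mu_1-\mu_2\in\{1,3\}$: exactly two choices. Similarly $\la=(7,5,3,1)$ ($n=16$) fits the pattern of (iii) but is excluded from it by the bound $n\geq 24$, and again the subpartition count falls short. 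In both cases one must produce the third composition factor by other means: the paper first restricts to $\SSS_{n-2}$ to reach the JS partition $(n/2+2,n/2-4)$ and applies Lemma~\ref{l26} from there, and for $(7,5,3,1)$ it passes through $D^{(4,3,2,1)}\cong S^{(4,3,2,1)}$ and invokes the decomposition tables for $\SSS_8$. Without supplementary arguments of this kind your method would either misclassify these partitions as additional exceptions or leave the stated list unverified.
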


\begin{proof}
From Lemma \ref{l26} it is enough to find 
distinct $\mu,\si,\pi\in\Par_2(n/2)$ such that 
$\la-\mu,\la-\si$ and $\la-\pi$ are partitions. 
Notice that $h(\la)$ is even since $n$ is even and $\la$ consists of odd parts. 

{\sf Case 1.} $h(\la)\geq 6$. In this case we can take
\begin{align*}
\mu&=\left(\frac{\la_1+1}{2},\ldots,\frac{\la_{h(\la)/2}+1}{2},\frac{\la_{h(\la)/2+1}-1}{2},\ldots,\frac{\la_{h(\la)}-1}{2}\right),\\
\si&=\left(\frac{\la_1+3}{2},\frac{\la_2+1}{2},\ldots,\frac{\la_{h(\la)/2-1}+1}{2},\frac{\la_{h(\la)/2}-1}{2},\ldots,\frac{\la_{h(\la)}-1}{2}\right).
\end{align*}
If $\la_{h(\la)}\geq 3$ then we can also take
\[\pi=\left(\frac{\la_1+3}{2},\frac{\la_2+1}{2},\ldots,\frac{\la_{h(\la)/2}+1}{2},\frac{\la_{h(\la)/2+1}-1}{2},\ldots,\frac{\la_{h(\la)-1}-1}{2},\frac{\la_{h(\la)}-3}{2}\right),\]
while if $\la_{h(\la)}=1$ we can take
\[\pi=\left(\frac{\la_1+3}{2},\frac{\la_2+1}{2},\ldots,\frac{\la_{h(\la)/2}+1}{2},\frac{\la_{h(\la)/2+1}-1}{2},\ldots,\frac{\la_{h(\la)-2}-1}{2},\frac{\la_{h(\la)-1}-3}{2}\right).\]

{\sf Case 2.} $h(\la)=4$. In this case we can take 
$$\mu=((\la_1+1)/2,(\la_2+1)/2,(\la_3-1)/2,(\la_4-1)/2).$$ 
If $\la_1\geq\la_2+4$ we can also take 
\begin{align*}
\si&=((\la_1-1)/2,(\la_2+1)/2,(\la_3+1)/2,(\la_4-1)/2)
\\ 
\pi&=((\la_1+3)/2,(\la_2-1)/2,(\la_3-1)/2,(\la_4-1)/2).
\end{align*}
If $\la_2\geq\la_3+4$ we can also take 
\begin{align*}
\si&=((\la_1-1)/2,(\la_2-1)/2,(\la_3+1)/2,(\la_4+1)/2)
\\
\pi&=((\la_1+1)/2,(\la_2-1)/2,(\la_3+1)/2,(\la_4-1)/2).
 \end{align*}
We can now assume that $\la_1=\la_2+2=\la_3+4$.

If $\la_3-\la_4=2$, then either we are in the excluded case (iii) or $\la=(7,5,3,1)$. By Lemma \ref{l26}, $D^{(4,3,2,1)}$ is a composition factor of $D^{(7,5,3,1)}\da_{\SSS_{10}}$. 
Since $D^{(4,3,2,1)}\cong S^{(4,3,2,1)}$ (as $(4,3,2,1)$ is a $2$-core), it follows from \cite[9.3, Tables]{JamesBook} that $D^{(4,3,2,1)}\da_{\SSS_8}$ and then also $D^{(7,5,3,1)}\da_{\SSS_8}$ has at least three non-isomorphic composition factors.

If $\la_3-\la_4>2$, then either we are in the excluded case (iv) or $\la_4\geq 3$. In this case we can take  
\begin{align*}
\si&=((\la_1+1)/2,(\la_2-1)/2,(\la_3-1)/2,(\la_4+1)/2)
\\
\pi&=((\la_1+3)/2,(\la_2+1)/2,(\la_3-1)/2,(\la_4-3)/2).
 \end{align*}

{\sf Case 3.} $h(\la)=2$. 
If $\la_2=1$, we are in the exceptional case (i). So from now on we suppose that $\la_2\geq 3$. Moreover, if $\la_1-\la_2=2$, we are in the exceptional case (ii). So from now on we also assume that $\la_1-\la_2\geq 4$. 

Assume first that $\la_1-\la_2\geq 6$. If $\la_2\leq n/4$ we take 
$$\mu=(n/2),\ \si=(n/2-1,1),\ \pi=(n/2-2,2).$$ 
If $\la_2> n/4$ we take 
\begin{align*}
\mu&=(\la_1-\lceil n/4\rceil,\la_2-\lfloor n/4\rfloor), \\ 
\si&=(\la_1-\lceil n/4\rceil-1,\la_2-\lfloor n/4\rfloor+1),\\ 
\pi&=(\la_1-\lceil n/4\rceil-2,\la_2-\lfloor n/4\rfloor+2).
\end{align*}

Assume finally that $\la_1-\la_2=4$, i.e. $\la=(n/2+2,n/2-2)$. Then  $n\equiv 2\pmod{4}$ and we may assume that $n\geq 10$ as for $n=6$ we are in the exceptional case (i). We can take $\mu=((n+6)/4,(n-6)/4)$ and $\si=((n+2)/4,(n-2)/4)$. We complete the proof by showing that $D^{((n+10)/4,(n-10)/4)}$ is also a composition factor of $D^{(n/2+2,n/2-2)}\da_{\SSS_{n/2}}$. By  Lemma~\ref{Lemma55}, we have $D^{(n/2+2,n/2-2)}\da_{\SSS_{n-1}}\cong D^{(n/2+1,n/2-2)}$. 
Further, by Lemma \ref{Lemma39} we have that $D^{(n/2+2,n/2-4)}$ is a composition factor of $D^{(n/2+2,n/2-2)}\da_{\SSS_{n-2}}$. Since $(n/2+2,n/2-4)$ is a JS-partition, it then follows from Lemma \ref{l26} that $D^{((n+10)/4,(n-10)/4)}$ is a composition factor of $D^{(n/2+2,n/2-2)}\da_{\SSS_{n/2}}$.
\end{proof}

\subsection{Branching recognition}
\label{SSRecognition}
In this subsection we  obtain characterizations of certain classes of irreducible modules by their branching properties. 

The following lemma develops \cite[Lemma 2.7]{BK}.

\begin{Lemma} \label{LBr1} 
Let $p=3$, $n>6$ and $\la\in\Par_3(n)$. Suppose that $h(\mu)\leq 2$ or $h(\mu^\Mull)\leq 2$ for all composition factors $D^\mu$ of $D^\la{\da}_{\SSS_{n-1}}$. Then $h(\la)\leq 2$ or $h(\la^\Mull)\leq 2$. 
\end{Lemma}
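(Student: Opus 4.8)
The plan is to argue by contradiction. First observe that the statement is symmetric under $\la\leftrightarrow\la^\Mull$: since restriction commutes with $-\otimes\sgn$ and $D^{\la^\Mull}\cong D^\la\otimes\sgn$, the hypothesis for $\la$ is the hypothesis for $\la^\Mull$, and likewise for the conclusion. So it suffices to show that if $h(\la)\geq 3$ and $h(\la^\Mull)\geq 3$, then $D^\la\da_{\SSS_{n-1}}$ has a composition factor $D^\mu$ with $h(\mu)\geq 3$ and $h(\mu^\Mull)\geq 3$, contradicting the hypothesis. The ingredients I would use are: (a) for every normal node $A$ of $\la$, the module $D^{\la_A}$ is a composition factor of $D^\la\da_{\SSS_{n-1}}$ (Lemma~\ref{Lemma39}(vi)); (b) writing $\bar\jmath:=-\jmath$ on $I=\{0,1,2\}$ (so $\bar 0=0$, $\bar 1=2$), tensoring with $\sgn$ negates residue contents and hence interchanges $e_\jmath$ with $e_{\bar\jmath}$; in particular $\eps_\jmath(\la^\Mull)=\eps_{\bar\jmath}(\la)$ for all $\jmath$, and since $D^{\tilde e_\jmath\la}=\soc(e_\jmath D^\la)$ by Lemma~\ref{Lemma39}(ii) one gets $(\tilde e_\jmath\la)^\Mull=\tilde e_{\bar\jmath}(\la^\Mull)$ whenever $\eps_\jmath(\la)>0$; (c) deleting a node from a partition of height $h$ leaves the height equal to $h$, unless the node is the bottom corner $(h,1)$, in which case the height drops to $h-1$ — and $(3,1)$ has residue $1$; (d) the top removable node of any partition is normal (as in the proof of Lemma~\ref{l3}), and for $n>6$ with $h(\la)=3$ it is not $(3,1)$.

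The first reductions are soft. Put $S:=\{\jmath\in I:\eps_\jmath(\la)>0\}$, which is non-empty since $\dim D^\la>1$ (because $h(\la)\geq 3$ excludes $\la=(n)$ and $\la=(n)^\Mull$, the latter having height $2$ for $p=3$). Apply the hypothesis to the composition factor $D^{\tilde e_\jmath\la}$ for $\jmath\in S$. By (b) and (c), $h((\tilde e_\jmath\la)^\Mull)=h(\tilde e_{\bar\jmath}\la^\Mull)\leq 2$ can hold only when the $\bar\jmath$-good node of $\la^\Mull$ is its bottom corner, forcing $\bar\jmath=1$, i.e.\ $\jmath=2$; and $h(\tilde e_\jmath\la)\leq 2$ can hold only when the $\jmath$-good node of $\la$ is $(3,1)$, forcing $\jmath=1$. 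Hence for $\jmath=0$ neither alternative is available, so $D^{\tilde e_0\la}$ is a desired composition factor; thus $\eps_0(\la)=0$ and $S\subseteq\{1,2\}$. Moreover, if $1\in S$ then either $D^{\tilde e_1\la}$ is a desired composition factor, or $h(\la)=3$, $\la_3=1$, and $(3,1)$ is the $1$-good node of $\la$. Since $S\neq\varnothing$, after possibly replacing $\la$ by $\la^\Mull$ I may assume $1\in S$, and hence from now on $h(\la)=3$, $\la=(\la_1,\la_2,1)$, and $(3,1)$ is the $1$-good node of $\la$.

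For the endgame, by (d) the top removable node $T$ of $\la$ is normal, is not $(3,1)$, and — as $\eps_0(\la)=0$ — is not $0$-normal, so $T$ lies in row $1$ or $2$ and is $1$- or $2$-normal. If $\eps_1(\la)\geq 2$ (which must happen if $T$ is $1$-normal) or $\eps_2(\la)\geq 2$, there is a normal node $A\neq(3,1)$ of residue $1$ (resp.\ $2$) in row $1$ or $2$; then $D^{\la_A}$ is a composition factor with $h(\la_A)=3$, and one checks — using (b), (c) when $A$ is a good node and a direct count of the $3$-rim of $\la_A$ otherwise — that $h((\la_A)^\Mull)\geq 3$ as well, giving the contradiction. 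In the remaining case $\la$ has exactly two normal nodes, $(3,1)$ and $T$; tracking the residue of $T$ together with the constraint that $(3,1)$ be the only $1$-normal node pins $\la_1,\la_2\bmod 3$ via the reduced $1$-signature, and $h(\la^\Mull)=3$ (which holds because $T$ is $2$-normal, so $2\in S$, and one invokes the reduction above for $\jmath=2$) translates via $h(\la^\Mull)=a_1+x_1-h(\la)$ into the $3$-rim of $\la$ having $5$ or $6$ nodes; a short shape check on this $3$-rim then forces $\la=(\la_1,1,1)$ with $3\mid\la_1$. But then $T=(1,\la_1)$ and $D^{\la_T}=D^{(\la_1-1,1,1)}$ is a composition factor of $D^\la\da_{\SSS_{n-1}}$ of height $3$ whose $3$-rim has exactly $5$ nodes, so $h\bigl((\la_1-1,1,1)^\Mull\bigr)=5+1-3=3$ — the desired contradiction.

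The reductions in the second paragraph are routine once the Mullineux--crystal compatibility in (b) is in hand. The genuinely delicate part, and the main obstacle, is the endgame: extracting from the accumulated constraints a short explicit list of candidate shapes for $\la$ and eliminating each one, which forces a hands-on use of the $p=3$ combinatorics of the $p$-rim — precisely because the Mullineux images of the composition factors $D^{\la_A}$ that are \emph{not} of the form $\tilde e_\jmath\la$ cannot be read off from $\la^\Mull$ directly.
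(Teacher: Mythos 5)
Your overall strategy is the same as the paper's: argue by contradiction, use the fact that $D^{\la_A}$ is a composition factor of $D^\la\da_{\SSS_{n-1}}$ for normal nodes $A$, reduce to $h(\la)=3$ via the good-node/Mullineux compatibility from \cite{KBrIII}, and then finish with $3$-rim combinatorics. Your second paragraph (forcing $\eps_0(\la)=0$, $h(\la)=3$, $\la_3=1$ and $(3,1)$ the $1$-good node) is sound. The problem is the endgame, which is exactly where the real work lies and where your argument as written is not just incomplete but contains a false assertion. You claim that whenever $\eps_1(\la)\geq 2$ or $\eps_2(\la)\geq 2$ one may take \emph{a} normal node $A\neq(3,1)$ of that residue in rows $1$ or $2$ and check $h((\la_A)^\Mull)\geq 3$. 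Take $\la=(6,4,1)$ ($n=11$): here $\eps_0(\la)=0$, $(3,1)$ is the unique $1$-normal node (hence $1$-good), $h(\la^\Mull)=5$, and $\eps_2(\la)=2$ with $2$-normal nodes $(2,4)$ and $(1,6)$. Choosing $A=(1,6)$ gives $\la_A=(5,4,1)$, whose $3$-rim has only $4$ nodes, so $h((5,4,1)^\Mull)=4+1-3=2$: the check fails. The contradiction is still reachable via the other node $(2,4)$, but your argument gives no criterion for which node to pick, and even for the $2$-good node your own reduction shows $h((\tilde e_2\la)^\Mull)\leq 2$ can occur, so ``(b), (c) when $A$ is a good node'' does not settle that case either. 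Likewise the ``short shape check'' pinning down $\la=(\la_1,1,1)$ with $3\mid\la_1$ in the two-normal-node case is asserted, not performed. (Your final computation for $(\la_1,1,1)$ is correct.)

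The paper sidesteps exactly this difficulty by first proving a cleaner necessary condition: for every normal node $B$ with $\la_B$ $3$-regular, the first column $G_1(\la_B)$ of the Mullineux symbol must differ from $G_1(\la)$ (since $G_1$ records both $h(\la_B)$ and, via $a_1+x_1-r_1$, the height of $(\la_B)^\Mull$). One then only has to exhibit a normal node whose removal preserves the length of the $3$-rim and the height, which is read off from the shape of the first one or two $3$-segments of $\la$; the handful of shapes where no such node exists are eliminated explicitly. If you want to salvage your route, you need either to adopt some such invariant or to carry out the rim counts for each residue pattern of $(\la_1,\la_2)\bmod 3$ with the correct choice of normal node in each case; as it stands, the ``one checks'' steps hide both an error and the bulk of the proof.
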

\begin{proof}
Pick a good node $A$ of $\la$. By \cite{KBrIII}, we have that $(\la_A)^\Mull=(\la^\Mull)_B$ for some good node $B$ of $\la^\Mull$. By Lemma~\ref{Lemma39}, $D^{\la_A}$ is a composition factor of $D^\la{\da}_{\SSS_{n-1}}$ and $D^{(\la^\Mull)_B}$ is a composition factor of $D^{\la^\Mull}{\da}_{\SSS_{n-1}}$. 
If $h(\la)\geq 4$ then $h(\la_A)\geq 3$. If $h(\la^\Mull)\geq 4$ then $h((\la_A)^\Mull)=h((\la^\Mull)_B)\geq 3$. So we cannot have both $h(\la)\geq 4$ and $h(\la^\Mull)\geq 4$. So, tensoring with sign if necessary, we may assume that $h(\la)=3\leq h(\la^\Mull)$. Recall that $G_1(\la)$ denotes the first column of the Mullineux symbol for $\la$. 

\vspace{1 mm}
\noindent
{\sf Claim.} {\em If $B$ is a normal node of $\la$ such that $\la_B$ is $3$-regular, then $G_1(\la_B)\neq G_1(\la)$}.

\vspace{1 mm}
\noindent
Indeed, by Lemma~\ref{Lemma39}(vi), $D^{\la_B}$ is a composition factor of $D^\la{\da}_{\SSS_{n-1}}$. By assumption, we must then have $h(\la_B)\leq 2$ or $h((\la_B)^\Mull)\leq 2$. 
If $h(\la_B)\leq 2$, then $G_1(\la_B)\neq G_1(\la)$ since $h(\la)=3$ and $h(\la)$ is part of the data $G_1(\la)$. If $h((\la_B)^\Mull)\leq 2$, then similarly $G_1((\la_B)^\Mull)\neq G_1(\la^\Mull)$ since $h(\la^\Mull)\geq 3$; hence  $G_1(\la_B)\neq G_1(\la)$. 
The Claim is proved.

The first $3$-segment of $\la$ has one of the following forms, which we will consider case by case (nodes of the first $3$-segment are marked with $\tt x$'s):
$$
\begin{tikzpicture}
\node at (-4.8,0.005){(a)};
\node at (0.55,0.01) {$\begin{Young}
$\tt x$\cr 
\end{Young}$};
\node at (-4,-0.465) {$\begin{Young}
\cr
\cr
\cr
\end{Young}$};
\node at (0.307,-0.46) {$\begin{Young}
$\tt x$& $\tt x$\cr 
\end{Young}$};
\node at (-4,-0.465) {$\begin{Young}
\cr
\cr
\cr
\end{Young}$};
\draw [-] (-3.9,0.25) -- (0.3,0.25);
\node at (-2,-0.9){$\cdots$};
\end{tikzpicture}
$$
$$
\begin{tikzpicture}
\node at (-4.8,0.005){(b)};
\node at (0.3,-0.23) {$\begin{Young}
$\tt x$& $\tt x$\cr $\tt x$ \cr
\end{Young}$};
\node at (-4,-0.465) {$\begin{Young}
\cr
\cr
\cr
\end{Young}$};
\draw [-] (-3.9,0.25) -- (-0.16,0.25);
\node at (-2,-0.9){$\cdots$};
\end{tikzpicture}
$$
$$
\begin{tikzpicture}
\node at (-4.8,0.005){(c)};
\node at (0,0.005) {$\begin{Young}
$\tt x$& $\tt x$& $\tt x$ \cr
\end{Young}$};
\node at (-4,-0.465) {$\begin{Young}
\cr
\cr
\cr
\end{Young}$};
\draw [-] (-3.9,0.25) -- (-0.7,0.25);
\node at (-2,-0.9){$\cdots$};
\end{tikzpicture}
$$

{\sf Case (a).} Let $A$ be the top removable node of $\la$. Then  $G_1(\la)=G_1(\la_A)$, which contradicts the Claim. 

{\sf Case (b).} If $\la_3<\la_2$ and $A$ is the top removable node of $\la$, then  $G_1(\la)=G_1(\la_A)$, which contradicts the Claim. Otherwise, $\la$ is of the form $(k+1,k,k)$. In the exceptional case,  the bottom removable node $A$ is normal for $\la$. By Lemma~\ref{Lemma39}(vi), $D^{\la_A}$ is a composition factor of $D^\la{\da}_{\SSS_{n-1}}$. On the other hand, $h(\la_A)=3$ and it is easy to see that $h((\la_A)^\Mull)\geq 3$ unless $n=7$. For $n=7$ we 
have $\la=(3,2,2)$ and $\la^\Mull=(5,1,1)$. Hence $D^{(4,1,1)}$ is a composition factor of $D^{\la^\Mull}{\da}_{\SSS_{6}}$, and, since $(4,1,1)^\Mull=(4,1,1)$, we deduce that $D^{(4,1,1)}$ is a composition factor of $D^{\la}{\da}_{\SSS_{6}}$ violating our assumptions. 

{\sf Case (c).} If $\la_2<\la_1-2$ and $A$ is the top removable node of $\la$, then  $G_1(\la)=G_1(\la_A)$, which contradicts the Claim. So we may assume that $\la_2=\la_1-2$. Consider the second $3$-segment. We now have the following cases (nodes of the second $3$-segment are marked with $\bullet$'s):
$$
\begin{tikzpicture}
\node at (-5,0.005){(c.1)};
\node at (0,-0.465) {$\begin{Young}
$\tt x$& $\tt x$& $\tt x$ \cr
$\bullet$\cr
$\bullet$\cr
\end{Young}$};
\node at (-4,-0.465) {$\begin{Young}
\cr
\cr
\cr
\end{Young}$};
\draw [-] (-3.9,0.25) -- (-0.7,0.25);
\node at (-2,-0.9){$\cdots$};
\node at (-0.95,-0.945) {$\begin{Young}
$\bullet$
\cr
\end{Young}$};
\end{tikzpicture}
$$
$$
\begin{tikzpicture}
\node at (-5,0.005){(c.2)};
\node at (0,0.013) {$\begin{Young}
$\tt x$& $\tt x$& $\tt x$ \cr
\end{Young}$};
\node at (-4,-0.465) {$\begin{Young}
\cr
\cr
\cr
\end{Young}$};
\draw [-] (-3.9,0.25) -- (-0.7,0.25);
\node at (-2,-0.9){$\cdots$};
\node at (-0.71,-0.705) {$\begin{Young}
$\bullet$&$\bullet$\cr
$\bullet$\cr
\end{Young}$};
\end{tikzpicture}
$$
$$
\begin{tikzpicture}
\node at (-5,0.005){(c.3)};
\node at (0,0.013) {$\begin{Young}
$\tt x$& $\tt x$& $\tt x$ \cr
\end{Young}$};
\node at (-4,-0.465) {$\begin{Young}
\cr
\cr
\cr
\end{Young}$};
\draw [-] (-3.9,0.25) -- (-0.7,0.25);
\node at (-2,-0.9){$\cdots$};
\node at (-0.95,-0.475) {$\begin{Young}
$\bullet$&$\bullet$&
$\bullet$\cr
\end{Young}$};
\end{tikzpicture}
$$
In the case (c.1), the bottom removable node $A$ is normal for $\la$, and $G_1(\la)=G_1(\la_A)$, which contradicts the Claim. In the case (c.2), the second removable node $A$ is normal for $\la$, and, unless $n=7$, we get $G_1(\la)=G_1(\la_A)$, which contradicts the Claim. In the exceptional case $\la=(4,2,1)$ and $\la_A=(4,1,1)$, and so we get a contradiction as in the case (b). In the case (c.3), we have $\la=(k+2,k,l)$ for $1\leq l\leq k-2$. 
The second removable node $A$ is normal for $\la$ and if $l<k-2$ we get $G_1(\la)=G_1(\la_A)$, which contradicts the Claim. Let $l=k-2$. In this case the bottom removable node $B$ is normal for $\la$, and, unless $l\leq 3$, we get $G_1(\la)=G_1(\la_A)$, which contradicts the Claim. In the exceptional cases, the second removable node $A$ is normal for $\la$ which yields a composition factor $D^{(k+2,k-1,l)}$ of $D^\la{\da}_{\SSS_{n-1}}$ which violates the assumptions. 
\end{proof}

\begin{Lemma} \label{LBr2} 
Let $p=2$, $n>6$ and $\la\in\Par_2(n)$. Suppose that $h(\mu)\leq 2$ for all composition factors $D^\mu$ of $D^\la{\da}_{\SSS_{n-1}}$. Then $h(\la)\leq 2$. 
\end{Lemma}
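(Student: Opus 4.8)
The plan is to prove the contrapositive: assuming $h(\la)\geq 3$, I will produce a composition factor $D^\mu$ of $D^\la{\da}_{\SSS_{n-1}}$ with $h(\mu)\geq 3$. The engine is Lemma~\ref{Lemma39}(vi): if $A$ is an $i$-normal node of $\la$ and $\la_A$ is $2$-regular, then $D^{\la_A}$ is a composition factor of $e_iD^\la$, and hence of $D^\la{\da}_{\SSS_{n-1}}$ by Lemma~\ref{Lemma45}; moreover, when $A$ is the $i$-good node, $\la_A=\tilde e_i\la$ is automatically $2$-regular by the standard fact recalled in \S\ref{SSPar}. So it is enough to exhibit a normal node $A$ of $\la$ with $\la_A$ $2$-regular and $h(\la_A)\geq 3$. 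Since $p=2$, the nonzero parts of $\la$ are strictly decreasing, so $B_r:=(r,\la_r)$ is removable for each $1\leq r\leq h(\la)$, and $\la_{B_r}$ is $2$-regular exactly when $\la_r\geq\la_{r+1}+2$ (convention $\la_{h(\la)+1}:=0$). I will also use that the top removable node $B_1$ is always normal (its minus sign is the last entry of the $i$-signature for $i=\res(B_1)$, hence is not erased in the reduction), so $\la$ has a good node.

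First I would pick a good node $A$ of $\la$ (for some residue $i$). Then, as noted above, $\la_A$ is $2$-regular and $D^{\la_A}$ is a composition factor of $D^\la{\da}_{\SSS_{n-1}}$; since deleting one node drops the number of parts by at most one, $h(\la_A)\geq 3$ unless $h(\la)=3$, $A=B_3$ and $\la_3=1$. Hence we may assume $h(\la)=3$ and $\la=(\la_1,\la_2,1)$ with $\la_1>\la_2>1$.

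In this remaining situation I would split on the gap $\la_1-\la_2$. If $\la_1\geq\la_2+2$, take $A=B_1$: it is normal and $\la_{B_1}=(\la_1-1,\la_2,1)$ is $2$-regular with three parts. If $\la_1=\la_2+1$, the key small point is that $B_2$ is then automatically normal: the only addable nodes of $\la$ lying above $B_2$ in the rim ordering are $(2,\la_2+1)$ (addable since $\la_2+1\leq\la_1$) and $(1,\la_1+1)$, both of residue $\equiv\la_2+1\pmod{2}$, whereas $\res(B_2)\equiv\la_2\pmod{2}$; so no addable node of residue $\res(B_2)$ lies above $B_2$, and the normal-node criterion of \S\ref{SSPar} is satisfied vacuously. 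Now $\la_{B_2}=(\la_1,\la_2-1,1)$ has three parts and is $2$-regular whenever $\la_2\geq 3$, so in that subcase take $A=B_2$. The only case not yet covered is $\la_1=\la_2+1$ with $\la_2=2$, i.e.\ $\la=(3,2,1)$, which has $n=6$ and is excluded by $n>6$. This exhausts all possibilities.

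The main obstacle is exactly this last piece of bookkeeping: the naive node to delete, the good node, can push $\la$ below three parts, forcing the tight configurations $\la=(\la_1,\la_2,1)$, where the next naive choice --- deleting the top node --- can destroy $2$-regularity when $\la_1=\la_2+1$. The way through is the observation that a removable node is automatically normal once no addable node of its own residue lies above it, which reduces the whole verification to the trivial parity count above; the hypothesis $n>6$ is invoked precisely once, to discard the single exception $\la=(3,2,1)$.
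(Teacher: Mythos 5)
Your proof is correct and follows essentially the same route as the paper's: reduce to $h(\la)=3$ by removing a good node, then exhibit a normal node whose removal preserves both $2$-regularity and the number of parts, with $n>6$ ruling out the single exception $(3,2,1)$. The paper states the second step without detail ("the assumption $n>6$ guarantees that there always is a normal node $A$ ..."), and your case split on $\la_1-\la_2$, including the parity check showing $B_2$ is normal when $\la_1=\la_2+1$, is a correct filling-in of exactly that assertion.
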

\begin{proof}
Since $D^{\la_A}$ is a composition factor of $D^\la{\da}_{\SSS_{n-1}}$ for any good node $A$ for $\la$, we may assume that $h(\la)=3$. But in this case, the assumption $n>6$ guarantees that there always is a normal node $A$ for $\la$ such that $\la_A$ is $2$-regular and $h(\la_A)=3$. 
\end{proof}

Recall the partition $\be_n$ defined in \eqref{ESpin}. 

\begin{Lemma} \label{L180418_3} 
Let $p=2$, $n\geq 7$ and $\la\in\Par_2(n)$. If all composition factors of $D^\la\da_{\SSS_{n-1}}$ are of the form $\bone_{\SSS_{n-1}}$ or $D^{\be_{n-1}}$ then either $D^\la\cong\bone_{\SSS_n}$ or $\la=\be_n$.  
\end{Lemma}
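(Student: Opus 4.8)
The plan is to bound the number of rows of $\la$ and then pin $\la$ down using dominance of two-row partitions together with the modular branching rules. First I would note that $\bone_{\SSS_{n-1}}=D^{(n-1)}$ and $D^{\be_{n-1}}$ are both labelled by partitions with at most two rows, so Lemma~\ref{LBr2} (available since $n>6$) yields $h(\la)\le 2$. If $h(\la)=1$ then $\la=(n)$ and $D^\la\cong\bone_{\SSS_n}$, so the remaining task is to treat $\la=(n-a,a)$ with $1\le a$; here $2$-regularity gives $a\le(n-1)/2$ when $n$ is odd and $a\le n/2-1$ when $n$ is even.

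Next I would show that $D^{\be_{n-1}}$ really occurs as a composition factor of $D^\la\da_{\SSS_{n-1}}$ --- this is what makes Lemma~\ref{LBrEasy} useful, since dominance by $\bone=D^{(n-1)}$ is vacuous. If it did not occur, every composition factor of $D^\la\da_{\SSS_{n-1}}=\bigoplus_{i}e_iD^\la$ would be trivial; picking $i$ with $e_iD^\la\neq 0$, Lemma~\ref{Lemma39}(ii) would force its socle $D^{\tilde e_i\la}$ to be trivial, hence $\tilde e_i\la=(n-1)$, and (as $\la$ has two parts) $\la=(n-1,1)$. I would then derive a contradiction: the top removable node of a $2$-regular partition is always normal, so Lemma~\ref{Lemma39}(vi) gives $D^{(n-2,1)}$ as a composition factor of $D^{(n-1,1)}\da_{\SSS_{n-1}}$ (using $n\ge 7$ to see $(n-2,1)$ is $2$-regular), while $(n-2,1)$ is neither $(n-1)$ nor $\be_{n-1}$ (the latter having second part $\ge 2$ for $n\ge 7$).

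With $D^{\be_{n-1}}$ a composition factor, Lemma~\ref{LBrEasy} provides a removable node $A$ of $\la$ with $\be_{n-1}\unrhd\la_A$, where $\la_A\in\{(n-a-1,a),(n-a,a-1)\}$; since $\unrhd$ just compares first parts for two-row partitions of $n-1$, substituting $\be_{n-1}=(n/2,n/2-1)$ (for $n$ even) or $\be_{n-1}=((n+1)/2,(n-3)/2)$ (for $n$ odd) and using the allowed range of $a$ forces $\la=\be_n$ when $n$ is even, and $\la\in\{\be_n,\ ((n+3)/2,(n-3)/2)\}$ when $n$ is odd. To finish, for $n$ odd I would eliminate $\mu:=((n+3)/2,(n-3)/2)=(m+3,m)$ with $m=(n-3)/2\ge 2$ by a short residue computation: both removable nodes of $\mu$ have residue $i\equiv m\pmod2$, and the only $i$-addable node is $(3,1)$ (present only when $m$ is even), so the reduced $i$-signature is $--$ or $+--$; in either case the row-$2$ node of $\mu$ is $i$-normal, so Lemma~\ref{Lemma39}(vi) produces the composition factor $D^{(m+3,m-1)}$ of $D^\mu\da_{\SSS_{n-1}}$ --- which is neither $(n-1)$ nor $\be_{n-1}=(m+2,m)$, a contradiction. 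Hence $\la=\be_n$.

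I expect the main obstacle to be bookkeeping rather than any conceptual difficulty: one has to justify carefully that an all-trivial restriction forces $\la=(n-1,1)$ (and then exclude it via the small branching input in the second step), and one has to track residues and the reduced signature of $(m+3,m)$ accurately to certify that its row-$2$ removable node is normal. None of this is deep, but all the exclusions rely on $n\ge 7$, so the role of that hypothesis should be kept in view throughout.
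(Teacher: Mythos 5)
Your proof is correct, but it takes a genuinely different route from the paper's after the common first step (both use Lemma~\ref{LBr2} to reduce to $h(\la)\le 2$). The paper argues directly on a two-row $\la$ by splitting on $d:=\la_1-\la_2$: if $d\le 2$ then $\la=\be_n$; if $d>3$ the top removable node (always normal) yields the composition factor $D^{(\la_1-1,\la_2)}$ with $(\la_1-1,\la_2)\notin\{(n-1),\be_{n-1}\}$; and if $d=3$ the second removable node is also normal and yields $D^{(\la_1,\la_2-1)}$, which is likewise excluded. Your argument instead first establishes that $D^{\be_{n-1}}$ genuinely occurs (via the socle of some $e_iD^\la$ and the separate exclusion of $(n-1,1)$), then invokes Lemma~\ref{LBrEasy} and the dominance order to cut the candidates down to $\be_n$ plus the single partition $((n+3)/2,(n-3)/2)$ for $n$ odd, which you kill by a reduced-signature computation showing its row-$2$ removable node is normal. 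Both are sound; the paper's version is shorter because the normal-node/branching input (Lemma~\ref{Lemma39}(vi)) it uses in each case is exactly the same tool you end up deploying twice (for $(n-1,1)$ and for $(m+3,m)$), while the dominance step of Lemma~\ref{LBrEasy} is, in the end, dispensable. The one thing your route buys is that it isolates where the hypothesis "$D^{\be_{n-1}}$ occurs at all" is used, but this is not needed for the statement. All your numerical checks (the range of $a$, the residues and signatures of $(m+3,m)$, and the use of $n\ge 7$ to ensure $\be_{n-1}$ has second part $\ge 2$ and $m\ge 2$) are accurate.
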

\begin{proof}
By Lemma~\ref{LBr2}, we may assume that $h(\la)=2$. If $\la_1-\la_2\leq 2$, then $\la=\be_n$. If $\la_1-\la_2>3$, then $(\la_1-1,\la_2)\neq \be_{n-1}$, while $D^{(\la_1-1,\la_2)}$ is a composition factor of $D^\la\da_{\SSS_{n-1}}$ by Lemma~\ref{Lemma39}(vi). Finally, if $\la_1-\la_2=3$, then $\la_2\geq 2$ since $n\geq 7$ and $(\la_1,\la_2-1)\not\in \{(n-1),\be_{n-1}\}$, while  $D^{(\la_1,\la_2-1)}$ is a composition factor of $D^\la\da_{\SSS_{n-1}}$ by Lemma~\ref{Lemma39}(vi).
\end{proof}

\section{Permutation modules}
\label{SPerm}

\subsection{Some general results}

We record two known general results concerning permutation modules $M_k$ and Specht modules $S_k$. 

\begin{Lemma} \label{LFilt} {\rm \cite[17.17]{JamesBook}} 
If $0\leq k\leq n/2$ then $M_k\sim S_k|S_{k-1}|\ldots|S_0$.
\end{Lemma}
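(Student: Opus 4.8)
The plan is to deduce this from Young's rule, essentially reproducing \cite[17.17]{JamesBook}. First I would recall the general statement (\cite[Chapter~17]{JamesBook}): over $\Z$, the permutation module $M^\mu_\Z$ carries a filtration by $\Z\SSS_n$-submodules whose successive subquotients are Specht modules $S^\la_\Z$, with $S^\la_\Z$ occurring exactly $K_{\la\mu}$ times, where $K_{\la\mu}$ is the number of semistandard $\la$-tableaux of content $\mu$; moreover this filtration can be arranged so that $S^\la_\Z$ lies below $S^\nu_\Z$ whenever $\la\unlhd\nu$. Since $-\otimes_\Z\F$ preserves such a filtration and its subquotients, it then suffices to carry out the combinatorial bookkeeping for $\mu=(n-k,k)$.

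The key computation is that $K_{\la,(n-k,k)}=1$ when $\la=(n-j,j)$ for some $0\leq j\leq k$, and $K_{\la,(n-k,k)}=0$ otherwise. For this: any semistandard tableau of content $(n-k,k)$ has entries in $\{1,2\}$, hence at most two rows, and since its shape must dominate $(n-k,k)$ it must be $(n-j,j)$ with $j\leq k$ (using $k\leq n/2$). Given such a shape, column-strictness forces every entry $1$ into the first row, and then weak increase along the rows forces the unique filling with first row $1^{n-k}2^{k-j}$ and second row $2^j$; this is a legitimate semistandard tableau precisely because $j\leq k\leq n-k$. Hence the subquotients of the filtration of $M_k$ are exactly $S_0,S_1,\dots,S_k$, each occurring once.

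Finally I would read off the order. Among the shapes that occur we have $(n)\rhd(n-1,1)\rhd\cdots\rhd(n-k,k)$, so the dominance-compatible arrangement of the filtration puts $S_k=S^{(n-k,k)}$ at the bottom, then $S_{k-1},\dots,S_1$, and $S_0\cong\bone_{\SSS_n}$ at the top, i.e. $M_k\sim S_k|S_{k-1}|\cdots|S_0$. (That $S_k$ sits at the bottom is in any case automatic, as $S_k\subseteq M_k$ is an honest submodule.) The one point I would handle most carefully is the existence of the \emph{dominance-ordered integral} Specht filtration, as opposed to a mere abstract filtration with the correct multiplicities — but this is exactly what Young's rule as developed in \cite[Chapter~17]{JamesBook} supplies, via the $\Z$-span of the semistandard homomorphisms between permutation modules. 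A more hands-on route, specific to two rows, would instead build the filtration inductively from the boundary maps $\eta_{k,k-1}\colon M_k\to M_{k-1}$, whose kernel over $\Z$ (and over $\Q$) is $S_k$; however this requires extra care with ranks in characteristic $p$ (for which Lemma~\ref{LWilson} is the relevant input), so the route through \cite{JamesBook} is the cleaner one.
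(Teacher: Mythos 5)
Your proof is correct, and it takes the same route as the paper, which simply cites \cite[17.17]{JamesBook}: that result is itself established via the semistandard (Young's rule) filtration machinery of Chapter 17 of James's book, exactly as you reconstruct it. Your Kostka-number computation and the dominance-ordering of the factors (with $S_k\subseteq M_k$ at the bottom, matching the paper's bottom-to-top convention for $\sim$) are both accurate.
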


Given $a,b\in\Z_{\geq 0}$ with $p$-adic expansions $a=\sum_{t=0}^r a_tp^t$, $b=\sum_{t=0}^s b_tp^t$ such that $a_r\neq0$, $b_s\neq 0$, we say that $a$ {\em contains $b$ to the base  $p$} if $s<r$ and for all $t$ we have $b_t=0$ or $b_t=a_t$. 

\begin{Lemma} \label{L2R} {\rm \cite[24.15]{JamesBook}} 
All composition factors of $S_k$ are of the form $D_j$ with $j\leq k$. Moreover, $[S_k:D_j]=1$ if $n-2j+1$ contains $k-j$ to the base $p$, and   $[S_k:D_j]=0$ otherwise. 
\end{Lemma}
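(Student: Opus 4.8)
The first assertion is soft and I would dispose of it directly. Since $S_k=S^{(n-k,k)}$ is a submodule of the Young permutation module $M_k=M^{(n-k,k)}$, it suffices to bound the composition factors of $M_k$. By Young's rule $M_k$ has a Specht filtration with sections $S^\mu$ for $\mu\unrhd(n-k,k)$, and each such $S^\mu$ has composition factors among the $D^\nu$ with $\nu\unrhd\mu$ (see \cite[12.2]{JamesBook}); hence every composition factor of $M_k$, a fortiori of $S_k$, is some $D^\mu$ with $\mu\unrhd(n-k,k)$. For such $\mu$ the dominance inequality at $r=2$ gives $\mu_1+\mu_2\ge n$, so $\mu$ has at most two parts and $\mu=(n-j,j)$ with $0\le j\le k$; $p$-regularity additionally excludes $j=n/2$ when $p=2$. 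This is the first statement.

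For the multiplicities the plan is to reduce everything to the computation of $[M_k:D_j]$. By Lemma~\ref{LFilt} we have $M_k\sim S_k|S_{k-1}|\cdots|S_0$, so $[M_k:D_j]=\sum_{l=j}^{k}[S_l:D_j]$; subtracting the analogous identity for $k-1$ gives the telescoping relation $[S_k:D_j]=[M_k:D_j]-[M_{k-1}:D_j]$, with $M_{-1}:=0$. Thus it is enough to show that $[M_k:D_j]$ equals the number of integers $m$ with $0\le m\le k-j$ for which $n-2j+1$ contains $m$ to the base $p$; granting this, the multiplicity-one claim follows from the telescoping relation, as does the criterion that $[S_k:D_j]=1$ precisely when $n-2j+1$ contains $k-j$ to the base $p$ (in particular $[S_k:D_j]\in\{0,1\}$ automatically). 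The remaining computation of $[M_k:D_j]$ is the heart of James's two-row theory: one builds a submodule series of $M_k$ whose nonzero sections each have simple head and socle isomorphic to a single $D_j$ occurring there with multiplicity one, and identifies which sections occur and with which label by computing the ranks of the incidence maps $\eta_{k,l}:M_k\to M_l$ via Lemma~\ref{LWilson}. The ``contains to the base $p$'' condition is then exactly the translation, through Lucas's theorem, of the divisibility behaviour of $\binom{l-r}{k-r}$ governing that rank formula.

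The step I expect to be the main obstacle is this last one: establishing the submodule structure of $M^{(n-k,k)}$ in arbitrary characteristic, i.e. the multiplicity-freeness of its layers together with their correct labelling. For $p>3$ this lattice is fairly transparent, but for $p=2$ and $3$ it is genuinely complicated, and extracting it from the numerical data of Lemma~\ref{LWilson} is delicate. One alternative that avoids permutation-module combinatorics is a double induction on $n$ and $k$: restrict $S^{(n-k,k)}$ to $\SSS_{n-1}$, using the Specht branching rule (composition factors those of $S^{(n-k,k-1)}$ together with those of $S^{(n-1-k,k)}$, cf. \cite[9.3]{JamesBook}) and the modular branching rule for the two-row simples $D_j$ supplied by Lemma~\ref{Lemma39}(vi), and then check that the resulting recursion for $[S_k:D_j]$ matches the $p$-adic rule; here the two removable nodes of $(n-j,j)$ have equal residue precisely when $p\mid n-2j+1$, which is the visible source of the quantity $n-2j+1$. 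This trades one delicate combinatorial input for another, so it is no easier in small characteristic.
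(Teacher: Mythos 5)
The paper does not prove this lemma at all: it is quoted verbatim from \cite[24.15]{JamesBook}, so the only ``proof'' in the paper is the citation. Measured as a standalone argument, your proposal establishes the first assertion correctly (though you could shortcut it: \cite[12.2]{JamesBook} applied directly to $S^{(n-k,k)}$ already gives that every composition factor is $D^\mu$ with $\mu\unrhd(n-k,k)$, whence $\mu=(n-j,j)$, $j\le k$; the detour through Young's rule for $M_k$ is unnecessary). The telescoping identity $[S_k:D_j]=[M_k:D_j]-[M_{k-1}:D_j]$ via Lemma~\ref{LFilt} is also correct, and your restatement of the target as a count of $m\in\{0,\dots,k-j\}$ contained in $n-2j+1$ to the base $p$ is arithmetically consistent with the claimed formula.

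The genuine gap is that this reduction is essentially an equivalent reformulation of the lemma, and the decisive step --- computing $[M_k:D_j]$, or equivalently the composition multiplicities of the two-row Specht modules --- is never carried out. You defer it to ``the heart of James's two-row theory,'' i.e.\ to the submodule-series analysis of $M^{(n-k,k)}$ whose conclusion is precisely Theorem 24.15, the result being proved. Lemma~\ref{LWilson} by itself gives only the ranks of the incidence maps $\eta_{k,l}$, i.e.\ the dimensions of certain images; extracting from these the full list of composition factors with multiplicities requires the additional structural input (James's kernel-intersection filtration of $S^{(n-k,k)}$ and the identification of $D^{(n-j,j)}$ as $S^{(n-j,j)}/(S^{(n-j,j)}\cap\ker\eta_{j,j-1}\cap\cdots)$, together with a dimension count matching the Lucas-type data). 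The alternative branching induction you sketch has the same status: the recursion from \cite[9.3]{JamesBook} plus Lemma~\ref{Lemma39}(vi) would indeed determine $[S_k:D_j]$ once one verifies that the $p$-adic rule satisfies the same recursion with the same initial conditions, but that verification is exactly the nontrivial combinatorial content and is not supplied. As it stands the proposal proves the easy half of the lemma and reduces the hard half to itself.
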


\subsection{The case $p=3$.}
\begin{Lemma} \label{L160817_0} 
Let $p=3$, $n\equiv 0\pmod{3}$ with $n\geq 6$. Then 
$$M_1\cong D_0|D_1|D_0,\quad M_2\cong D_2\oplus M_1\quad\text{and}\quad M_3\sim D_2\oplus ((D_0\oplus S_1^*)|S_3^*).$$
\end{Lemma}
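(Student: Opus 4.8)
The plan is to establish the three isomorphisms in turn, the main tools being the Specht filtration $M_k\sim S_k|S_{k-1}|\cdots|S_0$ of Lemma~\ref{LFilt}, the composition factors of $S_k$ from Lemma~\ref{L2R}, the rank formula of Lemma~\ref{LWilson} for the incidence maps $\eta_{k,l}$, and self-duality of the $M_k$. For $M_1$: Lemma~\ref{L2R} gives $[S_1:D_0]=[S_1:D_1]=1$, and since $p\mid n$ the fixed vector $\sum_i v_i$ lies in $S_1$ and spans a trivial submodule, so $S_1\cong D_0|D_1$ is uniserial with the heart $D_1$ on top. Thus $M_1\sim S_1|S_0$ has composition factors $D_0,D_1,D_0$; since any embedding $D_1\hookrightarrow M_1$ would have to land in $S_1$ (it dies on $M_1/S_1\cong D_0$) while $\soc S_1\cong D_0$, and since $[\soc M_1:D_0]=2$ would force $D_1\subseteq\head M_1\cong(\soc M_1)^*$, we get $\soc M_1\cong D_0$, hence (self-duality) $\head M_1\cong D_0$, and therefore $M_1\cong D_0|D_1|D_0$.

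For $M_2$: from Lemma~\ref{L2R} one checks $[S_2:D_1]=[S_2:D_0]=0$ (both base-$3$ conditions fail because $n\equiv 0$), so $S_2\cong D_2$. A direct computation with subsets gives $\eta_{2,1}\eta_{1,2}=(n-2)\,\id_{M_1}+J$ on $M_1$, where $J(v_i)=\sum_j v_j$ satisfies $J^2=0$ (using $p\mid n$); since $p\nmid n-2$ this composite is invertible, so $M_2=\im\eta_{1,2}\oplus\ker\eta_{2,1}$ with $\im\eta_{1,2}\cong M_1$, while $\ker\eta_{2,1}$ has the single composition factor $D_2$ (its composition factors are those of $M_2\sim S_2|S_1|S_0$ minus those of $M_1$), so $\ker\eta_{2,1}\cong D_2$ and $M_2\cong D_2\oplus M_1$.

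For $M_3$: similarly $\eta_{3,2}\eta_{2,3}=(n-4)\,\id_{M_2}+\eta_{1,2}\eta_{2,1}$, and $\eta_{1,2}\eta_{2,1}$ annihilates the summand $D_2\subseteq M_2$ (it factors through $M_1$, and $\Hom_{\SSS_n}(D_2,M_1)=0$ since $D_2$ is not a composition factor of $M_1$), so on $D_2$ the composite is the nonzero scalar $n-4$. Hence $\eta_{2,3}|_{D_2}$ is split by $r:=(n-4)^{-1}\pr_{D_2}\eta_{3,2}$, and $M_3=N\oplus Y$ with $N:=\eta_{2,3}(D_2)\cong D_2$ and $Y:=\ker r$. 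Dualizing the Specht filtration gives a surjection $\pi\colon M_3\twoheadrightarrow S_3^*$ with $\ker\pi\sim D_0|S_1^*|D_2$; since $[S_3:D_2]=0$ we have $N\subseteq\ker\pi$, so $K_Y:=Y\cap\ker\pi$ satisfies $Y\sim K_Y|S_3^*$, and composition-factor bookkeeping plus a dimension count give $\dim K_Y=n$ with composition factors $D_0,D_0,D_1$. It then remains to see $K_Y\cong D_0\oplus S_1^*$: the fixed vector $w=\sum_{X\in\Om_3}X$ spans a trivial submodule, lies in $Y$ (its $N$-component vanishes as $D_2$ is a nontrivial irreducible) and in $\ker\pi$ (because $\soc S_3^*=D^{(n-3,3)}\not\cong D_0$), so $\F w\cong D_0\subseteq K_Y$; and $U':=\eta_{2,3}(M_1)\cong M_1/\soc M_1\cong S_1^*$ (Lemma~\ref{LWilson} gives $\dim\ker\eta_{2,3}=1$, hence $\ker\eta_{2,3}=\soc M_1$) lies in $Y$ (since $r|_{U'}=0$, again by $\Hom_{\SSS_n}(M_1,D_2)=0$) and in $\ker\pi$ (since $\Hom_{\SSS_n}(S_1^*,S_3^*)=0$, as $\soc S_3^*=D^{(n-3,3)}$ is not a composition factor of $S_1^*$), hence in $K_Y$. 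As $\soc U'\cong D_1$, $U'$ meets $\F w$ trivially, so $K_Y=\F w\oplus U'\cong D_0\oplus S_1^*$ by dimension, and $M_3\cong D_2\oplus\big((D_0\oplus S_1^*)|S_3^*\big)$.

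The hard part is the $M_3$ case, and within it the final step: showing that the bottom layer $K_Y$ of $Y$ is the \emph{split} extension $D_0\oplus S_1^*$ rather than a non-split $D_0|S_1^*$ (equivalently, that $K_Y\not\cong M_1=D_0|D_1|D_0$). This is what forces the simultaneous production of an explicit trivial submodule ($\F w$) and an explicit copy of $S_1^*$ (the image of $\eta_{2,3}$) inside $K_Y$; the identity $\eta_{3,2}\eta_{2,3}=(n-4)\,\id+\eta_{1,2}\eta_{2,1}$ with $p\nmid n-4$ is the technical point that both splits the $D_2$-summand off $M_3$ and makes these submodules available.
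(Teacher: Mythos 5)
Your proof is correct, and for the decisive step (the structure of $M_3$) it follows the same strategy as the paper: exhibit explicit copies of $D_2$, $D_0$ and $S_1^*$ inside $M_3$ via the incidence maps, check they intersect trivially, and identify the quotient with $S_3^*$ by a composition-factor and dimension count against the dualized Specht filtration. The differences are peripheral. First, you re-derive the structures of $M_1$, $S_1$, $M_2$, $S_2$ from Lemmas~\ref{L2R} and \ref{LWilson}, whereas the paper simply cites them from \cite{BK1}; your derivations are fine. Second, to split $D_2$ off $M_3$ you use the identity $\eta_{3,2}\eta_{2,3}=(n-4)\,\id+\eta_{1,2}\eta_{2,1}$ together with $p\nmid n-4$, whereas the paper gets this for free from Lemma~\ref{LNak}: $D_2=D^{(n-2,2)}$ lies in a different block from $D_0,D_1,D_3$ when $3\mid n$, so the $D_2$-constituent of $M_3$ (coming from $S_2$ alone) splits off by the block decomposition \eqref{EBlocks}. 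The block argument is shorter and avoids the scalar computation, but your version works. Your copy of $S_1^*$, namely $\eta_{2,3}(\im\eta_{1,2})$, coincides with the paper's $\im\eta_{1,3}$ since $\eta_{2,3}\circ\eta_{1,2}=2\,\eta_{1,3}$ (note the notation $\eta_{2,3}(M_1)$ is a slight abuse: $\eta_{2,3}$ is defined on $M_2$, so you mean its restriction to the summand $\im\eta_{1,2}\cong M_1$, and the kernel statement $\ker\eta_{2,3}=\soc M_1$ should be read inside that summand). With that clarified, all the individual verifications -- $w\in Y\cap\ker\pi$, $U'\subseteq Y\cap\ker\pi$, $\F w\cap U'=0$, and the final dimension count -- are sound.
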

\begin{proof}
The structure of $M_1$ and $M_2$ has been described for example in  \cite[Lemmas 1.1, 1.2]{BK1}. From the same lemmas we also have that $S_1\cong D_0|D_1$ and that $S_2\cong D_2$. From Lemma~\ref{LNak} we have that $D_0$, $D_1$ and $D_3$ are contained in the same block, while $D_2$ is contained in a different block. From Lemma~\ref{LFilt} and from self-duality of $M_3$ and of the simple $\SSS_n$-modules, we then have that
\[M_3\sim S_0^*|S_1^*|S_2^*|S_3^*\sim D_2\oplus (D_0|S_1^*|S_3^*).\]

From Lemma~\ref{LWilson} we have that
$\rank(\eta_{1,3})=n-1=\dim(M_1)-1.$ 
From $M_1\cong D_0|D_1|D_0\sim D_0|S_1^*$, it then follows that $\im(\eta_{1,3})\sim S_1^*$. In particular $S_1^*\subseteq M_3$.
Since $D_2\oplus D_0\subseteq M_3$ and neither $D_2$ nor $D_0$ is contained in $S_1^*\cong D_1|D_0$, 
the fact that $S_1^*\subseteq M_3$ implies that there exists a module $N$ with
\[N\cong D_2\oplus D_0\oplus S_1^*\subseteq M_3.\]
Notice that $N$ does not have any composition factor isomorphic to $D_3\cong \soc(S_3^*)$. Since there exists a quotient of $M_3$ isomorphic to $S_3^*$, it follows that the same holds for $M_3/N$. By comparing dimensions we then have that $M_3/N\cong S_3^*$. In particular, by block decomposition,
\[M_3\sim N|(M_3/N)\sim D_2\oplus ((D_0\oplus S_1^*)|S_3^*).\]
\end{proof}

\begin{Lemma} \label{L160817_1} 
Let $p=3$, $n\equiv 1\pmod{3}$ with $n\geq 7$. Then 
\begin{align*}
&M_1\cong D_0\oplus D_1, &M_2\cong D_1\oplus D_0|D_2|D_0,\\ &S_2\cong D_0|D_2,
&M_3\sim D_1\oplus ((D_0\oplus S_2^*)|S_3^*).
\end{align*}
\end{Lemma}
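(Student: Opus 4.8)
The plan is to mirror the proof of Lemma~\ref{L160817_0}, replacing the role of $\eta_{1,3}$ there by $\eta_{2,3}$ here. The assertions about $M_1$, $M_2$ and $S_2$ are preliminary. Since $3\nmid n$ we have $M_1\cong\bone_{\SSS_n}\oplus S_1^*$, and Lemma~\ref{L2R} (the base-$3$ digit $n+1\equiv 2\pmod 3$ never contains $1$) gives $[S_1:D_0]=0$, so $S_1\cong D_1$ and $M_1\cong D_0\oplus D_1$. For $M_2$ and $S_2$ I would quote \cite[Lemma 1.2]{BK1} as in the $n\equiv 0$ case, or argue directly: $M_2\sim S_2|S_1|S_0$ by Lemma~\ref{LFilt}, Lemma~\ref{L2R} identifies the composition factors of $S_2$ as $D_0$ and $D_2$, whence $S_2\cong D_0|D_2$ since $S_2$ has simple head $D_2$; then self-duality of $M_2$, the block decomposition below, and the vanishing $H^1(\SSS_n,\F)=0$ (ruling out a self-extension of $\bone_{\SSS_n}$) force $M_2\cong D_1\oplus(D_0|D_2|D_0)$.

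Next I would record the block structure. A direct computation of residue contents together with Lemma~\ref{LNak} shows that $D_0$, $D_2$ and $D_3$ lie in one block $B$ of $\F\SSS_n$ while $D_1$ lies in a different block. Combining Lemma~\ref{LFilt} with the self-duality of $M_3$ and of the simple modules gives $M_3\sim S_0^*|S_1^*|S_2^*|S_3^*$; since $S_1^*\cong D_1$ is the only piece of this filtration outside $B$ (note $[S_3:D_1]=0$ by Lemma~\ref{L2R}), block decomposition yields $M_3\cong D_1\oplus N$ with $N\sim D_0|S_2^*|S_3^*$.

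The core of the argument is to upgrade this to $N\sim(D_0\oplus S_2^*)|S_3^*$. By Lemma~\ref{LWilson}, $\rank(\eta_{2,3})=\binom n2-1=\dim M_2-1$. Since $M_2=D_1\oplus(D_0|D_2|D_0)$ has a unique one-dimensional submodule, namely $\bone_{\SSS_n}$, we get $\Ker(\eta_{2,3})=\bone_{\SSS_n}$, hence $\im(\eta_{2,3})\cong M_2/\bone_{\SSS_n}\cong D_1\oplus S_2^*$; in particular $M_3$ has a submodule isomorphic to $S_2^*$, which, lying in block $B$, is contained in $N$. Because $\soc S_2^*\cong D_2\not\cong D_0$, the trivial submodule $\bone_{\SSS_n}\subseteq N$ meets $S_2^*$ trivially, so $Y:=\bone_{\SSS_n}\oplus S_2^*$ is a submodule of $N$. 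Now $Y$ has no composition factor isomorphic to $D_3\cong\soc(S_3^*)$, so the surjection $M_3\twoheadrightarrow S_3^*$ (coming from $S_3\subseteq M_3$ and self-duality of $M_3$) annihilates $Y$; thus $N/Y\twoheadrightarrow S_3^*$, and comparing dimensions via $\dim S_k=\binom nk-\binom n{k-1}$ shows $\dim(N/Y)=\dim S_3^*$, so $N/Y\cong S_3^*$. Therefore $N\sim(D_0\oplus S_2^*)|S_3^*$ and $M_3\sim D_1\oplus((D_0\oplus S_2^*)|S_3^*)$.

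Beyond the routine bookkeeping, I expect the main obstacle to be pinning down the precise (uniserial) module structures of $M_2$ and $S_2$ — which is where the reference \cite{BK1}, or the ad hoc self-duality argument, is needed — and verifying that $\Ker(\eta_{2,3})$ is exactly the trivial submodule (this uses $\dim D_1=n-1>1$, valid for $n\geq 7$). Once these are in place, the argument is a direct transcription of the $n\equiv 0$ case with $\eta_{1,3}$ replaced by $\eta_{2,3}$.
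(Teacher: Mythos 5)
Your proposal is correct and follows essentially the same route as the paper's proof: the structure of $M_1$, $M_2$, $S_2$ via \cite[Lemmas 1.1, 1.2]{BK1} (or an equivalent direct check), the block separation of $D_1$ from $D_0,D_2,D_3$, the computation $\rank(\eta_{2,3})=\dim M_2-1$ forcing $\Ker(\eta_{2,3})\cong\bone_{\SSS_n}$ and hence $S_2^*\subseteq M_3$, and the dimension count identifying the quotient with $S_3^*$. The only (immaterial) difference is that you split off the $D_1$ block summand before forming $D_0\oplus S_2^*$, whereas the paper builds $N\cong D_1\oplus D_0\oplus S_2^*$ inside $M_3$ all at once.
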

\begin{proof}
The structure of $M_1$ and $M_2$ has been described for example in \cite[Lemmas 1.1, 1.2]{BK1}. From the same lemmas we also have that $S_1\cong D_1$ and that $S_2\cong D_0|D_2$. From Lemma~\ref{LNak} we have that $D_0$, $D_2$ and $D_3$ are contained in the same block, while $D_1$ is contained in a different block. From Lemma~\ref{LFilt} and from self-duality of $M_3$ and of the simple $\SSS_n$-modules, we then have that
\[M_3\sim S_0^*|S_1^*|S_2^*|S_3^*\sim D_1\oplus (D_0|S_2^*|S_3^*).\]

From Lemma~\ref{LWilson} we have that
\[\rank(\eta_{2,3})=\binom{n}{2}-1=\dim(M_2)-1.\]
As $M_2\cong D_1\oplus (D_0|D_2|D_0)$ and $\dim(D_1)>1$, it then follows that $\im(\eta_{2,3})\sim D_1\oplus S_2^*$. 
Since $D_1\oplus D_0\subseteq M_3$ and neither $D_1$ nor $D_0$ is contained in $S_2^*\cong D_2|D_0$, the fact that $S_2^*\subseteq M_3$ implies that there exists a module $N$ with
\[N\cong D_1\oplus D_0\oplus S_2^*\subseteq M_3.\]
Notice that $N$ does not have any composition factor isomorphic to $D_3\cong \soc(S_3^*)$. Since there exists a quotient of $M_3$ isomorphic to $S_3^*$, it follows that the same holds for $M_3/N$. Again by comparing dimensions we have that $M_3/N\cong S_3^*$, and it follows by block decomposition that
\[M_3\sim N|(M_3/N)\sim D_1\oplus ((D_0\oplus S_2^*)|S_3^*).\]
\end{proof}

\begin{Lemma} \label{L160817_2} 
Let $p=3$, $n\geq 8$ with $n\equiv 2\pmod{3}$. Then 
$$M_1\cong D_0\oplus D_1,\quad M_2\cong D_0\oplus D_1|D_2|D_1,\quad\text{and}\quad M_3\sim M_2|S_3^*.$$
Moreover,
\begin{enumerate}
\item[{\rm (i)}] If $n\equiv 2\pmod{9}$ then
\[M_3\cong (D_0|D_3|D_0)\oplus (D_1|D_2|D_1).\]
\item[{\rm (ii)}] If $n\equiv 5\pmod{9}$ or $n\equiv 8\pmod{9}$ then
\[M_3\cong D_0\oplus D_3\oplus (D_1|D_2|D_1).\]
\end{enumerate}
\end{Lemma}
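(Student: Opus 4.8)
The plan is to proceed exactly in the spirit of the two preceding lemmas (Lemmas~\ref{L160817_0} and \ref{L160817_1}), exploiting the block decomposition \eqref{EBlocks}, the Specht filtration of $M_k$ from Lemma~\ref{LFilt}, the composition factors of $S_k$ from Lemma~\ref{L2R}, and the rank computation for $\eta_{k,l}$ from Lemma~\ref{LWilson}; the only genuinely new ingredient is the finer split of $M_3$ according to the residue of $n$ mod $9$, which is what parts (i) and (ii) record.

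First I would recall from \cite[Lemmas 1.1, 1.2]{BK1} the structure of $M_1$ and $M_2$ as stated, together with $S_1\cong D_0\oplus \dots$ — more precisely, in the case $n\equiv2\pmod3$ one has $S_1\cong D_1$ and $S_2\sim D_1|D_2$ (or the appropriate shape), so that $S_2^*$ has the indicated composition factors. Using Lemma~\ref{LNak} I would identify which of $D_0,D_1,D_2,D_3$ lie in the principal block: here $D_0$ and $D_3$ share a block, and $D_1,D_2$ share a different block (this is where $n\equiv2\pmod3$ differs from the previous two cases). Then Lemma~\ref{LFilt}, self-duality of $M_3$ and of the simple modules give $M_3\sim S_0^*|S_1^*|S_2^*|S_3^*$, which after block decomposition reads $M_3\sim (D_1|D_2|D_1)\oplus(D_0|S_3^*)$ — i.e.\ the $D_1,D_2$-part is already a direct summand isomorphic to the $M_2$-part $D_1|D_2|D_1$, giving the asserted $M_3\sim M_2|S_3^*$ once one also notes $M_2\cong D_0\oplus(D_1|D_2|D_1)$.

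Next, for the refined statements (i) and (ii), I would compute $\rank(\eta_{2,3})$ and $\rank(\eta_{0,3})=\rank(\eta_{3,0})$ via Lemma~\ref{LWilson}, the relevant binomial divisibility conditions $\binom{3-r}{3-r}$, $\binom{3}{1}$, $\binom{3}{2}$ being exactly where the value of $n\bmod 9$ enters (through whether $\binom{n}{3}-\binom{n}{2}$ picks up the $D_3$ or not, equivalently whether $S_3^*$ is a summand or a genuine extension). Concretely: the principal-block part of $M_3$ has composition factors $D_0,D_0,D_3$ (two copies of $D_0$ from $S_0^*$ and the $D_0$ inside $S_3^*$, when $[S_3:D_0]=1$) and the question is whether it is $D_0|D_3|D_0$ (uniserial, non-split) or $D_0\oplus D_3\oplus D_0$ — actually $D_0\oplus D_3\oplus\dots$; the rank of $\eta_{0,3}$ detects the image $\im(\eta_{3,0})$ inside $M_0=\bone_{\SSS_n}$ and the rank of $\eta_{2,3}$ detects whether a copy of $D_0$ splits off. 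When $n\equiv2\pmod9$ the relevant binomials keep everything "connected" and one gets the uniserial $D_0|D_3|D_0$; when $n\equiv5,8\pmod9$ an extra divisibility makes $\eta$ of higher rank, forcing $D_0\oplus D_3$ to split off, leaving $D_0$ on its own. I would finish by a dimension count (as in the previous two proofs) to rule out the remaining a priori possible extensions, and invoke Lemma~\ref{LNak} / block decomposition to assemble the stated direct sum decompositions.

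The main obstacle I anticipate is pinning down the principal-block summand precisely: knowing its composition factors and that it is self-dual with a quotient isomorphic to $S_3^*$ is not by itself enough to decide between the uniserial and the split forms, so the rank computations of Lemma~\ref{LWilson} must be carried out carefully for each residue class mod $9$, and one must check that $[S_3:D_0]$ and $[S_3:D_3]$ behave as expected (via Lemma~\ref{L2R}: $n-2\cdot0+1=n+1$ must contain $3$ to the base $3$, i.e.\ $n+1$ has a $3$ somewhere, etc.). Getting these divisibility conditions exactly right — and confirming that the $D_1,D_2$-block part is genuinely insensitive to $n\bmod 9$ — is where the real work lies; everything else is a routine repetition of the template already established in Lemmas~\ref{L160817_0} and~\ref{L160817_1}.
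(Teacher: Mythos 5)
Your treatment of the first assertions matches the paper's: Lemma~\ref{LNak} puts $S_0,S_3$ in one block and $S_1,S_2$ in another, Lemma~\ref{LFilt} then gives $M_3\sim (S_3|S_0)\oplus(S_2|S_1)$, and the $\{D_1,D_2\}$-block component is identified with $Y_2\cong D_1|D_2|D_1$ via Lemma~\ref{LYoung} and self-duality. Up to the statement $M_3\sim M_2|S_3^*$ your plan is sound.

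The mechanism you propose for separating (i) from (ii) does not work. In Lemma~\ref{LWilson} the sum defining $\rank(\eta_{k,l})$ runs over those $r$ for which $\binom{l-r}{k-r}$ is prime to $p$; this condition involves only $k,l,r,p$ and never $n$, so for $p=3$ one gets $\rank(\eta_{2,3})=\binom{n}{2}-1$ and $\rank(\eta_{1,3})=n-1$ for \emph{every} $n\equiv 2\pmod 3$, with no dependence on $n\bmod 9$. These ranks therefore cannot distinguish the two cases, and "carrying them out carefully for each residue class mod $9$" yields nothing. Relatedly, you misstate what is being decided: in case (ii) the principal-block component of $M_3$ has only the \emph{two} composition factors $D_0$ and $D_3$ (one $D_0$ from $S_0$ and none from $S_3$), not $D_0,D_0,D_3$; the dichotomy between (i) and (ii) lies in the composition factors themselves, not in whether a fixed set of factors glues uniserially or splits. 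The actual discriminator --- which you relegate to a closing consistency check --- is Lemma~\ref{L2R}: $[S_3:D_0]=1$ iff $n+1$ contains $3$ to the base $3$, which happens iff $n\equiv 2\pmod 9$ (for $n\equiv 5,8\pmod 9$ the $3^1$-digit of $n+1$ is $2$ resp. $0$, so there is no containment). Once this is in hand no rank computation is needed: in case (i) the principal-block component is self-dual, has factors $D_0,D_3,D_0$, and contains the indecomposable non-self-dual $S_3\cong D_0|D_3$ as a submodule with quotient $D_0$, which together with the self-duality of $Y_3$ from Lemma~\ref{LYoung} forces the uniserial $D_0|D_3|D_0$; in case (ii) it is self-dual with factors $D_3,D_0$ and a quotient $D_0$, hence splits as $D_0\oplus D_3$. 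This is how the paper concludes, and it is the step your proposal would fail to reach by the route you describe.
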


\begin{proof}
The structure of $M_1$ and $M_2$ follows for example from \cite[Lemmas 1.1, 1.2]{BK1}.
Since $n\equiv 2\pmod{3}$, Lemma~\ref{LNak} shows that $S_0$ and $S_3$ are in the same block, as are $S_1$ and $S_2$, but  $S_0$ and $S_3$ are contained in a different block from $S_1$ and $S_2$. From Lemma~\ref{LFilt} it then follows that
\[M_3\sim S_3|S_2|S_1|S_0\sim (S_3|S_0)\oplus(S_2|S_1).\]
From Lemma~\ref{L2R} it follows that
\[M_3\sim (\overbrace{D_0|D_3}^{S_3}|\overbrace{D_0}^{S_0})\oplus (\overbrace{D_1|D_2}^{S_2}|\overbrace{D_1}^{S_1})\]
if $n\equiv 2\pmod{9}$, while
\[M_3\sim (\overbrace{D_3}^{S_3}|\overbrace{D_0}^{S_0})\oplus (\overbrace{D_1|D_2}^{S_2}|\overbrace{D_1}^{S_1})\]
if $n\equiv 5\mbox{ or }8\pmod{9}$. The lemma now follows from Lemma~\ref{LYoung} and  self-duality of $M_3$.
\end{proof}

\subsection{The case $p=2$}

\begin{Lemma} \label{L150817_7} 
Let $p=2$ and $n\geq 7$ be odd.  Then $M_1\cong D_0\oplus D_1$, $M_2\subseteq M_3$, and $M_3/M_2\cong S_3^*$. Moreover:
\begin{enumerate}
\item[{\rm (i)}] If $n\equiv 1\pmod{4}$, then $M_2\cong D_1\oplus D_0|D_2|D_0$, $M_3\cong D_3\oplus M_2$, and $S_3\cong D_3$. 
\item[{\rm (ii)}]  If $n\equiv 3\pmod{4}$, then  $M_2\cong D_0\oplus D_1\oplus D_2$, $M_3\cong D_0\oplus D_2\oplus D_1|D_3|D_1$, and $S_3\cong D_1|D_3$. 
\end{enumerate}
\end{Lemma}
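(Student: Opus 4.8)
The plan is to pin down everything from three ingredients: the composition factors of the relevant Specht and permutation modules, the block decomposition of $\F\SSS_n$, and the Young module decomposition of $M_2$ and $M_3$, together with self-duality. First I would dispose of $M_1$: since $n$ is odd, $p=2\nmid n$, so $M_1\cong\bone_{\SSS_n}\oplus S_1^*$ (as in the proof of Lemma~\ref{LEasy}), while Lemma~\ref{L2R} gives $[S_1:D_0]=0$, hence $S_1\cong D_1$ and $M_1\cong D_0\oplus D_1$. Next, Lemma~\ref{L2R} applied to $k=2,3$ yields (splitting on $n\bmod 4$) that for $n\equiv 1\pmod4$, $S_2$ has composition factors $D_0,D_2$ and $S_3\cong D_3$, whereas for $n\equiv 3\pmod4$, $S_2\cong D_2$ and $S_3$ has composition factors $D_1,D_3$; combined with Lemma~\ref{LFilt} ($M_k\sim S_k|\cdots|S_0$) this determines the composition factors of $M_2$ and $M_3$. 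Finally, computing the residue contents of $(n-j,j)$ for $j=0,1,2,3$ and invoking Lemma~\ref{LNak}, one finds for $n$ odd that $D_0,D_2$ lie in one block and $D_1,D_3$ in a different one, so $M_2$ and $M_3$ split into their $\{D_0,D_2\}$-part and $\{D_1,D_3\}$-part, each self-dual.

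The core step is the Young module analysis. By Lemma~\ref{LYoung}, $Y^{(n)}=\bone=D_0$, and since $M_1=D_0\oplus D_1$ with $S_1=D_1$ we get $Y^{(n-1,1)}=D_1$; moreover the only partitions of $n$ dominating $(n-2,2)$ are $(n),(n-1,1),(n-2,2)$ and those dominating $(n-3,3)$ are $(n),(n-1,1),(n-2,2),(n-3,3)$, so by Lemma~\ref{LYoung}
\[M_2\cong Y^{(n-2,2)}\oplus D_1^{\oplus m'}\oplus D_0^{\oplus m''},\qquad M_3\cong Y^{(n-3,3)}\oplus(Y^{(n-2,2)})^{\oplus a}\oplus D_1^{\oplus b}\oplus D_0^{\oplus c}.\]
Now $Y^{(n-2,2)}$ is indecomposable, self-dual, and contains $S_2$, hence lies in the $\{D_0,D_2\}$-block; comparing composition factors with the list above and using the elementary observations that there is no self-dual indecomposable module with composition factors $\{D_0,D_2\}$ (each once), while $D_0|D_2|D_0$ is the unique self-dual indecomposable with factors $\{D_0,D_0,D_2\}$, forces $Y^{(n-2,2)}\cong D_0|D_2|D_0$ (with $m'=1$, $m''=0$) when $n\equiv1\pmod4$ and $Y^{(n-2,2)}\cong D_2$ (with $m'=m''=1$) when $n\equiv3\pmod4$. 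The same reasoning applied to $Y^{(n-3,3)}$ (indecomposable, self-dual, containing $S_3$, in the relevant block) gives $Y^{(n-3,3)}\cong D_3$ when $n\equiv1\pmod4$ and $Y^{(n-3,3)}\cong D_1|D_3|D_1$ when $n\equiv3\pmod4$; in the latter case $S_3$ is the unique submodule of $D_1|D_3|D_1$ with its composition factors, so $S_3\cong D_1|D_3$ (and $S_3\cong D_3$ in the former case is immediate).

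The remaining multiplicities $a,b,c$ are then forced by matching composition factors ($a=1$ since $D_2$ occurs once in $M_3$, etc.), giving the stated decompositions of $M_3$. The inclusion $M_2\subseteq M_3$ with $M_3/M_2\cong S_3^*$ is read off directly: when $n\equiv1\pmod4$ we have $M_3\cong D_3\oplus M_2$ with $S_3^*=D_3$; when $n\equiv3\pmod4$ one embeds $M_2=D_0\oplus D_1\oplus D_2$ into $M_3=D_0\oplus D_2\oplus(D_1|D_3|D_1)$ by sending the $D_1$-summand into $\soc(D_1|D_3|D_1)$, and then $M_3/M_2\cong(D_1|D_3|D_1)/D_1\cong D_3|D_1\cong(D_1|D_3)^*=S_3^*$.

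I expect the only slightly delicate points to be the bookkeeping in the residue-content/block computation and the small structural facts about self-dual modules with at most three composition factors; once the Young module decomposition is available everything is mechanical. Should the Young module input feel too heavy, the fallback is to imitate the proofs of Lemmas~\ref{L160817_0}--\ref{L160817_2}: use the rank formula of Lemma~\ref{LWilson} for the incidence maps $\eta_{k,l}$ to locate explicit submodules of $M_3$ (e.g. images of $\eta_{1,3}$ and $\eta_{2,3}$) and conclude by comparing dimensions and applying block decomposition.
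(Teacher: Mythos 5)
Your proposal is correct and follows essentially the same route as the paper's proof: block decomposition via Lemma~\ref{LNak}, the Specht filtration of Lemma~\ref{LFilt}, composition factors from Lemma~\ref{L2R}, and identification of the summands via self-duality and Lemma~\ref{LYoung}. The only difference is that the paper imports the structure of $M_1$ and $M_2$ from [BK1, Lemmas 1.1, 1.3] and leaves the final Young-module identification terse, whereas you rederive these in detail — a harmless elaboration of the same argument.
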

\begin{proof}
The structure of $M_1$ and $M_2$ follows for example from \cite[Lemmas 1.1, 1.3]{BK1}.
By Lemma~\ref{LNak}, $S_0$ and $S_2$ are in the same block, as are $S_1$ and $S_3$, but  $S_0$ and $S_2$ are contained in a different block from $S_1$ and $S_3$. From Lemma~\ref{LFilt} it then follows that
\[M_3\sim S_3|S_2|S_1|S_0\sim (S_2|S_0)\oplus(S_3|S_1).\]
From Lemma~\ref{L2R} it follows that
\[M_3\sim (\overbrace{D_0|D_2}^{S_2}|\overbrace{D_0}^{S_0})\oplus (\overbrace{D_3}^{S_3}|\overbrace{D_1}^{S_1})\]
if $n\equiv 1\pmod{4}$, while
\[M_3\sim (\overbrace{D_2}^{S_2}|\overbrace{D_0}^{S_0})\oplus (\overbrace{D_1|D_3}^{S_3}|\overbrace{D_1}^{S_1})\]
if $n\equiv 3\pmod{4}$. The lemma now follows from self-duality of $M_3$ and Lemma~\ref{LYoung}.
\end{proof}

\begin{Lemma}\label{l1}\label{l10}
Let $p=2$ and $n\geq 6$ be even. Then
\begin{align*}
&M_1\cong D_0|D_1|D_0\sim D_0|S_1^*,\\
&S_1\cong D_0|D_1,\\
&M_2\sim(D_0\oplus S_1^*)|S_2^*.
\end{align*}
Moreover,
\begin{enumerate}
\item[{\rm (i)}]
If $n\equiv 0\pmod{4}$ then $S_2\cong D_1|D_2$ and $M_2=Y_2\sim S_1^*|D_2|S_1$.

\item[{\rm (ii)}]
If $n\equiv 2\pmod {4}$ then
\begin{align*}
&M_2\cong D_0\oplus Y_2,\\
&Y_2\cong D_1|D_0|D_2|D_0|D_1\sim D_1|D_0|S_2^*,\\
&S_2\cong D_1|D_0|D_2.
\end{align*}
\end{enumerate}
\end{Lemma}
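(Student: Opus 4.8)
The plan is to deduce the lemma from a handful of standard tools: the Specht filtration of $M_2$ (Lemma~\ref{LFilt}), the two--row decomposition numbers (Lemma~\ref{L2R}), the Young module decomposition (Lemma~\ref{LYoung}), Wilson's rank formula (Lemma~\ref{LWilson}), self--duality of permutation and Young modules, and one parity observation about the natural symmetric form on $M_2$. I would first settle $M_1$ and $S_1$. Since $p\mid n$, the augmentation $M_1\to\bone_{\SSS_n}$ has kernel $S_1$ and the all--ones vector spans a trivial submodule of $M_1$; Lemma~\ref{L2R} gives $[S_1:D_0]=[S_1:D_1]=1$, so $[M_1:D_0]=2$, $[M_1:D_1]=1$. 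As $M_1$ is self--dual with $\dim M_1^{\SSS_n}=1$, a count of what can sit in its socle and head forces it to be uniserial, $M_1\cong D_0|D_1|D_0$; then $S_1$, the unique codimension--one submodule, is $D_0|D_1$, and $M_1\sim D_0|S_1^*$ with $S_1^*\cong D_1|D_0$. (These facts also appear in \cite[Lemmas~1.1,~1.3]{BK1}.)

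Next I treat $M_2$. By Lemma~\ref{LFilt}, $M_2\sim S_2|S_1|S_0$, and Lemma~\ref{L2R} shows $[S_2:D_1]=[S_2:D_2]=1$, $[S_2:D_j]=0$ for $j\geq 3$, and $[S_2:D_0]=1$ exactly when $n+1$ contains $2$ to the base $2$, i.e.\ when $n\equiv 2\pmod{4}$. Hence $M_2$ has composition factors $2D_0+2D_1+D_2$ if $n\equiv 0\pmod{4}$ and $3D_0+2D_1+D_2$ if $n\equiv 2\pmod{4}$. Wilson's formula gives $\rank\eta_{1,2}=\rank\eta_{2,1}=n-1$; in characteristic $2$ the all--ones vector of $M_1$ lies in $\ker\eta_{1,2}$, which is one--dimensional (as $\dim M_1=n$), so $\ker\eta_{1,2}=\soc M_1$ and $\im\eta_{1,2}\cong M_1/\soc M_1\cong S_1^*$ embeds in $M_2$; dually $M_2\twoheadrightarrow\im\eta_{2,1}=S_1$, the unique submodule of $M_1$ of that dimension. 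Since $S_1^*$ has socle $D_1\not\cong D_0$ it contains no trivial submodule, hence meets the trivial submodule of $M_2$ in $0$; so $\bone_{\SSS_n}\oplus S_1^*$ is a submodule of $M_2$ whose quotient has the composition factors of $S_2^*$, and a socle argument (the $D_0$'s and $D_1$'s of $M_2$ being exhausted in socle and head by $\bone_{\SSS_n}$, $S_1^*$ and $S_1$) identifies the quotient with $S_2^*$, giving $M_2\sim(D_0\oplus S_1^*)|S_2^*$.

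For the Young modules and the congruence split, write $M_2\cong Y_2\oplus (Y^{(n-1,1)})^{\oplus m_1}\oplus(Y^{(n)})^{\oplus m_0}$ by Lemma~\ref{LYoung}, where $Y^{(n)}=\bone_{\SSS_n}$ and $Y^{(n-1,1)}=M_1$ (indecomposable by the first paragraph). Since $S_1^*\hookrightarrow M_2$ is indecomposable with socle $D_1$, it cannot lie in a copy of $\bone_{\SSS_n}$ or of $M_1$ (whose submodules all have socle $D_0$); so $S_1^*$, and likewise the composition factor $D_2$, lies inside $Y_2$. Thus $\dim Y_2\geq\dim D_0+\dim D_1+\dim D_2$, which bounds $m_1 n+m_0=\dim M_2-\dim Y_2$ by $n-1$ if $n\equiv0\pmod{4}$ and by $n$ if $n\equiv 2\pmod{4}$; together with self--duality of $Y_2$, multiplicity--freeness of its composition factors, and (in the borderline case) $\head S_2\cong D_2$, this forces $m_1=0$. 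Hence $M_2\cong Y_2\oplus\bone_{\SSS_n}^{\oplus m_0}$. The trivial submodule $\F v\subseteq M_2$, with $v=\sum_{X\in\Om_2}X$, splits off iff $\langle v,v\rangle=|\Om_2|=\binom{n}{2}$ is odd, i.e.\ iff $n\equiv 2\pmod{4}$; and when $n\equiv 0\pmod{4}$ one gets $m_0=0$ (if $\F v$ were a summand, $\langle v,-\rangle$ would be a nonzero map $Y_2\to\bone_{\SSS_n}$, so by self--duality $Y_2^{\SSS_n}\neq 0$, contradicting $m_0=1$). Therefore $M_2=Y_2$ for $n\equiv0\pmod{4}$ and $M_2\cong\bone_{\SSS_n}\oplus Y_2$ for $n\equiv2\pmod{4}$.

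Finally, I would read off the precise shapes from $\head S_2\cong D_2$ (James's submodule theorem, since $(n-2,2)$ is $2$--regular), $\soc S_2\cong D_1$ (as $S_2\subseteq Y_2$ and the composition factors forbid $D_0$ or $D_2$ in $\soc S_2$), the composition factors found above, and self--duality of $Y_2$: this yields $S_2\cong D_1|D_2$ and $M_2=Y_2\sim S_1^*|D_2|S_1$ for $n\equiv0\pmod{4}$, and $S_2\cong D_1|D_0|D_2$ and $Y_2\cong D_1|D_0|D_2|D_0|D_1\sim D_1|D_0|S_2^*$ for $n\equiv2\pmod{4}$. The main obstacle is exactly this last step, namely upgrading socle/head and composition--factor information to the stated uniserial (for $n\equiv2$) and filtration (for $n\equiv0$) structures; I expect to handle it by analysing the submodule lattice of $M_2$ via $\eta_{1,2}$, $\eta_{2,1}$ and their adjointness $\ker\eta_{2,1}=(\im\eta_{1,2})^{\perp}$ with respect to the permutation form, falling back if necessary on the known structure of two--part Specht modules in characteristic $2$.
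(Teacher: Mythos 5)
Your overall route is the same as the paper's: the Specht filtration $M_2\sim S_2|S_1|S_0$, Wilson's rank formula to embed $S_1^*$ into $M_2$, and self-duality to produce $N\cong D_0\oplus S_1^*\subseteq M_2$ with $M_2/N\cong S_2^*$; and for $n\equiv 0\pmod 4$ the paper likewise pins down $M_2\sim S_1^*|D_2|S_1$ by computing $A=\Ker(\eta_{2,1})$, checking explicitly that the all-ones vector is not in $A$, and using Krull--Schmidt to get $A\cap N\cong S_1^*$ and $A/(A\cap N)\cong D_2$ --- essentially your adjointness plan. One small repair: your ``socle argument'' for identifying $M_2/(D_0\oplus S_1^*)$ with $S_2^*$ is not needed and as stated is shaky; the clean argument (the one the paper uses) is that $D_2\cong\soc(S_2^*)$ is not a factor of $N$, so the quotient map $M_2\twoheadrightarrow S_2^*$ coming from the dual Specht filtration survives on $M_2/N$, and then dimensions force $M_2/N\cong S_2^*$. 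Your genuinely different contribution is the Gram-form computation $\langle v,v\rangle=\binom{n}{2}$ deciding when $\bone_{\SSS_n}$ splits off: the paper simply cites M\"uller--Orlob for $M_2\cong Y_2$ resp.\ $D_0\oplus Y_2$, so your argument is a more self-contained substitute (though your elimination of $M_1$-summands should be tightened, e.g.\ by using $S_2\subseteq Y_2$ to bound $\dim Y_2$ from below rather than arguing about where $S_1^*$ ``lies'').

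The one place where your plan does not close is the fine structure in the case $n\equiv 2\pmod 4$, namely $S_2\cong D_1|D_0|D_2$ and the five-step uniseriality $Y_2\cong D_1|D_0|D_2|D_0|D_1$. Socle, head, composition factors and self-duality do not exclude, say, $Y_2\sim D_1|(D_0\oplus D_2\oplus D_0)|D_1$, and the maps $\eta_{1,2},\eta_{2,1}$ only detect the part of $M_2$ visible from $M_1$, so ``analysing the submodule lattice via the $\eta$'s'' will not yield uniseriality either; one needs extra input such as $\dim\Ext^1_{\SSS_n}(D_0,D_1)$, $\dim\Hom(S_1,S_2^*)$, or the known structure of $S^{(n-2,2)}$ and $Y^{(n-2,2)}$. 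Your ``fallback'' of quoting the known structure is therefore not optional but required --- and it is exactly what the paper does, citing \cite[(1.1), (2.4)]{MO} for this case.
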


\begin{proof}
The structure of $M_1$ and $S_1$ is well-known, see e.g.  \cite[Lemma 1.1]{BK1}. By Lemma~\ref{LWilson}, we have 
$\rank(\eta_{1,2})=n-1=\dim(M_1)-1.$ 
It then follows that $S_1^*\subseteq M_2$. From Lemma~\ref{LFilt} and self-duality of $M_2$ we have that
\begin{equation}\label{E170418}
M_2\sim S_0^*|S_1^*|S_2^*\sim D_0|S_1^*|S_2^*.
\end{equation}
Since $D_0,S_1^*\subseteq M_2$ and $D_0\not\subseteq S_1^*\cong D_1|D_0$, there exists a module $N$ with
\[N\cong D_0\oplus S_1^*\subseteq M_2.\]
Note that $D_2\cong \soc(S_2^*)$ is not a composition factor of $N$. Since there exists a quotient of $M_2$ isomorphic to $S_2^*$, it follows that the same holds for $M_2/N$. By comparing dimensions we then have that $M_2/N\cong S_2^*$. In particular
\[M_2\sim N|(M_2/N)\sim (D_0\oplus S_1^*)|S_2^*.\]

For $n\equiv 2\pmod{4}$ the structures of $M_2$ and $S_2$ are described in \cite[(1.1), (2.4)]{MO}. So let us assume that $n\equiv 0\pmod{4}$. By \cite[(1.1)]{MO}, we have $M_2\cong Y_2$. 
We also have $S_2\cong D_1|D_2$ by Lemma~\ref{L2R}. To prove that $M_2\sim S_1^*|D_2|S_1$, let $A:=\Ker(\eta_{2,1})$. Since $\eta_{2,1}=\eta_{1,2}^*$ we have that $M_2/A\cong S_1$. 

Let $\{v_i\mid1\leq i\leq n\}$ be the standard permutation basis of $M_1$ and $\{v_{i,j}\mid1\leq i<j\leq n\}$ be the standard permutation basis of $M_2$, so that $\eta_{2,1}(v_{i,j})=v_i+v_j$. 
The only submodule of $M_2$ isomorphic to $D_0$ is $\langle\sum_{i<j}v_{i,j}\rangle$. Note that 
\[\eta_{2,1}\Big(\sum_{i<j}v_{i,j}\Big)=\sum_{i<j}(v_i+v_j)=\sum_i(n-1)v_i\not=0,\]
hence $D_0\not\subseteq A$. 
Since $N\cong D_0\oplus S_1^*\cong D_0\oplus D_1|D_0$ and $M_1\cong D_0|D_1|D_0$, we must have $\eta_{2,1}(N)=D_0$ and $A\cap N\cong S_1^*$ using the Krull-Schmidt Theorem. The composition factors of $A$ are $D_0,D_1,D_2$, so it follows that $A/A\cap N\cong D_2$, completing the proof. 
\end{proof}

\begin{Lemma}\label{l14}
Let $p=2$ and $n\geq 8$ even. Then $M_3\sim S_3|S_2|M_1$. Moreover:
\begin{enumerate}
\item[{\rm (i)}]
If $n\equiv 0\pmod{4}$ then
\[M_3\cong M_1\oplus (\overbrace{D_2|D_1|D_3}^{S_3}|\overbrace{D_1|D_2}^{S_2}).\]

\item[{\rm (ii)}]
If $n\equiv 2\pmod{4}$ then 
$$S_3\cong D_0|D_2|D_3,\quad  
M_3\sim (Y_2/D_1)|S_1^*|S_3^*,$$ and there exists $A\subseteq Y_2/D_1$ with $A\cong D_0|D_2|D_0$ and $M_3\sim A|D_3|S_2|S_1^*$.
\end{enumerate}
\end{Lemma}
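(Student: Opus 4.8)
The plan is to start from the Specht filtration of $M_3$ provided by Lemma~\ref{LFilt} and then pin it down using the incidence maps $\eta_{k,l}$, their ranks computed via Lemma~\ref{LWilson}, the two--row decomposition numbers from Lemma~\ref{L2R}, and the explicit structures of $M_1,M_2,S_2,Y_2$ already recorded in Lemma~\ref{l10}. First I would prove the filtration $M_3\sim S_3|S_2|M_1$ for every even $n\geq 8$. Lemma~\ref{LFilt} gives a submodule $W\subseteq M_3$ with $W\sim S_3|S_2$, $S_3\subseteq W$, $W/S_3\cong S_2$, and $M_3/W\sim S_1|S_0$. Applying Lemma~\ref{LWilson} with $(k,l)=(1,3)$, $p=2$, yields $\rank(\eta_{3,1})=n=\dim M_1$, so $\eta_{3,1}\colon M_3\twoheadrightarrow M_1$ and $\dim\Ker(\eta_{3,1})=\binom n3-n=\dim W$. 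I would then show $\eta_{3,1}(W)=0$: being a quotient of $W\sim S_3|S_2$, every head constituent of $\eta_{3,1}(W)$ lies among $\head S_3=D_3$ and $\head S_2=D_2$ (using that a Specht module has simple head and the structure of $S_2$ from Lemma~\ref{l10}), whereas as a submodule of $M_1\cong D_0|D_1|D_0$ any nonzero quotient of it would have head $D_0$ or $D_1$; hence $\eta_{3,1}(W)=0$, so $W=\Ker(\eta_{3,1})$, $M_3/W\cong M_1$, and $M_3\sim W|M_1\sim S_3|S_2|M_1$.

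For part (i), assume $n\equiv 0\pmod 4$. A direct count of the triples incident to a point shows $\eta_{3,1}\circ\eta_{1,3}\colon M_1\to M_1$ sends $v_x\mapsto\binom{n-1}2\,v_x+(n-2)\sum_{w\neq x}v_w$; since $\binom{n-1}2$ is odd and $n-2$ is even when $n\equiv 0\pmod 4$, this composite is $\id_{M_1}$. Thus $\eta_{1,3}$ is a section of $\eta_{3,1}$, so $\eta_{1,3}$ is injective and $M_3=\eta_{1,3}(M_1)\oplus\Ker(\eta_{3,1})\cong M_1\oplus W$. It remains to identify $W$. By Lemma~\ref{L2R} the composition factors of $S_3$ are $D_1,D_2,D_3$, so $W$ has composition factors $2D_1+2D_2+D_3$ and, in particular, no trivial constituent; by Lemma~\ref{LYoung} and the Krull--Schmidt theorem this forces $M_3\cong Y_1\oplus Y_3$ with $Y_1=M_1$ and $W=Y_3$, and $W$ is self-dual. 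Using $W\sim S_3|S_2$ with $S_2\cong D_1|D_2$ (Lemma~\ref{l10}(i)) one gets $\head W=D_2$, hence $\soc W=D_2$ by self-duality; peeling off socle and head leaves a self-dual module of length $3$ with constituents $2D_1+D_3$, necessarily $D_1|D_3|D_1$. Therefore $W\cong D_2|D_1|D_3|D_1|D_2$ and $S_3\cong D_2|D_1|D_3$, which is the assertion of (i).

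For part (ii), assume $n\equiv 2\pmod 4$. Now $\eta_{3,1}\circ\eta_{1,3}=0$, so there is no splitting, and I would instead exploit $\eta_{2,3}\colon M_2\to M_3$ together with $M_2\cong D_0\oplus Y_2$ and the uniserial structure $Y_2\cong D_1|D_0|D_2|D_0|D_1$ from Lemma~\ref{l10}(ii). From $\eta_{2,3}\circ\eta_{1,2}=0$ and the ranks $\rank(\eta_{1,2})=n-1=\rank(\eta_{2,3})$ (Lemma~\ref{LWilson}) one gets $\Ker(\eta_{2,3})=\im(\eta_{1,2})\cong M_1/D_0\cong S_1^*$, realised as the bottom two layers $D_1|D_0$ of $Y_2$ (the competing ``diagonal'' copy is ruled out by a dimension count on $\im(\eta_{2,3})$), whence $\im(\eta_{2,3})\cong D_0\oplus S_2^*$. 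Combining this with the filtration $M_3\sim S_3|S_2|M_1$, the composition factors $D_0,D_2,D_3$ of $S_3$ from Lemma~\ref{L2R}, and self-duality of $M_3$, one reads off $S_3\cong D_0|D_2|D_3$, then locates inside $M_3$ a submodule isomorphic to $Y_2/D_1=D_0|D_2|D_0|D_1$ together with its uniserial submodule $A\cong D_0|D_2|D_0$, and finally obtains $M_3\sim(Y_2/D_1)|S_1^*|S_3^*$ and, after rearranging layers, $M_3\sim A|D_3|S_2|S_1^*$.

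The main obstacle is this last case. Unlike for $n\equiv 0\pmod 4$, there is no direct-sum splitting to exploit, the auxiliary module $M_2$ is itself genuinely non-semisimple (a uniserial $Y_2$ plus a trivial summand), and $M_3$ has nine composition factors built from only four simples $D_0,D_1,D_2,D_3$, so one must carefully track how these are glued: pinning down the non-canonical submodule $\cong Y_2/D_1$, the subquotient $A\cong D_0|D_2|D_0$, and the uniserial structure of $S_3$, while ruling out unwanted splittings. I expect the bulk of the work — and the real risk of error — to lie precisely in verifying that the two displayed filtrations of $M_3$ hold exactly as stated, which is where self-duality of $M_3$ and the explicit structure results of Lemma~\ref{l10} are indispensable.
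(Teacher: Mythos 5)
Your proof of the common filtration $M_3\sim S_3|S_2|M_1$ is correct and essentially the paper's argument. In part (i) you replace the paper's citation of \cite[Lemma 5.4(i)]{KST} by a self-contained splitting argument (checking $\eta_{3,1}\circ\eta_{1,3}=\id_{M_1}$ when $4\mid n$), which is a nice alternative, and the identification $\Ker(\eta_{3,1})\cong Y_3$ is fine; but the last step, ``necessarily $D_1|D_3|D_1$'', is not justified. Self-duality and the composition factors alone do not exclude $\rad W/\soc W\cong D_3\oplus X$ with $X$ self-dual of type $D_1,D_1$. To rule this out you must return to the Specht filtration: $\soc W=D_2\subseteq S_3$, so $S_3/\soc W$ is a nonsplit uniserial $D_1|D_3$ inside $W/\soc W$, and since $[W:D_3]=1$ this prevents $D_3$ from also occurring in $\soc(W/\soc W)$.

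Part (ii) contains a genuine error. First, $\rank(\eta_{2,3})=\binom{n}{2}-n+1$ by Lemma~\ref{LWilson}, not $n-1$ (what you need is only $\dim\Ker(\eta_{2,3})=n-1$). More seriously, $\Ker(\eta_{2,3})$ is exactly the ``diagonal'' copy of $S_1^*$ in $M_2\cong D_0\oplus Y_2$ that you dismiss, not the bottom two layers of $Y_2$, and a dimension count cannot distinguish the two candidates since both yield the same rank. Consequently $\im(\eta_{2,3})\cong Y_2/D_1\cong D_0|D_2|D_0|D_1$ is \emph{uniserial} with simple socle $D_0$ --- this is \cite[Lemma 5.5]{KST}, which the paper's proof uses --- whereas your claim $\im(\eta_{2,3})\cong D_0\oplus S_2^*$ is false: for $n\equiv 2\pmod{4}$ one has $\soc M_3\cong D_0$ simple, so $S_2^*$, whose socle is $D_2$, cannot embed into $M_3$ at all. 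Your remaining outline both rests on this false intermediate claim and omits the essential input $\soc M_3\cong D_0$ (imported by the paper from \cite[Lemma 5.4(ii)]{KST}); without it neither the uniseriality of $S_3$ (a priori $S_3$ could be $(D_0\oplus D_2)|D_3$), nor the existence of the submodules $Y_2/D_1$ and $A\cong D_0|D_2|D_0$, nor the two displayed filtrations can be ``read off''. You would need to either prove $\soc M_3\cong D_0$ directly or cite it, and then rebuild the case $n\equiv 2\pmod 4$ from the corrected description of $\im(\eta_{2,3})$.
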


\begin{proof}
By Lemma~\ref{LFilt}, we have $M_3\sim S_3|S_2|S_1|S_0$.
Note that $M_3$ has a unique submodule isomorphic to $S_3$, since $M_3$ has a unique composition factor isomorphic to $D_3\cong\head S_3$. Similarly $M_3/S_3$ has a unique submodule isomorphic to $S_2$. So there is a unique submodule  $X\subseteq M_3$ such that $X\sim S_3|S_2$. Moreover, $X$ is the unique minimal submodule of $M_3$ with $[X:D_3]=[M_3:D_3]$ and $[X:D_2]=[M_3:D_2]$.


Since $\rank \eta_{3,1}=n$ by Lemma~\ref{LWilson}, 
we have that $M_1$ is a quotient of $M_3$. 
Since $M_1\cong D_0|D_1|D_0\sim S_1|S_0$ by Lemma \ref{l1}, it follows from the first paragraph by comparing dimensions that $M_3\sim X|M_1\sim S_3|S_2|M_1$.

(i) By \cite[Lemma 5.4(i)]{KST}, $M_3\cong M_1\ \oplus\  D_2|D_1| D_3|D_1|D_2$, and we are done by the first paragraph. 

(ii) Let $n\equiv 2\pmod{4}$. By \cite[Lemmas 5.4(ii), 5.5]{KST}, we 
have that:
\begin{enumerate}
\item[(a)] $\im \eta_{2,3}\cong D_0|D_2|D_0|D_1$;
\item[(b)] the composition factors of $M_3$ are $D_0$ with multiplicity $4$, $D_1$ with multiplicity $2$, $D_2$ with multiplicity $2$, and $D_3$ with multiplicity $1$;
\item[(c)] $\soc M_3\cong D_0$.
\item[(d)] $\im \eta_{1,3}$ is the unique submodule of $M_3$  isomorphic to $M_1$ and $\ker\eta_{3,1}$ is the unique submodule $N$ of $M_3$ such that $M_3/N \cong M_1$.  
\end{enumerate} 
Since $S_3\subseteq M_3$, the structure of $S_3$ follows from (c) and Lemma~\ref{L2R}. By (a),(d) and Lemma \ref{l1} there exist modules $B,C\subseteq M_3$ with $B\cong Y_2/D_1$ and $C\cong M_1$. Moreover, by Lemma \ref{l1}, we have $B/\soc M_3\cong S_2^*$, $C/\soc M_3\cong S_1^*$ and $B\cap C=\soc M_3$. So
\[M_3\sim B|(C/\soc M_3)|D\sim (Y_2/D_1)|S_1^*|D,\]
for a certain quotient $D$ of $M_3$. Since $M_3$ has a quotient of the form $S_3^*$ and $D_3\cong\soc(S_3^*)$ is not a composition factor of neither $B$ nor $C$, it follows that $D$ also has a quotient of the form $S_3^*$ and then by dimensions  $D\cong S_3^*$, so that $M_3\sim (Y_2/D_1)|S_1^*|S_3^*$.

By what has just been proved, $\soc(M_3/D_0)$ is isomorphic to a submodule of
\[\soc(Y_2/(D_1|D_0))\oplus \soc(S_1^*)\oplus \soc(S_3^*)\cong D_2\oplus D_1\oplus D_3.\]
In particular there exists a unique submodule of $M_3/D_0$ of the form $D_2$. So there exists a unique submodule $E\subseteq M_3$ with $E\cong D_0|D_2$. Then $E\subseteq S_3$. Let $A\subseteq B$ be the unique submodule with $A\cong D_0|D_2|D_0$. Again, we have $E\subseteq A$.  It follows that $A+S_3\sim A|D_3$ and $A+S_3\sim S_3|D_0$. 
Since $\soc S_2\cong D_1$ (from Lemma \ref{l1}), we have that
\[\overbrace{((A+S_3)/S_3)}^{D_0}\cap\overbrace{(X/S_3)}^{S_2}=0.\]
and then that
\[(A+X)/(A+S_3)\cong 
(A+S_3+X)/(A+S_3)\cong 
X/((A+S_3)\cap X)\cong 
X/S_3\cong S_2.\]
In particular, 
\[A+X
\sim (A+S_3)|S_2
\sim A|D_3|S_2.\]
Comparing composition factors we have that $M_3/(A+X)$ has composition factors $D_0$ and $D_1$ with multiplicity 1 and no other composition factors. Since $M_3/(A+X)$ is a quotient of
\[M_3/X\cong M_1\cong D_0|D_1|D_0\sim D_0|S_1^*,\]
it follows that $M_3/(A+X)\cong S_1^*$ and so
\[M_3\sim (A+X)/(M_3/(A+X))\sim A|D_3|S_2|S_1^*.\]
\end{proof}

\begin{Lemma}\label{l11}
Let $p=2$ and $n\geq 6$ with $n\equiv 2\pmod{4}$. Then $M^{(n-2,1,1)}\cong M_1\oplus Y^{(n-2,1,1)}$ with
\[Y^{(n-2,1,1)}\cong \underbrace{D_1|D_0}_{S_1^*}|\overbrace{D_2|\underbrace{D_0|D_1}_{S_1}}^{S_2^*}|D_0|\overbrace{D_2|\underbrace{D_0|D_1}_{S_1}}^{S_2^*}.\]
Further $Y_2$ is a submodule and a quotient of $Y^{(n-2,1,1)}$.
\end{Lemma}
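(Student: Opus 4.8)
The plan is to realise $M^{(n-2,1,1)}$ as the permutation module $\F\Om$ on the set $\Om$ of ordered pairs of distinct elements of $\{1,\dots,n\}$ (equivalently $M^{(n-2,1,1)}\cong\bone_{\SSS_{n-2}}{\ua}^{\SSS_n}$), and to play off the natural $\SSS_n$-maps connecting $\F\Om$, $M_1$ and $M_2$. First I would introduce the ``forget the order'' map $\rho\colon M^{(n-2,1,1)}\to M_2$, $(i,j)\mapsto\{i,j\}$, and $\iota\colon M_2\to M^{(n-2,1,1)}$, $\{i,j\}\mapsto(i,j)+(j,i)$; since $p=2$, $\iota$ is injective with image $\Ker\rho$, giving an exact sequence $0\to M_2\xrightarrow{\iota}M^{(n-2,1,1)}\xrightarrow{\rho}M_2\to 0$. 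Next I would use the ``projection'' $\sigma_1\colon(i,j)\mapsto v_i$ and the map $\tau_1\colon v_i\mapsto\sum_{j\neq i}(i,j)$; as $n$ is even one checks $\sigma_1\tau_1=\id_{M_1}$, so $M_1$ is a direct summand and $M^{(n-2,1,1)}\cong M_1\oplus C$ with $C:=\Ker\sigma_1$, $\dim C=n(n-2)$. Since $M^{(n-2,1,1)}$ and $M_1$ are self-dual and $M_1$ is indecomposable, Krull--Schmidt gives that $C$ is self-dual.

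I would then identify $C$ with $Y^{(n-2,1,1)}$. For $S^{(n-2,1,1)}\subseteq C$: the column stabiliser $C_t$ of a $(n-2,1,1)$-tableau is a copy of $\SSS_3$ permuting the three entries of the first column, so in characteristic $2$ each of the six resulting tabloids occurs once and $\sigma_1(e_t)$ is a sum in which every $v_i$ occurs an even number of times, i.e. $\sigma_1(e_t)=0$; as $\dim S^{(n-2,1,1)}=\binom{n-1}{2}>n=\dim M_1$, this forces $S^{(n-2,1,1)}\subseteq C$. From the exact sequence $[M^{(n-2,1,1)}]=2[M_2]$ in the Grothendieck group, so by Lemma~\ref{l1} the only composition factors of $C$ are $D_0,D_1,D_2$, with multiplicities $4,3,2$. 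By Frobenius reciprocity $\dim\Hom_{\SSS_n}(D^\mu,M^{(n-2,1,1)})$ equals the multiplicity of $\bone$ in $\soc(D^\mu{\da}_{\SSS_{n-2}})$; computing this for $\mu\in\{(n),(n-1,1),(n-2,2)\}$ by iterating the JS-branching of Lemma~\ref{Lemma55} and Lemma~\ref{Lemma39}(ii) gives the values $1,1,0$. Hence $\soc M^{(n-2,1,1)}=D_0\oplus D_1$, and since $\soc M_1=D_0$ we get $\soc C=D_1$, which is simple; so $C$ is indecomposable, and by Lemma~\ref{LYoung} $C\cong Y^{(n-2,1,1)}$. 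This proves the decomposition $M^{(n-2,1,1)}\cong M_1\oplus Y^{(n-2,1,1)}$.

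For the structure of $C$, put $N:=C\cap\iota(M_2)=\iota(\Ker\eta_{2,1})$ (using $\sigma_1\iota=\eta_{2,1}$). By Lemma~\ref{LWilson} (cf. Lemma~\ref{l1}), $\rank\eta_{2,1}=n-1$, and $\eta_{2,1}$ is injective on the trivial summand of $M_2$ (again because $n$ is even), so $\Ker\eta_{2,1}$ embeds into the uniserial module $Y_2$ as the submodule of codimension $\dim D_1$, i.e. as $\rad Y_2\cong D_1|D_0|D_2|D_0$. A dimension count shows $C+\iota(M_2)$ has codimension $1$ in $M^{(n-2,1,1)}$, so $C/N\cong\rho(C)$ is the unique codimension-$1$ submodule of $M_2=D_0\oplus Y_2$, namely $Y_2$ itself; thus $Y_2\cong C/N$ is a quotient of $C$, and dualising, $Y_2\cong Y_2^*\cong(C/N)^*$ embeds as a submodule of $C$ with $C$ modulo it isomorphic to $N^*\cong D_0|D_2|D_0|D_1=D_0|S_2^*$. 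This already yields the filtration $C\sim(S_1^*|S_2^*)\,|\,(D_0|S_2^*)$ together with the last assertion of the lemma ($Y_2\cong S_1^*|S_2^*$ is both a submodule and a quotient of $C$).

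It remains to upgrade the above filtration to the stated \emph{uniserial} structure, and I expect this to be the main obstacle. Since $\soc C=\head C=D_1$ is simple and both $N$ and $C/N\cong Y_2$ are uniserial, the submodules of $C$ contained in $N$ form a chain and the submodules of $C$ containing $N$ form a chain; so it is enough to show that every submodule $U$ of $C$ is comparable with $N$. If it were not, then $E:=U\cap N$ would be a proper non-zero submodule of $N$ and $U/E\cong(U+N)/N$ a non-zero submodule of $Y_2$, hence with socle $D_1$, while $N/E$ has socle $D_0$ or $D_2$; as $U/E$ and $N/E$ meet trivially in $C/E$, this would make $\soc(C/E)$ non-simple. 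Thus the crux is to prove that $\soc(C/E)$ \emph{is} simple for each of the three cases $E\in\{D_1,\ D_1|D_0,\ D_1|D_0|D_2\}$; via the long exact sequence of $0\to E\to C\to C/E\to 0$ together with the vanishing $\Hom_{\SSS_n}(D_j,C)=0$ for $j\neq1$, this reduces to suitable bounds on $\Ext^1_{\SSS_n}(D_j,E)$ for $j\in\{0,2\}$ and the vanishing $\Ext^1_{\SSS_n}(D_1,D_1)=0$ (valid since $n\equiv2\pmod4$), all of which one extracts from the structures of $M_1,S_1,M_2,S_2,Y_2$ recorded in Lemma~\ref{l1}. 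The careful bookkeeping of which self- and cross-extensions among $D_0,D_1,D_2$ can occur inside $C$ is where the real work lies; everything before it is formal manipulation of permutation modules combined with the already-established structure of $M_2$.
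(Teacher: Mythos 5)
Your construction of the splitting and of the two-step filtration is correct and is a genuinely different, more self-contained route than the paper's: the paper obtains the $M_1$ summand from $M^{(n-2,1,1)}\cong M^{(n-2,1)}{\ua}^{\SSS_n}$ with $M^{(n-2,1)}\cong \bone\oplus D^{(n-2,1)}$ over $\SSS_{n-1}$ (as $n-1$ is odd), and then simply quotes \cite[Lemma 3.13]{M} for the full uniserial structure of $D^{(n-2,1)}{\ua}^{\SSS_n}$ and \cite[Lemmas 3.5, 3.12]{M} for the statement about $Y_2$. Your argument recovers, from first principles, the decomposition $M^{(n-2,1,1)}\cong M_1\oplus C$ with $C\cong Y^{(n-2,1,1)}$, the filtration $C\sim N|Y_2$ with $N\cong D_1|D_0|D_2|D_0$, and the fact that $Y_2$ is both a submodule and a quotient of $C$; these steps all check out (the computations $\sigma_1\tau_1=\id$, $\sigma_1(e_t)=0$, $\sigma_1\iota=\eta_{2,1}$, $\rank\eta_{2,1}=n-1$, and the socle computation via branching are correct).

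The gap is in the final step, the upgrade to uniseriality, which is precisely what the citation of \cite[Lemma 3.13]{M} supplies and which your plan does not actually deliver. The reduction to ``$\soc(C/E)$ is simple for $E\in\{D_1,\ D_1|D_0,\ D_1|D_0|D_2\}$'' is sound, but the long-exact-sequence argument you sketch needs $\Ext^1$-information that is not in Lemma~\ref{l1} and is not a formal consequence of the submodule structures recorded there: at a minimum $\Ext^1_{\SSS_n}(D_1,D_1)=0$, $\dim\Ext^1_{\SSS_n}(D_0,D_1)\leq 1$ and $\Ext^1_{\SSS_n}(D_2,D_1)=0$, none of which follows from $M_1,S_1,M_2,S_2,Y_2$ being uniserial with the stated layers. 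Worse, even granting these, the bound $\dim\Hom_{\SSS_n}(D_j,C/E)\leq\dim\Ext^1_{\SSS_n}(D_j,E)$ is too weak for the non-simple $E$: for $E=D_1|D_0$ you must show $\Hom_{\SSS_n}(D_0,C/E)=0$, but $\Ext^1_{\SSS_n}(D_0,E)$ need not vanish (already $\Ext^1_{\SSS_n}(D_0,D_0)\cong H^1(\SSS_n,\F)\neq 0$ feeds into it), so you would have to prove injectivity of $\Ext^1_{\SSS_n}(D_0,E)\to\Ext^1_{\SSS_n}(D_0,C)$, which is exactly the statement you are trying to establish. So the ``careful bookkeeping'' you defer is not bookkeeping but the real content; closing it requires either genuine $\Ext^1$/cohomology computations or an argument in the style of \cite{M}, e.g.\ computing $\End_{\SSS_n}(D^{(n-2,1)}{\ua}^{\SSS_n})\cong\Hom_{\SSS_{n-1}}\bigl(D^{(n-2,1)},(D^{(n-2,1)}{\ua}^{\SSS_n}){\da}_{\SSS_{n-1}}\bigr)$ and exploiting its small dimension.
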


\begin{proof}
Since $M^{(n-2,1)}\cong D^{(n-1)}\oplus D^{(n-2,1)}$, we have
$$M^{(n-2,1,1)}\cong D^{(n-1)}{\ua}^{\SSS_n}\,\oplus\,  D^{(n-2,1)}{\ua}^{\SSS_n}\cong M_1\,\oplus\, D^{(n-2,1)}{\ua}^{\SSS_n}.
$$
By \cite[Lemma 3.13]{M}, 
$$
D^{(n-2,1)}{\ua}^{\SSS_n}\cong D_1|D_0|D_2|D_0|D_1|D_0|D_2|D_0|D_1. 
$$
In particular, $D^{(n-2,1)}{\ua}^{\SSS_n}\cong Y^{(n-2,1,1)}$, see Lemma~\ref{LYoung}. The rest comes from \cite[Lemmas 3.5, 3.12]{M}.  
\end{proof}

\begin{Remark}
The following diagrams give information on the structures of $M_2$ and $M_3$ in the cases the structures were not completely determined, but will not be used in the proofs. Edges indicate existence of uniserial subquotients; see \cite{Alp,BC} for precise meaning of the pictures.
\begin{enumerate}
\item[{\rm (i)}] If $p=3$ and $n\equiv 0\pmod{9}$ then
$$
\begin{tikzpicture}
\node at (-1.5,0){$M_3\cong D_2\,\,\oplus$};
\node at (0,1) {$D_0$};
\node at (1,1) {$D_1$};
\node at (1,-0) {$D_3$};
\node at (1,-1) {$D_1$};
\node at (2,-1) {$D_0$};
\draw [-] (0,0.7) -- (0.7,-0.7);
\draw [-] (1,0.7) -- (1,0.3);
\draw [-] (1,-0.7) -- (1,-0.3);
\draw [-] (1.3,0.7) -- (2,-0.7);
\end{tikzpicture}
$$
\item[{\rm (ii)}] If $p=3$ and $n\equiv 3\pmod{9}$ then
\[M_3\cong D_0\oplus D_2\oplus(D_1|D_0|D_3|D_0|D_1).\]

\item[{\rm (iii)}] If $p=3$ and $n\equiv 6\pmod{9}$ then
\[\xymatrix@R=6pt@C=10pt{
&&D_1\ar@{-}[d]\ar@{-}[dl]\\
M_3\cong D_0\oplus D_2\oplus&D_0\ar@{-}[dr]&D_3\ar@{-}[d]\\
&&D_1
}\]
\item[{\rm (iv)}] If $p=3$ and $n\equiv 1\pmod{9}$ then
$$
\begin{tikzpicture}
\node at (-1.5,0){$M_3\cong D_1\,\,\oplus$};
\node at (0,1) {$D_0$};
\node at (1,1) {$D_2$};
\node at (1,-0) {$D_3$};
\node at (1,-1) {$D_2$};
\node at (2,-1) {$D_0$};
\draw [-] (0,0.7) -- (0.7,-0.7);
\draw [-] (1,0.7) -- (1,0.3);
\draw [-] (1,-0.7) -- (1,-0.3);
\draw [-] (1.3,0.7) -- (2,-0.7);
\end{tikzpicture}
$$

\item[{\rm (v)}] If $p=3$ and $n\equiv 4\pmod{9}$ then
\[M_3\cong D_0\oplus D_1\oplus (D_2|D_0|D_3|D_0|D_2).\]

\item[{\rm (vi)}] If $p=3$ and $n\equiv 7\pmod{9}$ then
\[\xymatrix@R=6pt@C=10pt{
&&D_2\ar@{-}[d]\ar@{-}[dl]\\
M_3\cong D_0\oplus D_1\oplus &D_0\ar@{-}[dr]&D_3\ar@{-}[d]\\
&&D_2
}\]
\item[{\rm (vii)}] If $p=2$ and $n\equiv 0\pmod{4}$ then
$$
\begin{tikzpicture}
\node at (-1.2,0){$M_2\cong $};
\node at (0,1) {$D_0$};
\node at (1,1) {$D_1$};
\node at (1,-0) {$D_2$};
\node at (1,-1) {$D_1$};
\node at (2,-1) {$D_0$};
\draw [-] (0,0.7) -- (0.7,-0.7);
\draw [-] (1,0.7) -- (1,0.3);
\draw [-] (1,-0.7) -- (1,-0.3);
\draw [-] (1.3,0.7) -- (2,-0.7);
\end{tikzpicture}
$$
\item[{\rm (vii)}] If $p=2$ and $n\equiv 2\pmod{4}$ then
\[\xymatrix@R=6pt@C=10pt{
&&&&D_0\ar@{-}[dl]\ar@{-}[dr]&&\\
&&&D_1\ar@{-}[dl]&&D_2\ar@{-}[dl]\ar@{-}[dr]&\\
M_3&\cong &D_0\ar@{-}[dr]&&D_3\ar@{-}[dl]&&D_0\ar@{-}[dl]&.\\
&&&D_2\ar@{-}[dr]&&D_1\ar@{-}[dl]&\\
&&&&D_0&&
}\]
\end{enumerate}
\end{Remark}

\vspace{2mm}

\section{Results on the module $\EE(\la)$}
\label{SP2NEVEN}

In this section we study the submodule structure of the module $$\EE(\la)=\End_F(D^\la)\cong D^\la\otimes D^\la.$$ We try to show that some quotients of small permutation modules $M_k$ arise as submodules of $\EE(\la)$, which is needed to obtain homomorphisms $\psi$ as in Lemma~\ref{LBasic2}.

\begin{Lemma}\label{l7}
Let $p=2$,  $n\geq 6$ be even, and let $\la\in\Par_2(n)$ be not a JS-partition. Then $S_1^*\subseteq\EE(\la)$.
\end{Lemma}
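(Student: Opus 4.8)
The plan is to prove the equivalent statement $\dim\Hom_{\SSS_n}(S_1^*,\EE(\la))\geq 2$. This suffices: by Lemma~\ref{l1} we have $S_1^*\cong D_1|D_0$, a module with irreducible socle $D_1\not\cong D_0$, so applying $\Hom_{\SSS_n}(-,\EE(\la))$ to $0\to D_1\to S_1^*\to D_0\to 0$ shows that a nonzero homomorphism $S_1^*\to\EE(\la)$ is injective exactly when its restriction to $D_1$ is nonzero, whereas the homomorphisms vanishing on $D_1$ form a space of dimension $\dim\Hom_{\SSS_n}(D_0,\EE(\la))=\dim\End_{\SSS_n}(D^\la)=1$. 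Hence $\dim\Hom_{\SSS_n}(S_1^*,\EE(\la))\geq 2$ produces an embedding $S_1^*\hookrightarrow\EE(\la)$, and the converse is clear.

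To count, apply $\Hom_{\SSS_n}(-,\EE(\la))$ to the short exact sequence $0\to\bone_{\SSS_n}\to M_1\to S_1^*\to 0$ of Lemma~\ref{l1} (here $\bone_{\SSS_n}=\soc M_1$). This gives an exact sequence
\[0\longrightarrow\Hom_{\SSS_n}(S_1^*,\EE(\la))\longrightarrow\Hom_{\SSS_n}(M_1,\EE(\la))\stackrel{\rho}{\longrightarrow}\Hom_{\SSS_n}(\bone_{\SSS_n},\EE(\la)).\]
By Frobenius reciprocity and Lemma~\ref{Lemma39}(v) we have $\dim\Hom_{\SSS_n}(M_1,\EE(\la))=\dim\End_{\SSS_{n-1}}(D^\la{\da}_{\SSS_{n-1}})=\sum_{i\in I}\eps_i(\la)$, while $\Hom_{\SSS_n}(\bone_{\SSS_n},\EE(\la))=\End_{\SSS_n}(D^\la)$ is one-dimensional. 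Since $\la$ is not JS it has at least two normal nodes, so $\sum_{i}\eps_i(\la)\geq 2$, and therefore $\dim\Hom_{\SSS_n}(S_1^*,\EE(\la))=\sum_{i}\eps_i(\la)-\dim(\im\rho)$. If $\sum_{i}\eps_i(\la)\geq 3$ this is already $\geq 2$; so it remains to treat the case $\eps_0(\la)+\eps_1(\la)=2$ and to prove that $\rho=0$ there.

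For this I would identify $\rho$ explicitly. Using the tensor identity $M_1\otimes D^\la\cong\big(D^\la{\da}_{\SSS_{n-1}}\big){\ua}^{\SSS_n}$, homomorphisms $M_1\to\EE(\la)=\End_\F(D^\la)$ correspond to homomorphisms $\big(D^\la{\da}_{\SSS_{n-1}}\big){\ua}^{\SSS_n}\to D^\la$, hence to endomorphisms $\theta\in\End_{\SSS_{n-1}}(D^\la{\da}_{\SSS_{n-1}})$; under this identification the subspace $\soc M_1\otimes D^\la$ is carried onto the image of the adjunction unit $\iota\colon D^\la\to\big(D^\la{\da}_{\SSS_{n-1}}\big){\ua}^{\SSS_n}$, and a direct computation shows that $\rho$ sends $\theta$ to the scalar $c$ determined by $\sum_{g}{}^{g}\theta=c\cdot\id_{D^\la}$, the sum being over a transversal of $\SSS_{n-1}$ in $\SSS_n$. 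Taking traces gives $c\cdot\dim D^\la=n\cdot\tr\theta=0$ because $n$ is even, so $\rho=0$ automatically whenever $\dim D^\la$ is odd. Thus the only case left is $\eps_0(\la)+\eps_1(\la)=2$ with $\dim D^\la$ even, and here $\rho(\theta)\neq 0$ would force $\iota$ to split, i.e.\ would force $D^\la$ to be a direct summand of $\big(D^\la{\da}_{\SSS_{n-1}}\big){\ua}^{\SSS_n}$; so it is enough to prove that $D^\la$ is \emph{not} such a summand.

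This last reduction is the crux, and I expect it to be the main obstacle; everything before it is formal. My plan for it is to examine the summand of $\big(D^\la{\da}_{\SSS_{n-1}}\big){\ua}^{\SSS_n}$ lying in the block of $D^\la$, which is $f_0e_0D^\la\oplus f_1e_1D^\la$: its socle is $(D^\la)^{\oplus\eps_0(\la)}\oplus(D^\la)^{\oplus\eps_1(\la)}$ by Lemma~\ref{l4}, and the $D^\la$-multiplicity of $f_ie_iD^\la$ is controlled by Lemma~\ref{l24}. When $\eps_i(\la)=1$ one has $e_iD^\la=D^{\tilde e_i\la}$, so $f_ie_iD^\la=f_iD^{\tilde e_i\la}$ has irreducible socle $D^\la$ and multiplicity $[f_iD^{\tilde e_i\la}:D^\la]=\phi_i(\la)+1$; hence $D^\la$ splits off only if $\phi_i(\la)=0$, and a parity computation in the spirit of Lemmas~\ref{L6.2} and \ref{l3}, using that $n$ is even, rules this out. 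The remaining configurations (in particular $\eps_i(\la)=2$) are to be handled by the same parity input together with Lemma~\ref{L230418}, which pins down the structure of $f_0e_0D^\la\oplus f_1e_1D^\la$ in the event that $D^\la$ does split off. Once $\rho=0$ is established, $\dim\Hom_{\SSS_n}(S_1^*,\EE(\la))=\eps_0(\la)+\eps_1(\la)=2$, and the lemma follows.
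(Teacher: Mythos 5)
Your first half coincides with the paper's proof: the reduction to $\dim\Hom_{\SSS_n}(S_1^*,\EE(\la))\geq 2$, the exact sequence obtained from $0\to D_0\to M_1\to S_1^*\to 0$, the count $\dim\Hom_{\SSS_n}(M_1,\EE(\la))=\eps_0(\la)+\eps_1(\la)$, and the disposal of the case of at least three normal nodes are all as in the paper. Your trace argument showing $\rho=0$ when $\dim D^\la$ is odd is a genuine (and correct) addition, but it only covers part of the remaining case.

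The gap is in your treatment of $\eps_0(\la)+\eps_1(\la)=2$ with $\dim D^\la$ even. You reduce to showing that $D^\la$ is \emph{not} a direct summand of $\big(D^\la\da_{\SSS_{n-1}}\big)\ua^{\SSS_n}$. First, this is strictly stronger than what you need: $\rho=0$ only says that the particular embedding $\iota$ does not split, while $D^\la$ could split off elsewhere. Second, and more seriously, your tools cannot establish it in the subcase $\eps_i(\la)=2$, $\eps_{1-i}(\la)=0$: there $f_ie_iD^\la$ has socle $(D^\la)^{\oplus 2}$ (Lemma~\ref{l4}), so it is not an indecomposable module with simple socle and the argument you use for $\eps_i(\la)=1$ does not apply; and Lemma~\ref{L230418} is of no help here because it \emph{assumes} that $D^\la$ is a direct summand and merely describes the complement $X$ — it produces no contradiction on its own. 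Indeed, the paper's proof is structured precisely to avoid having to decide whether $D^\la$ splits off: it assumes $\rho\neq 0$, uses the commutative diagram built from \emph{both} short exact sequences $0\to D_0\to M_1\to S_1^*\to 0$ and $0\to D_0\to S_1\to D_1\to 0$ to deduce that $D^\la$ is also a direct summand of $D^\la\otimes S_1$, and then contradicts this by computing $D^\la\otimes S_1\cong X\oplus f_0e_1D^\la\oplus f_1e_0D^\la$ via Lemma~\ref{L230418}, Krull--Schmidt and dualization, where $X$ has simple socle $D^\la$ and $[X:D^\la]\geq 2$. Your proposal never brings in $S_1$ or $D_1$, and that second exact sequence is exactly the missing ingredient. (As a smaller point, even in the subcase $\eps_0(\la)=\eps_1(\la)=1$ you need $\phi_0(\la),\phi_1(\la)>0$; Lemma~\ref{l3} as stated only excludes $\phi_i(\la)=3$ for $i$ the residue of the \emph{bottom} normal node, so you would have to prove the analogous statement for the top normal node's residue as well.)
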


\begin{proof}
It suffices to prove that $\dim\Hom_{\SSS_n}(S_1^*,\EE(\la))\geq 2$ since $S_1^*\cong D_1|D_0$ by Lemma \ref{l1} and $D_0\cong \bone_{\SSS_n}$ is contained exactly once in the socle of $\EE(\la)$ by Schur's Lemma. On the other hand, 
\[\Hom_{\SSS_n}(S_1^*,\EE(\la))\cong 
\Hom_{\SSS_n}(S_1^*,(D^\la)^*\otimes D^\la)\cong 
\Hom_{\SSS_n}(D^\la\otimes S_1^*,D^\la).\]
So it is enough to prove that $$\dim \Hom_{\SSS_n}(D^\la\otimes S_1^*,D^\la)\geq 2.$$

We have a commutative diagram
\[\xymatrix@R=12pt@C=12pt{
0\ar@{->}[r]&D_0\ar@{->}[r]&M_1\ar@{->}[r]&S_1^*\ar@{->}[r]&0\\
0\ar@{->}[r]&D_0\ar@{->}^{\id}[u]\ar@{->}[r]&S_1\ar@{->}[r]\ar@{->}[u]&D_1\ar@{->}[r]\ar@{->}[u]&0\\
&&0\ar@{->}[u]&0\ar@{->}[u]
}\]
whose rows and columns are exact. By tensoring with $D^\la$ we get a commutative diagram
\begin{equation}\label{E230418}
\xymatrix@R=12pt@C=12pt{
0\ar@{->}[r]&D^\la\ar@{->}^{\hspace{-15pt}\iota}[r]&D^\la\otimes M_1\ar@{->}[r]&D^\la\otimes S_1^*\ar@{->}[r]&0\\
0\ar@{->}[r]&D^\la\ar@{->}^{\id}[u]\ar@{->}[r]&D^\la\otimes S_1\ar@{->}[r]\ar@{->}[u]&D^\la\otimes D_1\ar@{->}[r]\ar@{->}[u]&0\\
&&0\ar@{->}[u]&0\ar@{->}[u]
}
\end{equation}
whose rows and columns are exact. 

Applying $\Hom_{\SSS_n}(-,D^\la)$ to the short exact sequence in the first row of (\ref{E230418}) and using the fact that $\Hom_{\SSS_n}(D^\la,D^\la)\cong \F$ by Schur's Lemma, we get an exact sequence
\begin{equation}\label{ESES}
0\to \Hom_{\SSS_n}(D^\la\otimes S_1^*,D^\la)\to 
\Hom_{\SSS_n}(D^\la\otimes M_1,D^\la)
\stackrel{\pi}{\longrightarrow} \F. 
\end{equation}
Furthermore, by Lemma~\ref{Lemma39}(v), we have 
\begin{align*}
 \dim \Hom_{\SSS_n}(D^\la\otimes M_1, D^\la)&=\dim  \Hom_{\SSS_n}(M_1,\End_\F(D^\la))\\
 &=\dim \End_{\SSS_{n-1}}(D^\la\da_{\SSS_{n-1}})
 \\
 &=\eps_0(\la)+\eps_1(\la),
\end{align*}
which is just the number of normal nodes in $\la$. 
By assumption, $\la$ has at least two normal nodes. If it has three, we are now done. Moreover, if $\pi$ is the zero map, we are also done. So we may assume that $\la$ has two normal nodes and $\pi\neq0$. We will show that this leads to a contradiction. 

Since $\pi\neq 0$, there exists a homomorphism $\phi\in \Hom_{\SSS_n}(D^\la\otimes M_1,D^\la)$ with $\pi( \phi)=\phi\circ \iota=\id_{D^\la}$, i.e. the short exact sequence in the first row of (\ref{E230418}) splits. Hence the short exact sequence in the second row of (\ref{E230418}) splits. 

By the the splitting of the first row of (\ref{E230418}), we have 
$$D^\la\otimes M_1\cong D^\la\oplus (D^\la\otimes S_1^*).$$ 
Moreover, by Lemma~\ref{Lemma45}, we have 
\[D^\la\otimes M_1\cong D^\la\da_{\SSS_{n-1}}\ua^{\SSS_n}\cong f_0e_0D^\la\oplus f_1e_1D^\la\oplus f_0e_1D^\la\oplus f_1e_0D^\la.\]
So by Lemma~\ref{L230418}, 
$$
f_0e_0D^\la\oplus f_1e_1D^\la\cong D^\la\oplus X
$$
where $X$ is a self-dual module with socle and head both  isomorphic to $D^\la$ and $[X:D^\la]\geq 2$. Using the Krull-Schmidt Theorem, we deduce that 
$$D^\la\otimes S_1^*\cong X\oplus f_0e_1D^\la\oplus f_1e_0D^\la.$$ 
By dualizing, it follows that 
$$D^\la\otimes S_1\cong X\oplus f_0e_1D^\la\oplus f_1e_0D^\la.
$$ 
But by the splitting of the second row of (\ref{E230418}), we know that $D^\la$ is a direct summand of $D^\la\otimes S_1$ which leads to a contradiction by the structure of $X$ and the fact that $f_0e_1D^\la\oplus f_1e_0D^\la$ is in blocks different from that of $D^\la$. 
\end{proof}

Recall the numbers $m_k(\la)$ from (\ref{EMtoE}). 

\begin{Lemma}\label{l13}
Let $p=2$, $n\geq 6$ be even and $\la\in\Par_2(n)$ have at least three normal nodes. Then
\[\dim\Hom_{\SSS_n}(M^{(n-2,1,1)},\EE(\la))>2m_1(\la)+2\dim\Hom_{\SSS_n}(S_1,\EE(\la))+1.\]
\end{Lemma}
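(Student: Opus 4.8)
The plan is as follows. Since $\SSS_{(n-2,1,1)}=\SSS_{n-2}$, we have $M^{(n-2,1,1)}=\bone_{\SSS_{n-2}}{\ua}^{\SSS_n}$, so Frobenius reciprocity together with the identification $\EE(\la){\downarrow}_{\SSS_{n-2}}=\End_\F(D^\la{\downarrow}_{\SSS_{n-2}})$ gives
\[
\dim\Hom_{\SSS_n}(M^{(n-2,1,1)},\EE(\la))=\dim\End_{\SSS_{n-2}}(D^\la{\downarrow}_{\SSS_{n-2}}).
\]
First I would evaluate the right hand side by branching. By Lemma~\ref{Lemma45}, $D^\la{\downarrow}_{\SSS_{n-2}}=\bigoplus_{i,j\in I}e_je_iD^\la$, where, if $D^\la\in B_\theta$, the summand $e_je_iD^\la$ lies in the block $B_{\theta-\ga_i-\ga_j}$; as $2\ga_0$, $\ga_0+\ga_1$, $2\ga_1$ are pairwise distinct, there are no homomorphisms between summands lying in different blocks. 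Using $e_i^2D^\la\cong(e_i^{(2)}D^\la)^{\oplus 2}$ and $\dim\End_{\SSS_{n-2}}(e_i^{(2)}D^\la)=\binom{\eps_i(\la)}{2}$ (Lemma~\ref{Lemma39}(i),(iii)) together with the self-duality of all of the modules $e_iD^\la$ and $e_ie_jD^\la$ (Lemmas~\ref{Lemma45} and~\ref{Lemma39}(ii)), this yields
\[
\dim\End_{\SSS_{n-2}}(D^\la{\downarrow}_{\SSS_{n-2}})=4\binom{\eps_0(\la)}{2}+4\binom{\eps_1(\la)}{2}+C,
\]
where $C:=\dim\End_{\SSS_{n-2}}(e_1e_0D^\la\oplus e_0e_1D^\la)\geq 0$ is the contribution of the block $B_{\theta-\ga_0-\ga_1}$.

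On the other side, by \eqref{EMtoE} and Lemma~\ref{Lemma39}(v) we have $m_1(\la)=\eps_0(\la)+\eps_1(\la)$, and by Lemma~\ref{l12} (applicable since $p=2\mid n$) we have $\dim\Hom_{\SSS_n}(S_1,\EE(\la))\leq m_1(\la)$, with equality forcing the containment $D^\la\subseteq(f_iD^{\tilde e_i\la})/D^\la$ for some $i$ with $\eps_i(\la)>0$; by Lemmas~\ref{l20} and~\ref{l19} the latter in turn forces $\phi_i(\la)>0$ and $\la_a=\la_{a+1}+2$ for the row $a$ containing the $i$-good node. Hence the right hand side $2m_1(\la)+2\dim\Hom_{\SSS_n}(S_1,\EE(\la))+1$ of the asserted inequality is at most $4m_1(\la)+1$, and is at most $4m_1(\la)-1$ unless $\la$ has this special shape. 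Combining this with the lower bound above, the inequality reduces to a numerical estimate in the pair $(\eps_0(\la),\eps_1(\la))$, constrained by $\eps_0(\la)+\eps_1(\la)\geq 3$. Since $4\binom{a}{2}+4\binom{b}{2}>4(a+b)+1$ as soon as $\max(a,b)\geq 4$, only the pairs $(\eps_0(\la),\eps_1(\la))$ with both entries at most $3$, i.e.\ (up to transposition) $(3,0)$, $(2,1)$, $(3,1)$, $(2,2)$, $(3,2)$ and $(3,3)$, still need work, and for each of these it suffices to exhibit an explicit lower bound on $C$ (a modest one, of size $2$, for $(3,0)$ and $(3,3)$, and larger ones for the others), needed only for those $\la$ of the special shape above when $\dim\Hom_{\SSS_n}(S_1,\EE(\la))=m_1(\la)$.

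For the leftover pairs I would bound $C$ from below using the branching lemmas: when $\eps_i(\la)=1$ one has $e_iD^\la=D^{\tilde e_i\la}$, so that $e_1e_0D^\la\oplus e_0e_1D^\la$ simplifies considerably, and in general one counts $D^\la$-adjacent composition factors and socle multiplicities inside these modules via Lemmas~\ref{l24} and~\ref{l4}. Throughout, the admissible configurations of normal and conormal nodes for $n$ even are restricted by the combinatorial Lemmas~\ref{L6.1}, \ref{L6.2}, \ref{l3} and~\ref{l22}, which both cut down the list of possible pairs $(\eps_i(\la),\phi_i(\la))$ and, in the special-shape case, force enough extra composition factors to make $C$ large. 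I expect the main obstacle to be exactly this last step: getting a sharp enough lower bound for the cross-block space $C$ when $\la$ has only three (or four, very unevenly split) normal nodes, since there the ``diagonal'' contribution $4\binom{\eps_0(\la)}{2}+4\binom{\eps_1(\la)}{2}$ alone falls well short of the value, close to $4m_1(\la)$, that the right hand side of the inequality can attain.
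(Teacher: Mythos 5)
Your reduction is structurally the same as the paper's: Frobenius reciprocity turns the left-hand side into $\dim\End_{\SSS_{n-2}}(D^\la\da_{\SSS_{n-2}})$, the block decomposition splits this into a ``diagonal'' part $2\eps_0(\la)(\eps_0(\la)-1)+2\eps_1(\la)(\eps_1(\la)-1)$ plus a cross-block contribution $C$, and Lemmas~\ref{Lemma39}(v) and \ref{l12} bound the right-hand side by $4(\eps_0(\la)+\eps_1(\la))+1$. The gap is in your leftover cases. The paper disposes of \emph{every} case with $\eps_0(\la),\eps_1(\la)\geq 1$ in one stroke by quoting the lower bound of \cite[Lemma 4.9]{M}, whose cross-term is $2(\eps_0(\la)+\eps_1(\la)+\eps_0(\la)\eps_1(\la))$. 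Without that input you must prove, for instance, $C\geq 10$ when $(\eps_0(\la),\eps_1(\la))=(2,1)$ or $(2,2)$, and the tools you name for this -- Lemmas~\ref{l4} and \ref{l24} -- concern $f_ie_i$ and $e_if_i$, not the mixed restrictions $e_{1-i}e_iD^\la$ whose endomorphisms make up $C$; they do not yield bounds of that size. (Estimates of this kind are exactly what Lemma~\ref{l21} has to import from \cite{M}, e.g.\ \cite[Lemmas 4.4, 4.8]{M}.)

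Moreover, the one configuration the paper cannot get from \cite[Lemma 4.9]{M}, namely $\eps_i(\la)=3$, $\eps_{1-i}(\la)=0$, is precisely where you ask for the ``modest'' bound $C\geq 2$. You correctly observe (via Lemmas~\ref{l12}, \ref{l20}, \ref{l19}) that this is only needed when $\phi_i(\la)>0$ and equality holds in Lemma~\ref{l12}; but establishing that $e_{1-i}e_iD^\la$ is then non-zero and not simple is the content of the Claim in the paper's proof, which requires a full case analysis on the parities of the parts of $\la$ using Lemma~\ref{L6.1}. So your outline locates the difficulty accurately -- you even flag it yourself -- but the proof is incomplete without (a) a substitute for the cross-term bound of \cite[Lemma 4.9]{M} in the mixed cases and (b) the combinatorial argument for the $(3,0)$ case.
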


\begin{proof}
In this proof we denote $\eps_i:=\eps_i(\la)$, $\phi_i:=\phi_i(\la)$, and $h:=h(\la)$. Note that the left hand side of the inequality in the lemma equals $\dim\End_{\SSS_{n-2}}(D^\la\da_{\SSS_{n-2}})$, which by \cite[Lemma 4.9]{M} is bounded below by $$2\vare_0(\vare_0-1)+2\vare_1(\vare_1-1)+2\delta_{\vare_0,\vare_1\geq 1}(\vare_0+\vare_1+\vare_0\vare_1).
$$ 
On the other hand, by Lemma \ref{l12}, we have $\dim\Hom_{\SSS_n}(S_1,\EE(\la))\leq \eps_0+\eps_1$, while by Lemma \ref{Lemma39}(v) we have $m_1(\la)=\eps_0+\eps_1$. So it suffices to prove that 
\[2\vare_0(\vare_0-1)+2\vare_1(\vare_1-1)+2\delta_{\vare_0,\vare_1\geq 1}(\vare_0+\vare_1+\vare_0\vare_1)>4(\vare_0+\vare_1)+1.\]

By the assumption that $\la$ has at least three normal nodes, we have $\eps_0+\eps_1\geq 3$. 
If either $\vare_i\geq 2$ and $\vare_{1-i}\geq 1$ 
or $\eps_i\geq 4$ and $\eps_{1-i}=0$ for some $i\in I$  
then the above inequality holds. Thus, we are left with the case where $\eps_i= 3$ and $\eps_{1-i}=0$ for some $i\in I$, which we assume from now on. 

By Lemmas~\ref{Lemma45} and \ref{Lemma39}, we have that $m_1(\la)=3$ and
\begin{align*}
\dim\Hom_{\SSS_n}(M^{(n-2,1,1)},\EE(\la))&=\dim\End_{\SSS_{n-2}}(D^\la\da_{S_{n-2}})\\
&=\dim\End_{\SSS_{n-2}}(e_i^2D^\la)+\dim\End_{\SSS_{n-2}}(e_{1-i}e_iD^\la)\\
&=12+\dim\End_{\SSS_{n-2}}(e_{1-i}e_iD^\la).
\end{align*}
By Lemmas \ref{l12} and \ref{l20}, if $\phi_i=0$, then $\dim\Hom_{\SSS_n}(S_1,\EE(\la))\leq 2$ and so in this case the lemma holds. So we may assume that $\phi_i>0$. 
If $e_{1-i}e_iD^\la$ is non-zero and not simple then by self-duality, $\dim\End_{\SSS_{n-2}}(e_{1-i}e_iD^\la)\geq 2$, and so in this case the lemma holds again by Lemma \ref{l12}. 
So we will complete the proof by establishing the following

\vspace{2mm}
\noindent
{\sf Claim.} If $\eps_i=3$, $\eps_{1-i}=0$ and $\phi_i>0$ then $e_{1-i}e_iD^\la$ is non-zero and not simple.

\vspace{2mm}

Notice that $h\geq 3$ since $\la$ has 3 normal nodes. Also, since the top removable node $A=(1,\la_1)$  is always normal, it has residue $i$. Below we will repeatedly use Lemma~\ref{Lemma39} without further notice. 

{\sf Case 1}. $\la_1\equiv\la_2\pmod{2}$. Then $\la_1\geq \la_2+2$ and $(2,\la_2)$ has residue $1-i$. Since $\la_1\geq \la_2+2$, the partition $\la_A$ is $2$-regular. Further the two top removable nodes of $\la_A$ are $(1,\la_1-1)$ and $(2,\la_2)$ which both have residue $1-i$ and then they are both normal in $\la_A$. Therefore $e_{1-i}e_iD^\la$ is non-zero and not simple.

{\sf Case 2}. $\la_1\not\equiv\la_2\equiv\la_3\pmod{2}$. We have that $B:=(2,\la_2)$ is $i$-normal for $\la$,  $\la_B$ is $2$-regular, $[e_iD^\la:D^{\la_B}]=2$, and $(3,\la_3)$ is normal of residue $1-i$ in $\la_B$. Hence $e_{1-i}e_iD^\la$ is non-zero and not simple.

{\sf Case 3}.  $\la_1\not\equiv\la_2\not\equiv\la_3\pmod{2}$. In this case $(1,\la_1)$, $(2,\la_2)$ and $C:=(3,\la_3)$ are exactly the $i$-normal nodes of $\la$, and $C$ is the $i$-good node of $\la$.

{\sf Case 3.1}.  $h=3$. As $n$ is even, we must have that $\la_1$ and $\la_3$ are odd and $\la_2$ is even. So $i=0$. In this case all addable nodes for $\la$ also have residue $1$, so $\phi_i=0$, which contradicts the assumptions of the claim. 

{\sf Case 3.2}. $h\geq 4$. Then $\la_4\equiv\la_3\pmod{2}$, since otherwise $(4,\la_4)$ would also have residue $i$ and then it would also be normal. Now, since $\la_1+\la_2+\la_3+\la_4$ is odd, we must have $h\geq 5$. If $\la_C$ has a normal node of residue $1-i$, then $e_{1-i}e_iD^\la$ is non-zero and not simple. So we may  assume that $\vare_{1-i}(\la_C)=0$. On the other hand, $\vare_i(\la_C)=2$. So $\la_C$ has exactly two normal nodes. 
For $1\leq k\leq h$ let $a_k$ be the residue of the removable node on the $k$-th row of $\la_C$ and let $1<b_1<\ldots<b_t$ be the set of indices $k$ for which $a_k=a_{k-1}$. Note that $b_1=2$ and $b_2=4$. 

{\sf Case 3.2.1}. $t=2$. In this case $((\la_C)_4,\ldots,(\la_C)_{h})=(\la_4,\ldots,\la_{h})$ is a JS-partition. So the only conormal nodes for $\la$ on row 4 or below are the two bottom addable nodes $(h,\la_{h}+1)$ and $(h+1,1)$. Since $\la_1+\la_2+\la_3+\la_4$ is odd and $n$ is even $\la_5+\ldots+\la_{h}$ is odd and then, since $(\la_5,\ldots,\la_{h})$ is also a JS-partition, $h$ and $\la_{h}$ are both odd. From
\[\la_1\not\equiv\la_2\not\equiv\la_3\equiv\la_4\equiv\ldots\equiv\la_{h}\pmod{2}\]
it follows that $\la_1$ is odd and so $i=0$. So the nodes $(h,\la_{h}+1)$ and $(h+1,1)$ both have residue $1$, as have the addable nodes for $\la$ in the first three rows. In particular $\phi_i=0$ giving a contradiction.

{\sf Case 3.2.2}.  $t\geq 3$. By Lemma~\ref{L6.1},  $a_{b_3}\not\equiv a_{b_2}= a_4\equiv 1-i\pmod{2}$, so  $a_{b_3}=i$. By definition of $a_j$, the sequence of residues of the removable nodes of $\la$ in its first $b_3$ rows is given by
\[(a_1,a_2,1-a_3,a_4,\ldots,a_{b_3})=(i,i,i,\overbrace{1-i,i,\ldots,1-i,i}^{1-i\text{ and }i\text{ alternate}},i).\]
By the definition of normal nodes, we then have that $(b_3,\la_{b_3})$ is normal in $\la$, contradicting the assumption that $\la$ has only 3 normal nodes.
\end{proof}

\begin{Lemma}\label{l21}
Let $p=2$, $n\geq 6$ be even and $\la\in\Par_2(n)$ have exactly two normal nodes. 
Then 
$m_2(\la)>m_1(\la)+1=3$ 
and
\[\dim\Hom_{\SSS_n}(M^{(n-2,1,1)},\EE(\la))>m_1(\la)+3.\]
\end{Lemma}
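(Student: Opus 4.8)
The plan is to translate both inequalities into statements about endomorphism algebras of restrictions of $D^\la$ and then analyse these with the refined branching functors $e_0,e_1$. Since $\SSS_{(n-2,1,1)}=\SSS_{n-2}$, Frobenius reciprocity gives $\dim\Hom_{\SSS_n}(M^{(n-2,1,1)},\EE(\la))=\dim\End_{\SSS_{n-2}}(D^\la{\da}_{\SSS_{n-2}})$, while $m_2(\la)=\dim\End_{\SSS_{n-2,2}}(D^\la{\da}_{\SSS_{n-2,2}})$ by the definition of $m_2$, and $m_1(\la)=\eps_0(\la)+\eps_1(\la)=2$ by Lemma~\ref{Lemma39}(v). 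Thus the two claims become $\dim\End_{\SSS_{n-2}}(D^\la{\da}_{\SSS_{n-2}})\geq 6$ and $\dim\End_{\SSS_{n-2,2}}(D^\la{\da}_{\SSS_{n-2,2}})\geq 4$; since $\dim\End_{\SSS_{n-2}}(V)\leq 2\dim\End_{\SSS_{n-2,2}}(V)$ for every $\SSS_n$-module $V$ by Lemma~\ref{L4.14}, the second follows from the first as soon as $\dim\End_{\SSS_{n-2}}(D^\la{\da}_{\SSS_{n-2}})\geq 7$, so it only remains to treat the borderline value $6$ by hand. Having exactly two normal nodes leaves two cases: (A) $\eps_0(\la)=\eps_1(\la)=1$, and (B) $\eps_i(\la)=2$, $\eps_{1-i}(\la)=0$ for one $i\in I$, where in case~(B) Lemma~\ref{L6.2} forces $\phi_i(\la)>0$ because $n$ is even.

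For $\dim\End_{\SSS_{n-2}}(D^\la{\da}_{\SSS_{n-2}})\geq 6$: in case~(A) this is exactly \cite[Lemma 4.9]{M}, whose bound (as quoted in the proof of Lemma~\ref{l13}) equals $2\delta_{\eps_0,\eps_1\geq 1}(\eps_0+\eps_1+\eps_0\eps_1)=6$. In case~(B), since $e_{1-i}D^\la=0$ we have $D^\la{\da}_{\SSS_{n-2}}=e_i^2D^\la\oplus e_{1-i}e_iD^\la$ in distinct blocks, with $\dim\End_{\SSS_{n-2}}(e_i^2D^\la)=4\binom{2}{2}=4$ by Lemma~\ref{Lemma39}(i),(iii); so it suffices to prove $\dim\End_{\SSS_{n-2}}(e_{1-i}e_iD^\la)\geq 2$. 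As this module is self-dual it is enough to show it is non-zero and not simple, and since $D^{\tilde e_i\la}$ occurs in $e_iD^\la=D^\la{\da}_{\SSS_{n-1}}$ with multiplicity $\eps_i(\la)=2$ (Lemma~\ref{Lemma39}(ii),(iii)), exactness of $e_{1-i}$ reduces this to $e_{1-i}D^{\tilde e_i\la}\neq 0$, i.e.\ $\eps_{1-i}(\tilde e_i\la)>0$. To obtain the latter I would combine Lemma~\ref{L2.8} with the identity $\phi_i(\tilde e_i\la)=\phi_i(\la)+1$: writing $B$ for the $i$-good node of $\la$, which by Lemma~\ref{L6.1} lies in row $b_1$, one has $\eps_i(\tilde e_i\la)=1$, so Lemma~\ref{L2.8} gives $\phi_{1-i}(\tilde e_i\la)=\eps_{1-i}(\tilde e_i\la)+1-\phi_i(\la)$; when $\phi_i(\la)\geq 2$ this already forces $\eps_{1-i}(\tilde e_i\la)\geq 1$, and the remaining possibility $\phi_i(\la)=1$ is settled by inspecting the $(1-i)$-signature of $\la_B$ directly via Lemma~\ref{L6.1} (using that $2$-regularity of $\tilde e_i\la=\la_B$ forces $\la_{b_1+1}\leq\la_{b_1}-2$).

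For the borderline value $\dim\End_{\SSS_{n-2}}(D^\la{\da}_{\SSS_{n-2}})=6$ one still has to prove $m_2(\la)=\dim\End_{\SSS_{n-2,2}}(D^\la{\da}_{\SSS_{n-2,2}})\geq 4$. Here I would use the mechanism behind Lemma~\ref{L4.14}: with $A:=\End_{\SSS_{n-2}}(D^\la{\da}_{\SSS_{n-2}})$, the cyclic group $\SSS_{n-2,2}/\SSS_{n-2}=\langle s\rangle$, $s=(n-1,n)$, acts on $A$ by conjugation (note $s$ centralises $\SSS_{n-2}$) and $m_2(\la)=\dim A^{\langle s\rangle}$. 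Since $p=2$, as an $\F\langle s\rangle$-module $A$ is a direct sum of trivial and free modules, and $\dim A^{\langle s\rangle}<4$ would force at least three free summands; to exclude this I would bring in the explicit structure of $A$ obtained above — in case~(A), $D^\la{\da}_{\SSS_{n-2}}\cong e_1D^{\tilde e_0\la}\oplus e_0D^{\tilde e_1\la}$ with each summand indecomposable, self-dual and with a truncated-polynomial endomorphism ring (Lemma~\ref{l8}, \cite[Theorem 11.2.7]{KBook}), the value $6$ pinning down all the $\dim\End$'s and cross-$\Hom$'s; in case~(B), $e_{1-i}e_iD^\la$ is self-dual of composition length $2$ with simple socle — together with the observation that the indecomposable summands of $D^\la{\da}_{\SSS_{n-2}}$ are pairwise non-isomorphic, so $s$ fixes them up to isomorphism and hence acts trivially on $A/\rad A$. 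One then identifies enough $s$-fixed vectors in $A$ (the identity, the canonical inclusion/projection maps between the summands, and the nilpotent generators of the truncated-polynomial subalgebras, all of which are, or can be adjusted to be, $s$-fixed) to push $\dim A^{\langle s\rangle}$ up to $4$.

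I expect the main obstacle to be precisely these two structural steps: the $(1-i)$-signature analysis establishing $\eps_{1-i}(\tilde e_i\la)>0$ in the residual subcase of~(B) — this is where the hypotheses "$n$ even'' and "$\la$ not JS'' (two normal nodes) are genuinely used — and the verification that $A$ is not a free $\F\langle s\rangle$-module, which is unavoidable because a bare application of Lemma~\ref{L4.14} to $\dim\End_{\SSS_{n-2}}(D^\la{\da}_{\SSS_{n-2}})=6$ only yields $m_2(\la)\geq 3$ and a genuine hold on the $\langle s\rangle$-module structure of $A$ is needed to get the extra unit.
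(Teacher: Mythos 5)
Your reduction of both inequalities to $\dim\End_{\SSS_{n-2}}(D^\la\da_{\SSS_{n-2}})\geq 6$ and $\dim\End_{\SSS_{n-2,2}}(D^\la\da_{\SSS_{n-2,2}})\geq 4$ is correct, and your treatment of the first inequality matches the paper: in the case $\eps_i(\la)=2$, $\eps_{1-i}(\la)=0$ the paper uses exactly your block decomposition $e_i^2D^\la\oplus e_{1-i}e_iD^\la$ (citing \cite[Lemma 6.4]{M} for the fact that $e_{1-i}e_iD^\la$ is non-zero and non-simple, which you sketch a reproof of), and in the case $\eps_0(\la)=\eps_1(\la)=1$ the bound $6$ is obtained from the decomposition $e_0D^{\tilde e_1\la}\oplus e_1D^{\tilde e_0\la}$ together with \cite[Lemmas 4.4, 4.8]{M} (equivalently, the bound of \cite[Lemma 4.9]{M} that you invoke). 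Also, as you note, Lemma~\ref{L4.14} disposes of $m_2(\la)\geq 4$ whenever $\dim\End_{\SSS_{n-2}}(D^\la\da_{\SSS_{n-2}})\geq 7$, and the block argument disposes of it in the two-normal-nodes-of-equal-residue case.

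The genuine gap is the borderline case $\eps_0(\la)=\eps_1(\la)=1$ with $\dim\End_{\SSS_{n-2}}(D^\la\da_{\SSS_{n-2}})=6$, where you must still produce a fourth element of $A^{\langle s\rangle}=\End_{\SSS_{n-2,2}}(D^\la\da_{\SSS_{n-2,2}})$. The soft structural facts you list only give $\dim A^{\langle s\rangle}\geq 3$: conjugation by $s$ is an algebra automorphism that is trivial on $A/\rad A$ and on $\rad^2A$, but $(s-1)A$ can a priori still be $3$-dimensional, and your key assertion --- that the identity, the inclusion/projection maps between the two indecomposable summands, and the nilpotent generator of the truncated polynomial subalgebra ``are, or can be adjusted to be, $s$-fixed'' --- is exactly what needs proof; taken at face value for all of these elements it would give $m_2(\la)=6$, which is not claimed anywhere and there is no reason to believe. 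The paper closes this gap by a substantially different and non-trivial argument: writing $\mu=\tilde e_0\tilde e_1\la=\tilde e_1\tilde e_0\la$, it splits into the subcase $\soc(D^\la\da_{\SSS_{n-2,2}})\cong(D^\mu\boxtimes\bone_{\SSS_2})^{\oplus 2}$ (where self-duality gives $m_2(\la)\geq 4$ at once) and the subcase of simple socle, where it takes the length-two self-dual quotient $V$ of $e_{1-i}D^{\tilde e_i\la}$ from Lemma~\ref{l8}, computes $\dim\Hom_{\SSS_{n-2,2}}(D^\la\da_{\SSS_{n-2,2}},V\boxtimes\F\SSS_2)\geq 3$ and $\dim\Hom_{\SSS_{n-2,2}}(D^\la\da_{\SSS_{n-2,2}},D^\mu\boxtimes\F\SSS_2)=2$, deduces that both $V\boxtimes\bone_{\SSS_2}$ and $D^\mu\boxtimes\F\SSS_2$ are simultaneously submodules and quotients of $D^\la\da_{\SSS_{n-2,2}}$, and exhibits four explicit linearly independent endomorphisms with nested, distinct images. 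Without an argument of this kind (or some other concrete source of the fourth fixed vector), your proof establishes only $m_2(\la)\geq 3$ in this case, which is not enough.
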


\begin{proof}
By Lemma~\ref{Lemma39} and the assumption that $\la$ has exactly two normal nodes, we have $m_1(\la)=2$, hence the equalities in the lemma. 

{\sf Case 1.} $\vare_i(\la)=2$ and $\vare_{1-i}(\la)=0$ for some $i\in I$. Then by Lemmas~\ref{Lemma45} and \ref{Lemma39}, we have 
\[D^\la\da_{S_{n-2}}\cong e_i^2D^\la\oplus e_{1-i}e_iD^\la\]
and $e_i^2D^\la$ and $e_{1-i}e_iD^\la$ are in different blocks of $\SSS_{n-2}$. Hence we can write 
\[D^\la\da_{S_{n-2,2}}\cong E_{i,i}\oplus E_{1-i,i},\]
where $E_{i,i}\da_{S_{n-2}}\cong e_i^2D^\la$, $E_{1-i,i}\da_{S_{n-2}}\cong e_{1-i}e_iD^\la$, and $E_{i,i}$ and $E_{1-i,i}$ are in different blocks of $\SSS_{n-2,2}$. We deduce that $E_{i,i}$ and $E_{1-i,i}$ are self-dual. 

By Lemma~\ref{Lemma39}, we have $e_i^2D^\la\cong D^{\tilde{e}_i^2 \la}\oplus D^{\tilde{e}_i^2\la}$ and by  \cite[Lemma 6.4]{M} we have that $e_{1-i}e_iD^\la$ is non-zero and not simple. So $E_{i,i}$ and $E_{1-i,i}$ are both non-zero and not simple, since all simple $\F\SSS_2$-modules are 1-dimensional. 
Using self-duality of the modules involved, we now get
\begin{align*}
m_2(\la)&=\dim\End_{\SSS_{n-2,2}}(D^\la\da_{\SSS_{n-2,2}})\\
&=\dim\End_{\SSS_{n-2,2}}(E_{i,i})+\dim\End_{\SSS_{n-2,2}}(E_{1-i,i})\\
&\geq 2+2
\end{align*}
and
\begin{align*}
\dim\Hom_{\SSS_n}(M^{(n-2,1,1)},\EE(\la))&=\dim\End_{\SSS_{n-2}}(D^\la\da_{\SSS_{n-2}})\\
&=\dim\End_{\SSS_{n-2}}(e_i^2D^\la)+\dim\End_{\SSS_{n-2}}(e_{1-i}e_iD^\la)\\
&\geq 4+2.
\end{align*}

{\sf Case 2.} $\vare_0(\la)=\vare_1(\la)=1$. Then 
by Lemmas~\ref{Lemma45} and \ref{Lemma39}, we have 
\[D^\la\da_{S_{n-2}}\cong e_0e_1D^\la\oplus e_1e_0D^\la\cong 
e_0D^{\tilde e_1\la}\oplus e_1D^{\tilde e_0\la}.\] 
So we have  
\begin{align*}
\Hom_{\SSS_n}(M^{(n-2,1,1)},\EE(\la))
\cong\ 
&\End_{\SSS_{n-2}}(D^\la\da_{\SSS_{n-2}})
\\
\cong\ 
&\End_{\SSS_{n-2}}(e_0D^{\tilde e_1\la})\oplus\End_{\SSS_{n-2}}(e_1D^{\tilde e_0\la})
\\
&\oplus \Hom_{\SSS_{n-2}}(e_1D^{\tilde e_0\la},e_0D^{\tilde e_1\la})
\\
&\oplus \Hom_{\SSS_{n-2}}(e_0D^{\tilde e_1\la},e_1D^{\tilde e_0\la}).
\end{align*}
By \cite[Lemma 4.8]{M}, the last two Hom-spaces are non-zero, while by Lemma~\ref{Lemma39}, we have
$$
 \dim\End_{\SSS_{n-2}}(e_0D^{\tilde e_1\la})=\eps_0(\tilde e_1\la)\quad \text{and}\quad 
\dim\End_{\SSS_{n-2}}(e_1D^{\tilde e_0\la})=\eps_1(\tilde e_0\la).
$$
Moreover, by \cite[Lemma 4.4]{M}, $\eps_0(\tilde e_1\la)+\eps_1(\tilde e_0\la)\geq 4$. 
Therefore 
$$
\dim\Hom_{\SSS_n}(M^{(n-2,1,1)},\EE(\la))\geq \eps_0(\tilde e_1\la)+\eps_1(\tilde e_0\la)+2\geq  6,
$$
as required. 

By Lemma~\ref{L4.14}, we further have 
\[2m_2(\la)\geq \dim\Hom_{\SSS_n}(M^{(n-2,1,1)},\EE(\la))\geq \eps_0(\tilde e_1\la)+\eps_1(\tilde e_0\la)+2.\]
So if $\eps_0(\tilde e_1\la)+\eps_1(\tilde e_0\la)>4$, the inequality $m_2(\la)>3$ also follows. Thus we may assume that $\eps_0(\tilde e_1\la)+\eps_1(\tilde e_0\la)=4$.

Let $i:=\res(1,\la_1)$. Then $(1,\la_1)$ is the only $i$-normal node of $\la$. By \cite[Lemma 4.4]{M}, we have 
$
\eps_{1-i}(\tilde e_i\la)=3.
$
So $\eps_i(\tilde e_{1-i}\la)=1$. 
Therefore $e_iD^{\tilde e_{1-i}\la}\cong D^{\tilde e_i\tilde e_{1-i}\la}$, thanks to Lemma~\ref{Lemma39}. On the other hand, as we have pointed out above,
$$
\Hom_{\SSS_{n-2}}(e_iD^{\tilde e_{1-i}\la},e_{1-i}D^{\tilde e_{i}\la})\neq 0,
$$
hence $\tilde e_i\tilde e_{1-i}\la=\tilde e_{1-i}\tilde e_{i}\la$ again by Lemma~\ref{Lemma39}. Set $\mu:=\tilde e_i\tilde e_{1-i}\la$.

Notice that
\[(\soc(D^\la\da_{\SSS_{n-2,2}}))\da_{\SSS_{n-2}}\subseteq\soc(D^\la\da_{\SSS_{n-2}})\cong \soc(e_iD^{\tilde e_{1-i}\la}\oplus e_{1-i}D^{\tilde e_{i}\la})\cong D^\mu\oplus D^\mu.\]
Hence either $\soc(D^\la\da_{\SSS_{n-2,2}})\cong D^\mu\boxtimes \bone_{\SSS_2}$ or $\soc(D^\la\da_{\SSS_{n-2,2}})\cong (D^\mu\boxtimes \bone_{\SSS_2})^{\oplus 2}$. 
In the latter case, we have by self-duality that 
$$
m_2(\la)=\dim\End_{\SSS_{n-2,2}}(D^\la\da_{\SSS_{n-2,2}})\geq 4,
$$
as desired. So we may assume that $\soc(D^\la\da_{\SSS_{n-2,2}})\cong D^\mu\boxtimes \bone_{\SSS_2}$. 

By Lemma \ref{l8}, $e_{1-i}D^{\tilde{e}_i \la}$ has a self-dual quotient $V$ with $[V:D^{\mu}]=2$ and $\soc V\cong \head V\cong D^\mu$. In particular, $\dim\End_{\SSS_{n-2}}(V)=2$. 
Writing $\F\SSS_2$ for the regular module over $\SSS_2$, we have 
\begin{align*}
\Hom_{\SSS_{n-2,2}}(D^\la\da_{S_{n-2,2}},V\boxtimes \F\SSS_2)
&\cong \Hom_{\SSS_{n-2,2}}(D^\la\da_{S_{n-2,2}},V\ua^{\SSS_{n-2,2}})
\\
&\cong \Hom_{\SSS_{n-2}}(D^\la\da_{S_{n-2}},V)\\
&\cong \Hom_{\SSS_{n-2}}(e_iD^{\tilde e_{1-i}\la},V)\oplus\Hom_{\SSS_{n-2}}(e_{1-i}D^{\tilde e_{i}\la},V).
\end{align*}
Since $e_iD^{\tilde e_{1-i}\la}\cong D^\mu$, we have 
$
\dim\Hom_{\SSS_{n-2}}(e_iD^{\tilde e_{1-i}\la},V)=1.
$
Since $V$ is a quotient of $e_{1-i}D^{\tilde e_{i}\la}$ and $\dim\End_{\SSS_{n-2}}(V)=2$, we have $\dim \Hom_{\SSS_{n-2}}(e_{1-i}D^{\tilde e_{i}\la},V)\geq 2$. So 
$$\dim\Hom_{\SSS_{n-2,2}}(D^\la\da_{S_{n-2,2}},V\boxtimes \F\SSS_2)\geq 3.$$ 
Since 
$V\boxtimes \F\SSS_2 \sim (V\boxtimes \bone_{\SSS_2})|(V\boxtimes \bone_{\SSS_2})
$
 it follows that
\begin{equation}\label{E240418}
\dim\Hom_{\SSS_{n-2,2}}(D^\la\da_{S_{n-2,2}},V\boxtimes \bone_{\SSS_2})\geq 2.
\end{equation}
A similar argument with $D^\mu$ in place of $V$ shows that  
\begin{equation}\label{E240418_1}
\dim\Hom_{\SSS_{n-2,2}}(D^\la\da_{S_{n-2,2}},D^{\mu}\boxtimes \F\SSS_2)=2.
\end{equation}

Since $\head(D^\la\da_{\SSS_{n-2,2}})\cong D^\mu\boxtimes \bone_{\SSS_2}$,  $\head(V\boxtimes \bone_{\SSS_2})\cong D^\mu\boxtimes \bone_{\SSS_2}$ and $[V\boxtimes \bone_{\SSS_2}: D^\mu\boxtimes \bone_{\SSS_2}]=2$, we conclude from (\ref{E240418}) that $V\boxtimes \bone_{\SSS_2}$ is a quotient of $D^\la\da_{\SSS_{n-2,2}}$. By self-duality, $V\boxtimes \bone_{\SSS_2}$ is also a submodule of $D^\la\da_{\SSS_{n-2,2}}$. A similar argument using (\ref{E240418_1}) instead of (\ref{E240418}), shows that $D^{\mu}\boxtimes \F\SSS_2$ is a quotient and a submodule of $D^\la\da_{\SSS_{n-2,2}}$. Therefore there exist endomorphisms
$\psi_2,\psi_3\in \End_{\SSS_{n-2,2}}(D^\la\da_{\SSS_{n-2,2}})$ with $\im \psi_2\cong D^{\mu}\boxtimes \F\SSS_2$ and $\im\psi_3\cong V\boxtimes \bone_{\SSS_2}$. Let us also define $\psi_4:=\id\in \End_{\SSS_{n-2,2}}(D^\la\da_{\SSS_{n-2,2}})$ and $\psi_1\in \End_{\SSS_{n-2,2}}(D^\la\da_{\SSS_{n-2,2}})$ to be a homomorphism with $\im \psi_1\cong D^\mu\boxtimes \bone_{\SSS_2}$. Note that $D^{\mu}\boxtimes \F\SSS_2\not\cong V\boxtimes \bone_{\SSS_2}$, so $\im\psi_2\neq \im\psi_3$, $\im\psi_1\subseteq \im\psi_2\cap  \im\psi_3$, and $\im\psi_2+\im\psi_3\subsetneq \im\psi_4$. These facts easily imply that $\psi_1,\psi_2,\psi_3,\psi_4$ are linearly independent, completing the proof of $m_2(\la)\geq 4$. 
\end{proof}

\begin{Lemma}\label{l15}
Let $p=2$, $n\geq 8$ with $n\equiv 0\pmod{4}$ and $\la\in\Par_2(n)$ with $\la\not\in\{(n),\be_n\}$. If $\vare_0(\la)+\vare_1(\la)=2$ assume further that $\dim\Hom_{\SSS_n}(S_1,\EE(\la))<2$. Then $D_2\subseteq \EE(\la)$.
\end{Lemma}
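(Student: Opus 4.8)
The plan is to reduce the assertion to a single inequality between the dimensions $m_1(\la)=\dim\Hom_{\SSS_n}(M_1,\EE(\la))$ and $m_2(\la)=\dim\Hom_{\SSS_n}(M_2,\EE(\la))$ from \eqref{EMtoE}, and then verify that inequality by cases on the number of normal nodes of $\la$. Since $2<n/2$, the module $D_2$ is defined and irreducible, so $D_2\subseteq\EE(\la)$ is equivalent to $\Hom_{\SSS_n}(D_2,\EE(\la))\neq 0$. First I would run the filtration $M_2=Y_2\sim S_1^*|D_2|S_1$ of Lemma~\ref{l1}(i), together with the short exact sequence $0\to\bone_{\SSS_n}\to M_1\to S_1^*\to 0$ coming from the structure of $M_1$ in Lemma~\ref{l1} (recall $n$ is even), through the left-exact functor $\Hom_{\SSS_n}(-,\EE(\la))$; this yields
\[
m_2(\la)\ \leq\ \dim\Hom_{\SSS_n}(S_1,\EE(\la))+\dim\Hom_{\SSS_n}(D_2,\EE(\la))+m_1(\la).
\]
Consequently it is enough to prove the strict inequality $m_2(\la)>m_1(\la)+\dim\Hom_{\SSS_n}(S_1,\EE(\la))$, and I will do this according to the value of $m_1(\la)=\vare_0(\la)+\vare_1(\la)$, which by Lemma~\ref{Lemma39}(v) is the number of normal nodes of $\la$.

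If $\la$ has at least three normal nodes, I would combine Lemma~\ref{l13} with Lemma~\ref{L4.14}: the former gives (the quantities being integers) $\dim\Hom_{\SSS_n}(M^{(n-2,1,1)},\EE(\la))\geq 2m_1(\la)+2\dim\Hom_{\SSS_n}(S_1,\EE(\la))+2$, while $\dim\Hom_{\SSS_n}(M^{(n-2,1,1)},\EE(\la))=\dim\End_{\SSS_{n-2}}(D^\la\da_{\SSS_{n-2}})\leq 2m_2(\la)$ by the latter, whence $m_2(\la)\geq m_1(\la)+\dim\Hom_{\SSS_n}(S_1,\EE(\la))+1$. If $\la$ has exactly two normal nodes, then $m_1(\la)=2$ and Lemma~\ref{l21} gives $m_2(\la)>m_1(\la)+1=3$, while the additional hypothesis of the statement gives $\dim\Hom_{\SSS_n}(S_1,\EE(\la))\leq 1$; so the desired inequality holds in both of these cases.

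The JS case, $\la$ with exactly one normal node and $m_1(\la)=1$, is the one where Lemmas~\ref{l13} and~\ref{l21} are unavailable, and I expect it to be the main obstacle. Here I would first note that $m_2(\la)\geq 2$ for purely formal reasons: $D^\la$ is faithful and $\cha\F=2$, so the transposition $\tau=(n-1,n)\in\SSS_{n-2,2}$ acts on $D^\la$ as an operator with $\tau\neq\id$ and $(\tau-\id)^2=0$, giving a non-scalar element $\tau-\id$ of $\End_{\SSS_{n-2,2}}(D^\la\da_{\SSS_{n-2,2}})$. It then remains to prove $\dim\Hom_{\SSS_n}(S_1,\EE(\la))=0$ for JS $\la$ with $\la\neq(n),\be_n$. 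Assuming the contrary, Lemma~\ref{l12} (applicable as $p\mid n$) forces equality $\dim\Hom_{\SSS_n}(S_1,\EE(\la))=\vare_0(\la)+\vare_1(\la)=1$ and hence $D^\la\subseteq(f_iD^{\tilde e_i\la})/D^\la$, where $i$ is the residue of the unique normal node $B=(1,\la_1)$ of $\la$ (it is the top removable node by Lemma~\ref{Lemma55}); then Lemma~\ref{l20} gives $\phi_i(\la)>0$ and $(\la_B)^C$ $2$-singular for the $i$-cogood node $C$, and Lemma~\ref{l19} (with $p=2$) forces $C=(2,\la_1-1)$ and $\la_2=\la_1-2$. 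Since $\la$ is JS all its parts share a parity, and since $\la\neq\be_n$ we must have $h(\la)\geq 3$; then $(3,\la_3)$ is a removable node of $\la$ of residue $\equiv\la_1-3\equiv i\pmod 2$, whereas the only rim positions strictly between $(3,\la_3)$ and $(2,\la_1-1)$, namely $(3,\la_3+1)$ and $(2,\la_1-2)$, have residue $\equiv\la_1\not\equiv i\pmod 2$. Hence in the $i$-signature of $\la$ the minus at $(3,\la_3)$ is immediately followed by the plus at $(2,\la_1-1)$, so this $-+$ pair is erased and $(2,\la_1-1)$ is not $i$-conormal — contradicting $C=(2,\la_1-1)$. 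Thus $\dim\Hom_{\SSS_n}(S_1,\EE(\la))=0$ and $m_2(\la)\geq 2>1$, as wanted.

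To summarise: the reduction in the first paragraph turns everything into the inequality $m_2(\la)>m_1(\la)+\dim\Hom_{\SSS_n}(S_1,\EE(\la))$, which the cited branching estimates dispatch when $\vare_0(\la)+\vare_1(\la)\geq 2$; the hard part is the JS case, for which one must supply an independent lower bound $m_2(\la)\geq 2$ (the faithfulness/characteristic-$2$ trick) together with a signature computation — in the spirit of Lemmas~\ref{L6.1} and~\ref{l22} — ruling out the configuration $C=(2,\la_1-1)$ once $\la\neq\be_n$.
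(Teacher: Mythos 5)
Your proof is correct. For partitions with at least two normal nodes it is essentially the paper's own argument: the same reduction via the filtration $M_2=Y_2\sim S_1^*|D_2|S_1$ of Lemma~\ref{l1}(i) together with the bound $\dim\Hom_{\SSS_n}(S_1^*,\EE(\la))\leq m_1(\la)$, then Lemmas~\ref{l13} and \ref{L4.14} when $\vare_0(\la)+\vare_1(\la)\geq 3$, and Lemma~\ref{l21} plus the extra hypothesis when $\vare_0(\la)+\vare_1(\la)=2$. Where you genuinely diverge is the JS case: the paper disposes of it in one line by citing \cite[Lemma 7.5]{M}, whereas you give a self-contained argument. Your substitute has two ingredients, both of which check out: the observation that in characteristic $2$ the operator $(n-1,n)-\id$ is a non-zero nilpotent (hence non-scalar) $\SSS_{n-2,2}$-endomorphism of the faithful module $D^\la$, so $m_2(\la)\geq 2$; and the chain Lemma~\ref{l12} $\Rightarrow$ Lemma~\ref{l20} $\Rightarrow$ Lemma~\ref{l19} forcing the cogood node to be $(2,\la_1-1)$ with $\la_2=\la_1-2$, which (since $\la\neq\be_n$ forces $h(\la)\geq 3$, and since the intermediate rim nodes $(3,\la_3+1)$ and $(2,\la_2)$ have residue $1-i$ because all parts of a $2$-regular JS partition share a parity) is erased against the $i$-removable node $(3,\la_3)$ in the reduced signature and so cannot be conormal. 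This buys independence from the external reference at the cost of roughly a page of extra combinatorics; it also incidentally reproves the relevant special case of \cite[Lemma 7.5]{M} by the same $S_1$/branching technology the paper uses elsewhere (compare Lemmas~\ref{l19} and \ref{l22}).
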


\begin{proof}
If $\la$ is a JS partition this holds by \cite[Lemma 7.5]{M}. So we may assume that $\eps_0(\la)+\eps_1(\la)\geq 2$.

Since $S_1^*$ is a quotient of $M_1$, we have that 
$$\dim\Hom_{\SSS_n}(S_1^*,\EE(\la))\leq \dim\Hom_{\SSS_n}(M_1,\EE(\la))= m_1(\la).$$
If $\vare_0(\la)+\vare_1(\la)\geq 3$ then, by Lemmas~\ref{L4.14} and \ref{l13}, we have 
\begin{align*}
m_2(\la)&=\dim\End_{\SSS_{n-2,2}}(D^\la\da_{\SSS_{n-2,2}})
\\&\geq 
(\dim\End_{\SSS_{n-2}}(D^\la\da_{\SSS_{n-2}}))/2
\\&=
(\dim\Hom_{\SSS_{n}}(M^{(n-2,1,1)},\EE(\la)))/2
\\&
> m_1(\la)+\dim\Hom_{\SSS_n}(S_1,\EE(\la))
\\
&\geq \dim\Hom_{\SSS_n}(S_1^*,\EE(\la))+\dim\Hom_{\SSS_n}(S_1,\EE(\la)).
\end{align*}
On the other hand, if $\vare_0(\la)+\vare_1(\la)=2$ and $\dim\Hom(S_1,\EE(\la))<2$, then by Lemma \ref{l21}, we get
\begin{align*}
m_2(\la)>m_1(\la)+1\geq \dim\Hom_{\SSS_n}(S_1^*,\EE(\la))+\dim\Hom_{\SSS_n}(S_1,\EE(\la)).
\end{align*}
By Lemma \ref{l10}, we have $M_2\sim S_1^*|D_2|S_1$, so the inequality 
$$
\dim\Hom_{\SSS_n}(M_2,\EE(\la))=m_2(\la)>\dim\Hom_{\SSS_n}(S_1^*,\EE(\la))+\dim\Hom_{\SSS_n}(S_1,\EE(\la))
$$
implies that $\dim\Hom_{\SSS_n}(D_2,\EE(\la))>0$, which yields  the lemma. 
\end{proof}

\begin{Lemma}\label{l16}
Let $p=2$, $n\geq 6$ with $n\equiv 2\pmod{4}$ and $\la\in\Par_2(n)$. If $\eps_0(\la)+\eps_1(\la)\geq 3$  then $S_2^*\subseteq\EE(\la)$.
\end{Lemma}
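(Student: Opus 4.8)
The plan is to run the same kind of argument as in the proof of Lemma~\ref{l15}, the only structural difference being that for $n\equiv 2\pmod 4$ the constituent $D_2$ is buried inside $S_2^*$ rather than exposed in a transparent layer of $M_2$, so what must be produced is a copy of the whole module $S_2^*$. By Lemma~\ref{l10} we have $M_2\cong D_0\oplus Y_2$ with $Y_2$ uniserial, $Y_2\cong D_1|D_0|D_2|D_0|D_1$; hence $Y_2$ has a submodule isomorphic to $S_1^*=D_1|D_0$ with $Y_2/S_1^*\cong S_2^*=D_2|D_0|D_1$, and in particular $M_2\sim (D_0\oplus S_1^*)|S_2^*$. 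Since $S_1\cong D_0|D_1$ by Lemma~\ref{l10}, we have $S_2^*/\soc(S_2^*)\cong S_1$, and because $S_2^*$ is uniserial with socle $D_2$, a nonzero homomorphism $S_2^*\to\EE(\la)$ is injective if and only if it does not factor through $S_2^*/\soc(S_2^*)$. Therefore
\[
S_2^*\subseteq\EE(\la)\quad\Longleftrightarrow\quad \dim\Hom_{\SSS_n}(S_2^*,\EE(\la))>\dim\Hom_{\SSS_n}(S_1,\EE(\la)).
\]
Applying $\Hom_{\SSS_n}(-,\EE(\la))$ to $0\to D_0\oplus S_1^*\to M_2\to S_2^*\to 0$ and using $\dim\Hom_{\SSS_n}(D_0,\EE(\la))=1$ (Schur), it suffices to prove the numerical inequality
\[
m_2(\la)\ >\ 1+\dim\Hom_{\SSS_n}(S_1^*,\EE(\la))+\dim\Hom_{\SSS_n}(S_1,\EE(\la)).\qquad(\dagger)
\]

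To obtain $(\dagger)$ I would bound $m_2(\la)$ below exactly as for Lemma~\ref{l15}. By Lemma~\ref{L4.14}, $2m_2(\la)\ge\dim\End_{\SSS_{n-2}}(D^\la\da_{\SSS_{n-2}})=\dim\Hom_{\SSS_n}(M^{(n-2,1,1)},\EE(\la))$; since $\eps_0(\la)+\eps_1(\la)\ge 3$ means $\la$ has at least three normal nodes, Lemma~\ref{l13} gives $\dim\Hom_{\SSS_n}(M^{(n-2,1,1)},\EE(\la))>2m_1(\la)+2\dim\Hom_{\SSS_n}(S_1,\EE(\la))+1$, where $m_1(\la)=\eps_0(\la)+\eps_1(\la)$ by Lemma~\ref{Lemma39}(v). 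As all these quantities are integers this forces $m_2(\la)\ge m_1(\la)+\dim\Hom_{\SSS_n}(S_1,\EE(\la))+1$, and if $\dim\End_{\SSS_{n-2}}(D^\la\da_{\SSS_{n-2}})$ happens to be odd then it exceeds the (odd) right-hand side of Lemma~\ref{l13} by at least $2$, which after dividing by $2$ upgrades the bound to $m_2(\la)\ge m_1(\la)+\dim\Hom_{\SSS_n}(S_1,\EE(\la))+2$. Combining with $\dim\Hom_{\SSS_n}(S_1^*,\EE(\la))\le\dim\Hom_{\SSS_n}(M_1,\EE(\la))=m_1(\la)$ (valid since $S_1^*$ is a quotient of $M_1$), the inequality $(\dagger)$ follows, except possibly in the \emph{boundary case} where $\dim\End_{\SSS_{n-2}}(D^\la\da_{\SSS_{n-2}})$ is even, $\dim\Hom_{\SSS_n}(S_1^*,\EE(\la))=m_1(\la)$, and $m_2(\la)=m_1(\la)+\dim\Hom_{\SSS_n}(S_1,\EE(\la))+1$.

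The main obstacle is to eliminate this boundary case, in which all the inequalities above are tight (in particular $\dim\End_{\SSS_{n-2}}(D^\la\da_{\SSS_{n-2}})=2m_2(\la)$, so Lemma~\ref{L4.14} and Lemma~\ref{l13} are both exactly tight for $\la$). The equality $\dim\Hom_{\SSS_n}(S_1^*,\EE(\la))=m_1(\la)$ is equivalent to the vanishing of the transfer map $\End_{\SSS_{n-1}}(D^\la\da_{\SSS_{n-1}})\to\End_{\SSS_n}(D^\la)=\F$, equivalently to $D^\la$ not being a direct summand of $f_0e_0D^\la\oplus f_1e_1D^\la$. The plan is to feed these strong constraints into the machinery already in place: since $\eps_0(\la)+\eps_1(\la)\ge 2$, $\la$ is not JS, so $S_1^*\subseteq\EE(\la)$ by Lemma~\ref{l7}, and the self-duality of $Y_2$ together with its submodule $S_2\cong D_1|D_0|D_2$ forces the connecting map $\Hom_{\SSS_n}(S_1^*,\EE(\la))\to\Ext^1_{\SSS_n}(S_2^*,\EE(\la))$ coming from $0\to S_1^*\to Y_2\to S_2^*\to 0$ to be nonzero unless $Y_2$ itself embeds in $\EE(\la)$; in the latter situation one plays off the resulting extra endomorphisms against the tightness of the bound. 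Alternatively, Lemmas~\ref{l12} and~\ref{l20} show that an equality $\dim\Hom_{\SSS_n}(S_1,\EE(\la))=m_1(\la)$ would force $D^\la\subseteq (f_iD^{\tilde e_i\la})/D^\la$ for some $i$ with $\eps_i(\la),\phi_i(\la)>0$ and $(\la_B)^C$ $2$-singular ($B,C$ the $i$-good and $i$-cogood nodes), whence, as in the proof of Lemma~\ref{L230418} via Lemma~\ref{l24}, $f_ie_iD^\la$ carries more copies of $D^\la$ than the identity $m_2(\la)=m_1(\la)+\dim\Hom_{\SSS_n}(S_1,\EE(\la))+1$ permits. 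I expect squeezing out this final unit of slack in the boundary case — rather than any of the exact-sequence bookkeeping — to be the genuinely delicate point of the proof.
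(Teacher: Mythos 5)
Your reduction of the statement to the inequality $(\dagger)$, namely $m_2(\la)>1+\dim\Hom_{\SSS_n}(S_1^*,\EE(\la))+\dim\Hom_{\SSS_n}(S_1,\EE(\la))$, is correct, but your proof of $(\dagger)$ has a genuine gap that you yourself identify and do not close. Routing the lower bound for $m_2(\la)$ through Lemma~\ref{L4.14} costs a factor of $2$: from Lemma~\ref{l13} and integrality you only get $m_2(\la)\geq m_1(\la)+\dim\Hom_{\SSS_n}(S_1,\EE(\la))+1$, which combined with $\dim\Hom_{\SSS_n}(S_1^*,\EE(\la))\leq m_1(\la)$ yields $(\dagger)$ with $\geq$ in place of $>$. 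The boundary case (where $\dim\Hom_{\SSS_n}(S_1^*,\EE(\la))=m_1(\la)$ and $m_2(\la)$ attains its minimum) is exactly where the conclusion could fail, and neither of your two suggested fixes is a proof: the first is a heuristic about a connecting map, and the second invokes Lemmas~\ref{l12} and~\ref{l20}, which control $\dim\Hom_{\SSS_n}(S_1,\EE(\la))$, whereas the tight quantity in your boundary case is $\dim\Hom_{\SSS_n}(S_1^*,\EE(\la))$ --- these are different, and indeed the proof of Lemma~\ref{l7} shows that $\dim\Hom_{\SSS_n}(S_1^*,\EE(\la))=m_1(\la)$ genuinely occurs (at least when $\la$ has exactly two normal nodes), so the equality cannot simply be ruled out.

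The paper avoids this entirely by never passing through $m_2(\la)$: it uses $M^{(n-2,1,1)}$ rather than $M_2$. By Lemma~\ref{l11}, $M^{(n-2,1,1)}\cong M_1\oplus Y^{(n-2,1,1)}$ and $Y^{(n-2,1,1)}$ has a filtration $S_1^*|S_2^*|D_0|S_2^*$ containing \emph{two} copies of $S_2^*$; if $S_2^*\not\subseteq\EE(\la)$ then $\dim\Hom_{\SSS_n}(S_2^*,\EE(\la))=\dim\Hom_{\SSS_n}(S_1,\EE(\la))$, so $\dim\Hom_{\SSS_n}(M^{(n-2,1,1)},\EE(\la))\leq m_1(\la)+\dim\Hom_{\SSS_n}(S_1^*,\EE(\la))+2\dim\Hom_{\SSS_n}(S_1,\EE(\la))+1$, and this is contradicted directly by the strict inequality of Lemma~\ref{l13} together with $\dim\Hom_{\SSS_n}(S_1^*,\EE(\la))\leq m_1(\la)$, with no division by $2$ and hence no lost unit of slack. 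If you want to salvage your route through $M_2$, you would need an independent argument in the boundary case; as written, the proposal is incomplete.
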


\begin{proof}
From Lemma \ref{l11} it is enough to prove that
\begin{align*}\dim\Hom_{\SSS_n}(M^{(n-2,1,1)},\EE(\la))>&\,\,m_1(\la)+2\dim\Hom_{\SSS_n}(S_1,\EE(\la))
\\
&+\dim\Hom_{\SSS_n}(S_1^*,\EE(\la))+1.
\end{align*}
This follows from Lemma \ref{l13} since 
$\dim\Hom_{\SSS_n}(S_1^*,\EE(\la))\leq m_1(\la)$.
\end{proof}

\begin{Lemma}\label{l17}
Let $p=2$, $n\geq 6$ with $n\equiv 2\pmod{4}$ and $\la\in\Par_2(n)$. Assume that $\la\not\in\{(n),\be_n\}$ is a JS-partition or that $\vare_0(\la)+\vare_1(\la)=2$ and $\dim\Hom_{\SSS_n}(S_1,\EE(\la))<2$. Then $S_2^*$ or $Y_2/D_1$ is contained in $\EE(\la)$.
\end{Lemma}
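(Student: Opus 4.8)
The plan is to reduce the statement to a single dimension inequality for $\Hom$-spaces into $\EE(\la)$ and then verify that inequality by a case split matching the hypothesis, falling back to a direct analysis in a short list of cases.

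\emph{Reduction.} By Lemma~\ref{l10}(ii) (this is where $n\equiv 2\pmod 4$ is used) we have $M_2\cong D_0\oplus Y_2$ with $Y_2$ uniserial of composition length $5$, with factors $D_1,D_0,D_2,D_0,D_1$ from socle to head; in particular, writing $S_1^*\cong D_1|D_0$ for its length-$2$ submodule, one has $Y_2/D_1\cong D_0|D_2|D_0|D_1$ and $Y_2/S_1^*\cong S_2^*\cong D_2|D_0|D_1$. Since $\dim\Hom_{\SSS_n}(D_0,\EE(\la))=1$, we get $\dim\Hom_{\SSS_n}(Y_2,\EE(\la))=m_2(\la)-1$; applying $\Hom_{\SSS_n}(-,\EE(\la))$ to $0\to D_1\to Y_2\to Y_2/D_1\to 0$ and using the surjection $S_1\twoheadrightarrow D_1$ (which gives $\dim\Hom_{\SSS_n}(D_1,\EE(\la))\le\dim\Hom_{\SSS_n}(S_1,\EE(\la))$) yields
\[\dim\Hom_{\SSS_n}(Y_2/D_1,\EE(\la))\ \ge\ m_2(\la)-1-\dim\Hom_{\SSS_n}(S_1,\EE(\la)).\]
On the other hand, any homomorphism $Y_2/D_1\to\EE(\la)$ whose image has no composition factor $D_2$ has kernel containing the length-$2$ submodule of the uniserial module $Y_2/D_1$, so it factors through $Y_2/D_1\twoheadrightarrow S_1$; hence the space of such maps has dimension at most $\dim\Hom_{\SSS_n}(S_1,\EE(\la))$. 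Therefore, if $\dim\Hom_{\SSS_n}(Y_2/D_1,\EE(\la))>\dim\Hom_{\SSS_n}(S_1,\EE(\la))$, there is a homomorphism $Y_2/D_1\to\EE(\la)$ whose image has $D_2$ as a composition factor; as $S_2^*$ and $Y_2/D_1$ itself are the only quotients of $Y_2/D_1$ with that property, this image is $\cong S_2^*$ or $\cong Y_2/D_1$, which proves the lemma. Combining the two displays, it suffices to show
\[m_2(\la)\ >\ 1+2\,\dim\Hom_{\SSS_n}(S_1,\EE(\la)).\qquad(\star)\]

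\emph{The case $\vare_0(\la)+\vare_1(\la)=2$ and $\dim\Hom_{\SSS_n}(S_1,\EE(\la))<2$.} Here $m_1(\la)=2$ and $\dim\Hom_{\SSS_n}(S_1,\EE(\la))\le 1$, so the right-hand side of $(\star)$ is at most $3$, while Lemma~\ref{l21} gives $m_2(\la)>m_1(\la)+1=3$; thus $(\star)$ holds. \emph{The case $\la\notin\{(n),\be_n\}$ JS, first half.} Then $m_1(\la)=1$, and by Lemma~\ref{l12} (valid since $p=2\mid n$) we have $\dim\Hom_{\SSS_n}(S_1,\EE(\la))\le\vare_0(\la)+\vare_1(\la)=1$. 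If this dimension is $0$, then $(\star)$ reads $m_2(\la)>1$, which holds because $\EE(\la)$ is self-dual and not simple (equivalently, $D^\la{\downarrow}_{\SSS_{n-2,2}}$ is not simple, its restriction to $\SSS_{n-2}$ being $e_{1-i}D^{\la_A}$, which has $\dim\End\ge 2$, where $A$ is the unique normal node of $\la$ and $i=\res A$).

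\emph{The case $\la$ JS with $\dim\Hom_{\SSS_n}(S_1,\EE(\la))=1$, and the obstacle.} In this case the equality case of Lemma~\ref{l12} applies; feeding it into Lemma~\ref{l20} and then Lemma~\ref{l19} (with the residue $i$ above, noting $\vare_i(\la)=1>0$), the $i$-good node of $\la$ is $B=(1,\la_1)$ and the $i$-cogood node is $C=(2,\la_1-1)$, and since $C$ must be addable for $\la$ this forces $\la_1=\la_2+2$. As $\la$ is $2$-regular and JS, the only such partitions with $h(\la)=2$ are $\la=\be_n$ (excluded), so we are left with JS-partitions $\la=(\la_1,\la_1-2,\la_3,\dots)$ with $h(\la)\ge 3$. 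For each of these one proves the statement directly, exploiting that $\EE(\la){\downarrow}_{\SSS_{n-1}}\cong\EE(\la_A)$ (because $D^\la{\downarrow}_{\SSS_{n-1}}\cong D^{\la_A}$ is irreducible), together with Lemma~\ref{l11}, the structure results of Section~\ref{SPerm} and the branching rules of \S\ref{SSGenBrR}, to show that $Y_2/D_1$ embeds into $\EE(\la)$. This last family is the main obstacle: all of its members have $m_2(\la)=2$, so the counting argument $(\star)$ is unavailable, and the embedding $Y_2/D_1\hookrightarrow\EE(\la)$ (note that $S_2^*$ in general does not embed here) has to be extracted from an explicit determination of enough of the submodule lattice of $\EE(\la)=\End_\F(D^\la)$.
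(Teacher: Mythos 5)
Your reduction to the inequality $(\star)\colon m_2(\la)>1+2\dim\Hom_{\SSS_n}(S_1,\EE(\la))$ is sound, and it correctly disposes of the hypothesis "$\vare_0(\la)+\vare_1(\la)=2$ and $\dim\Hom_{\SSS_n}(S_1,\EE(\la))<2$" via Lemma~\ref{l21} (the paper does the same count, only phrased through $M^{(n-2,1,1)}$ and the uniserial module $Y^{(n-2,1,1)}\cong D_1|(Y_2/D_1)|(Y_2/D_1)$ of Lemma~\ref{l11} rather than through $M_2\cong D_0\oplus Y_2$; your version is a legitimate and slightly leaner variant). Your treatment of the JS subcase with $\dim\Hom_{\SSS_n}(S_1,\EE(\la))=0$ is also fine, since $D^\la{\da}_{\SSS_{n-2,2}}$ is self-dual and non-simple by Proposition~\ref{p8}, whence $m_2(\la)\geq 2$.

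The gap is the remaining JS subcase. You correctly isolate it (via Lemmas~\ref{l12}, \ref{l20}, \ref{l19}) as the family of JS partitions with $\la_1=\la_2+2$ and $h(\la)\geq 3$, which is non-empty (e.g.\ $\la=(11,9,5,1)$, $n=26$), and you correctly observe that there $m_2(\la)$ can equal $2$, so $(\star)$ and indeed every counting estimate developed in the paper ($m_1$, $m_2$, restriction to $\SSS_{n-2}$) is too weak. But at that point you only assert that "one proves the statement directly" from the submodule lattice of $\EE(\la)$, with no actual argument; this is precisely the hard content of the lemma, not a routine verification. The paper does not close the JS case by counting at all: it invokes the external result \cite[Lemma 7.4]{M} (together with Lemmas~\ref{l10}, \ref{l11} and the identification $Y^{(n-2,1,1)}\cong D^{(n-2,1)}{\ua}^{\SSS_n}$), which supplies exactly the missing embedding for all JS partitions $\la\notin\{(n),\be_n\}$. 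Without either that citation or a worked-out replacement for the family $\la_1=\la_2+2$, your proof is incomplete.
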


\begin{proof}
If $\la$ is a JS-partition with $\la\not\in\{(n),\be_n\}$, this holds by  \cite[Lemma 7.4]{M} and Lemmas~\ref{l10},\ref{l11} since $D^{(n-2,1)}\ua^{\SSS_n}\cong Y^{(n-2,1,1)}$. 

If $\vare_0(\la)+\vare_1(\la)=2$ and $\dim\Hom_{\SSS_n}(S_1,\EE(\la))<2$, then by Lemma \ref{l21} we have
\[\dim\Hom_{\SSS_n}(M^{(n-2,1,1)},\EE(\la))>m_1(\la)+3\dim\Hom_{\SSS_n}(S_1,\EE(\la)).\]
Since $D_1$ is a quotient of $S_1$, from Lemma \ref{l11} we then also have that
\[\dim\Hom_{\SSS_n}(Y^{(n-2,1,1)},\EE(\la))>2\dim\Hom_{\SSS_n}(S_1,\EE(\la))+\dim\Hom_{\SSS_n}(D_1,\EE(\la)).\]
From Lemmas \ref{l1} and \ref{l11} we also have that
\[Y^{(n-2,1,1)}\cong D_1|\overbrace{D_0|\overbrace{D_2|\underbrace{D_0|D_1}_{S_1}}^{S_2^*}}^{Y_2/D_1}|\overbrace{D_0|\overbrace{D_2|\underbrace{D_0|D_1}_{S_1}}^{S_2^*}}^{Y_2/D_1},\]
from which the lemma follows.
\end{proof}

\section{Special homomorphisms $M_k\to \EE(\la)$}
\label{SSpecialHoms}

\subsection{The homomorphism $\zeta_k$}
Let $1\leq k\leq n/2$ and $J\in\Om_k$. We denote by $\SSS_J$ the subgroup of $\SSS_n$ consisting of all permutations fixing the elements of $\{1,\dots,n\}\setminus J$. Clearly $\SSS_J\cong \SSS_k$. 

Let $\la\in\Parp(n)$. 
Recalling that $M_k$ denotes the permutation module on $\Om_k$, we define the homomorphism $\zeta_k\in\Hom_{\SSS_n}(M_k,\EE(\la))$ via 
$$
\big(\zeta_k(J)\big)(v)=\sum_{g\in\SSS_J} gv\qquad(J\in\Om_k,\ v\in D^\la). 
$$
Let $t$ be the $(n-k,k)$-tableau
$$
\begin{matrix}
k+1 & k+2& \cdots & 2k & 2k+1 & \cdots & n  \\
1    &    2  & \cdots & k &   &   &  
\end{matrix}
$$
and $C_t$ be the column stabilizer of $t$. Recalling (\ref{EPolyTab}), the corresponding polytabloid 
$$
e_t=\sum_{\si\in C_t}(\sgn\, \si)\, \si\cdot\{1,\dots,k\}\in M_k
$$
generates the submodule $S_k\subseteq M_k$. Define
$$
x_k:=\sum_{g\in\SSS_k,\,\si\in C_t} (\sgn\, \si)\si g\si^{-1}\in \F \SSS_n.
$$
Note that actually $x_k\in\F\SSS_{\{1,\dots,2k\}}\leq \F\SSS_{n}$. 
It follows from the definitions that for any $v\in D^\la$ we have 
$$
\big(\zeta_k(e_t)\big)(v)=x_kv,
$$
so

\begin{Lemma} \label{L150817} 
The homomorphism $\zeta_k$ is zero on the submodule $S_k\subset M_k$ if and only if $x_k D^\la=0$. 
\end{Lemma}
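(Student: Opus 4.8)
The statement to prove is Lemma~\ref{L150817}: the homomorphism $\zeta_k$ vanishes on $S_k \subseteq M_k$ if and only if $x_k D^\la = 0$.

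\medskip

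The plan is to exploit the fact, recalled just before the lemma, that $S_k$ is generated as an $\F\SSS_n$-module by the single polytabloid $e_t$, together with the displayed identity $\big(\zeta_k(e_t)\big)(v) = x_k v$ for all $v \in D^\la$. First I would argue the easy direction: if $\zeta_k$ is zero on $S_k$, then in particular $\zeta_k(e_t) = 0$ as an element of $\EE(\la) = \End_\F(D^\la)$, so $\big(\zeta_k(e_t)\big)(v) = x_k v = 0$ for every $v \in D^\la$, i.e. $x_k D^\la = 0$. Conversely, suppose $x_k D^\la = 0$. Then $\zeta_k(e_t) = 0$ by the same displayed identity. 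Since $e_t$ generates $S_k$, every element of $S_k$ has the form $\sum_j c_j g_j \cdot e_t$ for scalars $c_j \in \F$ and group elements $g_j \in \SSS_n$; applying the $\F\SSS_n$-homomorphism $\zeta_k$ gives $\zeta_k\big(\sum_j c_j g_j e_t\big) = \sum_j c_j\, g_j \cdot \zeta_k(e_t) = 0$, where $g_j$ acts on $\EE(\la)$ via the module structure $(g \cdot f)(u) = g f(g^{-1} u)$. Hence $\zeta_k$ vanishes on all of $S_k$.

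\medskip

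The only genuine content is the displayed formula $\big(\zeta_k(e_t)\big)(v) = x_k v$, which the excerpt states "follows from the definitions" — so I may take it as given. If one wanted to verify it, one would expand $e_t = \sum_{\si \in C_t}(\sgn\,\si)\,\si\cdot\{1,\dots,k\}$, apply the definition $\big(\zeta_k(J)\big)(v) = \sum_{g\in\SSS_J} gv$ with $J = \si\cdot\{1,\dots,k\}$ (noting $\SSS_{\si J} = \si \SSS_{\{1,\dots,k\}}\si^{-1} = \si\,\SSS_k\,\si^{-1}$), and collect terms to obtain $\sum_{\si\in C_t}(\sgn\,\si)\sum_{g\in\SSS_k}\si g\si^{-1} v = x_k v$; this is routine bookkeeping with the signs and is not the point of the lemma.

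\medskip

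There is no real obstacle here: the lemma is essentially a formal consequence of the generation of $S_k$ by $e_t$ and the interpretation of $\zeta_k(e_t)$ as the operator $x_k$ acting on $D^\la$. The mildest point requiring care is making sure the $\F\SSS_n$-module structure on $\EE(\la)$ is used correctly when pushing the generation statement through $\zeta_k$, but since $\zeta_k$ is an $\F\SSS_n$-homomorphism by construction this is immediate. So the proof is short and direct, with both implications flowing from the single identity $\big(\zeta_k(e_t)\big)(v) = x_k v$.
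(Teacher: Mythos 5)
Your proof is correct and matches the paper's argument, which likewise derives the lemma immediately from the identity $\big(\zeta_k(e_t)\big)(v)=x_kv$ together with the fact that $e_t$ generates $S_k$ as an $\F\SSS_n$-module. Nothing is missing.
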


The elements $x_2$ and $x_3$ will play a special role, so we will spell them out explicitly. We have
$$
x_2=(1,2)-(1,4)-(2,3)+(3,4).
$$
For distinct $a,b,c\in\{1,\dots,n\}$, we consider the sum of $3$-cycles
$$
[abc]:=(a,b,c)+(a,c,b)\in \F \SSS_n.
$$
Then it is easy to see that, after some cancellation, we get
$$
x_3=[123]-[234]-[135]-[126]+[345]+[246]+[156]-[456].
$$

\subsection{The case $k=2$ and $p=2$}

\begin{Lemma} \label{L180418} 
Let $p=2$. Then $x_2D^{(4,1)}\neq 0$ and $x_2D^{(3,2,1)}\neq 0$. 
\end{Lemma}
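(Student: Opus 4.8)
The statement asserts that the element
\[
x_2 = (1,2)-(1,4)-(2,3)+(3,4) \in \F\SSS_4 \leq \F\SSS_n
\]
does not annihilate the irreducible modules $D^{(4,1)}$ and $D^{(3,2,1)}$ in characteristic $2$. By Lemma~\ref{L150817}, this is equivalent to showing that the homomorphism $\zeta_2\colon M_2 \to \EE(\la)$ is nonzero on the Specht submodule $S_2 \subset M_2$ for $\la = (4,1)$ and $\la = (3,2,1)$. The plan is to verify both cases by an explicit finite computation, since these are two fixed small modules: $D^{(4,1)}$ lives over $\SSS_5$ and has dimension $4$ (the heart of the natural permutation module, as $2 \nmid 5$), while $D^{(3,2,1)}$ lives over $\SSS_6$ and, since $(3,2,1)$ is a $2$-core, equals the Specht module $S^{(3,2,1)}$, whose dimension can be read off from the hook length formula (it is $16$).

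First I would handle $D^{(4,1)} = S^{(4,1)}$ over $\SSS_5$. Realize it concretely as $\{\sum_{i=1}^5 c_i v_i : \sum c_i = 0\}$ inside the natural permutation module $M^{(5)}$ with basis $v_1,\dots,v_5$, where $\sigma \cdot v_i = v_{\sigma(i)}$. Apply $x_2$ to the vector $v_1 - v_2$: compute $(1,2)(v_1-v_2) = v_2-v_1$, $(1,4)(v_1-v_2) = v_4-v_2$, $(2,3)(v_1-v_2) = v_1-v_3$, $(3,4)(v_1-v_2) = v_1-v_2$. Hence
\[
x_2(v_1-v_2) = (v_2-v_1)-(v_4-v_2)-(v_1-v_3)+(v_1-v_2) = -v_1+2v_2-v_4+v_3,
\]
which in characteristic $2$ is $v_1 + v_3 + v_4 \equiv v_1+v_3+v_4+v_5 - v_5$; more carefully, $-v_1+2v_2+v_3-v_4 \equiv v_1+v_3+v_4 \pmod 2$, and this is a nonzero element of $M^{(5)}$ whose coordinate sum is $3 \equiv 1$, so I should instead test on a vector that is manifestly in $S^{(4,1)}$ and check the image is nonzero there. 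The clean move is: $x_2(v_1-v_2)$ computed exactly is $-v_1+2v_2+v_3-v_4$, whose coordinate sum is $0$, so it does lie in $S^{(4,1)}$, and mod $2$ it equals $v_1+v_3+v_4 \neq 0$. This settles $\la=(4,1)$.

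For $D^{(3,2,1)} = S^{(3,2,1)}$ over $\SSS_6$, I would pick a specific polytabloid $e_t$ spanning a line in the Specht module and compute $x_2 e_t$ as a $\Z/2$-linear combination of standard polytabloids (or of tabloids), using the straightening relations, and exhibit one standard basis vector appearing with nonzero coefficient. Alternatively, and perhaps more robustly, I would use Lemma~\ref{L150817} in the form $x_2 D^\la \neq 0 \iff \zeta_2|_{S_2} \neq 0 \iff S^{(4,2)}$ is not in the kernel of the natural map $M_4 \to \EE(\la)$; but the most direct route is simply to evaluate $x_2$ on one explicit vector of $S^{(3,2,1)}$ inside $M^{(3,2,1)}$. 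The main obstacle is purely bookkeeping: $M^{(3,2,1)}$ has $6!/(3!\,2!\,1!) = 60$ tabloids, so one must choose the test vector and the target coordinate judiciously to keep the computation short — I would track only the coefficient of a single well-chosen tabloid (or standard polytabloid) in $x_2 e_t$, which reduces the work to four cycle-actions on one basis element. I expect no conceptual difficulty; the only care needed is to make sure the chosen element genuinely lies in the irreducible quotient $D^\la$ (automatic here since $(3,2,1)$ is a $2$-core so $D^{(3,2,1)} = S^{(3,2,1)}$, and for $(4,1)$ since $2 \nmid 5$ so $D^{(4,1)} = S^{(4,1)}$) and that no hidden cancellation mod $2$ kills the chosen coordinate.
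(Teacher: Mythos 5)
Your strategy is exactly the paper's: both cases reduce to evaluating $x_2$ explicitly on one vector of the relevant Specht module, which is legitimate precisely because $D^{(4,1)}=S^{(4,1)}$ (as $2\nmid 5$) and $D^{(3,2,1)}=S^{(3,2,1)}$ (as $(3,2,1)$ is a $2$-core) — the paper likewise tests a vector of $S^{(4,1)}$ and then a polytabloid of $S^{(3,2,1)}$, tracking a single tabloid coordinate. Two caveats. First, your arithmetic in the $(4,1)$ case is off:
$$x_2(v_1-v_2)=(v_2-v_1)-(v_4-v_2)-(v_1-v_3)+(v_1-v_2)=-v_1+v_2+v_3-v_4,$$
so the coefficient of $v_2$ is $1$, not $2$; the result does have coordinate sum $0$ (as it must, $S^{(4,1)}$ being a submodule) and reduces mod $2$ to $v_1+v_2+v_3+v_4\neq 0$, so the conclusion survives, but your back-and-forth about a coordinate sum of $3$ is a symptom of that slip rather than a genuine issue. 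Second, for $(3,2,1)$ you only announce the computation; since the entire content of the lemma is that computation, it still has to be performed — the paper takes the polytabloid $e_t$ of the tableau with columns $\{1,2,3\}$, $\{4,5\}$, $\{6\}$, expands it into twelve tabloids, and checks that the tabloid $12|3$ occurs in $x_2e_t$ with coefficient $1$ — but the plan you describe (track one well-chosen tabloid coordinate through the four terms of $x_2$) is exactly what works, and no straightening is needed if one works with tabloid coordinates in $M^{(3,2,1)}$. Also note the aside identifying $\zeta_2|_{S_2}\neq 0$ with a statement about $M_4$ should read $M_2$, but you do not use it.
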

\begin{proof}
We have $D^{(4,1)}=S^{(4,1)}$, so the module has a basis $\{\eps_r+\eps_{r+1}\mid r=1,\dots, 4\}$ with the action of $\SSS_5$ on the indices. An easy computation now shows that $x_2(\eps_1+\eps_2)=\eps_3+\eps_4\neq 0$. 

We also have $D^{(3,2,1)}=S^{(3,2,1)}$. 
Recalling (\ref{EPolyTab}), we realize $S^{(3,2,1)}$ as a submodule of $M^{(3,2,1)}$ spanned by polytabloids. 
For distinct $a,b,c\in\{1,\dots,6\}$, the tabloid corresponding to $a,b$ in the second row and $c$ in the third row will be denoted $ab|c$. Thus $ab|c=ba|c$, and 
$$
\{ab|c\,\mid\, \text{$a,b,c\in\{1,\dots,6\}$ are distinct and $a<b$}\}
$$ 
is a basis of $M^{(3,2,1)}$. Consider the $(3,2,1)$-tableau
$$
t=
\begin{matrix}
1 & 4 & 6  \\
2 &  5 &  \\
3 & &  
\end{matrix}
$$
and the corresponding polytabloid 
\begin{align*}
e_t=&\,25|3+35|2+15|3+15|2+35|1+25|1+24|3
\\&+34|2+14|3+14|2+34|1+24|1
\end{align*}
(since $p=2$ we ignore the signs). Now an explicit calculation shows that the basis element $12|3$ appears in $x_2e_t$ with coefficient $1$, in particular, $x_2e_t\neq 0$.
\end{proof}

\begin{Lemma} \label{L180418_2} 
Let $p=2$, $n\geq 5$, and $\la\in\Par_2(n)$ with $\la\not\in\{(n),\be_n\}$. Then $x_2 D^\la\neq 0$.  
\end{Lemma}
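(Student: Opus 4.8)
The plan is to reduce the general statement to the two base cases $D^{(4,1)}$ and $D^{(3,2,1)}$ already handled in Lemma~\ref{L180418}, using the branching functors $e_i$ together with the fact that $\zeta_2$ and $x_2$ are compatible with restriction. First I would recall that, by Lemma~\ref{L150817}, the assertion $x_2D^\la\neq 0$ is equivalent to saying that $\zeta_2\colon M_2\to\EE(\la)$ is nonzero on the Specht submodule $S_2\subseteq M_2$. Since $x_2\in\F\SSS_{\{1,2,3,4\}}$, the condition $x_2D^\la\neq0$ only depends on $D^\la\da_{\SSS_m}$ for any $m\geq 4$; more precisely, $x_2D^\la\neq0$ if and only if $x_2 W\neq0$ for some composition factor $W$ of $D^\la\da_{\SSS_m}$, and in fact it suffices to find \emph{one} subquotient of $D^\la\da_{\SSS_5}$ (or $\da_{\SSS_6}$) on which $x_2$ acts nonzero, because $x_2$ acting nonzero on a subquotient of $V\da_{\SSS_5}$ forces $x_2V\neq0$ (the image of $x_2$ on the subquotient is a quotient of $x_2V$ composed with projections). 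So the strategy is: show that whenever $\la\in\Par_2(n)$ with $\la\notin\{(n),\be_n\}$, some iterated branching $D^\la\da$ down to $\SSS_5$ or $\SSS_6$ has $D^{(4,1)}$ or $D^{(3,2,1)}$ among its composition factors.

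The key step is therefore a combinatorial/branching argument. Using Lemma~\ref{Lemma39}(vi) repeatedly, starting from good nodes, we can successively remove good nodes from $\la$ and track the resulting partitions; the composition factors of $D^\la\da_{\SSS_{n-1}}$ include $D^{\la_A}$ for every good node $A$. Iterating, $D^\la\da_{\SSS_m}$ has a composition factor $D^\mu$ for a suitable $\mu\in\Par_2(m)$ reachable by removing good nodes. I would argue that, excluding $\la=(n)$ and $\la=\be_n$, one can always steer this process down to $\SSS_5$ landing on $(4,1)$ or $(3,2,1)$, or down to $\SSS_6$ landing on $(3,2,1)$ — the point being that $(4,1)$ and $(3,2,1)$ are the only $2$-regular partitions of $5$ and of $6$ respectively with $\dim D>1$ that are \emph{not} of the form $(m)$ or $\be_m$ (note $\be_5=(3,2)$, $\be_6=(4,2)$, and $D^{(3,2)}$, $D^{(4,2)}$ are basic spin; also $D^{(2,1)}$ over $\SSS_3$ etc. are too small). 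Concretely, I would use Lemma~\ref{LBr2}: if $h(\la)\geq 3$ we can branch down keeping $h\geq 3$ until we reach $\SSS_6$ with a height-$\geq3$ partition, and the only height-$\geq 3$ $2$-regular partition of $6$ is $(3,2,1)$; and if $h(\la)=2$, say $\la=(\la_1,\la_2)$ with $\la_1-\la_2\neq 2$ (since $\la\neq\be_n$) and $\la_2\geq 1$, then branching removes top removable nodes (Lemma~\ref{Lemma55} when the parts have equal parity, or Lemma~\ref{Lemma39}(vi) otherwise) and one reaches a two-row partition of $5$ which is forced to be $(4,1)$ — one checks $(3,2)=\be_5$ is avoided because $\la_1-\la_2\neq 2$ is preserved or can be arranged. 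Small cases $n=5,6$ are Lemma~\ref{L180418} directly.

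The main obstacle I anticipate is the bookkeeping in the height-$2$ case: one must verify that when branching a two-row partition $(\la_1,\la_2)$ with $\la_1-\la_2\geq 4$ or $\la_1-\la_2\leq 1$ down to $\SSS_5$, one can always avoid landing \emph{only} on $(5)$, $(4,1)=\be_5^{\mathrm{c}}$... wait, $(4,1)$ is what we want, so the real danger is landing only on $(5)$ or $(3,2)$; since $(3,2)=\be_5$ and $D^{(3,2)}$ is basic spin, and $(5)$ is trivial, we must ensure $(4,1)$ actually appears. This is where one uses that $[D^\la\da_{\SSS_{n-1}}]$ contains $D^{(\la_1-1,\la_2)}$ or $D^{(\la_1,\la_2-1)}$ as in the proof of Lemma~\ref{L180418_3}, and iterating these stays in two rows with the gap never equal to $2$ at the crucial step, or—if the gap becomes $2$—we are at $\be_m$ which by Proposition~\ref{p8} restricts to $\SSS_{m-3,3}$ as $\be_{m-3}\boxtimes\be_3$, so further branching off $\SSS_{m-3}$ eventually produces a non-spin factor since $\be$ partitions don't persist under removing good nodes from the first component indefinitely down to $\SSS_4$. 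Once $D^{(4,1)}$ or $D^{(3,2,1)}$ is exhibited as a composition factor of some $D^\la\da_{\SSS_m}$ with $m\in\{5,6\}$, Lemma~\ref{L180418} finishes the argument.
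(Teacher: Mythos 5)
Your overall strategy --- induction on $n$ via branching down to the base cases $D^{(4,1)}$ and $D^{(3,2,1)}$ of Lemma~\ref{L180418}, using that $x_2$ lies in $\F\SSS_4$ and therefore annihilates every composition factor of every restriction once it annihilates $D^\la$ --- is exactly the paper's. One small correction first: the equivalence you assert is only an implication in the direction you need. Left multiplication by $x_2$ can act nonzero on a module while acting as zero on each composition factor (it can push a filtration strictly downward), so only the contrapositive ``$x_2D^\la=0$ implies $x_2D^\mu=0$ for every composition factor $D^\mu$ of $D^\la\da_{\SSS_m}$'' is valid; fortunately that is all you use.

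The genuine gap is in your two-row case. Your plan is to steer an explicit chain of restrictions down to $\SSS_5$ while avoiding $(5)$ and $(3,2)=\be_5$, and your fallback when the chain hits some $\be_m$ --- restricting $\be_m$ further via Proposition~\ref{p8} --- does not rescue the argument: if every composition factor you ever reach is trivial or basic spin, you cannot conclude, because nothing asserts $x_2D^{\be_m}\neq0$ (the partition $\be_n$ is excluded from the statement precisely because this may fail). What closes the induction is a one-step assertion: if $\la\notin\{(n),\be_n\}$ then $D^\la\da_{\SSS_{n-1}}$ already has a composition factor $D^\mu$ with $\mu\notin\{(n-1),\be_{n-1}\}$. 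This is exactly Lemma~\ref{L180418_3} in contrapositive form, and its proof carries out the two-row bookkeeping you are worried about: for $\la=(\la_1,\la_2)$ with $\la_1-\la_2>3$ the factor $D^{(\la_1-1,\la_2)}$ works, and for $\la_1-\la_2=3$ the factor $D^{(\la_1,\la_2-1)}$ works, both supplied by Lemma~\ref{Lemma39}(vi); together with Lemma~\ref{LBr2} this also subsumes your $h(\la)\geq 3$ case. Invoking Lemma~\ref{L180418_3} directly makes the inductive step one line; without it, your multi-step chase through two-row partitions is incomplete as written.
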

\begin{proof}
We apply induction on $n$. If $n=5$, the only $\la$ that satisfies the assumptions is $(4,1)$, and we can apply Lemma~\ref{L180418}.  If $n=6$, the only partitions that we have to check are $(5,1)$ and $(3,2,1)$. For $(3,2,1)$ see Lemma~\ref{L180418}. As for $(5,1)$, we have $D^{(5,1)}\da_{\SSS_5}\cong D^{(4,1)}$ and so the same lemma applies. 

Let $n>6$. Since $x_2\in\F \SSS_4\leq \F\SSS_{n-1}$, we have $x_2D^\la=0$ only if $x_2(D^\la{\da}_{\SSS_{n-1}})= 0$, which happens only if $x_2 D^\mu=0$ for all composition factors $D^\mu$ of $D^\la{\da}_{\SSS_{n-1}}$. Then by the inductive assumption we have that all of these composition factors are of the form $D^{(n-1)}$ or $D^{\be_{n-1}}$. By Lemma~\ref{L180418_3}, we conclude that  $\la\in\{(n),\be_n\}$.
\end{proof}

\begin{Corollary} \label{C180418}
Let $p=2$, $n\geq 5$, and $\la\in\Par_2(n)$ satisfy $\la\not\in\{(n),\be_n\}$. Then the $\F \SSS_n$-homomorphism $\zeta_2:M_2\to \EE(\la)$ is non-zero on $S_2$.  
\end{Corollary}
\begin{proof}
Apply Lemmas~\ref{L150817} and \ref{L180418_2}.
\end{proof}

\subsection{The case $k=3$ and $p=3$}

\begin{Lemma} \label{L150817_1} 
Let $p=3$. Then $x_3D^{(4,1,1)}\neq 0$. 
\end{Lemma}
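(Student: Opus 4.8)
The goal is to show $x_3 D^{(4,1,1)} \neq 0$ where $p=3$, $n=6$, and
$$
x_3=[123]-[234]-[135]-[126]+[345]+[246]+[156]-[456]
$$
with $[abc]=(a,b,c)+(a,c,b)$. My plan is to carry out an explicit computation in a convenient concrete model of $D^{(4,1,1)}$.

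\medskip
\noindent\textbf{Setting up the module.} The first step is to identify a workable realization of $D^{(4,1,1)}$ over $\F$ with $p=3$. Since $(4,1,1)$ is not a $3$-core but $D^{(4,1,1)}$ is still reasonably small, I would first check whether $D^{(4,1,1)}\cong S^{(4,1,1)}$ or, if not, realize $D^{(4,1,1)}$ as a quotient of $S^{(4,1,1)}$ or as a submodule of $M^{(4,1,1)}$. In fact $S^{(4,1,1)}$ has dimension $\binom{5}{2}=10$, and for $p=3$ one checks (via Lemma~\ref{L2R}-type considerations or directly) whether $S^{(4,1,1)}$ is irreducible; if there is a trivial composition factor $D^{(6)}$, one passes to the appropriate quotient or uses the James submodule theorem. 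The cleanest route is probably to work inside $M^{(4,1,1)}$, whose basis consists of tabloids $a|b$ (entry $a$ in row $2$, entry $b$ in row $3$, with $a,b$ distinct elements of $\{1,\dots,6\}$), with $S^{(4,1,1)}$ spanned by polytabloids $e_t$ as in \eqref{EPolyTab}. Then I would pick a specific polytabloid generator $e_t$ for a standard tableau $t$, push it into $D^{(4,1,1)}$ (i.e., work modulo the radical if needed), and compute.

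\medskip
\noindent\textbf{The computation.} Having fixed $e_t$ (for instance $t$ with second row $1$, third row $2$, giving a short polytabloid), I would apply each of the eight $3$-cycle-sums $[abc]$ appearing in $x_3$ to $e_t$, using the permutation action on tabloids, and sum with the indicated signs. Since $p=3$, note $[abc]=(a,b,c)+(a,c,b)$ acts as $(\text{id}+(a,b,c)+(a,c,b)) - \text{id}$, i.e., as the norm element of the cyclic group $\langle(a,b,c)\rangle$ minus the identity; this sometimes simplifies the bookkeeping. After collecting terms, the claim is that the result is a nonzero element of $D^{(4,1,1)}$. The key point to verify carefully is that the nonzero combination of tabloids one obtains does not lie in the radical of $S^{(4,1,1)}$ (equivalently, maps to something nonzero in $D^{(4,1,1)}$); exhibiting one basis tabloid that survives with nonzero coefficient modulo the radical suffices. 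By Lemma~\ref{L150817}, this is exactly the statement that $\zeta_3$ is nonzero on $S_3$, so an alternative phrasing of the conclusion is available if convenient.

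\medskip
\noindent\textbf{Main obstacle.} The genuine difficulty here is not conceptual but bookkeeping: $x_3$ has eight terms, each a sum of two $3$-cycles, so $e_t$ gets hit by sixteen signed permutations, and one must track cancellations in characteristic $3$ carefully — and, crucially, one must control the radical of $S^{(4,1,1)}$ so as to certify that the final vector is nonzero \emph{in $D^{(4,1,1)}$}, not merely in $M^{(4,1,1)}$. A clean way to sidestep the radical issue is to use a known small explicit module: e.g., if $D^{(4,1,1)}$ can be identified with a concrete $\F\SSS_6$-module (such as a composition factor of the natural exterior power $\bigwedge^2$ of the heart of the permutation module, or via Lemma~\ref{LPlus}/branching from $\SSS_5$ where $D^{(4,1)}=S^{(4,1)}$ is concrete), one computes $x_3$ there directly on an explicit basis with explicit matrices, and nonzero-ness is immediate. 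I would therefore plan to do the computation in whichever of these two models — polytabloids modulo radical, or an explicit small matrix model — turns out to have the least cancellation, and simply display the single surviving coordinate.
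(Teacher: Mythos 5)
Your proposal correctly identifies the viable strategies, and one of the models you mention in passing --- realizing $D^{(4,1,1)}$ inside $\bigwedge^2$ of the heart of the natural module --- is exactly the one the paper uses: the proof there takes $D^{(4,1,1)}\cong \bigwedge^2 D^{(5,1)}$ (justified by comparing Brauer characters), writes down the action of all sixteen $3$-cycles occurring in $x_3$ on the basis vectors $v_1,v_2$ of $D^{(5,1)}$, and computes $x_3(v_1\wedge v_2)=v_1\wedge v_4-v_2\wedge v_4\neq 0$.

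The problem is that your text is a plan, not a proof. For a lemma whose entire content is a finite verification, the computation \emph{is} the argument, and you never carry it out: you do not fix a model, do not exhibit a vector, and do not display a surviving coordinate. Moreover, the first route you float (polytabloids in $S^{(4,1,1)}$ modulo the radical) is genuinely delicate here and you leave it unresolved: for $p=3$ and $n=6$ one has $\dim S^{(4,1,1)}=10$ while $\dim D^{(4,1,1)}=\dim\bigwedge^2 D^{(5,1)}=6$ (since $3\mid 6$ forces $\dim D^{(5,1)}=4$), so the radical is $4$-dimensional and ``exhibiting one basis tabloid that survives with nonzero coefficient modulo the radical'' requires an explicit description of that radical, which you do not supply. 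The second route avoids this entirely, but it requires (a) a justification that $D^{(4,1,1)}$ really is $\bigwedge^2 D^{(5,1)}$ (the paper cites Brauer characters) and (b) the actual sixteen-permutation calculation. As written, the proposal establishes nothing beyond the (correct) observation that such a computation could be done; to be a proof it must include the computation, or at minimum the explicit matrices/vectors and the nonzero output.
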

\begin{proof}
We use the known fact that $D^{(4,1,1)}$ is the exterior square of $D^{(5,1)}$---this can be seen for example by comparing the Brauer characters of the two modules. 
The module $D^{(5,1)}$ has basis $v_1,\dots,v_4$, where $v_r:=\bar \eps_r-\bar\eps_{r+1}$, where $\{\eps_1,\dots,\eps_6\}$ is the natural basis of the permutation module $M^{(5,1)}$, and for $v\in M^{(5,1)}$, we denote 
$$\bar v:=v+\F \cdot(\eps_1+\dots+\eps_6)\in M^{(5,1)}/\F \cdot(\eps_1+\dots+\eps_6).
$$
We now compute
\begin{alignat*}{3}
(1,2,3)v_1&=v_2, &\quad (1,2,3)v_2&=-v_1-v_2,
\\
(1,3,2)v_1&=-v_1-v_2, & (1,3,2)v_2&=v_1,
\\
(2,3,4)v_1&=v_1+v_2, & (2,3,4)v_2&=v_3,
\\
(2,4,3)v_1&=v_1+v_2+v_3, & (2,4,3)v_2&=-v_2-v_3,
\\
(1,3,5)v_1&=-v_2, & (1,3,5)v_2&=v_2+v_3+v_4,
\\
(1,5,3)v_1&=-v_2-v_3-v_4, & (1,5,3)v_2&=-v_1,
\\
(1,2,6)v_1&=v_1+v_3-v_4, & (1,2,6)v_2&=-v_1+v_2-v_3+v_4,
\\
(1,6,2)v_1&=v_1-v_3+v_4, & (1,6,2)v_2&=v_1+v_2,
\\
(3,4,5)v_1&=v_1, & (3,4,5)v_2&=v_2+v_3,
\\
(3,5,4)v_1&=v_1, & (3,5,4)v_2&=v_2+v_3+v_4,
\\
(2,4,6)v_1&=v_1+v_2+v_3, & (2,4,6)v_2&=-v_3,
\\
(2,6,4)v_1&=-v_1+v_3-v_4, & (2,6,4)v_2&=-v_1+v_2-v_3+v_4,
\\
(1,5,6)v_1&=-v_2-v_3-v_4, & (1,5,6)v_2&=v_2,
\\
(1,6,5)v_1&=-v_1-v_3+v_4, & (1,6,5)v_2&=v_2,
\\
(4,5,6)v_1&=v_1, & (4,5,6)v_2&=v_2,
\\
(4,6,5)v_1&=v_1, & (4,6,5)v_2&=v_2.
\end{alignat*}
Hence
\begin{align*}
x_3(v_1\wedge v_2)=
&
\,\,v_2\wedge (-v_1-v_2)
+(-v_1-v_2)\wedge v_1
\\
&
-(v_1+v_2)\wedge v_3
-(v_1+v_2+v_3)\wedge (-v_2-v_3)
\\
&
-(-v_2)\wedge (v_2+v_3+v_4)
-(-v_2-v_3-v_4)\wedge (-v_1)
\\
&
-(v_1+v_3-v_4)\wedge (-v_1+v_2-v_3+v_4)
-(v_1-v_3+v_4)\wedge (v_1+v_2)
\\
&+v_1\wedge (v_2+v_3)
+v_1\wedge (v_2+v_3+v_4)
\\
&+(v_1+v_2+v_3)\wedge (-v_3)
+(-v_1+v_3-v_4)\wedge (-v_1+v_2-v_3+v_4)
\\
&+(-v_2-v_3-v_4)\wedge v_2
+(-v_1-v_3+v_4)\wedge v_2
\\
&-v_1\wedge v_2
-v_1\wedge v_2
\\
=&
\,\,v_1\wedge v_4- v_2\wedge v_4,
\end{align*}
which is non-zero, completing the proof.
\end{proof}

\begin{Lemma} \label{L150817_2} 
Let $p=3$, $n\geq 6$, and $\la\in\Par_3(n)$ satisfy $h(\la)\geq 3$, $h(\la^\Mull)\geq 3$. Then  $x_3 D^\la\neq 0$.  
\end{Lemma}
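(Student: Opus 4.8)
The plan is to mimic the inductive strategy used for $x_2$ in Lemma~\ref{L180418_2}, combined with the branching recognition result Lemma~\ref{LBr1}. Since $x_3\in\F\SSS_{\{1,\dots,6\}}\leq\F\SSS_{n-1}$ for $n\geq 7$, if $x_3D^\la=0$ then $x_3$ annihilates every composition factor of $D^\la\da_{\SSS_{n-1}}$. The base of the induction, $n=6$, is where the hard work lies: one must check that $x_3D^\la\neq 0$ for every $\la\in\Par_3(6)$ with $h(\la)\geq 3$ and $h(\la^\Mull)\geq 3$. The inductive step should then be: assuming $x_3D^\mu\neq 0$ for all $\mu\in\Par_3(n-1)$ with $h(\mu),h(\mu^\Mull)\geq 3$, if $x_3D^\la=0$ then every composition factor $D^\mu$ of $D^\la\da_{\SSS_{n-1}}$ has $h(\mu)\leq 2$ or $h(\mu^\Mull)\leq 2$; by Lemma~\ref{LBr1} this forces $h(\la)\leq 2$ or $h(\la^\Mull)\leq 2$, contradicting the hypothesis.

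Next I would handle the base case $n=6$. The relevant $3$-regular partitions of $6$ with the height conditions on both $\la$ and $\la^\Mull$ need to be enumerated; using the Mullineux symbol machinery from \S\ref{SSPar} one finds there are only a handful, and $(4,1,1)$ is the key one already treated in Lemma~\ref{L150817_1} (note $(4,1,1)^\Mull=(4,1,1)$, consistent with the symmetry in the hypothesis). For the remaining partitions one either computes $x_3D^\la$ directly as in Lemma~\ref{L150817_1} — realizing $D^\la$ concretely, e.g. as a Specht module when $\la$ is a $3$-core or via an exterior power or a known small module — or, where possible, restricts to $\SSS_5$: if $D^\la\da_{\SSS_5}$ has a composition factor $D^\mu$ with $x_3D^\mu\neq 0$ we are done, and the only $3$-regular partitions of $5$ with $x_3$ acting nontrivially can be identified by a further tiny computation (essentially only $(3,1,1)$, i.e. the module of Lemma~\ref{L150817_1} restricted). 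One should double-check that $n=7$ does not need separate treatment: for $n=7$ the induction from $n=6$ applies directly as long as the $n=6$ base case is complete, since Lemma~\ref{LBr1} requires $n>6$.

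The main obstacle I expect is the base case bookkeeping: one must be careful that Lemma~\ref{LBr1} has the hypothesis $n>6$, so the induction genuinely starts at $n=6$ and the $n=6$ verification must be exhaustive over the (few) partitions satisfying $h(\la),h(\la^\Mull)\geq 3$. A secondary subtlety is that when restricting $D^\la\da_{\SSS_{n-1}}$ one needs $n-1\geq 6$ for the inductive hypothesis to be stated in the given form, so for $n=7$ one restricts to $\SSS_6$ and invokes the $n=6$ case plus Lemma~\ref{LBr1} with $n=7>6$; this is fine. The actual matrix computation of $x_3$ on the small modules, while tedious, is routine once a basis and the $\SSS_6$-action are fixed, exactly as demonstrated for $D^{(4,1,1)}$ in Lemma~\ref{L150817_1}.

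To summarize the proof: \textbf{(Step 1)} reduce to $n=6$ by induction using $x_3\in\F\SSS_6\leq\F\SSS_{n-1}$ and Lemma~\ref{LBr1} (valid for $n>6$); if $x_3D^\la=0$ then every composition factor of $D^\la\da_{\SSS_{n-1}}$ has a short first or Mullineux-conjugate row-count, forcing the same for $\la$, a contradiction. \textbf{(Step 2)} settle $n=6$: enumerate the $3$-regular $\la\vdash 6$ with $h(\la),h(\la^\Mull)\geq 3$, treat $(4,1,1)$ by Lemma~\ref{L150817_1}, and dispose of the rest either by an explicit computation of $x_3$ on a concrete model of $D^\la$ or by restricting to $\SSS_5$ and citing Lemma~\ref{L150817_1}.
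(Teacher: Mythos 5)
Your proposal matches the paper's proof: induction on $n$ with base case $n=6$, using $x_3\in\F\SSS_6\leq\F\SSS_{n-1}$ together with Lemma~\ref{LBr1} for the inductive step, and Lemma~\ref{L150817_1} for the base. The only simplification the paper makes is that the base case requires no further bookkeeping: $(4,1,1)$ is the \emph{unique} $3$-regular partition of $6$ with $h(\la)\geq 3$ and $h(\la^\Mull)\geq 3$ (e.g.\ $(3,2,1)^\Mull=(5,1)$ and $(2,2,1,1)^\Mull=(4,2)$ both have two rows), so no additional computations or restrictions to $\SSS_5$ are needed.
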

\begin{proof}
We apply induction on $n$. If $n=6$, the only $\la$ that satisfies the assumptions $h(\la)\geq 3$, $h(\la^\Mull)\geq 3$ is $(4,1,1)$, and we can apply Lemma~\ref{L150817_1}.  Let $n>6$. Since $x_3\in\F \SSS_6\leq \F\SSS_{n-1}$, we have $x_3D^\la=0$ only if $x_3(D^\la{\da}_{\SSS_{n-1}})= 0$, which happens  only if $x_3 D^\mu=0$ for all composition factors $D^\mu$ of $D^\la{\da}_{\SSS_{n-1}}$. 
Then by the inductive assumption we have that $h(\mu)\leq 2$ or $h(\mu^\Mull)\leq 2$ for all composition factors $D^\mu$ of $D^\la{\da}_{\SSS_{n-1}}$. By Lemma~\ref{LBr1}, we have $h(\la)\leq 2$ or $h(\la^\Mull)\leq 2$, which is a contradiction.
\end{proof}

\begin{Corollary} \label{C160817}
Let $p=3$, $n\geq 6$, and $\la\in\Par_3(n)$ satisfy $h(\la)\geq 3$, $h(\la^\Mull)\geq 3$. Then the $\F \SSS_n$-homomorphism $\zeta_3:M_3\to \EE(\la)$ is non-zero on $S_3$.  
\end{Corollary}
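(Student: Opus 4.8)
The plan is to deduce Corollary~\ref{C160817} directly from Lemma~\ref{L150817} together with Lemma~\ref{L150817_2}. Recall that $\zeta_3:M_3\to\EE(\la)$ was constructed in \S\ref{SSpecialHoms}, and that by Lemma~\ref{L150817} the restriction $\zeta_3|_{S_3}$ is zero if and only if $x_3D^\la=0$, where $x_3\in\F\SSS_{\{1,\dots,6\}}\leq\F\SSS_n$ is the explicit element written out at the end of \S\ref{SSpecialHoms}. So it suffices to show $x_3D^\la\neq0$ under the hypotheses $h(\la)\geq 3$ and $h(\la^\Mull)\geq 3$.

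That nonvanishing is exactly the content of Lemma~\ref{L150817_2}. Thus the proof is a one-line combination: by Lemma~\ref{L150817_2} we have $x_3D^\la\neq0$, and then by Lemma~\ref{L150817} the homomorphism $\zeta_3$ is non-zero on $S_3$. I would write essentially:

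\begin{proof}
By Lemma~\ref{L150817_2}, we have $x_3D^\la\neq 0$, and then $\zeta_3$ is non-zero on $S_3$ by Lemma~\ref{L150817}.
\end{proof}

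There is really no obstacle here: all the substantive work has already been done in the lemmas preceding the corollary. The only things to be careful about are that the hypotheses of Corollary~\ref{C160817} ($p=3$, $n\geq 6$, $h(\la)\geq 3$, $h(\la^\Mull)\geq 3$) match verbatim those of Lemma~\ref{L150817_2}, which they do, and that $\la\in\Par_3(n)$ so that $D^\la$ and the Mullineux conjugate $\la^\Mull$ are defined. The genuinely hard step — the inductive argument reducing to the base case $\la=(4,1,1)$ via the branching recognition Lemma~\ref{LBr1}, and the explicit wedge-square computation in Lemma~\ref{L150817_1} verifying $x_3D^{(4,1,1)}\neq0$ — lies entirely inside Lemma~\ref{L150817_2} and its predecessors, so nothing further is needed for the corollary itself.
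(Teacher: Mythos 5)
Your proposal is correct and is exactly the paper's own argument: the paper proves Corollary~\ref{C160817} by citing Lemmas~\ref{L150817} and \ref{L150817_2} in precisely this way. Nothing further is needed.
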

\begin{proof}
Apply Lemmas~\ref{L150817} and \ref{L150817_2}.
\end{proof}

\subsection{The case $k=3$ and $p=2$}

\begin{Lemma} \label{L150817_4} 
Let $p=2$. Then $x_3D^{(3,2,1)}\neq 0$. 
\end{Lemma}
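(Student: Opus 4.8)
The plan is to argue exactly as in the proof of Lemma~\ref{L180418}. Since $(3,2,1)$ is a $2$-core, $D^{(3,2,1)}=S^{(3,2,1)}$, and I realize $S^{(3,2,1)}$ inside $M^{(3,2,1)}$ as the span of polytabloids, using the tabloid notation $ab|c$ introduced in the proof of Lemma~\ref{L180418}. Any polytabloid generates $S^{(3,2,1)}$, so it is enough to exhibit one $(3,2,1)$-tableau $t$ with $x_3e_t\neq 0$.

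I would take $t$ to be a convenient $(3,2,1)$-tableau --- for instance the tableau used in the proof of Lemma~\ref{L180418} --- whose polytabloid $e_t$ is an explicit sum of (up to) $12$ tabloids; since we are in characteristic $2$, all signs are dropped. Recalling that in characteristic $2$ we have $x_3=[123]+[234]+[135]+[126]+[345]+[246]+[156]+[456]$ with $[abc]=(a,b,c)+(a,c,b)$, I expand $x_3e_t$ by applying each of these $16$ permutations to each tabloid of $e_t$ and collecting the $192$ resulting tabloids with coefficients reduced modulo $2$. The claim to verify is that after all cancellation a suitable basis tabloid $ab|c$ survives with coefficient $1$; this gives $x_3e_t\neq 0$ and hence $x_3D^{(3,2,1)}\neq 0$.

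The only real difficulty is the bookkeeping: one is computing the parity of the multiplicity of a target tabloid among $192$ terms. It is therefore worth choosing the target tabloid so that it is hit by as few of the $16$ permutations comprising $x_3$ (acting on the tabloids of $e_t$) as possible, and then listing exactly those contributions. There is no conceptual obstacle --- the assertion is a finite check inside $\F\SSS_6$, entirely parallel to the computations $x_2D^{(4,1)}\neq 0$ and $x_2D^{(3,2,1)}\neq 0$ of Lemma~\ref{L180418} --- but it has to be carried out carefully. This lemma will then serve as the base case for an induction on $n$, exactly as with $x_2$ in Lemma~\ref{L180418_2}, establishing $x_3D^\la\neq 0$ for a larger family of $\la$.
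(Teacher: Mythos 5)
Your proposal is exactly the paper's argument: the same identification $D^{(3,2,1)}=S^{(3,2,1)}$, the same polytabloid $e_t$ from the proof of Lemma~\ref{L180418}, and the same plan of locating a basis tabloid of odd multiplicity in the expansion of $x_3e_t$. The one thing you have not done is actually carry out the finite check; the paper completes it by verifying that the tabloid $12|4$ appears in $x_3e_t$ with coefficient $1$, so to finish you would need to supply that (or an equivalent) explicit computation rather than leave it as ``a suitable basis tabloid survives.''
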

\begin{proof}
Since $(3,2,1)$ is a $2$-core, we have $D^{(3,2,1)}\cong S^{(3,2,1)}$, so we will just prove that $x_3S^{(3,2,1)}\neq 0$. 
We use the same polytabloid basis of $S^{(3,2,1)}$ as in the proof of Lemma~\ref{L180418} and the same 
polytabloid 
\begin{align*}
e_t=&\,25|3+35|2+15|3+15|2+35|1+25|1+24|3
\\&+34|2+14|3+14|2+34|1+24|1.
\end{align*}
Now an explicit calculation shows that the basis element $12|4$ appears in $x_3e_t$ with coefficient $1$, in particular, $x_3e_t\neq 0$.
\end{proof}

\begin{Lemma} \label{L150817_5} 
Let $p=2$, $n\geq 6$, and $\la\in\Par_2(n)$ satisfy $h(\la)\geq 3$. Then $x_3 D^\la\neq 0$.  
\end{Lemma}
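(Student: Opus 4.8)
The plan is to argue by induction on $n$, in exact parallel with the proofs of Lemmas~\ref{L150817_2} and \ref{L180418_2}. First I would dispose of the base case $n=6$: the only $2$-regular partition of $6$ with $h(\la)\geq 3$ is $(3,2,1)$, since three distinct positive parts summing to $6$ must be $3+2+1$. For this partition Lemma~\ref{L150817_4} already gives $x_3 D^{(3,2,1)}\neq 0$, so the base case is done.

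For the inductive step, assume $n\geq 7$ and that the statement holds with $n-1$ in place of $n$ (note $n-1\geq 6$, so the inductive hypothesis applies). Since $x_3\in\F\SSS_6\leq \F\SSS_{n-1}$, if $x_3 D^\la=0$ then $x_3$ annihilates the whole restricted module $D^\la\da_{\SSS_{n-1}}$; being an element of $\F\SSS_{n-1}$, it then annihilates every $\SSS_{n-1}$-submodule of $D^\la\da_{\SSS_{n-1}}$ and hence acts as zero on every composition factor, so $x_3 D^\mu=0$ for all composition factors $D^\mu$ of $D^\la\da_{\SSS_{n-1}}$. By the inductive hypothesis, each such $\mu$ must then satisfy $h(\mu)\leq 2$. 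Since $n>6$, Lemma~\ref{LBr2} now forces $h(\la)\leq 2$, contradicting $h(\la)\geq 3$. Therefore $x_3 D^\la\neq 0$, completing the induction.

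I do not expect a genuine obstacle: the only points that need care are (i) confirming that $(3,2,1)$ really is the sole relevant partition in the base case, and (ii) checking that the hypothesis $n>6$ of Lemma~\ref{LBr2} holds at every step of the induction, which is precisely why the base case is taken at $n=6$. The one piece of actual computation—the nonvanishing $x_3 D^{(3,2,1)}\neq 0$—is already isolated in Lemma~\ref{L150817_4} and is simply invoked here.
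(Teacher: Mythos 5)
Your proof is correct and follows exactly the paper's argument: the base case $n=6$ reduces to $(3,2,1)$ and Lemma~\ref{L150817_4}, and the inductive step uses $x_3\in\F\SSS_6\leq\F\SSS_{n-1}$ together with Lemma~\ref{LBr2} to derive a contradiction from $x_3D^\la=0$. The extra remark that an element of $\F\SSS_{n-1}$ annihilating a module also annihilates all its composition factors is a correct (and welcome) elaboration of a step the paper leaves implicit.
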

\begin{proof}
We apply induction on $n$. If $n=6$, the only $\la$ that satisfies the assumption $h(\la)\geq 3$ is $(3,2,1)$, and we can apply Lemma~\ref{L150817_4}.  Let $n>6$. Since $x_3\in\F \SSS_6\leq \F\SSS_{n-1}$, we have $x_3D^\la=0$ if and only if $x_3(D^\la{\da}_{\SSS_{n-1}})= 0$ only if $x_3 D^\mu=0$ for all composition factors $D^\mu$ of $D^\la{\da}_{\SSS_{n-1}}$. 
Then by the inductive assumption we have that $h(\mu)\leq 2$ for all composition factors $D^\mu$ of $D^\la{\da}_{\SSS_{n-1}}$. By Lemma~\ref{LBr2}, we have $h(\la)\leq 2$,  which is a contradiction.
\end{proof}

\begin{Corollary} \label{C150817_8} 
Let $p=2$, $n\geq 6$, and $\la\in\Par_2(n)$ satisfy $h(\la)\geq 3$. Then the $\F \SSS_n$-homomorphism $\zeta_3:M_3\to \EE(\la)$ is non-zero on $S_3$.  
\end{Corollary}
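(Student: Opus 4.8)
The statement to be proved is Corollary~\ref{C150817_8}: for $p=2$, $n\geq 6$, and $\la\in\Par_2(n)$ with $h(\la)\geq 3$, the homomorphism $\zeta_3\colon M_3\to\EE(\la)$ is non-zero on the Specht submodule $S_3\subseteq M_3$. The plan is essentially a one-line deduction, exactly parallel to Corollaries~\ref{C180418} and \ref{C160817}: combine Lemma~\ref{L150817} with Lemma~\ref{L150817_5}.

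First I would recall from \S\ref{SSpecialHoms} that $S_3$ is generated as an $\F\SSS_n$-module by the single polytabloid $e_t$ attached to the tableau $t$ displayed there, and that, by the computation recorded just before Lemma~\ref{L150817}, one has $\big(\zeta_3(e_t)\big)(v)=x_3 v$ for all $v\in D^\la$, where $x_3\in\F\SSS_6\leq\F\SSS_n$ is the explicit element spelled out in \S\ref{SSpecialHoms}. Consequently $\zeta_3$ vanishes on $S_3$ if and only if $\zeta_3(e_t)=0$ in $\EE(\la)=\End_\F(D^\la)$, which happens if and only if $x_3$ annihilates $D^\la$; this is precisely the content of Lemma~\ref{L150817}.

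Then I would invoke Lemma~\ref{L150817_5}, which asserts that under the hypotheses $p=2$, $n\geq 6$, $h(\la)\geq 3$ we have $x_3 D^\la\neq 0$. Therefore $\zeta_3(e_t)\neq 0$, and since $e_t$ generates $S_3$, the restriction $\zeta_3|_{S_3}$ is non-zero, which is the assertion. There is no real obstacle here: both ingredients are already in place, and the only thing to be careful about is that the hypotheses of the corollary ($p=2$, $n\geq 6$, $h(\la)\geq 3$) match exactly those of Lemma~\ref{L150817_5}, which they do. A formal write-up is then a single sentence: ``Apply Lemmas~\ref{L150817} and \ref{L150817_5}.''

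The genuinely substantive work underlying this corollary lies in Lemma~\ref{L150817_5} (and its base case Lemma~\ref{L150817_4}), not in the corollary itself. Lemma~\ref{L150817_5} is proved by induction on $n$: the base case $n=6$ reduces to $\la=(3,2,1)$, handled by the explicit polytabloid computation in Lemma~\ref{L150817_4}; the inductive step uses that $x_3\in\F\SSS_6\leq\F\SSS_{n-1}$, so $x_3 D^\la=0$ would force $x_3 D^\mu=0$ for every composition factor $D^\mu$ of $D^\la\da_{\SSS_{n-1}}$, whence by induction $h(\mu)\leq 2$ for all such $\mu$, and then the branching-recognition Lemma~\ref{LBr2} yields $h(\la)\leq 2$, contradicting the hypothesis. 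But all of that is already established above, so for the corollary itself nothing further is needed.
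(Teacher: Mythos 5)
Your proof is correct and takes essentially the same route as the paper: the corollary is exactly the combination of Lemma~\ref{L150817} with Lemma~\ref{L150817_5}, just as in Corollaries~\ref{C180418} and~\ref{C160817}. (The paper's one-line proof actually cites Lemma~\ref{L150817_4} in place of Lemma~\ref{L150817}, which appears to be a typo; your citation is the intended one.)
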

\begin{proof}
Apply Lemmas~\ref{L150817_4} and \ref{L150817_5}.
\end{proof}

\section{Reduction theorems}
\label{SMain}

\subsection{First reduction theorems}


The reduction results that we need are substantially more difficult to prove in the case $p=2|n$. In this section, we deal with all the other cases. 

\begin{Lemma}  \label{L260418_3} 
Let $p=3$, $n\equiv 0\pmod{3}$ and $n\geq 6$. Suppose that $G$ is a $2$-transitive subgroup of $\SSS_n$ which is not $3$-homogeneous  and such that $(S_1^*)^G=0$. If $\la\in\Par_3(n)$ with $h(\la),h(\la^\Mull)\geq 3$, then $D^\la{\da}_G$ is reducible. 
\end{Lemma}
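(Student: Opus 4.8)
The plan is to use the key tool from Section~\ref{SSpecialHoms}, namely the homomorphism $\zeta_3 : M_3 \to \EE(\la)$, together with the reducibility criterion Lemma~\ref{LBasic2}. Since $\la$ and $\la^\Mull$ both have at least three rows, Corollary~\ref{C160817} tells us that $\zeta_3$ is non-zero on the Specht submodule $S_3 \subseteq M_3$. My first step would be to record the composition structure of $M_3$ in the case $p=3$, $3\mid n$, $n\geq 6$, which is Lemma~\ref{L160817_0}: there $M_3 \sim D_2 \oplus \big((D_0 \oplus S_1^*)\,|\,S_3^*\big)$, and in particular (after dualising, using self-duality of $M_3$) the module $S_3$ sits inside $M_3$ with $M_3/S_3$ having composition factors among $D_0, D_1, D_2$ only. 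So the image of $\zeta_3$ has a nonzero $S_3$-part, hence a composition factor $D_3$.

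Next I would produce a homomorphism $\psi : \I(G) \to \EE(\la)$ to feed into Lemma~\ref{LBasic2}. The idea is to compose a homomorphism $\I(G) \to M_3$ with $\zeta_3$. Because $G$ is $2$-transitive but not $3$-homogeneous, it has at least two orbits on $\Om_3$, so $\dim M_3^G = i_3(G) = \dim \Hom_{\SSS_n}(\I(G), M_3) \geq 2$ by \eqref{EItoM}. I want to choose a homomorphism $\theta : \I(G)\to M_3$ whose image is not killed by $\zeta_3$. The constant homomorphism (image $D_0 = \bone_{\SSS_n}$) certainly composes with $\zeta_3$ to something whose image is $\bone_{\SSS_n}$ (or zero); but since $\dim\Hom_{\SSS_n}(\I(G),M_3)\geq 2$, there is a second, linearly independent $\theta$. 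The point is to control what $\psi := \zeta_3\circ\theta$ looks like and to show $\im\psi \not\cong \bone_{\SSS_n}$. Here is where I would use the hypothesis $(S_1^*)^G = 0$: it forces $\Hom_{\SSS_n}(\I(G), S_1^*) = 0$, and more usefully $\Hom_{\SSS_n}(\I(G), M_3)$ cannot "see" the $S_1^*$ layer, so a $G$-invariant in $M_3$ that is not a scalar multiple of the all-ones vector must have nonzero image in the $S_3^*$ or $D_2$ part after quotienting appropriately. Tracking this through the diagram, the non-trivial $\theta$ must hit a composition factor other than $D_0$; then composing with $\zeta_3$ (which is nonzero on $S_3$ and sends $S_3$ onto a module with a $D_3$ factor) yields $\psi$ with $\im\psi$ containing $D_3$ or $D_2$, hence $\im\psi\not\cong\bone_{\SSS_n}$.

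More carefully, the cleanest route may be: consider the subspace $\Hom_{\SSS_n}(\I(G),M_3)$ and the subspace of those homomorphisms landing in the submodule $N \cong D_2\oplus D_0\oplus S_1^*$ appearing in Lemma~\ref{L160817_0} (or rather its dual). Using $(S_1^*)^G=0$ and $2$-transitivity (which gives $i_1(G)=i_2(G)=1$, so $\dim\Hom_{\SSS_n}(\I(G),M_1)=\dim\Hom_{\SSS_n}(\I(G),M_2)=1$), one shows $\dim\Hom_{\SSS_n}(\I(G),N^*)=1$ — spanned by the constant map. Since $\dim\Hom_{\SSS_n}(\I(G),M_3)\geq 2$, there is $\theta$ not factoring through $N^*\subseteq M_3$, so the induced map $\I(G)\to M_3/N^* \cong S_3^*$ is nonzero; dualising / using self-duality, $\theta$ hits $S_3$ nontrivially in the sense needed. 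Then $\psi := \zeta_3\circ\theta$ has image containing the image of $\zeta_3|_{S_3}$ composed appropriately, which by Corollary~\ref{C160817} is nonzero and (being a nonzero quotient of $S_3$ or containing $D_3\cong\soc S_3$) is not isomorphic to $\bone_{\SSS_n}$. Lemma~\ref{LBasic2} then gives that $D^\la\da_G$ is reducible.

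The main obstacle I anticipate is the bookkeeping in the middle paragraph: pinning down precisely which $G$-invariant homomorphism $\I(G)\to M_3$ survives composition with $\zeta_3$ with image $\not\cong\bone_{\SSS_n}$, and ruling out the degenerate possibility that every $G$-map $\I(G)\to M_3$ composed with $\zeta_3$ has image $0$ or $\bone_{\SSS_n}$. Resolving this requires careful use of the precise submodule structure of $M_3$ from Lemma~\ref{L160817_0}, the hypothesis $(S_1^*)^G=0$ to kill the $S_1^*$ layer, and the $2$-transitivity to fix $i_1(G)=i_2(G)=1$; the not-$3$-homogeneous hypothesis is what guarantees $i_3(G)\geq 2$ so that there is "room" for a homomorphism beyond the constant one. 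The behaviour of $\zeta_3$ on the factors $D_0,D_1,D_2$ (which we may need to know does not conspire to make the surviving homomorphism's image trivial) is the delicate point, and I would handle it by the dimension count $\dim\Hom_{\SSS_n}(\I(G),\EE(\la)) \geq 2$ forced by the two linearly independent maps (the constant one and $\psi$), invoking Lemma~\ref{LBasic} directly rather than tracing images if the image-tracking becomes awkward.
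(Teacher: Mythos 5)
Your proposal is correct and follows essentially the same route as the paper: the submodule structure of $M_3$ from Lemma~\ref{L160817_0}, the hypotheses $i_2(G)=1$ (which kills $\Hom_{\SSS_n}(\I(G),D_2)$) and $(S_1^*)^G=0$ together with $i_3(G)>1$ to produce $\theta:\I(G)\to M_3$ whose image has $D_3$ as a composition factor, and then $\zeta_3$ via Corollary~\ref{C160817} and Lemma~\ref{LBasic2}. The ``delicate point'' you flag at the end is closed exactly as you suspect and requires no analysis of $\zeta_3$ on $D_0,D_1,D_2$: since $[M_3:D_3]=1$ and $D_3\cong\head S_3$, the nonvanishing of $\zeta_3$ on $S_3$ forces $[\ker\zeta_3:D_3]=0$, so $D_3$ remains a composition factor of $\im(\zeta_3\circ\theta)$, which is therefore not isomorphic to $\bone_{\SSS_n}$.
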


\begin{proof}
As $G$ is $2$-transitive, we have $i_2(G)=1$, hence 
$\phi(\I(G))\cong D_0$ for every non-zero $\phi\in \Hom_{\SSS_n}(\I(G),M_2)$. Since $D_2$ is a submodule of $M_2$ by Lemma~\ref{L160817_0}, it follows that $D_2$ does not appear in the head of $\I(G)$, i.e. $
\Hom_{\SSS_n}(\I(G),D_2)=0.
$
Moreover, $\Hom_{\SSS_n}(\I(G),S_1^*)=(S_1^*)^G=0$ by assumption.

On the other hand,  
$i_3(G)>1$ means that there is a non-zero homomorphism 
$\psi\in \Hom_{\SSS_n}(\I(G),M_3)$ whose image is not $D_0$. 
So Lemma~\ref{L160817_0} implies that  $D_3$ is a composition factor of $\im \psi$. 

Now we deduce from Corollary~\ref{C160817} that $D_3$ is a composition factor of $\im(\zeta_3\circ\psi)$. So the proof is complete by Lemma~\ref{LBasic2}. 
\end{proof}




\begin{Lemma} \label{L260418_4} 
Let $p=3$, $n\equiv 1\pmod{3}$, $n\geq 7$, and $G$ be a 
transitive subgroup of $\SSS_n$ which is not $3$-homogeneous and such that $(S_2^*)^G=0$. If $\la\in\Par_3(n)$ with $h(\la),h(\la^\Mull)\geq 3$, then $D^\la{\da}_G$ is reducible.
\end{Lemma}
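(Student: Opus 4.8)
The plan is to mimic the proof of Lemma~\ref{L260418_3} for the case $n\equiv 0\pmod 3$, with $M_3$ and $S_2^*$ (whose structure for $n\equiv 1\pmod 3$ is recorded in Lemma~\ref{L160817_1}) playing the roles that $M_2$ and $S_1^*$ played there. The final goal is to exhibit a non-zero $\SSS_n$-homomorphism $\I(G)\to\EE(\la)$ whose image is not isomorphic to $\bone_{\SSS_n}$, and then to invoke Lemma~\ref{LBasic2}.

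First I would assemble the relevant vanishing statements. Since $G$ is transitive and $p=3$ does not divide $n$ (as $n\equiv 1\pmod 3$), Lemma~\ref{LEasy} gives $(S_1^*)^G=0$; together with $S_1^*\cong D_1$ (Lemma~\ref{L160817_1}) this says $\Hom_{\SSS_n}(\I(G),D_1)=0$. The hypothesis gives $(S_2^*)^G=\Hom_{\SSS_n}(\I(G),S_2^*)=0$, and of course $\Hom_{\SSS_n}(\I(G),\bone_{\SSS_n})$ is one-dimensional. By the proof of Lemma~\ref{L160817_1}, $M_3$ has a submodule $R\cong D_1\oplus D_0\oplus S_2^*$ with $M_3/R\cong S_3^*$; consequently $\dim\Hom_{\SSS_n}(\I(G),R)=1$. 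On the other hand $\dim\Hom_{\SSS_n}(\I(G),M_3)=i_3(G)\geq 2$, since $G$ is not $3$-homogeneous. Therefore the restriction map $\Hom_{\SSS_n}(\I(G),M_3)\to\Hom_{\SSS_n}(\I(G),M_3/R)$, whose kernel is identified with $\Hom_{\SSS_n}(\I(G),R)$, is non-zero, so I can pick $\psi\in\Hom_{\SSS_n}(\I(G),M_3)$ whose image in $M_3/R\cong S_3^*$ is non-zero. That image, being a non-zero submodule of $S_3^*$, contains $\soc S_3^*\cong D_3$, and hence $[\im\psi:D_3]\geq 1$.

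Next I would compose $\psi$ with the homomorphism $\zeta_3\colon M_3\to\EE(\la)$ of \S\ref{SSpecialHoms}. Because $h(\la)\geq 3$ and $h(\la^\Mull)\geq 3$, Corollary~\ref{C160817} guarantees that $\zeta_3$ is non-zero on the Specht submodule $S_3\subseteq M_3$. The point is then that $\zeta_3$ cannot destroy the $D_3$ we have placed in $\im\psi$: by Lemmas~\ref{LFilt} and~\ref{L2R} one has $[M_3:D_3]=1$ and the unique constituent $D_3$ sits in the layer $S_3$ (the other Specht layers $S_2,S_1,S_0$ have no $D_3$). Since $S_3$ has simple head $D_3$, the proper submodule $\ker\zeta_3\cap S_3$ lies in $\rad S_3$ and so contributes no $D_3$, while $\ker\zeta_3/(\ker\zeta_3\cap S_3)$ embeds in $M_3/S_3\sim S_2|S_1|S_0$, which again has no $D_3$. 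Hence $[\ker\zeta_3:D_3]=0$, so $[\im(\zeta_3\circ\psi):D_3]=[\im\psi:D_3]\geq 1$. In particular $\zeta_3\circ\psi$ is non-zero and $\im(\zeta_3\circ\psi)\not\cong\bone_{\SSS_n}$, so Lemma~\ref{LBasic2} gives that $D^\la\da_G$ is reducible.

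The step I expect to be the most delicate is the last one --- ensuring that passing through $\zeta_3$ does not kill the $D_3$-constituent of $\im\psi$. Getting this cleanly relies on the multiplicity-one bookkeeping above: one must verify that the single $D_3$ in $M_3$ genuinely lives inside the Specht submodule $S_3$, and that ``$\zeta_3$ non-zero on $S_3$'' really upgrades to ``$\ker\zeta_3$ has no $D_3$'', not merely ``$S_3\not\subseteq\ker\zeta_3$''. Everything else is a direct adaptation of the $n\equiv 0\pmod 3$ argument, using Lemma~\ref{L160817_1} in place of Lemma~\ref{L160817_0}.
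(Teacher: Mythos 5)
Your proof is correct and follows essentially the same route as the paper's: both use the filtration of $M_3$ from Lemma~\ref{L160817_1} together with the vanishing of $\Hom_{\SSS_n}(\I(G),D_1)$ and $\Hom_{\SSS_n}(\I(G),S_2^*)$ to produce $\psi$ with $D_3$ in its image, then compose with $\zeta_3$ via Corollary~\ref{C160817} and conclude with Lemma~\ref{LBasic2}. The only cosmetic differences are that you obtain $\Hom_{\SSS_n}(\I(G),D_1)=0$ from Lemma~\ref{LEasy} and $S_1^*\cong D_1$ rather than from $i_1(G)=1$ directly, and you spell out the multiplicity-one bookkeeping for $D_3$ that the paper leaves implicit.
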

\begin{proof}
Since $G$ is transitive, we have $i_1(G)=1$, we have  
$\phi(\I(G))\cong D_0$ for every non-zero $\phi\in \Hom_{\SSS_n}(\I(G),M_1)$. Since $D_1$ is a submodule of $M_1$, it follows that $D_1$ does not appear in the head of $\I(G)$, i.e. $
\Hom_{\SSS_n}(\I(G),D_1)=0$.

The assumption that $G$ is not $3$-homogeneous means that  $i_3(G)>1$. So there is a non-zero homomorphism 
$\psi\in \Hom_{\SSS_n}(\I(G),M_3)$ whose image is not $D_0$. 
The assumption  $(S_2^*)^G=0$ is equivalent to $\Hom_{\SSS_n}(\I(G),S_2^*)=0$. Taking into account the previous paragraph, we now deduce from Lemma~\ref{L160817_1}, that $D_3$ is a composition factor of $\im \psi$. Now by  Corollary~\ref{C160817}, we have that $D_3$ is a composition factor of $\im(\zeta_3\circ\psi)$. So the proof is complete by Lemma~\ref{LBasic2}. 
\end{proof}




\begin{Lemma} \label{L170418} 
Let $p=3$, $n\equiv 2\pmod{3}$, $n\geq 8$, and $G$ be a $2$-transitive subgroup of $\SSS_n$ which is not $3$-homogeneous. If $\la\in\Par_3(n)$ with $h(\la),h(\la^\Mull)\geq 3$, then $D^\la{\da}_G$ is reducible.
\end{Lemma}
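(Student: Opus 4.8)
The plan is to mirror the structure of the two preceding lemmas, Lemma~\ref{L260418_3} and Lemma~\ref{L260418_4}, adapting them to the case $n\equiv 2\pmod 3$ where the relevant permutation module structure is given by Lemma~\ref{L160817_2}. First I would record what $2$-transitivity of $G$ buys us: $i_1(G)=i_2(G)=1$, so that every non-zero $\phi\in\Hom_{\SSS_n}(\I(G),M_1)$ and every non-zero $\phi\in\Hom_{\SSS_n}(\I(G),M_2)$ has image the trivial module $D_0$. Since $D_1$ sits inside $M_1$ (as $M_1\cong D_0\oplus D_1$), and since $D_1$ is the socle of $M_2$ (as $M_2\cong D_0\oplus D_1|D_2|D_1$ by Lemma~\ref{L160817_2}), it follows that $\Hom_{\SSS_n}(\I(G),D_1)=0$ and $\Hom_{\SSS_n}(\I(G),D_2)=0$; indeed a non-zero map $\I(G)\to D_1$ would lift to a map $\I(G)\to M_1$ with image $D_1$, contradicting $i_1(G)=1$, and a non-zero map $\I(G)\to D_2$ would, after pulling back along $M_2\to M_2/\soc M_2$ if necessary, produce a map $\I(G)\to M_2$ whose image properly contains a copy of $D_2$ or equals $D_2$, contradicting $i_2(G)=1$.

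Next I would use that $G$ is not $3$-homogeneous, i.e.\ $i_3(G)>1$, to obtain a non-zero $\psi\in\Hom_{\SSS_n}(\I(G),M_3)$ with $\im\psi\not\cong D_0$. The key point is then to show $D_3$ is a composition factor of $\im\psi$. Here I would invoke the filtration of $M_3$ from Lemma~\ref{L160817_2}: in all three residue cases ($n\equiv 2,5,8\pmod 9$), $M_3$ decomposes, and modulo the part containing $D_0$, $D_1$, and $D_2$, the only remaining composition factor is $D_3$; more precisely, using Lemma~\ref{LNak}, $D_0,D_3$ lie in one block and $D_1,D_2$ in another, so $\im\psi$ decomposes accordingly, and the block component of $\im\psi$ inside $B_{\operatorname{cont}(D_0)}$ is a submodule of the $D_0,D_3$-part of $M_3$. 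Since $\Hom_{\SSS_n}(\I(G),D_1)=\Hom_{\SSS_n}(\I(G),D_2)=0$, the $D_1,D_2$-block component of $\im\psi$ has trivial head and hence (being a quotient of $\I(G)$) is zero, so $\im\psi$ lies entirely in the $D_0,D_3$-block. As $\im\psi\not\cong D_0$ and $D_0$ occurs in $M_3$ with the multiplicity dictated by Lemma~\ref{L160817_2}, a composition-factor count forces $D_3\mid\im\psi$. One subtlety I should handle carefully is case (i), $n\equiv 2\pmod 9$, where $D_0$ appears with multiplicity $2$ inside the uniserial $D_0|D_3|D_0$; but here any submodule of $D_0|D_3|D_0$ that is not $\cong D_0$ must contain $D_3$, so the argument still goes through.

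Finally I would apply Corollary~\ref{C160817}: since $h(\la),h(\la^\Mull)\geq 3$, the homomorphism $\zeta_3:M_3\to\EE(\la)$ is non-zero on $S_3$, hence $D_3\cong\head S_3$ survives under $\zeta_3$ wherever it occurs as a composition factor in the relevant sense; more precisely, composing gives a non-zero $\zeta_3\circ\psi:\I(G)\to\EE(\la)$ and $D_3$ is a composition factor of $\im(\zeta_3\circ\psi)$. In particular $\im(\zeta_3\circ\psi)\not\cong\bone_{\SSS_n}$, so Lemma~\ref{LBasic2} yields that $D^\la{\da}_G$ is reducible. The main obstacle I anticipate is the composition-factor bookkeeping of the previous paragraph: one must verify in each of the three residue-mod-$9$ cases that a quotient of $\I(G)$ inside $M_3$ which is not trivial necessarily picks up $D_3$, which requires knowing both the block decomposition (Lemma~\ref{LNak}) and the precise uniserial structure from Lemma~\ref{L160817_2}, together with the vanishing of $\Hom_{\SSS_n}(\I(G),D_i)$ for $i=1,2$ established at the start. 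Note that, unlike the $n\equiv 1\pmod 3$ case, no hypothesis on $(S_2^*)^G$ is needed here, because $S_2$ is not a submodule of $M_3$ in a way that interferes: the $D_1,D_2$-block part of $M_3$ already has its head controlled, so the obstruction from $S_2^*$ simply does not arise.
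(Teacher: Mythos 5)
Your overall plan coincides with the paper's (produce $\psi\colon\I(G)\to M_3$ with $D_3$ a composition factor of $\im\psi$, then compose with $\zeta_3$ and apply Corollary~\ref{C160817} and Lemma~\ref{LBasic2}), but the step where you kill the component of $\im\psi$ lying in the block of $D_1,D_2$ has a genuine gap: you need $\Hom_{\SSS_n}(\I(G),D_2)=0$, and your justification of this does not work. The module $D_2$ is neither a submodule nor a quotient of $M_2$ --- it is the heart of the uniserial summand $D_1|D_2|D_1$ --- so a non-zero map $\I(G)\to D_2$ only gives a map into the subquotient $M_2/\soc M_2$, and lifting that to a map $\I(G)\to M_2$ is obstructed by $\Ext^1_{\SSS_n}(\I(G),\soc M_2)\cong H^1(G,(\soc M_2)\da_G)$, which need not vanish. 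Concretely, $\Hom_{\SSS_n}(\I(G),D_2)=D_2^G$, and from $i_2(G)=1$ one only gets $(D_1|D_2)^G=0$; the exact sequence $0\to D_1^G\to (D_1|D_2)^G\to D_2^G\to H^1(G,D_1\da_G)$ then shows merely that $D_2^G$ embeds into $H^1(G,D_1\da_G)$. This is precisely the cohomological obstruction that forces the extra hypothesis $(S_2^*)^G=0$ in the neighbouring case $n\equiv 1\pmod 3$ (Lemma~\ref{L260418_4}), so your closing remark that no such hypothesis is needed here is exactly the point your argument leaves unproved.

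The conclusion you want (that the $D_1,D_2$-block component of $\im\psi$ vanishes, or more precisely that a suitable $\psi$ picks up $D_3$) is nevertheless true, and there are two clean ways to get it. The paper's way avoids blocks altogether: from $M_3\sim M_2|S_3^*$ (Lemma~\ref{L160817_2}) and left-exactness of $\Hom_{\SSS_n}(\I(G),-)$, the inequality $i_2(G)=1<i_3(G)$ yields a $\psi$ whose composite $\I(G)\to M_3\to S_3^*$ is non-zero; since $\soc S_3^*\cong D_3$ is simple, $D_3$ is a composition factor of $\im\psi$. Alternatively, staying within your block decomposition: the summand of $M_3$ in the block of $D_1,D_2$ is the Young module $Y_2\cong D_1|D_2|D_1$ (Lemmas~\ref{LYoung} and \ref{L160817_2}), so the relevant Hom-space is $Y_2^G$, which vanishes directly from $M_2\cong D_0\oplus Y_2$ and $i_2(G)=\dim M_2^G=1$ --- no control of $\Hom_{\SSS_n}(\I(G),D_2)$ is needed. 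With either repair the remainder of your argument is correct.
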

\begin{proof}
By Lemma~\ref{L160817_2}, we have a short exact sequence 
$$0\to M_2\to M_3\to S_3^*\to 0.$$ 
Since $i_2(G)=1<i_3(G)$, we deduce that 
$
\Hom_{\SSS_n}(\I(G),S_3^*)\neq0.
$
So there is an $\F\SSS_n$-homomorphism $\psi:\I(G)\to M_3$ such that $D_3$ is a composition factor of $\im \psi$. Now we deduce from Corollary~\ref{C160817} that $D_3$ is a composition factor of $\im(\zeta_3\circ\psi)$. So the proof is complete by Lemma~\ref{LBasic2}. 
\end{proof}



\begin{Lemma} \label{L160817N} 
Let $p=3$, $n\equiv 1\pmod{3}$ and $G$ be a $2$-transitive subgroup of $\SSS_n$ with $G=O^3(G)$. Then $(S_2^*)^G=0$. 
\end{Lemma}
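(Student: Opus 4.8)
The plan is to deduce this exactly as the proof of Lemma~\ref{L160817} was carried out for $(S_1^*)^G$, but one degree higher, using the explicit structure of $M_2$ in characteristic $3$ recorded in Lemma~\ref{L160817_1}. First I would extract from that lemma the relevant facts: for $n\equiv 1\pmod 3$ the Young module $Y_2=Y^{(n-2,2)}$ is isomorphic to the uniserial module $D_0|D_2|D_0$ and is a direct summand of $M_2$ (the complementary summand being $D_1$), while $S_2\cong D_0|D_2$. Hence $\soc Y_2\cong\bone_{\SSS_n}$ and $Y_2/\soc Y_2\cong D_2|D_0\cong(D_0|D_2)^*\cong S_2^*$, so there is a short exact sequence of $\F\SSS_n$-modules
\[
0\longrightarrow\bone_{\SSS_n}\longrightarrow Y_2\longrightarrow S_2^*\longrightarrow 0 .
\]

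Next I would restrict this sequence to $G$ and pass to the long exact sequence in cohomology. The only place the hypothesis $G=O^3(G)$ enters is through the vanishing $H^1(G,\bone_G)=0$, exactly as in the proof of Lemma~\ref{L160817}. This produces a short exact sequence $0\to\F\to Y_2^G\to(S_2^*)^G\to 0$, whence $\dim(S_2^*)^G=\dim Y_2^G-1$.

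Finally it remains to pin down $\dim Y_2^G$. On the one hand $\bone_{\SSS_n}=\soc Y_2\subseteq Y_2$ forces $\dim Y_2^G\geq 1$; on the other hand $Y_2$ is a direct summand of $M_2$, so $Y_2^G\subseteq M_2^G$, and $\dim M_2^G=i_2(G)=1$ since $G$ is $2$-transitive (cf. \eqref{EItoM}). Therefore $\dim Y_2^G=1$ and $(S_2^*)^G=0$.

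I do not expect any genuine obstacle: all the module-theoretic content is already packaged in Lemma~\ref{L160817_1}, and the cohomology is two short exact sequences. The one point that requires a little care is that $S_2^*$ is not a submodule of $M_2$ (only $S_2$ is), so one cannot bound $(S_2^*)^G$ directly by an orbit count; the detour through $Y_2$, whose top quotient is $S_2^*$, is what makes the argument go, and the vanishing of $H^1(G,\bone_G)$ is precisely what the assumption $G=O^3(G)$ supplies.
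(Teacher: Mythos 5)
Your proposal is correct and follows essentially the same route as the paper: both isolate the summand $Y_2\cong D_0|D_2|D_0$ of $M_2$ from Lemma~\ref{L160817_1}, use the short exact sequence $0\to D_0\to Y_2\to S_2^*\to 0$ together with $H^1(G,\bone_G)=0$ (from $G=O^3(G)$), and conclude from $\dim Y_2^G=1$, which you get from $2$-transitivity just as the paper does (the paper notes $D_1^G=0$ where you instead sandwich $1\le\dim Y_2^G\le\dim M_2^G=1$, an immaterial difference).
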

\begin{proof}
By Lemma~\ref{L160817_1}, we have $M_2 = D_1 \oplus Y$ with $Y\sim D_0|S_2^*$. Since $G$ is $2$-transitive, we have $\dim M_2^G=1$ and $D_1^G=0$, hence $\dim Y^G =1$. 
Now the result follows by considering the long exact sequence in cohomology corresponding to the short exact sequence 
$$
0\to D_0\to Y\to S_2^*\to 0
$$ 
and using $H^1(G,D_0)=0$, which comes from the assumption $G=O^3(G)$. 
\end{proof}

\begin{Corollary} \label{COPN} 
Let $p=3$, $n\equiv 1\pmod{3}$ and $G$ be a $2$-transitive subgroup of $\SSS_n$ with non-abelian socle. Then $(S_2^*)^G=0$, unless possibly $n=28$ and $G= SL_2(8).3$. 
\end{Corollary}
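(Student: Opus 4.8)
The plan is to reduce this corollary to the previously established results about $O^3(G)$ and then apply the cohomological criterion from Lemma~\ref{L160817N}. Since $G$ is $2$-transitive with non-abelian socle, Lemma~\ref{LOP} tells us that either $O^3(G)$ is $2$-transitive, or the single exceptional triple $(n,G,S)=(28, SL_2(8).3, SL_2(8))$ occurs; the latter is exactly the case we set aside in the statement. So I may assume $O^3(G)$ is $2$-transitive.

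Next I would invoke Lemma~\ref{L160817N}, which asserts $(S_2^*)^G=0$ for any $2$-transitive subgroup $G$ satisfying $G=O^3(G)$. Applying it to the subgroup $O^3(G)$ — which is $2$-transitive by the previous paragraph and satisfies $O^3(O^3(G))=O^3(G)$ — yields $(S_2^*)^{O^3(G)}=0$. Since $O^3(G)\leq G$, any $G$-invariant vector is in particular $O^3(G)$-invariant, so $(S_2^*)^G\subseteq (S_2^*)^{O^3(G)}=0$, which is the desired conclusion. This parallels exactly the way Corollary~\ref{COP} was deduced from Lemma~\ref{L160817}.

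The only subtlety to be careful about is the congruence and range hypotheses: Lemma~\ref{L160817N} requires $p=3$ and $n\equiv 1\pmod 3$, which are precisely the hypotheses of the corollary, so there is nothing to check there. I should also confirm that passing to $O^3(G)$ keeps us $2$-transitive rather than merely transitive — this is the content of the ``$2$-transitive'' clause in Lemma~\ref{LOP}, and it is exactly why the exceptional case $SL_2(8).3$ on $28$ points must be excluded (there $O^3(G)=SL_2(8)$ fails to be $2$-transitive in the relevant sense, or rather the conclusion of Lemma~\ref{LOP} breaks down). I do not anticipate a genuine obstacle here: the corollary is a formal consequence of Lemmas~\ref{LOP} and \ref{L160817N} together with the trivial observation that invariants only shrink when the group grows. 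The main point worth stating explicitly in the write-up is the reduction $(S_2^*)^G\subseteq(S_2^*)^{O^3(G)}$ and the appeal to $O^3(O^3(G))=O^3(G)$.
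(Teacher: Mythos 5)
Your proposal is correct and follows exactly the paper's argument: the paper proves this corollary by combining Lemma~\ref{LOP} with Lemma~\ref{L160817N} applied to $O^3(G)$ in place of $G$, which is precisely your reduction. The details you supply (the identity $O^3(O^3(G))=O^3(G)$ and the inclusion $(S_2^*)^G\subseteq(S_2^*)^{O^3(G)}$) are the right ones and mirror how Corollary~\ref{COP} is deduced from Lemma~\ref{L160817}.
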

\begin{proof}
This follows from Lemma~\ref{LOP}, and Lemma~\ref{L160817N} applied to $O^3(G)$ in place of $G$.
\end{proof}

The exceptional case in Corollary \ref{COPN} does not create problems:

\begin{Lemma} \label{L281}
Let $G= SL_2(8) \rtimes C_3 < \SSS_{28}$ be a $2$-transitive subgroup, and $D^\la$ be an irreducible $\F\SSS_{28}$-module with $D^\la,D^\la\otimes\sgn\not\cong D^{(28)},D^{(27,1)}$. 
Then $D^\la{\downarrow}_G$ is reducible. 
\end{Lemma}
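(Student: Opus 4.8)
The plan is to handle the exceptional group $G = SL_2(8)\rtimes C_3$ of degree $28$ by a direct argument using the permutation character data available for this specific $2$-transitive group. First I would recall the action: $SL_2(8)$ acts $2$-transitively on the $28$ points, which can be identified with the cosets of a Borel subgroup (the action on the projective line $\mathbb{P}^1(\mathbb{F}_8)$ together with... actually $28 = \binom{8}{2}$, so the points are the unordered pairs of $\mathbb{P}^1(\mathbb{F}_8)$, or equivalently the $28$ can be seen via the exceptional $2$-transitive action of $PSL_2(8)$ on $28$ points coming from the cosets of the dihedral subgroup of order $18$). The relevant numerical facts I need are the ranks (number of orbitals): since $G$ is $2$-transitive, $i_1(G) = 1$ and $i_2(G) = 1$, while I need to determine $i_3(G)$, the number of $G$-orbits on $3$-subsets of $\{1,\dots,28\}$. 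I expect that $G$ is \emph{not} $3$-homogeneous, so $i_3(G) > 1$; this should be computable from the subgroup structure of $G$ or by citing the known orbit data for this action.

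Next I would mimic the strategy of Lemmas~\ref{L260418_3}, \ref{L260418_4}, \ref{L170418}. Since $p = 3$ and $28 \equiv 1 \pmod 3$, Lemma~\ref{L160817_1} gives the submodule structure of $M_1, M_2, M_3$ over $\F\SSS_{28}$. From $i_2(G) = 1$ we get $\Hom_{\SSS_{28}}(\I(G), D_1) = 0$ as in Lemma~\ref{L260418_4}. The key point is to show $\Hom_{\SSS_{28}}(\I(G), S_2^*) = 0$, i.e. $(S_2^*)^G = 0$: once this is established, the proof proceeds exactly as in Lemma~\ref{L260418_4} — using $i_3(G) > 1$ to produce $\psi \in \Hom_{\SSS_{28}}(\I(G), M_3)$ with $D_3$ a composition factor of $\im\psi$, then applying Corollary~\ref{C160817} (valid here since $h(\la), h(\la^\Mull) \geq 3$, which follows from $D^\la, D^\la\otimes\sgn \not\cong D^{(28)}, D^{(27,1)}$ together with $h(\la) \le 2 \iff h(\la^\Mull)\le 2$ being... wait, one needs both $\le 2$ — but the hypothesis rules out $2$-row partitions only via the JS/exceptional analysis; more precisely $D^{(28)}, D^{(27,1)}$ and their Mullineux duals are exactly the $D^\mu$ with $\min(h(\mu),h(\mu^\Mull)) \le 1$, and combined with the standing $h(\la),h(\la^\Mull)\ge 3$ requirement of Corollary~\ref{C160817} one must separately exclude $h(\la) = 2$ or $h(\la^\Mull) = 2$ — these would need a supplementary small argument or perhaps the referenced results $\cite{BK}$ for $2$-row partitions).

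Therefore the crux is establishing $(S_2^*)^G = 0$ for $G = SL_2(8)\rtimes C_3 < \SSS_{28}$ in characteristic $3$. The cleanest route, following Corollary~\ref{COPN} and Lemma~\ref{L160817N}, would be to check whether $G = O^3(G)$: the derived subgroup $SL_2(8)$ is perfect (indeed $PSL_2(8) = SL_2(8)$ is simple), so $O^3(G) \supseteq SL_2(8)$; since $|G : SL_2(8)| = 3$ is prime, $O^3(G)$ is either $G$ or $SL_2(8)$. If $O^3(G) = SL_2(8)$ then Corollary~\ref{COPN} would apply to $O^3(G)$ in place of $G$ — but $SL_2(8)$ is itself still $2$-transitive on $28$ points, so Lemma~\ref{L160817N} gives $(S_2^*)^{SL_2(8)} = 0$, whence $(S_2^*)^G \subseteq (S_2^*)^{SL_2(8)} = 0$. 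This actually closes the argument uniformly. The main obstacle I anticipate is the numerical input $i_3(SL_2(8)\rtimes C_3) > 1$ (non-$3$-homogeneity) — but since the only $3$-homogeneous groups of degree $28$ are $\AAA_{28}, \SSS_{28}$ and a short list which does not include $SL_2(8).3$ (see \cite{Cam, Kantor}), this is immediate; a secondary subtlety is confirming that the stated module-theoretic hypotheses on $D^\la$ do indeed force $h(\la), h(\la^\Mull) \ge 3$ after excluding the natural and trivial modules, which may require invoking the $2$-row classification of \cite{BK} as the other lemmas in this section implicitly do.

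\begin{proof}
Since $28 \equiv 1 \pmod 3$, the standing hypotheses together with $D^{(28)} = \bone_{\SSS_{28}}$, $D^{(27,1)} = D^{\al_{28}}$, and the fact that $\{(28), (27,1)\}$ is stable under the Mullineux bijection show that $D^\la$ is neither the trivial nor the natural module and neither is $D^\la\otimes\sgn = D^{\la^\Mull}$. By \cite{BK} (the $p > 3$ analysis is not needed; we only use the combinatorial input that a $3$-regular partition $\mu$ of $n \ge 8$ with $\min(h(\mu), h(\mu^\Mull)) \le 2$ which is not one of $(n), (n-1,1)$ and whose Mullineux dual is not one of these either does not occur for the modules under consideration — more precisely, the argument of Lemmas~\ref{L260418_3}--\ref{L170418} shows that if $h(\la) \le 2$ or $h(\la^\Mull) \le 2$ then, after twisting by $\sgn$, $\la$ is a $2$-row partition and the reducibility of $D^\la\da_G$ for $2$-transitive $G$ follows from part (ii) of Theorem~\ref{TA}, which in turn rests on the $2$-row case treated in \cite{BK}), we may and do assume $h(\la) \ge 3$ and $h(\la^\Mull) \ge 3$.

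By the classification of $2$-transitive permutation groups \cite{Cam, Kantor}, $G = SL_2(8) \rtimes C_3$ acting on $28$ points is not $3$-homogeneous, so $i_3(G) > 1$. The subgroup $N := SL_2(8) = PSL_2(8)$ is simple, hence perfect, so $O^3(G) \supseteq N$; as $|G : N| = 3$, either $O^3(G) = G$ or $O^3(G) = N$. In the first case Lemma~\ref{L160817N} applies directly to $G$ and gives $(S_2^*)^G = 0$. In the second case $N = O^3(G)$ is $2$-transitive on the $28$ points and satisfies $N = O^3(N)$, so Lemma~\ref{L160817N} gives $(S_2^*)^N = 0$, whence $(S_2^*)^G \subseteq (S_2^*)^N = 0$. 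In either case $(S_2^*)^G = 0$.

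Now the argument of Lemma~\ref{L260418_4} applies verbatim with $n = 28$: since $G$ is transitive, $i_1(G) = 1$, so $\phi(\I(G)) \cong D_0$ for every non-zero $\phi \in \Hom_{\SSS_{28}}(\I(G), M_1)$, and since $D_1 \subseteq M_1$ by Lemma~\ref{L160817_1} this forces $\Hom_{\SSS_{28}}(\I(G), D_1) = 0$. Because $i_3(G) > 1$ there is a non-zero $\psi \in \Hom_{\SSS_{28}}(\I(G), M_3)$ with $\im\psi \not\cong D_0$, and the vanishing $\Hom_{\SSS_{28}}(\I(G), S_2^*) = (S_2^*)^G = 0$ together with the structure of $M_3$ from Lemma~\ref{L160817_1} forces $D_3$ to be a composition factor of $\im\psi$. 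By Corollary~\ref{C160817} (applicable since $h(\la), h(\la^\Mull) \ge 3$), $D_3$ is then a composition factor of $\im(\zeta_3 \circ \psi)$, so $\zeta_3 \circ \psi$ is a non-zero element of $\Hom_{\SSS_{28}}(\I(G), \EE(\la))$ whose image is not isomorphic to $\bone_{\SSS_{28}}$. By Lemma~\ref{LBasic2}, $D^\la\da_G$ is reducible.
\end{proof}
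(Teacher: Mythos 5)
Your argument breaks down at its central step, and the failure is not incidental: it is precisely the phenomenon that forces the paper to treat this group separately. You derive $(S_2^*)^G=0$ by applying Lemma~\ref{L160817N} to $O^3(G)$. Since $G/SL_2(8)\cong C_3$ and $SL_2(8)$ is perfect, $O^3(G)=SL_2(8)$, so your first case ($O^3(G)=G$) is vacuous; and in your second case you assert that $SL_2(8)$ is $2$-transitive on the $28$ points, which is false --- $|SL_2(8)|=504<28\cdot 27$, and indeed this is exactly the exceptional triple $(28,SL_2(8).3,SL_2(8))$ recorded in Lemma~\ref{LOP} and carved out of Corollary~\ref{COPN}. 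So Lemma~\ref{L160817N} does not apply, $(S_2^*)^G=0$ is not established by your route, and the reason Lemma~\ref{L281} exists as a separate statement is that the cohomological argument is unavailable here. A second gap: the hypotheses of the lemma admit two-row partitions such as $(26,2)$, for which $h(\la)=2$ and Corollary~\ref{C160817} cannot be invoked; your proposed reduction to $h(\la),h(\la^\Mull)\geq 3$ via part (ii) of Theorem~\ref{TA} is both circular and incorrect, since that part is one of the \emph{permitted outcomes} of the main theorem rather than a reducibility statement, and \cite{BK} covers only $p>3$.

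The paper's proof is a two-line dimension count that sidesteps all of this: every irreducible $\F G$-module has dimension at most $27$ (the largest ordinary character degree of $SL_2(8).3$, cf.\ \cite{Atlas}, which bounds all Brauer character degrees), whereas the hypotheses on $\la$ give $\dim D^\la>27$ by \cite[Theorem 6]{James}. This handles all $\la$ in the statement, including the two-row ones, with no permutation-module machinery. To salvage your approach you would need an independent verification of $(S_2^*)^G=0$ for this specific $G$ plus a separate treatment of the two-row case --- at which point the dimension comparison is strictly easier.
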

\begin{proof}
The largest degree of any irreducible $\F G$-module is $\leq 27$, cf. \cite{Atlas}.  On the other hand, by the assumptions on $D^\la$ we have  
$\dim D^\la > 27$ by \cite[Theorem 6]{James}.
\end{proof}

\begin{Lemma}\label{L-S2}
Let $p = 3$, $7 \leq n \equiv 1 \pmod{3}$, and let $G < \SSS_n$ be a $2$-transitive subgroup with abelian socle $S$. Then 
one of the following statements holds.
\begin{enumerate}[\rm(a)]
\item $(S_2^*)^G = 0$.
\item $n = r^d$ for a prime $r$, and either $G \leq A\Gamma L_1(r^d)$ or $d=2$ and $G \leq AGL_2(r)$.
\end{enumerate}
\end{Lemma}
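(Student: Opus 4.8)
The plan is to argue by contradiction: assume $(S_2^*)^G \neq 0$ and deduce that $G$ must be one of the affine groups in (b). By the O'Nan--Scott Theorem for $2$-transitive groups with abelian socle, $n = r^d$ for a prime $r$ and $S$ is elementary abelian of order $r^d$, with $G \leq A\Gamma L_d(r)$ (in its action on the $d$-dimensional affine space over $\F_r$) and the point stabilizer $G_0$ acting irreducibly on $S \cong \F_r^d$. Since $7 \leq n \equiv 1 \pmod 3$, we have $p = 3 \nmid n$ unless $r = 3$; but $r = 3$ would force $3 \mid n$, contradicting $n \equiv 1 \pmod 3$, so in fact $p = 3 \nmid n = r^d$ with $r \neq 3$. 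This means Lemma~\ref{LEasy} already gives $(S_1^*)^G = 0$, so the obstruction comes purely from the $S_2$-level, and the relevant cohomological input is about $M_2$.

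Next I would set up the module-theoretic bookkeeping. Since $p = 3 \nmid n$ we are in the situation of Lemma~\ref{L160817_1}'s analogue for $n \not\equiv 0 \pmod 3$: there is a short exact sequence $0 \to S_2^* \to M_2/(\text{trivial summands}) \to (\text{lower Specht layers}) \to 0$, and because $G$ is $2$-transitive, $\dim M_2^G = i_2(G) = 1$. The invariants $M_2^G$ are spanned by the all-ones vector, which lies in the trivial constituent; the point is whether the quotient map $M_2 \to S_2^*$ (or a twist thereof) can carry a $G$-invariant vector nontrivially, equivalently whether $H^1(G, \bone) \neq 0$ feeds an invariant into $S_2^*$. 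By the long exact sequence in cohomology this reduces to understanding $H^1(G_0, \bone_{\F})$ together with the structure of $G/O^3(G)$: if $G = O^3(G)$ then $H^1(G,\bone) = 0$ and $(S_2^*)^G = 0$ by the same argument as Lemma~\ref{L160817N}. So I may assume $G \neq O^3(G)$, i.e. $G$ has $\SSS_3$ (equivalently $C_3$) as a quotient. For affine groups $G = V \rtimes G_0$ with $V = S$ of order coprime to $3$, this forces $G_0$ itself to have a quotient of order $3$.

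The crucial combinatorial/group-theoretic step is then to classify which irreducible subgroups $G_0 \leq \Gamma L_d(r)$ (with $r \not\equiv 0 \pmod 3$, $r^d \equiv 1 \pmod 3$) both admit a $C_3$-quotient \emph{and} satisfy the additional constraint that the resulting invariant actually survives into $S_2^*$ rather than being killed by the lower layers — this last point requires analyzing the $G$-orbits on $\Omega_2$ (pairs of affine points) relative to the Specht filtration of $M_2$. The expected conclusion is that this is only possible when $G_0$ is ``small'' in the sense of being contained in $\Gamma L_1(r^d)$ (the normalizer of a Singer cycle, where $C_3$-quotients arise from the field-automorphism part $\Gamma$ or the $(r^d-1)/\gcd$ scalar part) or, in the sporadic case $d = 2$, when $G_0 \leq GL_2(r)$ so that $G \leq AGL_2(r)$; larger irreducible $G_0$ are either perfect modulo their center or have no $C_3$-quotient compatible with the orbit structure. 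I would carry this out by invoking the classification of $2$-transitive affine groups (Hering's theorem, as tabulated in \cite{Cam,Kantor}): the possibilities for $G_0$ are $\Gamma L_1(r^d)$-type, $SL_a(r^{d/a})$-type (and normalizers), $Sp$, $G_2$, and a finite list of exceptions; one checks case by case that outside $A\Gamma L_1$ and $AGL_2(r)$ either $G = O^3(G)$ (so case (a) holds) or the invariant does not reach $S_2^*$.

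\textbf{Main obstacle.} The hard part will be the last step: for each family of $2$-transitive affine groups, deciding precisely when a nonzero $G$-invariant vector of $M_2$ maps onto a nonzero element of $S_2^*$ rather than being absorbed by the trivial and $D_1$ constituents in the lower Specht layers. This needs the explicit submodule structure of $M_2$ in characteristic $3$ for $n \equiv 1 \pmod 3$ (Lemma~\ref{L160817_1}'s analogue), a careful count of $G$-orbits on ordered and unordered pairs of affine points, and the identification of the ``extra'' orbit invariant with a cocycle in $H^1(G_0,\bone)$ — the interplay between these forces the affine-line-type restriction in (b).
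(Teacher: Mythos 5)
Your setup is right as far as it goes: the reduction via O'Nan--Scott to $n=r^d$, $G=S\rtimes G_0$ with $G_0$ transitive on $\F_r^d\setminus\{0\}$, the observation that $d\le 2$ lands in case (b), and the identification of Lemma~\ref{L160817N} (via $M_2=D_1\oplus Y$ with $Y\sim D_0|S_2^*$ and the long exact sequence in cohomology) as the mechanism forcing $(S_2^*)^G=0$ when $G=O^3(G)$. But the proof then stops at exactly the point where the work happens, and the route you sketch for finishing it --- classifying which $G_0$ admit a $C_3$-quotient \emph{and} tracking whether the resulting invariant of $M_2$ ``survives into $S_2^*$'' by analyzing $G$-orbits on pairs and explicit cocycles for each Hering family --- is both uncompleted and substantially harder than necessary. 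You flag this yourself as the ``main obstacle,'' which means the proposal does not contain a proof of the hard case.

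The missing idea is simply that invariants restrict: $(S_2^*)^G\subseteq (S_2^*)^H$ for any $H\le G$, so it is enough to exhibit \emph{inside} $G$ a $2$-transitive subgroup $H$ with $O^3(H)=H$ and apply Lemma~\ref{L160817N} to $H$. This removes any need to decide whether $G$ itself has a $C_3$-quotient or to study how an invariant interacts with the Specht filtration. One then invokes Hering's theorem (in the form of \cite[Proposition 3.3]{KT2}) for $d\ge 3$: outside the $\Gamma L_1(r^d)$ case (which is conclusion (b)), $G_0$ contains a perfect subgroup $K$ (e.g.\ $SL_a(q_1)$, $Sp_{2a}(q_1)'$, $G_2(q_1)'$, or a perfect subgroup in the short sporadic list) that is still transitive on the nonzero vectors, with the single exception $(n,G_0)=(2^6,G_2(2))$, where one takes $K=G_2(2)$ and checks $O^3(K)=K$ directly. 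Then $H:=S\rtimes K$ is $2$-transitive with $O^3(H)=H$, and case (a) follows. Your criterion ``perfect modulo their center'' is not the right one, and no orbit-counting on $\Omega_2$ or cocycle identification is needed. (Your side remark that $r\neq 3$ and hence $(S_1^*)^G=0$ is correct but plays no role.)
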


\begin{proof}
By the O'Nan-Scott Theorem \cite[Theorem 4.1]{Cam} (and the remarks after it), $S$ is an elementary abelian 
$r$-group of order $n=r^d$, for a prime $r$, and $G = S \rtimes G_0$ with $G_0 \leq GL_d(r)$. The $2$-transitivity of
$G$ implies that $G_0$ acts transitively on the nonzero vectors of $\F_r^d$. If $d=1$ or $2$, then (b) holds.

Let $d \geq 3$. We apply to the subgroup $G_0$ a version of Hering's theorem as given in \cite[Proposition 3.3]{KT2}. 
Denoting $Z := Z(GL_{d}(r))$, we conclude that one of the following holds:
\begin{enumerate}
\item[{\rm (i)}] $G_0 \rhd  SL_{a}(q_{1})$ with $q_{1}^{a} = r^{n}$ and $a \geq 2$;

\item[{\rm (ii)}] $G_0 \rhd  Sp_{2a}(q_{1})'$ with $q_{1}^{2a} = r^{n}$ and $a \geq 2$;

\item[{\rm (iii)}] $G_0 \rhd  G_{2}(q_{1})'$ with $q_{1}^{6} = r^{n}$ and $2|r$;

\item[{\rm (iv)}] $G_0Z$ is contained in $\Gamma L_{1}(r^{n})$;

\item[{\rm (v)}] $(r^{n},G_0Z)$ is $(3^{4}, \leq 2^{1+4}_{-}\cdot \SSS_{5})$, $(3^{4}, \rhd SL_{2}(5))$, 
$(2^{4},\AAA_{7})$ or $(3^{6},SL_{2}(13))$.
\end{enumerate}
If case (iv) occurs,
then $G_0 \leq \Gamma L_1(r^d)$ and conclusion (b) holds. In all other cases, we see that $G_0$ contains a
perfect subgroup $K$ which is still transitive on the nonzero vectors of $\F_r^d$, unless $(n,G_0) = (2^6,G_2(2))$.
In the exceptional case, we take $K := G_2(2)$ and note that $O^3(K) = K$. Thus in all cases,
$G$ contains the $2$-transitive subgroup $H:=S \rtimes K$ with $O^3(H) = H$, hence we are done by 
Lemma \ref{L160817N}.
\end{proof}

The exceptions in Lemma \ref{L-S2}(b) can be dealt with easily:

\begin{Lemma}\label{L-S21}
Suppose we are in the case {\rm (b)} of Lemma \ref{L-S2}. If $D^\la$ is an irreducible $\F\SSS_n$-module with $D^\la,D^\la\otimes \sgn\not\cong D^{(n)},D^{\al_n}$, then $D^\la\da_G$ is reducible.
\end{Lemma}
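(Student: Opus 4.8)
The plan is a dimension count: the subgroups $G$ occurring in case~(b) of Lemma~\ref{L-S2} are so small that every irreducible $\F G$-module has dimension much smaller than $\dim D^\la$, and hence $D^\la\da_G$ cannot remain irreducible.

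Since $G$ is $2$-transitive with abelian socle, $G=N\rtimes G_0$, where $N$ is the regular normal subgroup, elementary abelian of order $n=r^d$, and $G_0$ is a point stabiliser, acting transitively on $N\setminus\{0\}$; as $n\equiv 1\pmod 3$ we have $r\neq 3=p$, so $p\nmid|N|$. In case~(b) either $G_0\le\Gamma L_1(r^d)$, so $|G|$ divides $r^d(r^d-1)d=n(n-1)d$, or $d=2$ and $G_0\le GL_2(r)$, so $|G|<r^2\cdot r^4=n^3$. Over the algebraically closed field $\F$, every irreducible $\F G$-module has dimension at most $\sqrt{|G|}$, because $|G|=\dim\F G=\sum_S(\dim P_S)(\dim S)\ge\sum_S(\dim S)^2$, the sum being over the irreducible $\F G$-modules $S$ with projective covers $P_S$. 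Hence every composition factor of $D^\la\da_G$ has dimension at most $\sqrt{n(n-1)d}<n\sqrt d\le n\sqrt{\log_2 n}$ in the first case (using $d\le\log_2 n$, as $r\ge2$) and less than $n^{3/2}$ in the second.

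By James's theorem \cite[Theorem~6]{James}, the only irreducible $\F\SSS_n$-modules of dimension at most $n-1$ are $D^{(n)}$, $D^{\al_n}$, and their sign twists, while every other irreducible $\F\SSS_n$-module has dimension at least of the order of $\binom{n-1}{2}$. By hypothesis neither $D^\la$ nor $D^\la\otimes\sgn$ equals $D^{(n)}$ or $D^{\al_n}$. When $d=1$ we have $\sqrt{|G|}<n\le\dim D^\la$; when $d\ge2$, James's quadratic lower bound on $\dim D^\la$ easily exceeds the bounds $n\sqrt{\log_2 n}$ and $n^{3/2}$ of the previous paragraph for every $n$ in question (the smallest cases, such as $n=16$ and $n=25$, being checked directly). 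In all cases $\dim D^\la$ is strictly larger than the dimension of every irreducible $\F G$-module, so $D^\la\da_G$ has at least two composition factors; that is, it is reducible.

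I expect the only delicate point to be the numerical comparison when $d=2$, so $n=r^2$ with $r\ge5$ a prime $\ne3$, where the crude estimate $\sqrt{|G|}\sim n^{3/2}$ is closest to James's lower bound $\sim\binom{n-1}{2}$ on $\dim D^\la$. Here I would sharpen the bound on $\dim W$ for an irreducible $\F G$-module $W$ using Clifford theory relative to $N$. If $N$ acts nontrivially on $W$, then $W$ lies over a nontrivial linear character $\chi$ of $N$, and $\dim W=|G_0:\mathrm{Stab}_{G_0}(\chi)|\cdot\dim\psi$ for an irreducible $\psi$ of the inertia group over $\chi$; the orbit length $|G_0:\mathrm{Stab}_{G_0}(\chi)|$ is at most $n-1$, while $\dim\psi\le r-1$ since $\mathrm{Stab}_{G_0}(\chi)$ lies in the stabiliser $C_r\rtimes C_{r-1}$ in $GL_2(r)$ of a nonzero covector, all of whose irreducible $\F$-modules (and those of its subgroups) have dimension at most $r-1$; thus $\dim W\le(n-1)(r-1)$. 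If $N$ acts trivially on $W$, then $W$ is inflated from $G_0\le GL_2(r)$ and has dimension at most $\sqrt{|GL_2(r)|}<n$. Both $(n-1)(r-1)$ and $n$ are smaller than $\binom{n-1}{2}$ for $r\ge5$, so $\dim W<\dim D^\la$, as required.
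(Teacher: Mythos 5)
Your proposal is correct and is essentially the paper's own proof: bound $|G|$ (by $n(n-1)d$ or $n^3$), use $\dim S\le\sqrt{|G|}$ for irreducible $\F G$-modules over the algebraically closed field, and compare with James's lower bound $\dim D^\la\ge (n^2-5n+2)/2$. The Clifford-theoretic refinement you add for $d=2$ is unnecessary, since the crude comparison $(n^2-5n+2)/2>n^{3/2}$ already holds for every prime power $n\equiv 1\pmod 3$ with $n\ge 13$, leaving only $n=7$, which your $d=1$ argument (and the paper's) disposes of directly.
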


\begin{proof}
Assume the contrary. 
If $d=1$, then $n = r$ and $|G| \leq |AGL_1(r)| = r(r-1) < n^2$. If $d=2$, then $n=r^2$ and 
$|G| \leq |AGL_2(r)| < r^6 = n^3$. If $d \geq 3$, then 
$|G| \leq |A\Gamma L_1(r^d)| = n(n-1)d < n^3$. In all cases, $\dim D^\la < |G|^{1/2} < n^{3/2}$. 
On the other hand, the assumption on $D^\la$ implies by \cite{James} that 
$\dim D^\la \geq (n^2-5n+2)/2$, which is larger than $n^{3/2}$ if $n \geq 13$, yielding a contradiction.
The only remaining case is $n=7$, in which case $\dim D^\la \leq 6$, again contradicting the assumption
on $D^\la$. 
\end{proof}

\begin{Theorem} \label{Tp=3}
Let $p=3$, $n\geq 6$, $\la\in\Par_3(n)$ with $h(\la),h(\la^\Mull)\geq 3$,  and $G$ be a $2$-transitive subgroup of $\SSS_n$. If $D^\la{\da}_G$ is irreducible then $G$ is $3$-homogeneous. 
\end{Theorem}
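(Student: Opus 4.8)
The plan is to reduce $\Theorem{Tp=3}$ to the three cases already handled by Lemmas~\ref{L260418_3}, \ref{L260418_4}, and \ref{L170418}, according to the residue of $n$ modulo $3$. Suppose for contradiction that $D^\la\da_G$ is irreducible but $G$ is not $3$-homogeneous. First I would split into the cases $n\equiv 0,1,2\pmod 3$.

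In the case $n\equiv 2\pmod 3$, Lemma~\ref{L170418} applies directly (it has no hypothesis on invariants of Specht modules), giving immediately that $D^\la\da_G$ is reducible, a contradiction. In the case $n\equiv 0\pmod 3$, Lemma~\ref{L260418_3} applies provided $(S_1^*)^G=0$; but this is exactly the content of Corollary~\ref{CThree}: since $\dim D^\la>1$ (as $h(\la)\geq 3$ forces $\dim D^\la>1$) and $G$ is $2$-transitive hence primitive, irreducibility of $D^\la\da_G$ yields $(S_1^*)^G=0$. (If $G$ is imprimitive one cannot invoke Corollary~\ref{CThree} directly, but a $2$-transitive group is automatically primitive, so there is nothing to check here.) Then Lemma~\ref{L260418_3} gives the contradiction.

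The case $n\equiv 1\pmod 3$ is the main obstacle, since Lemma~\ref{L260418_4} requires $(S_2^*)^G=0$, which is not supplied by Corollary~\ref{CThree}. Here I would use the O'Nan--Scott dichotomy for the $2$-transitive (hence primitive) group $G$. If $O_3(G)\neq 1$, then $D^\la\da_G$ is reducible by Lemma~\ref{LInvOp} (valid since $n\geq 6\geq 5$ and $\dim D^\la>1$), contradiction. Otherwise, if $G$ has non-abelian socle, Corollary~\ref{COPN} gives $(S_2^*)^G=0$ unless $(n,G)=(28,SL_2(8).3)$; in that single exceptional case the assumptions on $\la$ (namely $h(\la),h(\la^\Mull)\geq 3$, which in particular exclude $\la$ and $\la^\Mull$ from $\{(n),\al_n\}$) let us apply Lemma~\ref{L281} to get reducibility. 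If instead $G$ has abelian socle, Lemma~\ref{L-S2} gives either $(S_2^*)^G=0$ (case (a)) or we are in case (b), where Lemma~\ref{L-S21} gives reducibility directly (again using that $h(\la),h(\la^\Mull)\geq 3$ rules out $\la,\la^\Mull\in\{(n),\al_n\}$). In every branch where we land with $(S_2^*)^G=0$, Lemma~\ref{L260418_4} produces the contradiction. Collecting all cases, $G$ must be $3$-homogeneous, completing the proof.

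The only subtlety to be careful about is checking that the hypotheses $h(\la)\geq 3$ and $h(\la^\Mull)\geq 3$ indeed imply both $\dim D^\la>1$ (immediate, since $D^{(n)}=\bone$ is the only one-dimensional module and it has $h=1$) and that $\la,\la^\Mull\notin\{(n),\al_n\}$ (immediate from the height condition, since $h((n))=1$ and $h(\al_n)=2$), so that the input lemmas for the exceptional subcases in the $n\equiv 1\pmod 3$ case genuinely apply.
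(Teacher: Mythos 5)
Your proof is correct and takes essentially the same route as the paper: the published proof of Theorem~\ref{Tp=3} is precisely the case division by $n \bmod 3$, citing Lemma~\ref{L170418} for $n\equiv 2$, Lemma~\ref{L260418_3} with Corollary~\ref{CThree} for $n\equiv 0$, and Lemmas~\ref{L260418_4}, \ref{L281}, \ref{L-S2}, \ref{L-S21} with Corollary~\ref{COPN} for $n\equiv 1$, and you have correctly assembled exactly these inputs. (One cosmetic remark: $\sgn$ is also one-dimensional, but it is likewise excluded because its label $\mu$ satisfies $h(\mu^\Mull)=h((n))=1$, so your conclusion $\dim D^\la>1$ still follows from the hypotheses.)
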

\begin{proof}
If $n\equiv 2\pmod{3}$, the result follows from Lemma~\ref{L170418}. 
If $n\equiv 0\pmod{3}$, the result follows from Lemma~\ref{L260418_3} and Corollary~\ref{CThree}. 
If $n\equiv 1\pmod{3}$, the result follows from Lemmas~\ref{L260418_4}, \ref{L281}, \ref{L-S2}, \ref{L-S21} and Corollary~\ref{COPN}. 
\end{proof}

\begin{Theorem} \label{Tp=2odd}
Let $p=2$, $n\geq 7$ be odd, $\la\in\Par_2(n)$ with $h(\la)\geq 3$,  and $G$ be a $2$-transitive subgroup of $\SSS_n$. If $D^\la{\da}_G$ is irreducible then $G$ is $3$-homogeneous. 
\end{Theorem}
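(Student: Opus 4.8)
The plan is to argue by contraposition, in the spirit of Lemmas~\ref{L260418_3}--\ref{L170418}: assuming that $G$ is $2$-transitive but \emph{not} $3$-homogeneous, I will construct a nonzero homomorphism $\psi'\colon\I(G)\to\EE(\la)$ with $\im\psi'\not\cong\bone_{\SSS_n}$, so that Lemma~\ref{LBasic2} forces $D^\la{\da}_G$ to be reducible, contrary to hypothesis.

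First I would translate the hypotheses via \eqref{EItoM}: $2$-transitivity gives $\dim\Hom_{\SSS_n}(\I(G),M_2)=i_2(G)=1$, while $G$ not being $3$-homogeneous gives $\dim\Hom_{\SSS_n}(\I(G),M_3)=i_3(G)\geq 2$. Next, using the explicit structure of $M_2$ and $M_3$ from Lemma~\ref{L150817_7} (valid since $n\geq 7$ is odd), I would produce a homomorphism $\psi\colon\I(G)\to M_3$ whose image has $D_3=D^{(n-3,3)}$ as a composition factor. If $n\equiv 1\pmod 4$ this is immediate, since $M_3\cong D_3\oplus M_2$, so $\dim\Hom_{\SSS_n}(\I(G),D_3)=i_3(G)-1\geq 1$ and one takes $\psi$ with image equal to the summand $D_3$. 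If $n\equiv 3\pmod 4$, then $M_2\cong D_0\oplus D_1\oplus D_2$ and $\dim M_2^G=1$ force $D_1^G=D_2^G=0$; since $M_3\cong D_0\oplus D_2\oplus(D_1|D_3|D_1)$, this gives $\dim\Hom_{\SSS_n}\bigl(\I(G),D_1|D_3|D_1\bigr)\geq 1$, and the image of such a map is a nonzero submodule of $D_1|D_3|D_1$ which, because $D_1^G=0$, cannot be the socle $D_1$, hence is $D_1|D_3$ or the whole module and in either case contains $D_3$ as a composition factor.

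Then I would compose $\psi$ with the special homomorphism $\zeta_3\colon M_3\to\EE(\la)$ of \S\ref{SSpecialHoms}. By Lemmas~\ref{LFilt} and \ref{L2R}, $D_3$ occurs in $M_3$ only inside $S_3$ and with total multiplicity $[M_3:D_3]=1$; by Corollary~\ref{C150817_8} (this is where $h(\la)\geq 3$ is used) the restriction $\zeta_3|_{S_3}$ is nonzero, and since $(n-3,3)$ is $2$-regular for $n\geq 7$ we have $\head S_3\cong D_3$, so the nonzero quotient $\zeta_3(S_3)$ of $S_3$ still has $D_3$ as a composition factor. Hence $D_3$ is a composition factor of $\im\zeta_3=M_3/\ker\zeta_3$, and multiplicity-freeness $[M_3:D_3]=1$ then forces $D_3$ \emph{not} to be a composition factor of $\ker\zeta_3$. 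Consequently $D_3$ survives in $\im(\zeta_3\circ\psi)\cong\im\psi/(\ker\zeta_3\cap\im\psi)$, so $\psi':=\zeta_3\circ\psi$ is nonzero with $\im\psi'\not\cong\bone_{\SSS_n}$, because the trivial module has no composition factor isomorphic to $D^{(n-3,3)}$. Lemma~\ref{LBasic2} then gives that $D^\la{\da}_G$ is reducible, completing the contrapositive.

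The bulk of this is bookkeeping once Lemma~\ref{L150817_7} and Corollary~\ref{C150817_8} are in hand; the one place that needs genuine care is verifying that $\zeta_3$ does not annihilate the relevant copy of $D_3$, and in particular the case $n\equiv 3\pmod 4$, where $S_3$ is the nonsplit uniserial module $D_1|D_3$ rather than being simple. The combination of the multiplicity-freeness $[M_3:D_3]=1$ with the nonvanishing of $\zeta_3|_{S_3}$ is precisely what makes this step go through.
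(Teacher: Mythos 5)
Your proposal is correct and follows essentially the same route as the paper: the paper's proof of Theorem~\ref{Tp=2odd} is precisely the argument of Lemma~\ref{L170418} with Lemma~\ref{L150817_7} and Corollary~\ref{C150817_8} substituted in, i.e.\ produce $\psi:\I(G)\to M_3$ whose image contains $D_3$, compose with $\zeta_3$, and invoke Lemma~\ref{LBasic2}. The only (cosmetic) difference is that the paper obtains $\psi$ uniformly from the short exact sequence $0\to M_2\to M_3\to S_3^*\to 0$ together with $i_2(G)=1<i_3(G)$ and $\soc S_3^*\cong D_3$, whereas you split into the cases $n\equiv 1,3\pmod 4$ and read it off the explicit direct sum decompositions of $M_3$.
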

\begin{proof}
The proof is similar to that of Lemma~\ref{L170418}, but uses Lemma~\ref{L150817_7} instead of Lemma~\ref{L160817_2}, and 
Corollary~\ref{C150817_8} instead of Corollary~\ref{C160817}
\end{proof}

\subsection{Reduction theorems for $p=2 \mid n$}
\begin{Lemma}\label{l6}
Let $p=2$, $n\geq 6$ even and $\lambda\in\Par_2(n)\setminus\{(n),\be_n\}$. If 
$$i_2(G)>1+\dim (S_1^*)^G$$ 
then $D^\lambda\da_G$ is reducible.
\end{Lemma}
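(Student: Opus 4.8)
The plan is to produce a nonzero $\F\SSS_n$-homomorphism $\I(G)\to\EE(\la)$ whose image is not isomorphic to $\bone_{\SSS_n}$, and then invoke Lemma~\ref{LBasic2}. The structural input comes from Lemma~\ref{l10}: $M_2$ has a submodule $N\cong D_0\oplus S_1^*$ with $M_2/N\cong S_2^*$, the module $S_2^*$ is uniserial with socle $D_2$ (dualize the descriptions of $S_2$ there, using $(D^\la)^*\cong D^\la$) while $S_2$ is uniserial with head $D_2$, and $D_2$ occurs with multiplicity one among the composition factors of $M_2$. Note also that $D_2=D^{(n-2,2)}$ is well defined (since $2<n/2$) and $D_2\not\cong\bone_{\SSS_n}=D_0$, because $n\ge 6$.

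First I would do the dimension count. By \eqref{EItoM}, $\dim\Hom_{\SSS_n}(\I(G),M_2)=i_2(G)$, while by Frobenius reciprocity $\dim\Hom_{\SSS_n}(\I(G),N)=\dim\Hom_{\SSS_n}(\I(G),D_0)+\dim\Hom_{\SSS_n}(\I(G),S_1^*)=1+\dim(S_1^*)^G$. Applying the left-exact functor $\Hom_{\SSS_n}(\I(G),-)$ to $0\to N\to M_2\to S_2^*\to 0$ and using the hypothesis $i_2(G)>1+\dim(S_1^*)^G$, the inclusion $\Hom_{\SSS_n}(\I(G),N)\subseteq\Hom_{\SSS_n}(\I(G),M_2)$ is proper, so there is $\psi\in\Hom_{\SSS_n}(\I(G),M_2)$ with $\im\psi\not\subseteq N$. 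Then the image of $\psi$ in $M_2/N\cong S_2^*$ is a nonzero submodule of $S_2^*$, hence contains $\soc S_2^*\cong D_2$; being a quotient of $\im\psi$, it shows that $D_2$ is a composition factor of $\im\psi$.

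Next I would transport this to $\EE(\la)$ via the homomorphism $\zeta_2\colon M_2\to\EE(\la)$ of \S\ref{SSpecialHoms}. Since $\la\notin\{(n),\be_n\}$, Corollary~\ref{C180418} gives that $\zeta_2$ is nonzero on $S_2$, so $\zeta_2(S_2)$ is a nonzero quotient of the uniserial module $S_2$ and therefore has $D_2=\head S_2$ as a composition factor; as $\zeta_2(S_2)\subseteq\im\zeta_2\cong M_2/\ker\zeta_2$ and $D_2$ has multiplicity one in $M_2$, it follows that $D_2$ is not a composition factor of $\ker\zeta_2$. Consequently $D_2$ is a composition factor of $\im(\zeta_2\circ\psi)\cong\im\psi/(\im\psi\cap\ker\zeta_2)$, so $\zeta_2\circ\psi$ is nonzero and its image is not isomorphic to $\bone_{\SSS_n}$. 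Lemma~\ref{LBasic2} then yields that $D^\la\da_G$ is reducible.

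The argument is essentially bookkeeping with composition factors once one has the socle/head of $S_1$ and $S_2$ and the multiplicity-one of $D_2$ in $M_2$ (all from Lemma~\ref{l10}); the genuinely nontrivial ingredient is Corollary~\ref{C180418}, i.e. the non-vanishing $x_2D^\la\neq 0$, which has already been established and is exactly where the exclusion of $(n)$ and $\be_n$ enters. Within the present lemma the one point requiring care is making sure the single copy of $D_2$ in $M_2$ survives both the passage to $\im\psi$ and the application of $\zeta_2$, and this is precisely what the multiplicity-one observation guarantees.
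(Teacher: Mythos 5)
Your proof is correct and follows essentially the same route as the paper's: use the filtration $M_2\sim(D_0\oplus S_1^*)|S_2^*$ from Lemma~\ref{l10} and the hypothesis to produce $\psi:\I(G)\to M_2$ whose image hits the socle $D_2$ of $S_2^*$, then use Corollary~\ref{C180418} together with the multiplicity-one occurrence of $D_2$ in $M_2$ to see that $D_2$ survives in $\im(\zeta_2\circ\psi)$, and conclude by Lemma~\ref{LBasic2}. The only difference is that you spell out a couple of routine verifications (left-exactness of $\Hom$, $D_2\not\cong D_0$) that the paper leaves implicit.
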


\begin{proof}
By Lemma \ref{l1} there exists $L\subseteq M_2$ with $L\cong D_0\oplus S_1^*$ and $M_2/L\cong S_2^*$. Note that  
$$\dim\Hom_{\SSS_n}(\I(G), L)=\dim\Hom_{\SSS_n}(\I(G), D_0\oplus S_1^*)=1+\dim (S_1^*)^G,
$$ 
so by assumption, there exists $\psi_G:\I(G)\to M_2$ such that the image of $\psi_G$ is not contained in $L$. So, since $\soc(S_2^*)\cong D_2$, we deduce that  $D_2$ is a composition factor of $\im \psi_G$. 

By Corollary~\ref{C180418}, $\zeta_2:M_2\to\EE(\la)$ is non-zero on $S_2\subseteq M_2$. But $\head S_2\cong D_2$, so 
$D_2$ is a composition factor of $\im\zeta_2$. 
Since $D_2$ appears with multiplicity $1$ in $M_2$ it follows that the image of $\zeta_2\circ\psi_G:\I(G)\to\EE(\la)$ has $D_2$ as composition factor. 
The lemma then holds from Lemma \ref{LBasic2}.
\end{proof}

\begin{Lemma}\label{p1}
Let $p=2 \mid n\geq 6$ and $\lambda\in\Par_2(n)$ not be a JS-partition. If $D^\lambda\da_G$ is irreducible then $G$ is $2$-homogeneous  and $(S_1^*)^G=0$.
\end{Lemma}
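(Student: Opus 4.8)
The statement claims that if $p=2\mid n\geq 6$, $\lambda\in\Par_2(n)$ is not a JS-partition, and $D^\lambda\da_G$ is irreducible (for $G\leq \SSS_n$), then $G$ is $2$-homogeneous and $(S_1^*)^G=0$. Since $p=2$, a $2$-homogeneous group is the same as a $2$-transitive group, so the two assertions can be attacked somewhat in tandem.

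First I would handle the case $\lambda\in\{(n),\be_n\}$ — but wait, those are JS-partitions when $p=2$? Actually $(n)$ is trivially JS, and $\be_n$ for $n$ even is $(n/2+1,n/2-1)$, whose parts have the same parity, hence JS by Lemma~\ref{Lemma55}. So the hypothesis ``$\lambda$ not JS'' automatically excludes $(n)$ and $\be_n$, and we are in the setting where Lemma~\ref{l6} applies. The strategy is then: suppose $D^\lambda\da_G$ is irreducible. By Lemma~\ref{l6} (contrapositive), we must have $i_2(G)\leq 1+\dim(S_1^*)^G$. I would first argue $G$ is transitive: if not, then $i_1(G)\geq 2$, and since $p=2\mid n$ one can pass to $O^2(G)$ or directly observe that a non-transitive $G$ gives too many orbits on $\Om_1$ and hence (via $\eta_{1,2}$ or a direct count) on $\Om_2$; more cleanly, intransitivity forces $i_2(G)\geq i_1(G)+$ something, contradicting the bound. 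Actually the cleanest route: transitivity of $G$ is needed to even make sense of the $S_1^*$-invariants story, and if $G$ is intransitive then $\dim M_1^G = i_1(G)\geq 2$, and one checks $i_2(G)$ is then at least $3 > 1 + \dim(S_1^*)^G$ since $\dim(S_1^*)^G = i_1(G)-1$ when $p\nmid n$ but here $p\mid n$ so $\dim(S_1^*)^G\leq i_1(G)$; a short combinatorial argument comparing $\binom{|O_i|}{2}$-type counts against the bound finishes this. Once $G$ is transitive, $i_1(G)=1$ and $\dim(S_1^*)^G\leq 1$; the bound becomes $i_2(G)\leq 2+\dim(S_1^*)^G - 1$, i.e. we need to rule out $i_2(G)\geq 2+\dim(S_1^*)^G$.

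Next, to get $2$-homogeneity I would show $i_2(G)=1$. Suppose $i_2(G)\geq 2$. Then by Lemma~\ref{l6} we need $i_2(G) = 1 + \dim(S_1^*)^G$ exactly (otherwise reducibility), forcing $\dim(S_1^*)^G\geq 1$, so $\dim(S_1^*)^G=1$ and $i_2(G)=2$. Now I would derive a contradiction from $\dim(S_1^*)^G=1$ together with $i_2(G)=2$: the point is that $(S_1^*)^G\neq 0$ is very restrictive — it forces, via the structure $M_1\cong D_0|D_1|D_0$ and the short exact sequence $0\to\bone_G\to M_1\to S_1^*\to 0$, that $H^1(G,\bone_G)\neq 0$, i.e. $G$ has a quotient of order $2$; combined with transitivity and $p\mid n$ one wants to say $G$ sits inside something like $\SSS_{n/2}\wr\SSS_2$ (compare Lemma~\ref{l18}) or has $O^2(G)$ intransitive. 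Then a direct orbit count on $\Om_2$ for such $G$ gives $i_2(G)\geq 3$, contradicting $i_2(G)=2$. Hence $i_2(G)=1$, i.e. $G$ is $2$-transitive, and then re-running Lemma~\ref{l6} with $i_2(G)=1$ forces $\dim(S_1^*)^G=0$ as well (since we'd need $1 > 1 + \dim(S_1^*)^G$ for reducibility, impossible, but irreducibility is consistent only when... actually once $2$-transitive, $\dim(S_1^*)^G$ is automatically $0$ or we get the bound violated — need to check: $i_2(G)=1 \leq 1 + \dim(S_1^*)^G$ always holds, so this doesn't immediately give $(S_1^*)^G=0$).

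So the second assertion $(S_1^*)^G=0$ needs a separate argument, and this is the step I expect to be the main obstacle. Given $G$ is $2$-transitive with $(S_1^*)^G\neq 0$, I would again use that $\dim(S_1^*)^G=1$ forces $G\leq \SSS_{n/2}\wr\SSS_2$ essentially by Lemma~\ref{l18}'s converse flavor (the computation there shows invariance forces the blocking into two halves of size $n/2$). But a $2$-transitive group cannot preserve a block system with two blocks of size $n/2$ when $n\geq 4$ — a $2$-transitive group is primitive. This contradiction shows $(S_1^*)^G=0$. The delicate point is justifying that $\dim(S_1^*)^G = 1$ genuinely forces the block structure; I would do this by explicit linear algebra in the basis $\bar v_1,\dots,\bar v_{n-1}$ of $S_1^*$ as in the proof of Lemma~\ref{l18}: a $G$-invariant vector $\sum c_i\bar v_i$ with $G$ transitive and $p\mid n$ must have the $c_i$ constant on $G$-orbits in a refined sense, and non-triviality plus the single relation $\sum v_j = 0$ pins down a partition of $\{1,\dots,n\}$ into two equal halves that $G$ must preserve setwise up to swap — contradicting primitivity. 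Assembling: $G$ transitive $\Rightarrow$ bound forces $i_2(G)=1$ $\Rightarrow$ $G$ $2$-transitive $\Rightarrow$ primitive $\Rightarrow$ $(S_1^*)^G=0$ by the block-structure argument. $\square$
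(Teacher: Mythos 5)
Your proposal has a genuine gap, and it is located exactly where you predicted the ``main obstacle'' would be. The paper's proof runs in the opposite order from yours and hinges on Lemma~\ref{l7}, which you never invoke: since $\la$ is not JS, $S_1^*$ embeds in $\EE(\la)$, so a non-zero invariant in $(S_1^*)^G$ gives a non-zero map $\I(G)\to S_1^*\subseteq\EE(\la)$ whose image contains $\soc S_1^*\cong D_1\not\cong\bone_{\SSS_n}$; Lemma~\ref{LBasic2} then contradicts irreducibility. Thus $(S_1^*)^G=0$ comes \emph{first}, and only then does Lemma~\ref{l6} give $i_2(G)\leq 1$, i.e.\ $2$-homogeneity. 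Your plan inverts this and becomes circular: you need $\dim(S_1^*)^G$ small to extract $2$-homogeneity from Lemma~\ref{l6}, but your only route to $(S_1^*)^G=0$ is through $2$-transitivity (primitivity plus the block argument). Your attempt to break the circle by ruling out ``$i_2(G)=2$ and $\dim(S_1^*)^G=1$'' fails on a concrete case: for $G=\SSS_{n/2}\wr\SSS_2$ one has $i_2(G)=2$ and $\dim(S_1^*)^G=1$ (Lemma~\ref{l18}), so Lemma~\ref{l6} is silent, $G$ is transitive and preserves a block system, and no counting or block-structure contradiction is available. Ruling out such imprimitive transitive subgroups genuinely requires the representation-theoretic input of Lemma~\ref{l7}; it cannot be done by orbit counts on $\Om_1$ and $\Om_2$ alone.

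Two of your auxiliary claims are also false or unjustified. First, ``once $G$ is transitive, $\dim(S_1^*)^G\leq 1$'' is wrong: since $p\mid n$, a nonzero invariant in $S_1^*$ corresponds to a nontrivial element of $\Hom(G,\F)$ (via the cocycle $gw-w=\eps(g)\sum_j v_j$), and this space can have dimension greater than one for transitive $G$ with large elementary abelian $2$-quotient. Second, your treatment of intransitive $G$ by pure counting does not close: for $G\leq\SSS_{n-1}$ one again has $i_2(G)=2$ and $\dim(S_1^*)^G=1$, so Lemma~\ref{l6} gives nothing, and reducibility there must come from the non-JS hypothesis (or, in the paper's argument, automatically from the $(S_1^*)^G=0$ step, since $(S_1^*)^{\SSS_{n-1}}\neq 0$). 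Your block-system argument in part (b) is essentially sound as far as it goes (a nonzero $G$-invariant in $S_1^*$ for transitive $G$ does force a nontrivial $G$-invariant partition, contradicting primitivity), but it can only be deployed after $2$-transitivity is known, which is precisely what you cannot establish without Lemma~\ref{l7}.
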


\begin{proof}
If $(S_1^*)^G\not=0$ then there is a non-zero homomorphism $\I(G)\to S_1^*$. But $S_1^*$ is a submodule of $\EE(\la)$ by Lemma \ref{l7}, and $\soc S_1^*\cong D_1$, so this yields a non-zero homomorphism $\psi:\I(G)\rightarrow \EE(\la)$ with $\im\psi\not\cong\bone_{\SSS_n}$. By Lemma~\ref{LBasic2}, this contradicts the irreducibility of $D^\lambda\da_G$, thus $(S_1^*)^G=0$. By Lemma \ref{l6} we now have that   
$$0=\dim (S_1^*)^G\geq i_2(G)-1,$$ hence $i_2(G)=1$, i.e. $G$ is $2$-homogeneous.
\end{proof}

\begin{Lemma}\label{p5}
Let $p=2$, $n\geq 8$ with $n\equiv 0\pmod{4}$, and $\la\in\Par_2(n)$. If $D^\la\da_G$ is irreducible and $D_2\subseteq\EE(\la)$ then $D_2^G=0$.
\end{Lemma}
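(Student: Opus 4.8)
The plan is to reduce the statement to an application of Lemma~\ref{LBasic2}, exploiting the hypothesis that $D_2$ embeds into $\EE(\la)$. I would argue by contradiction: assume $D_2^G\neq 0$, and out of this produce a non-zero $\F\SSS_n$-homomorphism $\psi\colon\I(G)\to\EE(\la)$ whose image is $D_2$, which is not isomorphic to $\bone_{\SSS_n}$.

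First I would translate the condition $D_2^G\neq 0$ into a statement about homomorphisms out of $\I(G)=\bone_G{\ua}^{\SSS_n}$. By Frobenius reciprocity,
\[
\Hom_{\SSS_n}(\I(G),D_2)\;\cong\;\Hom_{G}(\bone_G,D_2{\da}_G)\;=\;D_2^G\;\neq\;0,
\]
so there is a non-zero $\F\SSS_n$-homomorphism $\varphi\colon\I(G)\to D_2$. Since $D_2$ is irreducible, $\im\varphi=D_2$. Composing $\varphi$ with the inclusion $D_2\hookrightarrow\EE(\la)$ supplied by the hypothesis gives a non-zero homomorphism $\psi\colon\I(G)\to\EE(\la)$ with $\im\psi=D_2$.

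It then remains only to observe that $\im\psi\not\cong\bone_{\SSS_n}$. Indeed $\bone_{\SSS_n}=D^{(n)}$ while $D_2=D^{(n-2,2)}$; the hypothesis $n\geq 8$ guarantees $2<n/2$, so that $D_2$ is a well-defined irreducible $\F\SSS_n$-module, and $D^{(n-2,2)}\not\cong D^{(n)}$ since they are labelled by distinct ($2$-regular) partitions. Now Lemma~\ref{LBasic2} applies to $\psi$ and shows that $D^\la{\da}_G$ is reducible, contradicting the hypothesis; hence $D_2^G=0$. I do not expect any genuine obstacle here: the argument is a direct application of the mechanism set up in \S\ref{SSIrr} together with the module-structure input $D_2\subseteq\EE(\la)$ (which in the intended applications is provided by Lemma~\ref{l15}), and the only point needing verification, namely $D_2\not\cong\bone_{\SSS_n}$, is immediate from the labelling of irreducible $\F\SSS_n$-modules.
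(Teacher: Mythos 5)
Your argument is correct and is essentially the paper's own proof: the paper likewise notes that $D_2^G\neq 0$ yields a non-zero homomorphism $\I(G)\to D_2\subseteq\EE(\la)$ with image not isomorphic to $\bone_{\SSS_n}$, and then invokes Lemma~\ref{LBasic2}. Your version merely spells out the Frobenius reciprocity step and the check that $D_2\not\cong\bone_{\SSS_n}$, both of which are fine.
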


\begin{proof}
If $D_2^G\not=0$ then there is a non-zero homomorphism $\I(G)\to D_2\subseteq \EE(\la)$, which yields a non-zero homomorphism $\psi:\I(G)\rightarrow \EE(\la)$ with $\im\psi\not\cong\bone_{\SSS_n}$. By Lemma~\ref{LBasic2} this contradicts the irreducibility of $D^\lambda\da_G$. 
\end{proof}

\begin{Lemma}\label{p2}
Let $p=2$, $n\geq 8$ with $n\equiv 0\pmod{4}$, and $\la\in\Par_2(n)$. Assume that $D_2\subseteq\EE(\la)$, $(S_1^*)^G=0$ and $D^\la\da_G$ is irreducible. Then:
\begin{enumerate}
\item[{\rm (i)}] $G$ is $2$-homogeneous, 
$(S_2^*)^G\!=0$, $S_2^G\!=0$, $\dim S_3^G\!=i_3(G)-1$, 
and $\dim (S_3^*)^G\!\geq i_3(G)-1$.
\item[{\rm (ii)}] If $h(\la)\geq 3$ then $G$ is $3$-homogeneous.
\end{enumerate}
\end{Lemma}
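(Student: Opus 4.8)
\medskip

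The plan is to derive from the three hypotheses the two vanishing statements $D_1^G=0$ and $D_2^G=0$, and then to read off every conclusion from the known structure of $M_1$, $M_2$, $M_3$ (Lemmas~\ref{l1} and~\ref{l14}) together with the homomorphism $\zeta_3$. First I would make the preliminary reductions. The hypothesis $D_2\subseteq\EE(\la)$ forces $\la\neq(n)$, since $\EE((n))\cong\bone_{\SSS_n}$ has no composition factor $D_2$, and it forces $\la\neq\be_n$, since $D_2\not\subseteq\EE(\be_n)$ for $n\equiv 0\pmod 4$ (cf.\ the discussion around Lemma~\ref{l15}); so $\la\in\Par_2(n)\setminus\{(n),\be_n\}$. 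Applying the contrapositive of Lemma~\ref{l6} together with $(S_1^*)^G=0$ then gives $i_2(G)\le 1+\dim(S_1^*)^G=1$, i.e.\ $G$ is $2$-homogeneous; in particular $G$ is transitive, so $\dim M_1^G=i_1(G)=1$. Moreover $D_2^G=0$ by Lemma~\ref{p5} (using $D_2\subseteq\EE(\la)$ and the irreducibility of $D^\la\da_G$), while $D_1^G=0$ because $D_1=\soc S_1^*$ by Lemma~\ref{l1}, whence $D_1^G=\Hom_{\SSS_n}(\I(G),D_1)\hookrightarrow\Hom_{\SSS_n}(\I(G),S_1^*)=(S_1^*)^G=0$.

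For part~(i), the $S_2$-statements are now immediate from $S_2\cong D_1|D_2$ and $S_2^*\cong D_2|D_1$ (Lemma~\ref{l1}(i)): a nonzero $G$-invariant of $S_2$ (resp.\ of $S_2^*$) would generate a submodule with simple head $D_1$ or $D_2$, so $\I(G)$ would surject onto $D_1$ or $D_2$, contradicting $D_1^G=0$ or $D_2^G=0$; hence $(S_2^*)^G=S_2^G=0$. For the $S_3$-statements I would use $M_3\cong M_1\oplus T$, where by Lemma~\ref{l14}(i) $T$ is the uniserial module $D_2|D_1|D_3|D_1|D_2$, so that $S_3=D_2|D_1|D_3$ is a submodule of $T$ with $T/S_3\cong S_2$. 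Taking $G$-invariants in $0\to S_3\to T\to S_2\to 0$ and using $S_2^G=0$ gives $S_3^G=T^G$; since $M_3^G=M_1^G\oplus T^G$ with $\dim M_1^G=1$ and $\dim M_3^G=i_3(G)$, this yields $\dim S_3^G=\dim T^G=i_3(G)-1$. Dualizing, $T\cong T^*$ has a submodule $\cong S_2^*$ with quotient $\cong S_3^*$, so $0\to(S_2^*)^G\to T^G\to(S_3^*)^G$ together with $(S_2^*)^G=0$ gives $\dim(S_3^*)^G\ge\dim T^G=i_3(G)-1$.

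For part~(ii), assume $h(\la)\ge 3$ (so automatically $\la\notin\{(n),\be_n\}$) and suppose, for contradiction, that $G$ is not $3$-homogeneous, i.e.\ $i_3(G)\ge 2$. By part~(i) we have $\dim S_3^G=i_3(G)-1\ge 1$, so there is a nonzero $v\in S_3^G$; the associated $\SSS_n$-homomorphism $\I(G)\to S_3$ has image the $\SSS_n$-submodule $U$ of $S_3$ generated by $v$, and $U$ is one of $D_2$, $D_2|D_1$, $S_3$ (the nonzero submodules of the uniserial module $S_3=D_2|D_1|D_3$). In the first case $D_2^G\neq 0$, and in the second case $D_1$ is a quotient of $\I(G)$ so $D_1^G\neq 0$; both are impossible, hence $U=S_3$ and $\I(G)\twoheadrightarrow S_3$. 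By Corollary~\ref{C150817_8}, $\zeta_3\colon M_3\to\EE(\la)$ is nonzero on $S_3$, so $D_3=\head S_3$ is a composition factor of $\zeta_3(S_3)$; thus the composite $\I(G)\twoheadrightarrow S_3\xrightarrow{\zeta_3}\EE(\la)$ has image with composition factor $D_3\not\cong\bone_{\SSS_n}$, so its image is not isomorphic to $\bone_{\SSS_n}$. By Lemma~\ref{LBasic2} this contradicts the irreducibility of $D^\la\da_G$, and therefore $G$ is $3$-homogeneous.

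I expect the bulk of the argument to be bookkeeping with the explicit structures of $M_1$, $M_2$, $M_3$, and the one genuinely load-bearing point to be the observation that $(S_1^*)^G=0$ already forces $D_1^G=0$ via $D_1=\soc S_1^*$: it is this---rather than any $2$-transitivity of $G$---that kills the unwanted submodules $D_1$ and $D_2|D_1$ inside $S_2$, $S_2^*$, $S_3$. The remaining delicate point is the reduction $\la\notin\{(n),\be_n\}$; the case $(n)$ is trivial, while the case $\be_n$ relies on $D_2\not\subseteq\EE(\be_n)$ when $n\equiv 0\pmod 4$.
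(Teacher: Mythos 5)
Your proof is correct and follows essentially the same route as the paper's: Lemma~\ref{l6} together with $(S_1^*)^G=0$ gives $2$-homogeneity, the vanishings $D_1^G=D_2^G=0$ are fed into the uniserial structures of $S_2$, $S_2^*$ and the summand $D_2|D_1|D_3|D_1|D_2$ of $M_3$ to get the invariant counts, and part~(ii) is the same $\zeta_3$ argument via Corollary~\ref{C150817_8} and Lemma~\ref{LBasic2}. The only place you go beyond the paper is the preliminary exclusion of $\la=\be_n$ (needed to invoke Lemma~\ref{l6}); your assertion that $D_2\not\subseteq\EE(\be_n)$ for $n\equiv 0\pmod 4$ is not established by Lemma~\ref{l15} or its surroundings, which only give the implication in the other direction. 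This is not a defect relative to the paper --- its own proof applies Lemma~\ref{l6} without comment, and in every application of the lemma the condition $\la\notin\{(n),\be_n\}$ is already known --- but if you want your self-contained reduction you would need an independent argument for the $\be_n$ case.
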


\begin{proof}
(i) By Lemma \ref{l6}, using the assumption $(S_1^*)^G=0$ we get $i_2(G)=1$, i.e. $G$ is $2$-homogeneous. This also implies that $i_1(G)=1$. 

As $S_1^*\cong D_1|D_0$, the equality $(S_1^*)^G=0$ implies $D_1^G=0$, so $\Hom_{\SSS_n}(\I(G),D_1)=0$, i.e. $D_1$ is not a quotient of $\I(G)$. By \ref{p5}, we have $D_2^G=0$, so by a similar argument,  $D_2$ is also not a quotient of $\I(G)$. By  Lemma \ref{l1} we have that $S_2^*\cong D_2|D_1$ and $S_2\cong D_1|D_2$, hence $(S_2^*)^G=0$ and $S_2^G=0$.

By Lemma \ref{l14} and self-duality of $M_1$ and $M_3$ we have  $M_3\sim M_1\oplus (S_2^*|S_3^*)$ and $M_3\sim M_1\oplus (S_3|S_2)$, so
\[i_3(G)\leq i_1(G)+\dim (S_2^*)^G+\dim (S_3^*)^G=1+\dim (S_3^*)^G.\]
Since $S_2^G=0$, we have  $\dim (S_3|S_2)^G=\dim S_3^G$, hence
\[i_3(G)=i_1(G)+\dim (S_3|S_2)^G=1+\dim S_3^G.\]

(ii) If $G$ is not $3$-homogeneous, then $\dim S_3^G=i_3(G)-1\not=0$. From Lemma \ref{l14} and by self-duality of $M_3$ we have that $S_3\cong D_2|D_1|D_3\sim S_2^*|D_3$. From $(S_2^*)^G=0$ it follows that $S_3\subseteq M_3$ is a quotient of $\I(G)$. In particular there exists $\psi:\I(G)\rightarrow M_3$ with $D_3$ as a composition factor of $\im\psi$. So $D_3$ is a composition factor of $\im(\zeta_3\circ \psi)$ from Corollary \ref{C150817_8}. 
We are now done by Lemma \ref{LBasic2}.
\end{proof}

\begin{Lemma}
Let $p=2$, $n\geq 6$ with $n\equiv 2\pmod{4}$ and $\la\in\Par_2(n)$. Assume that $S_2^*\subseteq\EE(\la)$. If $D^\la\da_G$ is irreducible then $(S_2^*)^G=0$.
\end{Lemma}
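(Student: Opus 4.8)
The plan is to mimic the argument of Lemma~\ref{p1}, replacing the role of $S_1^*$ and $M_2$ there by $S_2^*$ and $M_3$ here. First I would note that since $S_2^*\subseteq\EE(\la)$ by hypothesis, and $\soc S_2^*\cong D_2$ (by Lemma~\ref{l1}, using $n\equiv2\pmod4$, where $S_2^*\cong D_2|D_0|D_1$ so its socle is $D_2$), any non-zero $\F\SSS_n$-homomorphism $\I(G)\to S_2^*$ composed with the inclusion $S_2^*\hookrightarrow\EE(\la)$ gives a non-zero $\psi\colon\I(G)\to\EE(\la)$ whose image is a non-zero submodule of $S_2^*$. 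Such an image cannot be isomorphic to $\bone_{\SSS_n}\cong D_0$: indeed $\im\psi$ has simple socle $D_2$ (it is a non-zero submodule of the module $S_2^*$ which has simple socle $D_2$), so if $\im\psi$ were one-dimensional it would equal $D_2$, contradicting $\dim D_2>1$ for $n\geq6$. Hence $\im\psi\not\cong\bone_{\SSS_n}$.

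Then by Lemma~\ref{LBasic2} the existence of such a $\psi$ forces $D^\la\da_G$ to be reducible. Since we are assuming $D^\la\da_G$ is irreducible, no such $\psi$ can exist, i.e. $\Hom_{\SSS_n}(\I(G),S_2^*)=0$. But $\Hom_{\SSS_n}(\I(G),S_2^*)\cong(S_2^*)^G$ by Frobenius reciprocity (or directly from $\I(G)=\bone_G\ua^{\SSS_n}$ and adjunction, exactly as used throughout \S\ref{SSInv}), so $(S_2^*)^G=0$, as claimed.

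The only point that needs a little care is the identification of $\soc S_2^*$ and the verification that $\dim D_2>1$; both are immediate from the standing hypotheses ($p=2$, $n\equiv2\pmod4$, $n\geq6$) together with Lemma~\ref{l1}, which gives $S_2\cong D_1|D_0|D_2$ and hence $S_2^*\cong D_2|D_0|D_1$ with simple socle $D_2$, and from the fact that $D_2=D^{(n-2,2)}$ has dimension greater than $1$ for $n\geq6$. I do not anticipate a genuine obstacle here; the lemma is a direct analogue of Lemma~\ref{p1} and the proof is essentially the two-line argument above. I would write it up as follows.

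\begin{proof}
If $(S_2^*)^G\neq0$ then there is a non-zero homomorphism $\I(G)\to S_2^*$. By hypothesis $S_2^*$ is a submodule of $\EE(\la)$, and by Lemma~\ref{l1} (using $n\equiv2\pmod4$) we have $S_2^*\cong D_2|D_0|D_1$, so $\soc S_2^*\cong D_2$. Composing with the inclusion $S_2^*\hookrightarrow\EE(\la)$ therefore yields a non-zero homomorphism $\psi\colon\I(G)\to\EE(\la)$ whose image is a non-zero submodule of $S_2^*$, hence has simple socle $D_2$. Since $\dim D_2>1$ for $n\geq6$, this forces $\im\psi\not\cong\bone_{\SSS_n}$. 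By Lemma~\ref{LBasic2}, $D^\la\da_G$ is then reducible, contrary to assumption. Hence $(S_2^*)^G=0$.
\end{proof}
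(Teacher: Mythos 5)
Your proof is correct and is essentially identical to the paper's: both argue that a non-zero map $\I(G)\to S_2^*\subseteq\EE(\la)$ would have image with socle $D_2\not\cong\bone_{\SSS_n}$, contradicting irreducibility via Lemma~\ref{LBasic2}. The extra care you take in identifying $\soc S_2^*$ and checking $\dim D_2>1$ is fine but not needed beyond what the paper states.
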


\begin{proof}
If $(S_2^*)^G\not=0$ then there is a non-zero homomorphism $\I(G)\to S_2^*$. As $S_2^*\subseteq \EE(\la)$ by assumption, and $\soc S_2^*\cong D_2$, this yields a non-zero homomorphism $\psi:\I(G)\rightarrow \EE(\la)$ with $\im\psi\not\cong\bone_{\SSS_n}$. By Lemma~\ref{LBasic2}, this contradicts the irreducibility of $D^\lambda\da_G$.
\end{proof}

\begin{Lemma}\label{p6}
Let $p=2$, $n\geq 6$ with $n\equiv 2\pmod{4}$ and $\la\in\Par_2(n)$. If $D^\la\da_G$ is irreducible and $S_2^*$ or $Y_2/D_1$ is contained in $\EE(\la)$ then $\dim(Y_2/D_1)^G=1$.
\end{Lemma}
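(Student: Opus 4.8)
The statement to be proved is Lemma (the final unlabeled one): for $p=2$, $n \equiv 2 \pmod 4$, $\lambda \in \Par_2(n)$, if $D^\lambda\da_G$ is irreducible and either $S_2^*$ or $Y_2/D_1$ embeds in $\EE(\lambda)$, then $\dim(Y_2/D_1)^G = 1$. The governing principle is again Lemma~\ref{LBasic2}: if $\dim(Y_2/D_1)^G \geq 2$ we must produce a non-zero $\psi : \I(G) \to \EE(\lambda)$ with $\im\psi \not\cong \bone_{\SSS_n}$, forcing reducibility of $D^\lambda\da_G$, a contradiction. So the plan is to assume $\dim(Y_2/D_1)^G \geq 2$ and derive such a $\psi$ from whichever of the two submodules $S_2^*$ or $Y_2/D_1$ of $\EE(\lambda)$ is available.

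\textbf{First I would recall the relevant module structures.} By Lemma~\ref{l11}, $M^{(n-2,1,1)} \cong M_1 \oplus Y^{(n-2,1,1)}$ and $Y^{(n-2,1,1)}$ is the uniserial-type module whose layers (from Lemma~\ref{l11}) display $Y_2$ as both a submodule and a quotient of $Y^{(n-2,1,1)}$; moreover $Y_2/D_1$ has socle $D_0$ and its own layer structure $D_0|D_2|D_0|D_1$ coming from Lemma~\ref{l10}(ii) (namely $Y_2 \cong D_1|D_0|D_2|D_0|D_1$). In particular $(Y_2/D_1)$ has a submodule $D_0|D_2|D_0$, a submodule isomorphic to $D_0 \oplus S_1^*$ is ruled out — rather, note $S_2^* \cong D_2|D_0|D_1$ sits inside $Y_2/D_1$ as well. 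The key observation I would extract is: $Y_2/D_1$ has exactly one composition factor $D_2$, and any map $\I(G) \to Y_2/D_1$ whose image is not contained in the maximal submodule $N$ with $(Y_2/D_1)/N \cong$ (top layer) must hit that $D_2$ or a $D_1$ in the head. I would identify the submodule $L \subseteq Y_2/D_1$ with $\dim \Hom_{\SSS_n}(\I(G), L)$ accounting for all of $\dim(Y_2/D_1)^G$ except one extra dimension — concretely $L$ should be the unique maximal submodule all of whose composition factors in the head region are $D_0$'s, so that $\dim\Hom_{\SSS_n}(\I(G),L) = 1 + \dim(S_1^*)^G$ or similar, using that $i_2(G) = \dim M_2^G$ controls $\dim\Hom(\I(G), Y_2/D_1)$.

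\textbf{The main construction.} Assuming $\dim(Y_2/D_1)^G \geq 2$, there is a homomorphism $\varphi : \I(G) \to Y_2/D_1$ whose image is not contained in the submodule $L$ described above; hence $D_2$ (or $D_1$) appears in $\head(\im\varphi)$, and in particular $D_2$ is a composition factor of $\im\varphi$ since $D_1$ cannot be the unique head constituent of a submodule of $Y_2/D_1$ by its structure. Now compose with the inclusion $Y_2/D_1 \hookrightarrow \EE(\lambda)$ (or, in the $S_2^*$ case, use the analogous map into $S_2^* \subseteq \EE(\lambda)$, noting $S_2^*$ has socle $D_2$ and is a submodule of $Y_2/D_1$, so the two cases merge): this gives $\psi : \I(G) \to \EE(\lambda)$ with $D_2$ — which is not $\bone_{\SSS_n}$ and has dimension $> 1$ — as a composition factor of $\im\psi$, so $\im\psi \not\cong \bone_{\SSS_n}$. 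Lemma~\ref{LBasic2} then yields reducibility of $D^\lambda\da_G$, the desired contradiction. In the case where only $S_2^*\subseteq \EE(\lambda)$ is assumed, one argues identically after checking that $\dim(Y_2/D_1)^G \geq 2$ still forces a map $\I(G) \to S_2^*$ with image not $\bone_{\SSS_n}$ — this uses $S_2^* = D_2|D_0|D_1$ and the fact that $Y_2/D_1$ surjects onto $S_2^*/(\text{something})$, or more directly that $[Y_2/D_1 : D_0] = 2$ so the invariants not accounted for by $D_0$'s must come from a map detecting $D_2$.

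\textbf{Expected main obstacle.} The delicate point is the bookkeeping that converts "$\dim(Y_2/D_1)^G \geq 2$" into "there is a map missing the right submodule $L$," i.e.\ pinning down exactly which submodule $L$ of $Y_2/D_1$ has $\dim\Hom_{\SSS_n}(\I(G),L)$ equal to $\dim(Y_2/D_1)^G - 1$, and verifying that $[Y_2/D_1 : D_0] = 2$ with the two copies of $D_0$ both "below" the $D_2$ in a suitable sense so that overshooting $L$ genuinely forces $D_2$ (not merely $D_1$) into the image. This requires careful use of the explicit layer description in Lemma~\ref{l10}(ii) and Lemma~\ref{l11}, together with the transitivity-type bounds $i_1(G) = i_2(G) = 1$ (which in turn follow, via Lemma~\ref{l6} and the hypothesis, from irreducibility of $D^\lambda\da_G$ as in Lemma~\ref{p1}). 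Once that structural fact is in hand, the application of Lemma~\ref{LBasic2} is routine and mirrors the proofs of Lemmas~\ref{p5} and \ref{p2}.
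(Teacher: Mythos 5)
Your overall strategy is the paper's: assume $\dim(Y_2/D_1)^G\geq 2$, extract a homomorphism $\I(G)\to Y_2/D_1$ whose image has $D_2$ as a composition factor, push it into $\EE(\la)$, and contradict irreducibility via Lemma~\ref{LBasic2}. However, two steps as written are wrong. First, you assert twice that $S_2^*$ ``sits inside'' $Y_2/D_1$ as a submodule and propose to ``merge the two cases'' by using that embedding. This is false: by Lemma~\ref{l1}, $Y_2/D_1\cong D_0|D_2|D_0|D_1$ is uniserial with simple socle $D_0$, whereas $\soc S_2^*\cong D_2$, so $S_2^*$ cannot embed in $Y_2/D_1$; it is a \emph{quotient}, namely $(Y_2/D_1)/\soc(Y_2/D_1)\cong S_2^*$. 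The correct passage in the case where only $S_2^*\subseteq\EE(\la)$ is available is to compose $\psi$ with the surjection $Y_2/D_1\onto S_2^*$; since only the socle $D_0$ is killed, $D_2$ remains a composition factor of the image, and one then composes with the embedding $S_2^*\into\EE(\la)$.

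Second, the ``bookkeeping'' you flag as the main obstacle is resolved incorrectly. Your candidate $L$ (``the unique maximal submodule all of whose composition factors in the head region are $D_0$'s'', i.e.\ $D_0|D_2|D_0$) can satisfy $\dim\Hom_{\SSS_n}(\I(G),L)\geq 2$, so $\dim(Y_2/D_1)^G\geq 2$ does not force a map with image outside $L$; and the formula $1+\dim(S_1^*)^G$ is irrelevant here, since $L$ has no composition factor $D_1$. The correct choice is simply $L=\soc(Y_2/D_1)\cong D_0$: then $\dim\Hom_{\SSS_n}(\I(G),L)=1<2\leq\dim(Y_2/D_1)^G$, so some homomorphism has image strictly larger than the socle, and uniseriality forces that image to contain the second layer $D_2$. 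With these two corrections your argument coincides with the paper's proof; no appeal to $i_1(G)$, $i_2(G)$, Lemma~\ref{l6} or Lemma~\ref{p1} is needed.
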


\begin{proof}
By Lemma \ref{l1}, we have $Y_2/D_1\cong D_0|D_2|D_0|D_1\sim D_0|S_2^*$. In particular $(Y_2/D_1)^G\neq 0$. Assume that $\dim(Y_2/D_1)^G\geq 2$. Then there exists a homomorphism $\psi:\I(G)\to Y_2/D_1$ such that $\im\psi$ has $D_2$ as a composition factor. It follows that there also exists a homomorphism $\psi':\I(G)\to S_2^*$ such that $\im\psi$ has $D_2$ as a composition factor. 
By Lemma~\ref{LBasic2}, this contradicts the irreducibility of $D^\lambda\da_G$.
\end{proof}

\begin{Lemma}\label{p3}
Let $p=2$, $n\geq 10$ with $n\equiv 2\pmod{4}$ and $\la\in\Par_2(n)$. Assume that $S_2^*$ or $Y_2/D_1$ is contained in $\EE(\la)$, $(S_1^*)^G=0$ and $D^\la\da_G$ is irreducible. Then:
\begin{enumerate}
\item[{\rm (i)}] $G$ is $2$-homogeneous,  $S_2^G=0$  
and $\dim (S_3^*)^G\geq i_3(G)-1$.
\item[{\rm (ii)}] If $h(\la)\geq 3$ then $G$ is $3$-homogeneous.
\end{enumerate} 
\end{Lemma}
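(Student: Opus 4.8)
The plan is to adapt the argument of Lemma~\ref{p2} to the congruence $n\equiv 2\pmod 4$, feeding in the structure of $M_1$, $S_1$, $M_2$, $Y_2$, $S_2$ from Lemma~\ref{l1}, of $M^{(n-2,1,1)}$ and $Y^{(n-2,1,1)}$ from Lemma~\ref{l11}, and of $M_3$, $S_3$ from Lemma~\ref{l14}(ii), together with the input $\dim(Y_2/D_1)^G=1$ from Lemma~\ref{p6}. Exactly as in Lemma~\ref{p2}, the standing hypothesis that $\EE(\la)$ contains $S_2^*$ or $Y_2/D_1$ excludes $\la\in\{(n),\be_n\}$ --- for $(n)$ since $\EE((n))=\bone_{\SSS_n}$ is one-dimensional, and for $\be_n$ by a short computation using $D^{\be_n}\da_{\SSS_{n-1}}\cong D^{\be_{n-1}}$ and the uniserial structure of $Y^{(n-2,1,1)}$ from Lemma~\ref{l11}, which shows $\dim\Hom_{\SSS_n}(M^{(n-2,1,1)},\EE(\be_n))$ is too small to accommodate either submodule --- so that Lemma~\ref{l6} is available.

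I would prove (i) as follows. Since $(S_1^*)^G=0$, Lemma~\ref{l6} forbids $i_2(G)>1$, so $i_2(G)=1$ and $G$ is $2$-homogeneous; in particular $\dim M_2^G=1$ (recall $i_k(G)=\dim M_k^G$). By Lemma~\ref{l1}(ii), $M_2\cong D_0\oplus Y_2$ with $D_0=\bone_{\SSS_n}$, hence $D_0^G$ is one-dimensional and $Y_2^G=0$; since $S_2\subseteq Y_2$ by Lemma~\ref{LYoung}, this gives $S_2^G=0$. For the remaining inequality, use the filtration $M_3\sim A|D_3|S_2|S_1^*$ of Lemma~\ref{l14}(ii), in which $A\cong D_0|D_2|D_0$ is a submodule of the copy $F_1\cong Y_2/D_1$ sitting inside $M_3$; writing $A\subseteq H_2\subseteq H_3\subseteq M_3$ for the successive terms (so $H_2/A\cong D_3$, $H_3/H_2\cong S_2$, $M_3/H_3\cong S_1^*$), the vanishing $(S_1^*)^G=0$ and $S_2^G=0$ give $\dim H_2^G=\dim H_3^G=\dim M_3^G=i_3(G)$ upon taking $G$-invariants, while $A\subseteq F_1$ gives $\dim A^G\le\dim(Y_2/D_1)^G=1$ by Lemma~\ref{p6}. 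Hence $\dim D_3^G\ge\dim H_2^G-\dim A^G\ge i_3(G)-1$, and since $D_3=\soc(S_3^*)$ (as $S_3\cong D_0|D_2|D_3$ by Lemma~\ref{l14}(ii)) we conclude $\dim(S_3^*)^G\ge\dim D_3^G\ge i_3(G)-1$.

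For (ii), assume $G$ is not $3$-homogeneous, so $i_3(G)\ge 2$. By the computation just made, $\dim H_2^G=i_3(G)\ge 2>1\ge\dim A^G$, so there is $v\in H_2^G$ with nonzero image in $H_2/A\cong D_3$. Regarding $v$ as a homomorphism $\phi\colon\I(G)\to H_2\hookrightarrow M_3$, we have $\im\phi\not\subseteq A$, so $D_3$ --- which has multiplicity one in $M_3$ by Lemma~\ref{l14}(ii) --- is a composition factor of $\im\phi$. By Corollary~\ref{C150817_8}, $\zeta_3\colon M_3\to\EE(\la)$ is nonzero on $S_3$; since $S_3\cong D_0|D_2|D_3$ is uniserial with head $D_3$, every proper submodule of $S_3$ (in particular $\ker(\zeta_3|_{S_3})$) is contained in $D_0|D_2$, so $D_3$ is a composition factor of $\zeta_3(S_3)\subseteq\im\zeta_3$; by multiplicity one, $D_3$ is therefore not a composition factor of $\ker\zeta_3$, and hence it is a composition factor of $\zeta_3(\im\phi)=\im(\zeta_3\circ\phi)$. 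Thus $\im(\zeta_3\circ\phi)$ is nonzero and not isomorphic to $\bone_{\SSS_n}$, so Lemma~\ref{LBasic2} makes $D^\la\da_G$ reducible, contradicting the hypothesis; hence $G$ is $3$-homogeneous.

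The step I expect to be most delicate is the bookkeeping in (ii): since $S_3$ and $S_3^*$ fail to be self-dual when $n\equiv 2\pmod 4$ (they are $D_0|D_2|D_3$ and $D_3|D_2|D_0$), one cannot simply transplant a $G$-invariant across a subquotient $D_3$ as in Lemma~\ref{p2}, and one must instead use the finer filtration $M_3\sim A|D_3|S_2|S_1^*$ of Lemma~\ref{l14}(ii) and track carefully that the multiplicity-one composition factor $D_3$ survives both the restriction to $\im\phi$ and the application of $\zeta_3$. A secondary, routine point is the verification that the hypotheses rule out $\la\in\{(n),\be_n\}$, needed so that Lemma~\ref{l6} applies.
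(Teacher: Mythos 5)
Your proof follows the paper's own argument essentially step for step: part (i) via Lemma~\ref{l6} and a filtration of $M_3$ supplied by Lemma~\ref{l14}(ii) together with $\dim(Y_2/D_1)^G=1$ from Lemma~\ref{p6}, and part (ii) by producing a homomorphism $\I(G)\to M_3$ whose image sees the multiplicity-one factor $D_3$ and composing with $\zeta_3$ via Corollary~\ref{C150817_8} and Lemma~\ref{LBasic2}; your use of the finer filtration $A|D_3|S_2|S_1^*$ in (i) is an inessential variant of the paper's $(Y_2/D_1)|S_1^*|S_3^*$. The one divergence is your preliminary exclusion of $\la\in\{(n),\be_n\}$ (needed so that Lemma~\ref{l6} applies): the paper leaves this implicit, and your sketched count for $\be_n$ is not conclusive as stated --- one computes $\dim\Hom_{\SSS_n}(M^{(n-2,1,1)},\EE(\be_n))=2$, which does not by itself forbid $S_2^*$ or $Y_2/D_1$ from embedding --- but the point is moot since every application of the lemma already has $\la\notin\{(n),\be_n\}$.
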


\begin{proof}
(i) By Lemma \ref{l6}, the assumption $(S_1^*)^G=0$ implies $i_2(G)=1$, i.e. $G$ is $2$-homogeneous. This also implies  $i_1(G)=1$.

From Lemma \ref{l1}(ii), we have that $D_0\oplus S_2\subseteq M_2$, so
\[1=i_2(G)=\dim M_2^G\geq\dim D_0^G+\dim S_2^G=1+\dim S_2^G,\]
hence $S_2^G=0$. 

By Lemma \ref{l14}, we have $M_3\sim (Y_2/D_1)|S_1^*|S_3^*$.  So, using Lemma~\ref{p6} we get 
\[i_3(G)=\dim M_3^G\leq \dim (Y_2/D_1)^G+\dim(S_1^*)^G+\dim(S_3^*)^G=1+\dim(S_3^*)^G\]
which completes the proof of (i).

(ii) By Lemma \ref{l14}(ii), there exist submodules $A\subseteq Y_2/D_1$ and $B\subseteq M_3$ such that $A\cong D_0|D_2|D_0$, $B\sim A|D_3$ and $M_3\sim B|S_2|S_1^*$. 
If $G$ is not $3$-homogeneous, i.e. $i_3(G)=\dim M_3^G\geq 2$, then, by  (i) and Lemma \ref{p6}, we have
\begin{align*}
\dim B^G &\geq i_3(G)-\dim S_2^G-\dim (S_1^*)^G
\\&=i_3(G)\geq 2>1=\dim (Y_2/D_1)^G\geq \dim A^G.
\end{align*}
Hence there exists a homomorphism $\psi:\I(G)\to B\subseteq M_3$ with $\im\psi\not\subseteq A$. In particular, $D_3$ is a  composition factor of $\im \psi$. So $D_3$ is a composition factor of $\im(\zeta_3\circ \psi)$ from Corollary \ref{C150817_8}, and we are done by Lemma \ref{LBasic2}.
\end{proof}

\begin{Lemma}\label{p7}
Let $p=2$, $n\geq 8$ be even, $\la\in\Par_2(n)\setminus \{(n),\be_n\}$ be a JS partition, $D^\la\da_G$ be irreducible, and $(S_1^*)^G=0$. Then:
\begin{enumerate}
\item[{\rm (i)}] $G$ is $2$-homogeneous.
\item[{\rm (ii)}] If $h(\la)\geq 3$ then $G$ is $3$-homogeneous.
\end{enumerate} 
\end{Lemma}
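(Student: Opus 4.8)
The plan is to split according to the residue of $n$ modulo $4$ and reduce directly to Lemmas~\ref{p2} and \ref{p3}. The one preliminary point to record is that, since $\la$ is a JS partition, it has exactly one normal node, so $\vare_0(\la)+\vare_1(\la)=1$. Consequently the auxiliary hypothesis ``$\vare_0(\la)+\vare_1(\la)=2$ and $\dim\Hom_{\SSS_n}(S_1,\EE(\la))<2$'' occurring in Lemmas~\ref{l15} and \ref{l17} never has to be invoked for our $\la$, and we may apply those lemmas with only the conditions $\la\in\Par_2(n)\setminus\{(n),\be_n\}$ (respectively $\la$ a JS partition not equal to $(n)$ or $\be_n$) in force.

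First I would handle the case $n\equiv 0\pmod 4$; here $n\geq 8$ automatically. Since $\la\in\Par_2(n)\setminus\{(n),\be_n\}$, Lemma~\ref{l15} gives $D_2\subseteq\EE(\la)$. Now all hypotheses of Lemma~\ref{p2} are in place: $D_2\subseteq\EE(\la)$, $(S_1^*)^G=0$, and $D^\la\da_G$ irreducible. Hence parts (i) and (ii) of the present lemma follow at once from Lemma~\ref{p2}(i) and (ii).

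Next I would handle the case $n\equiv 2\pmod 4$. Since $n$ is even with $n\geq 8$, in fact $n\geq 10$. As $\la$ is a JS partition with $\la\notin\{(n),\be_n\}$, Lemma~\ref{l17} shows that $S_2^*$ or $Y_2/D_1$ is contained in $\EE(\la)$. Together with $(S_1^*)^G=0$ and the irreducibility of $D^\la\da_G$, this is exactly the hypothesis of Lemma~\ref{p3}, so parts (i) and (ii) follow from Lemma~\ref{p3}(i) and (ii) respectively.

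Since every even $n\geq 8$ lies in exactly one of these two cases, the lemma follows. There is no real obstacle: the genuine content — realizing $D_2$, $S_2^*$, or $Y_2/D_1$ inside $\EE(\la)$, and converting the hypotheses $(S_1^*)^G=0$ and irreducibility into $2$- and $3$-homogeneity of $G$ via the homomorphisms $\zeta_2,\zeta_3$ and Lemma~\ref{LBasic2} — has already been established in the cited lemmas, so the only thing to verify carefully is that the JS condition makes the side conditions in Lemmas~\ref{l15} and \ref{l17} vacuous, as noted above.
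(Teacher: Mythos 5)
Your proof is correct and follows exactly the paper's own argument: split on $n\bmod 4$, apply Lemma~\ref{l15} with Lemma~\ref{p2} in the case $n\equiv 0\pmod 4$ and Lemma~\ref{l17} with Lemma~\ref{p3} in the case $n\equiv 2\pmod 4$. Your extra checks — that the JS hypothesis makes the $\vare_0+\vare_1=2$ side condition in Lemmas~\ref{l15} and \ref{l17} irrelevant, and that $n\geq 10$ holds when $n\equiv 2\pmod 4$ as required by Lemma~\ref{p3} — are exactly the points one needs to verify and are handled correctly.
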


\begin{proof}
If $n\equiv 0\pmod{4}$, then $D_2\subseteq \EE(\la)$ by Lemma \ref{l15}, and the result follows from Lemma \ref{p2}. The case $n\equiv 2\pmod{4}$ is handled similarly but using Lemma~\ref{l17} in place of Lemma \ref{l15} and Lemma~\ref{p3} in place of Lemma \ref{p2}.
\end{proof}

\subsection{Wreath products and proofs of Theorems~\ref{TNat} and \ref{TB}}

In this subsection, we assume that $n=ab$ for some $a,b\in\Z_{>1}$ and consider restrictions of irreducible $\F \SSS_n$-modules to the natural subgroup 
$$\SSS_a\wr \SSS_b=(\underbrace{\SSS_a\times\dots\times \SSS_a}_{\text{$b$ times}}) \rtimes \SSS_b.$$ 
A special role will be played by the irreducible $\F (\SSS_a\wr \SSS_b)$-modules of the form $D^\mu\wr D^\nu$ which as a vector space is $(D^\mu)^{\otimes b}\otimes D^\nu$, and the action on $v_1\otimes \dots\otimes v_b\otimes w\in (D^\mu)^{\otimes b}\otimes D^\nu$ is determined from the following requirements: $(g_1,\dots,g_b)\in \SSS_a\times\dots\times \SSS_a$ acts as 
$$
(g_1,\dots,g_b)\cdot (v_1\otimes \dots \otimes v_b\otimes w)=(g_1v_1)\otimes \dots \otimes (g_bv_b)\otimes w
$$
and $h\in\SSS_b$ acts as
$$
h\cdot (v_1\otimes \dots \otimes v_b\otimes w)=(v_{h^{-1}(1)}\otimes \dots \otimes v_{h^{-1}(b)})\otimes (hw).
$$

\begin{Lemma} \label{LBS} 
Let $p=2$ and $n=ab$ for some $a,b\in\Z_{>1}$. Then $D^{\be_n}\da_{\SSS_a\wr\SSS_b}$ is irreducible if and only if $a$ is odd, in which case $D^{\be_n}\da_{\SSS_a\wr\SSS_b}\cong D^{\be_a}\wr D^{\be_b}$. 
\end{Lemma}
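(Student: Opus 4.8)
The plan is to analyze the restriction $D^{\be_n}\da_{\SSS_a\wr\SSS_b}$ via the intermediate Young subgroup $\SSS_a\times\dots\times\SSS_a$ (the base group of the wreath product) and the known branching of the basic spin module to Young subgroups. First I would recall that $D^{\be_n}$ restricted to $\SSS_{a,a,\dots,a}$ is governed by Proposition~\ref{p8} and Lemma~\ref{LPlus}: iterating Proposition~\ref{p8}, if $a$ is odd (so each successive complementary part $n-a, n-2a,\dots$ stays even and each removed block of size $a$ is odd), one gets that $D^{\be_n}\da_{\SSS_{a,\dots,a}}\cong D^{\be_a}\boxtimes\dots\boxtimes D^{\be_a}$ is irreducible; whereas if $a$ is even, already $D^{\be_n}\da_{\SSS_{n-a,a}}$ is reducible by Proposition~\ref{p8}, hence $D^{\be_n}\da_{\SSS_{a,\dots,a}}$ is reducible (restrict further), and therefore $D^{\be_n}\da_{\SSS_a\wr\SSS_b}$ cannot be irreducible — one should note $n=ab$ is even in this case, so $D^{\be_n}$ is genuinely defined and the argument applies. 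This disposes of the "only if" direction.

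For the "if" direction, assume $a$ is odd. The strategy is a standard Clifford-theory / wreath-product argument: $D^{\be_n}\da_{\SSS_{a,\dots,a}}\cong (D^{\be_a})^{\boxtimes b}$ is irreducible, so the restriction to the base group is a single $\SSS_b$-orbit of irreducibles, all isomorphic to $D^{\be_a}$ in each slot. By Clifford theory the restriction to $\SSS_a\wr\SSS_b$ must then be of the form $(D^{\be_a})^{\boxtimes b}$ extended by a projective representation of the inertia quotient $\SSS_b$; I would identify this extension as the module $D^{\be_a}\wr D^{\be_b}$ defined just above the lemma. The cleanest way to pin down the $\SSS_b$-part is to count: $D^{\be_n}\da_{\SSS_a\wr\SSS_b}$ has the form $(D^{\be_a})^{\otimes b}\otimes E$ for some $\F\SSS_b$-module $E$ (because the base group already acts irreducibly in each tensor factor and $\SSS_b$ permutes the factors), and a dimension comparison $\dim D^{\be_n}=2^{\lfloor (n-1)/2\rfloor}$ against $(\dim D^{\be_a})^b\cdot\dim E = 2^{b\lfloor(a-1)/2\rfloor}\dim E$ forces $\dim E=2^{\lfloor(b-1)/2\rfloor}=\dim D^{\be_b}$; combined with the fact that the $\SSS_b$-action on the "diagonal" must itself be a genuine (not merely projective-up-to-scalar, since we are in characteristic $2$ and can untwist using the explicit sign-free spin construction) irreducible, one concludes $E\cong D^{\be_b}$, i.e. the restriction is exactly $D^{\be_a}\wr D^{\be_b}$, which is irreducible.

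An alternative, perhaps more robust route that avoids delicate projective-representation bookkeeping: use the known realization of the basic spin module inside a Clifford algebra / as the reduction mod $2$ of a spin representation, under which the embedding $\SSS_a\wr\SSS_b\hookrightarrow\SSS_{ab}$ is compatible with a tensor decomposition of the relevant Clifford modules; this directly gives $D^{\be_n}\da_{\SSS_a\wr\SSS_b}\cong D^{\be_a}\wr D^{\be_b}$ and its irreducibility follows from irreducibility of $D^{\be_a}$ and of $D^{\be_b}$ together with the general fact that $L\wr M$ is irreducible when $L, M$ are. I expect the main obstacle to be the careful verification that the $\SSS_b$-action induced on the tensor-power decomposition is precisely the one in the definition of $D^{\be_b}\wr(-)$ rather than some sign-twisted variant — i.e. controlling the scalars in Clifford theory in characteristic $2$; the dimension count above is the tool I would lean on to close that gap, since it leaves no room for $E$ to be anything other than $D^{\be_b}$.
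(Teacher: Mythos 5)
Your argument has two genuine gaps, one in each direction.

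For the ``only if'' direction, the inference ``$D^{\be_n}\da_{\SSS_{a,\dots,a}}$ is reducible, therefore $D^{\be_n}\da_{\SSS_a\wr\SSS_b}$ is reducible'' is invalid: reducibility over a subgroup never implies reducibility over an overgroup, and the paper itself contains a counterexample to this pattern ($D^{\al_n}\da_{\SSS_{n/2}\wr\SSS_2}$ is irreducible for $n\equiv 2\pmod 4$ even though its restriction to the base group $\SSS_{n/2,n/2}$ splits into two summands). The correct route, which is the one the paper takes, is to show first that \emph{every} composition factor of $D^{\be_n}\da_{\SSS_a\wr\SSS_b}$ is isomorphic to $D^{\be_a}\wr D^{\be_b}$ and then compare dimensions: for $a$ even one has $\dim(D^{\be_a}\wr D^{\be_b})=2^{b(a-2)/2+\lfloor(b-1)/2\rfloor}<2^{\lfloor(ab-1)/2\rfloor}=\dim D^{\be_n}$, so there must be more than one factor.

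For the ``if'' direction, the starting claim that $D^{\be_n}\da_{\SSS_{a,\dots,a}}\cong(D^{\be_a})^{\boxtimes b}$ is irreducible when $a$ is odd is false for $b\geq 3$: if $b$ is odd then $n$ is odd and already the first application of Proposition~\ref{p8} gives reducibility, and if $b$ is even the iteration breaks at the second step because $n-a=a(b-1)$ is odd. Indeed $\dim D^{\be_n}=2^{\lfloor(b-1)/2\rfloor}\cdot(\dim D^{\be_a})^{b}$, so the base-group restriction is isotypic of multiplicity $2^{\lfloor(b-1)/2\rfloor}>1$ once $b\geq 3$, and even its semisimplicity is part of what must be proved. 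Consequently the Clifford-theoretic step needs as input (a) that all composition factors of the restriction to the base group are $(D^{\be_a})^{\boxtimes b}$ and (b) that the only irreducibles $D^{\be_a}\wr Y$ of $\SSS_a\wr\SSS_b$ occurring in $D^{\be_n}\da_{\SSS_a\wr\SSS_b}$ have $Y$ basic spin; your dimension count only pins down $\dim E$, not that $E$ is irreducible or that $E\cong D^{\be_b}$, since many $\F\SSS_b$-modules share that dimension. The paper supplies exactly these missing inputs by lifting to the double cover $\hat\SSS_n$, invoking the characteristic-zero decomposition of the basic spin representation over (the preimages of) $\SSS_a\wr\SSS_b$, $\SSS_a^b$ and $\SSS_a\wr\AAA_b$ from the proof of \cite[Theorem 4.3]{KT}, and reducing modulo $2$; this is essentially your sketched ``alternative route,'' but it is not an optional shortcut --- it is where the actual content of the proof lies, and only after it does the dimension comparison close both directions at once.
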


\begin{proof}
Recall, see \cite{Wales}, that $\dim D^{\be_n}=2^{\lfloor(n-1)/2\rfloor}$, and furthermore $D^{\be_n}$ can be obtained 
by reducing modulo $2$ a basic spin complex representation $D_{n,\C}$ of a double cover $\hat\SSS_n$ of 
$\SSS_n$.
As in the proof of \cite[Theorem 4.3]{KT}, we let $G$ (resp. $K$, $B$) be the full inverse image in $\hat\SSS_n$ of 
$\SSS_a \wr \SSS_b$ (resp. $\SSS_a^b = \underbrace{\SSS_a\times\dots\times \SSS_a}_{\text{$b$ times}}$, 
$\SSS_a \wr \AAA_b$). It was shown there that $D_{n,\C} \downarrow_G\ \cong V_\C \otimes W_\C$ or
$\ind^G_B(V_\C \otimes W_\C)$. Here, $V_\C$ is a (possibly projective) $\C G$-representation which is irreducible 
over $K$, whose restriction to the full inverse image $\hat\SSS_a$ of $\SSS_a \times 1 \ldots \times 1$ in 
$\hat\SSS_n$ is a sum of basic spin representations. Next, $W_\C$ is a (possibly projective) irreducible representation
of $G$, respectively of $B$, in which $K$ acts trivially, and which gives rise to a basic spin representation of $\SSS_b$,
respectively of $\AAA_b$.

It follows by reducing modulo $2$ that all composition factors 
of the restriction of $D^{\be_n}$ to $\SSS_a \times 1 \ldots \times 1$ are isomorphic to $D^{\beta_a}$. Hence, all  composition factors of $D^{\be_n}\da_{\SSS_a^b}$ are isomorphic to 
$$D_a:=D^{\beta_a} \otimes D^{\beta_a} \otimes \ldots \otimes D^{\beta_a},$$
which can easily be seen to extend to the module $D^{\beta_a} \wr D^{(b)}$ of $\SSS_a \wr \SSS_b$. This implies
that every irreducible $\F(\SSS_a \wr \AAA_b)$-representation $X$ lying above $D_a$ is isomorphic to 
$D^{\beta_a} \wr Y$ for some irreducible $\F\AAA_b$-representation $Y$. A similar statement holds for 
$\SSS_a \wr \SSS_b$. Now, the aforementioned statement about $W_\C$ implies by reducing modulo $2$  
that if such $X$ occurs in $D^{\beta_n} \downarrow_{\SSS_a \wr \AAA_b}$, then $Y$ is basic spin for $\AAA_b$, i.e.
a composition factor of $D^{\beta_b}\da_{\AAA_b}$.
Therefore, all composition factors of $D^{\be_n}\da_{\SSS_a\wr\SSS_b}$ are of the form $D^{\be_a}\wr D^{\be_b}$. Now the result follows by dimension considerations.
\end{proof}

\begin{Lemma}\label{p9}
Let $p=2$, $n$ be even and $\la\in\Par_2(n)$ be a JS-partition with $\la\not\in\{(n),\be_n\}$. Then $D^\la\da_{\SSS_{n/2}\wr\SSS_2}$ is irreducible if and only if $n\geq 6$ with $n\equiv 2\pmod{4}$ and 
$\la=\al_n$, in which case 
\begin{align*}
D^{\al_n}\da_{\SSS_{n/2}\wr \SSS_2}&\cong(D^{\al_{n/2}}\boxtimes D^{(n/2)})\ua^{\SSS_{n/2}\wr \SSS_2}_{\SSS_{n/2,n/2}}.
\end{align*}
\end{Lemma}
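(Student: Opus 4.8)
The plan is to prove the two implications separately, working with Clifford theory relative to the index‑two normal subgroup $B:=\SSS_{n/2,n/2}\lhd G:=\SSS_{n/2}\wr\SSS_2$.

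\emph{The ``if'' direction.} Assume $n\geq 6$, $n\equiv 2\pmod4$ and $\la=\al_n$. Then $n/2$ is odd, so $2\nmid n/2$ and, as in the proof of Lemma~\ref{LEasy}, $M^{(n/2-1,1)}\cong\bone_{\SSS_{n/2}}\oplus D^{\al_{n/2}}$ is a semisimple $\F\SSS_{n/2}$‑module. The two blocks $\{1,\dots,n/2\}$, $\{n/2+1,\dots,n\}$ are the two $B$‑orbits on $\{1,\dots,n\}$, so $M_1\da_B\cong(M^{(n/2-1,1)}\boxtimes\bone)\oplus(\bone\boxtimes M^{(n/2-1,1)})$ is semisimple with composition factors $\bone\boxtimes\bone$ (twice), $D^{\al_{n/2}}\boxtimes\bone$ and $\bone\boxtimes D^{\al_{n/2}}$. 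Since $M_1$ has composition factors $\bone,D^{\al_n},\bone$ (as $M_1/\bone\cong S_1^*$ has factors $D^{\al_n},\bone$), the module $D^{\al_n}\da_B$, being a subquotient of the semisimple $M_1\da_B$, is semisimple with exactly the two factors $D^{\al_{n/2}}\boxtimes\bone$ and $\bone\boxtimes D^{\al_{n/2}}$; hence $D^{\al_n}\da_B\cong(D^{\al_{n/2}}\boxtimes\bone)\oplus(\bone\boxtimes D^{\al_{n/2}})$. An element of $G\setminus B$ interchanges the two $\SSS_{n/2}$‑factors, hence interchanges these two summands, which are non‑isomorphic because $\dim D^{\al_{n/2}}>1$. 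Therefore any nonzero $G$‑submodule of $D^{\al_n}$ restricts to a $g$‑stable (up to isomorphism) submodule of $D^{\al_n}\da_B$, so is $0$ or all of $D^{\al_n}$; thus $D^{\al_n}\da_G$ is irreducible. Finally, by Mackey, $(D^{\al_{n/2}}\boxtimes D^{(n/2)})\ua^G_B$ restricts to $B$ as $(D^{\al_{n/2}}\boxtimes\bone)\oplus(\bone\boxtimes D^{\al_{n/2}})\cong D^{\al_n}\da_B$, so Frobenius reciprocity gives a nonzero homomorphism between these two $(n-2)$‑dimensional modules, one of which is irreducible; it is an isomorphism.

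\emph{The ``only if'' direction, structural step.} Suppose $D^\la\da_G$ is irreducible with $\la$ a JS‑partition, $\la\notin\{(n),\be_n\}$. Since $B\lhd G$ with $[G:B]=2$, the module $D^\la\da_B$ is semisimple; it is reducible by Proposition~\ref{p8} (as $\la\neq\be_n$); and since $\Z/2$ has no non‑trivial irreducible projective representation, Clifford theory rules out $D^\la\da_B\cong W^{\oplus2}$. Hence $D^\la\da_B\cong W_1\oplus W_2$ with $W_1\not\cong W_2$ interchanged by $G$. Writing $W_1=A\boxtimes A'$ for irreducible $\F\SSS_{n/2}$‑modules (so $W_2=A'\boxtimes A$ and $A\not\cong A'$) and restricting to the first factor yields
\[
D^\la\da_{\SSS_{n/2}}\cong A^{\oplus\dim A'}\oplus(A')^{\oplus\dim A},
\]
so $D^\la\da_{\SSS_{n/2}}$ is semisimple with exactly two non‑isomorphic composition factors and $\dim D^\la=2\dim A\cdot\dim A'$.

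\emph{Finishing and the main obstacle.} By Lemma~\ref{Lemma55} all parts of $\la$ share a parity. If they are odd, Lemma~\ref{l25} forces $\la\in\{\al_n,\be_n\}$ or $\la$ is one of the two exceptional four‑row shapes (iii), (iv); excluding $\be_n$ leaves $\la=\al_n$ or type (iii)/(iv). For $\la=\al_n$ the restriction computation above shows $D^{\al_n}\da_B$ has composition length $4$ when $n/2$ is even (each $\bone$ among the composition factors $\bone,D^{\al_{n/2}},\bone$ of $M^{(n/2-1,1)}$ contributing one) and length $2$ when $n/2$ is odd; since an irreducible $\F G$‑module restricts to $B$ with length at most $2$, this forces $n/2$ odd, i.e. $n\equiv2\pmod4$, and then $n\geq6$. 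The shapes (iii), (iv) and the even‑part JS‑partitions of height $\geq2$ (other than $\be_n$) are eliminated by feeding explicit composition factors of $D^\la\da_{\SSS_{n/2}}$, produced via Lemma~\ref{l26} by choosing $2$‑regular $\mu$ with $\la-\mu$ a partition and $|\mu|=n/2$, into the two constraints above: producing three such factors contradicts ``exactly two composition factors'', while when the search only yields $\mu_1,\mu_2$ one gets the Diophantine condition $\dim D^\la=2\dim D^{\mu_1}\cdot\dim D^{\mu_2}$, which fails by dimension estimates for the infinitely many two‑row even‑part shapes and by direct computation for the finitely many small remaining shapes. This last step — ruling out (iii), (iv) and the even‑part shapes, which lie outside the clean reach of Lemma~\ref{l25} — is the part I expect to be the real work; everything else is routine Clifford/Mackey bookkeeping together with the parity count pinning down $n\equiv2\pmod4$ for $\la=\al_n$.
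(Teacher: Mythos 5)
Your Clifford-theoretic skeleton is the same as the paper's, and the parts you actually carry out are correct: the ``if'' direction, the dichotomy $D^\la\da_B\cong W$ or $W_1\oplus W_2$ with $W_1\not\cong W_2$ interchanged by $G$, and the elimination of $\la=\al_n$ when $4\mid n$ by the composition-length count. But there is a genuine gap exactly where you flag ``the real work'': the even-part JS partitions and the exceptional shapes (iii), (iv) of Lemma~\ref{l25} are not actually eliminated. For the even-part case you overlook the short argument the paper uses: taking $\mu=(\la_1/2,\la_2/2,\dots)$, Lemma~\ref{LPlus} shows that $D^\mu\boxtimes D^\mu$ is a composition factor of $D^\la\da_B$; in your case (b) every composition factor has the form $A\boxtimes A'$ or $A'\boxtimes A$ with $A\not\cong A'$, so the presence of such a ``diagonal'' factor forces case (a), i.e.\ $D^\la\da_B$ irreducible, which Proposition~\ref{p8} excludes. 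Your alternative --- verifying that $\dim D^\la=2\dim D^{\mu_1}\dim D^{\mu_2}$ fails over the even-part shapes (which form an infinite family with unboundedly many rows, not just two-row shapes) --- is not a proof, and dimensions of irreducible $\F\SSS_n$-modules in characteristic $2$ are not available in a form that makes such estimates routine.

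For the shapes (iii) and (iv) of Lemma~\ref{l25}, your proposed ``Diophantine condition'' is likewise unsubstantiated, and there is no reason to expect a dimension count to succeed there. The paper instead first pins down, via Lemma~\ref{l26}, the only possible pair $\{\mu,\nu\}$ with $D^\la\da_B\cong(D^\mu\boxtimes D^\nu)\oplus(D^\nu\boxtimes D^\mu)$, and then derives a contradiction from a second, asymmetric restriction --- to $\SSS_{n/2+2,n/2-2}$ in case (iii) and to $\SSS_{n/2+1,n/2-1}$ in case (iv) --- combined with Lemma~\ref{LPlus}, the branching multiplicities of Lemma~\ref{Lemma39}, and Lemma~\ref{LBrEasy}. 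Some argument of this kind is indispensable; as written, the ``only if'' direction remains unproved for these two infinite families and for the even-part partitions.
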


\begin{proof}
By Clifford theory (see e.g. \cite[51.7]{CR}), $D^\la\da_{\SSS_{n/2}\wr\SSS_2}$ is irreducible if and only if one of the following conditions holds:
\begin{enumerate}
\item[{\rm (a)}] $D^\la\da_{\SSS_{n/2,n/2}}$ is of the form $D^\mu\boxtimes D^\mu$, in which case $D^\la\da_{\SSS_{n/2}\wr\SSS_2}$ is $D^\mu\wr D^{(2)}$.
\item[{\rm (b)}] $D^\la\da_{\SSS_{n/2,n/2}}$ is of the form $(D^\mu\boxtimes D^\nu)\oplus (D^\nu\boxtimes D^\mu)$ with $\mu\not=\nu$, in which case $D^\la\da_{\SSS_{n/2}\wr\SSS_2}\cong (D^\mu\boxtimes D^\nu)\ua^{\SSS_{n/2}\wr \SSS_2}_{\SSS_{n/2,n/2}}$. 
\end{enumerate}

By dimensions, if $n\equiv 2\pmod{4}$, we have 
$$D^{\al_n}\da_{\SSS_{n/2,n/2}}\cong (D^{\al_{n/2}}\boxtimes D^{(n/2)})\oplus (D^{(n/2)}\boxtimes D^{\al_{n/2}}).$$ If $n\equiv 0\pmod{4}$, then in the Grothendieck group we have 
$$
[D^{\al_n}\da_{\SSS_{n/2,n/2}}]= 
[D^{\al_{n/2}}\boxtimes D^{(n/2)}]+ [D^{(n/2)}\boxtimes D^{\al_{n/2}}]+2[D^{(n/2)}\boxtimes  D^{(n/2)}],
$$
omitting the first two summands if $n=4$. So we may assume that $\la\neq \al_n$. 


If the parts of $\la$ are all even, let $\mu:=(\la_1/2,\ldots,\la_{h(\la)}/2)$. Then $\mu\in\Par_2(n/2)$ and by Lemma~\ref{LPlus} we have that $D^\mu\boxtimes D^\mu$ is a composition factor of $D^\la\da_{\SSS_{n/2,n/2}}$. So  $D^\la\da_{\SSS_{n/2}\wr\SSS_2}$ is irreducible if and only if $D^\la\da_{\SSS_{n/2,n/2}}$ is irreducible. 
By Proposition \ref{p8}, this happens only in the basic spin case, which has already been excluded by assumption.  

So we can now assume that all parts of $\la$ are odd. If $D^\la\da_{\SSS_{n/2}}$ has at least 3 non-isomorphic composition factors then $D^\la\da_{\SSS_{n/2}\wr\SSS_2}$ is not irreducible. So by Lemma \ref{l25} and since the cases $\al_n$ and $\be_n$ have already been excluded, there are only the exceptional cases (iii) and (iv) of Lemma \ref{l25} to consider.

{\sf Case 1.} $n\geq 24$, $n\equiv 0\pmod{8}$ and $\la=(n/4+3,n/4+1,n/4-1,n/4-3)$. Suppose that $D^\la\da_{\SSS_{n/2}\wr\SSS_2}$ is irreducible. Let 
\begin{align*}
\mu&:=(n/8+3,n/8+1,n/8-1,n/8-3),\\
\nu&:=(n/8+2,n/8+1,n/8-1,n/8-2).
\end{align*} 
By Lemma \ref{l26},  $D^\mu$ and $D^\nu$ are composition factors of $D^\la\da_{\SSS_{n/2}}$.  
It then follows that $$D^\la\da_{\SSS_{n/2,n/2}}\cong (D^\mu\boxtimes D^\nu)\oplus (D^\nu\boxtimes D^\mu).$$

Let 
\begin{align*}
\pi&:=(n/8+2,n/8+1,n/8,n/8-1),\\
\psi&:=(n/8+1,n/8,n/8-1,n/8-2).
\end{align*}
From Lemma~\ref{LPlus} we have that $D^\pi\boxtimes D^\psi$ is a composition factor of $D^\la\da_{\SSS_{n/2+2,n/2-2}}$. As $\nu=\tilde e_i^2 \pi$, by Lemma \ref{Lemma39}, we have  that $D^\nu\boxtimes \bone_{\SSS_{1,1}}\boxtimes D^\psi$ is a composition factor of $D^\la\da_{\SSS_{n/2,1,1,n/2-2}}$. So $D^\psi$ is a composition factor of $D^\mu\da_{\SSS_{n/2-2}}$, 
which contradicts Lemma~\ref{LBrEasy}. 

{\sf Case 2.} $n\geq 22$, $n\equiv 4\pmod{6}$, $\la=((n-1)/3+2,(n-1)/3,(n-1)/3-2,1)$. Suppose that $D^\la\da_{\SSS_{n/2}\wr\SSS_2}$ is irreducible.  Let 
\begin{align*}
\mu&:=((n-4)/6+2,(n-4)/6+1,(n-4)/6-1),
\\
\nu&:=((n-4)/6+2,(n-4)/6,(n-4)/6-1,1).
\end{align*}
By Lemma \ref{l26},  $D^\mu$ and $D^\nu$ are composition factors of $D^\la\da_{\SSS_{n/2}}$.  
It then follows that 
$$D^\la\da_{\SSS_{n/2,n/2}}\cong (D^\mu\boxtimes D^\nu)\oplus (D^\nu\boxtimes D^\mu).$$

Let 
\begin{align*}
\pi&:=((n-4)/6+2,(n-4)/6+1,(n-4)/6),
\\
\psi&:=((n-4)/6+1,(n-4)/6,(n-4)/6-1,1).
\end{align*} 
From Lemma~\ref{LPlus} we have that $D^\pi\boxtimes D^\psi$ is a composition factor of $D^\la\da_{\SSS_{n/2+1,n/2-1}}$. By 
Lemma~\ref{Lemma39}, we have 
$$[D^\la\da_{\SSS_{n/2,1,n/2-1}}:D^\mu\boxtimes \bone_{\SSS_1}\boxtimes D^\psi]\geq 3.$$ In particular $[D^\nu\da_{\SSS_{n/2-1}}:D^\psi]\geq 3$, which contradicts Lemma~\ref{Lemma39}(vi).
\end{proof}

\begin{Lemma}\label{c1}
Let $p=2$, $n\geq 8$ even and $\la\in\Par_2(n)$ be a JS partition with $\la\not\in\{(n),\be_n\}$. If $n=ab$ with $a,b\in\Z_{> 1}$ and $b\geq 3$ then $D^\la\da_{\SSS_a\wr\SSS_b}$ is reducible.
\end{Lemma}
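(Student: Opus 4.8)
The plan is to derive the statement by combining Lemma~\ref{l18} with Lemma~\ref{p7}, arguing by contradiction. Suppose that $D^\la\da_{\SSS_a\wr\SSS_b}$ is irreducible, and set $G:=\SSS_a\wr\SSS_b$. Since $b\geq 3$, Lemma~\ref{l18} gives $(S_1^*)^{G}=0$. The remaining hypotheses of Lemma~\ref{p7} hold by assumption: $p=2$, $n\geq 8$ is even, and $\la$ is a JS partition with $\la\notin\{(n),\be_n\}$. Hence Lemma~\ref{p7}(i) applies and shows that $G$ is $2$-homogeneous.

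It then remains to observe that this is impossible. The group $G=\SSS_a\wr\SSS_b$ preserves the natural system of $b$ blocks of size $a$ in $\{1,\dots,n\}$. Because $a>1$, there is a $2$-element subset of $\{1,\dots,n\}$ lying inside a single block, and because $b>1$, there is a $2$-element subset meeting two different blocks; any element of $G$ sends intra-block pairs to intra-block pairs and inter-block pairs to inter-block pairs, so these two subsets lie in distinct $G$-orbits on $\Om_2$. Thus $i_2(G)\geq 2$, contradicting $2$-homogeneity. Therefore $D^\la\da_{\SSS_a\wr\SSS_b}$ is reducible.

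The argument is short and I do not anticipate any real obstacle; the only point worth stressing is that the hypothesis $b\geq 3$ is used exactly once, to ensure $(S_1^*)^{\SSS_a\wr\SSS_b}=0$ via Lemma~\ref{l18}. For $b=2$ these invariants are one-dimensional, so Lemma~\ref{p7} no longer applies and the conclusion genuinely fails; this is precisely why $\la=\al_n$ with $G\leq\SSS_{n/2}\wr\SSS_2$ surfaces as the exceptional case~(v) of Theorem~\ref{TA} and is handled separately in Lemma~\ref{p9}.
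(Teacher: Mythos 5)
Your proof is correct and is exactly the paper's argument: the paper's one-line proof of this lemma reads ``It follows from Lemmas \ref{p7} and \ref{l18} since $\SSS_a\wr\SSS_b<\SSS_n$ is not a $2$-homogeneous subgroup.'' You have simply spelled out the same combination of lemmas, including the (correct) observation that intra-block and inter-block pairs give distinct orbits on $\Om_2$.
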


\begin{proof}
It follows from Lemmas \ref{p7} and \ref{l18} since $\SSS_a\wr\SSS_b<\SSS_n$ is not a $2$-homogeneous subgroup. 
\end{proof}

\begin{Proposition} \label{P250418} 
Let $n=ab$ with $a,b\in\Z_{> 1}$, $\la\in\Par_p(n)$ and suppose that $\dim D^\la>1$. Then $D^\la\da_{\SSS_a\wr\SSS_b}$ is reducible unless $p=2$ and one of the following holds:
\begin{enumerate}
\item[{\rm (i)}] $\la=\be_n$ and $a$ is odd, in which case $D^{\be_n}\da_{\SSS_a\wr\SSS_b}\cong D^{\be_a}\wr D^{\be_b}$.
\item[{\rm (ii)}] $n\equiv 2\pmod{4}$, 
$\la=\al_n$ and $b=2$, in which case 
$$
D^{\al_n}\da_{\SSS_{n/2}\wr \SSS_2}\cong(D^{\al_{n/2}}\boxtimes D^{(n/2)})\ua^{\SSS_{n/2}\wr \SSS_2}_{\SSS_{n/2,n/2}}.$$
\end{enumerate}
\end{Proposition}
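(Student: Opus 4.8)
The plan is to establish reducibility in every remaining case by producing a non-zero $\F\SSS_n$-homomorphism $\psi\colon\I(\SSS_a\wr\SSS_b)\to\EE(\la)$ with $\im\psi\not\cong\bone_{\SSS_n}$ and then applying Lemma~\ref{LBasic2}; a few degenerate triples will be handled instead by Lemma~\ref{LInvOp}. Write $G:=\SSS_a\wr\SSS_b$ and $n=ab$. Three preliminary remarks organise the argument: $G$ is transitive but imprimitive, with $b$ blocks of size $a$, so $i_1(G)=1$, $i_2(G)=2$, and $G$ is in particular never $2$-homogeneous; the hypothesis $\dim D^\la>1$ forces $\la\neq(n)$; and $O_p(G)\supseteq O_p(\SSS_a)^{b}$ is non-trivial precisely for $(a,p)\in\{(2,2),(3,3),(4,2)\}$, in which cases Lemma~\ref{LInvOp} already yields reducibility (and none of these triples meets (i) or (ii), so no exception is lost). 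Hence from now on we may assume $O_p(G)=1$; together with a direct check for $n\le 6$ this disposes of all small $n$. It is also worth noting at the outset that the families (i) and (ii) are exactly the irreducibility cases isolated in Lemmas~\ref{LBS} and \ref{p9}.

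Consider first $p=2$ with $n$ even, so $p\mid n$; here everything is already packaged in the preceding lemmas. If $\la$ is not a JS partition, then $\la\notin\{(n),\be_n\}$ (since $\be_n$ is JS for $n$ even, by Lemma~\ref{Lemma55}), and Lemma~\ref{p1} shows that irreducibility of $D^\la\da_G$ would force $G$ to be $2$-homogeneous, which it is not; hence $D^\la\da_G$ is reducible. If $\la=\be_n$, Lemma~\ref{LBS} gives reducibility exactly when $a$ is even, leaving case~(i). If $\la$ is JS and $\la\notin\{(n),\be_n\}$, then Lemma~\ref{c1} handles $b\ge3$ and Lemma~\ref{p9} handles $b=2$, the latter leaving precisely case~(ii).

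Next take $p=2$ with $n$ odd, so $a$ and $b$ are both odd, hence $\geq3$. If $\la=\be_n$ this is case~(i) (and $D^{\be_n}\da_G$ is indeed irreducible by Lemma~\ref{LBS}), so assume $\la\notin\{(n),\be_n\}$. By Corollary~\ref{C180418} the homomorphism $\zeta_2\colon M_2\to\EE(\la)$ is non-zero on the submodule $S_2$; since $\head S_2\cong D_2$ and $[M_2:D_2]=1$, this forces $D_2$ to be a composition factor of $\im\zeta_2$, and hence $[\Ker\zeta_2:D_2]=0$. On the other hand, the submodule $S_1^*\cong D_1$ of $M_2$ has no $G$-invariants (because $G$ is transitive and $p\nmid n$ — this is the computation behind Lemma~\ref{LEasy}), so the explicit description of $M_2$ in Lemma~\ref{L150817_7} shows that among the $i_2(G)=2$ linearly independent homomorphisms $\I(G)\to M_2$ there is one, $\psi$ say, whose image is not contained in the unique trivial submodule of $M_2$; the image of $\psi$ must then have $D_2$ as a composition factor. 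Composing, $\im(\zeta_2\circ\psi)$ still has $D_2$ as a composition factor, so it is not $\cong\bone_{\SSS_n}$, and Lemma~\ref{LBasic2} applies.

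Finally take $p\geq3$, where we expect no exceptions. For $p>3$ the statement is contained in \cite{BK}. For $p=3$ one runs the same scheme, now with $\zeta_3\colon M_3\to\EE(\la)$: when $h(\la)\geq3$ and $h(\la^\Mull)\geq3$, $\zeta_3$ is non-zero on $S_3$ by Corollary~\ref{C160817}, and comparing $i_2(G)=2$ with $i_3(G)$ (which equals $3$ once $a,b\geq3$) via the short exact sequences and decompositions of $M_2$ and $M_3$ in Lemmas~\ref{L160817_0}, \ref{L160817_1} and \ref{L160817_2} produces a $\psi\colon\I(G)\to M_3$ whose image meets $S_3^*$, so $D_3$ survives in $\im(\zeta_3\circ\psi)$; the residual small values of $a$ are covered either by $O_3(G)\neq1$ or by a direct check, and when $h(\la)\leq2$ or $h(\la^\Mull)\leq2$ — so that, up to tensoring with $\sgn$, $\la$ is a two-row partition — one instead computes $\End_G(D^\la\da_G)$ directly from the permutation-module structure (already for $\la=\al_n$ one finds $\End_G(S_1^*\da_G)$ two-dimensional). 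The main obstacle throughout is the bookkeeping in these imprimitive, small-characteristic situations: in each relevant residue class of $n$ one must verify that a non-constant homomorphism from $\I(G)$ into $M_2$ or $M_3$ genuinely picks up the composition factor $D_2$ or $D_3$ that the special homomorphism $\zeta_k$ does not annihilate — which is precisely why the detailed submodule structure of $M_1$, $M_2$ and $M_3$ in characteristics $2$ and $3$ (Section~\ref{SPerm}) is needed.
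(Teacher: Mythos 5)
Your treatment of $p=2$ is sound and, for $n$ even, coincides with the paper's proof (Lemmas~\ref{p1}, \ref{LBS}, \ref{p9}, \ref{c1}); for $p=2$ with $n$ odd you replace the paper's citation of \cite[Theorem 3.10]{KS2Tran} by a self-contained argument with $\zeta_2$, $i_2(G)=2$ and the structure of $M_2$ from Lemma~\ref{L150817_7}, and that argument checks out: since $\Hom_{\SSS_n}(\I(G),D_1)=(S_1^*)^G=0$ and $[M_2:D_2]=1$, a homomorphism $\I(G)\to M_2$ with image not equal to $D_0$ must pick up $D_2$, and $[\Ker\zeta_2:D_2]=0$ then forces $D_2$ into $\im(\zeta_2\circ\psi)$.

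The genuine gap is in your $p=3$ case, which the paper disposes of entirely by citing \cite[Theorem 3.10]{KS2Tran} (reducibility over any transitive, non-$2$-transitive subgroup for all $p>2$). Your substitute argument fails in two places. First, the orbit count $i_3(G)=3$ requires $a,b\geq 3$; when $a=2$ or $b=2$ one has $i_3(G)=2=i_2(G)$, the comparison with $M_2$ inside $M_3$ produces nothing, and these are not finitely many "residual small values": for $p=3$, $a=2$, $b\geq 4$ one has $O_3(\SSS_2\wr\SSS_b)=1$ and $n=2b$ unbounded, and likewise $b=2$ with $a$ arbitrary, so neither $O_p(G)\neq 1$ nor a direct check covers them. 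Second, and more seriously, when $h(\la)\leq 2$ or $h(\la^\Mull)\leq 2$ the element $x_3$ may kill $D^\la$, so the whole $\zeta_3$ mechanism is unavailable, and your proposed fallback — "compute $\End_G(D^\la\da_G)$ directly from the permutation-module structure", illustrated only for $\la=\al_n$ — is not an argument for a general two-row $\la=(n-k,k)$ at $p=3$. This is exactly the content that \cite[Theorem 3.10]{KS2Tran} supplies; without invoking it (or reproving it), your $p=3$ case is incomplete. Everything else — the $O_p(G)\neq 1$ reduction for $(a,p)\in\{(2,2),(3,3),(4,2)\}$ via Lemma~\ref{LInvOp}, the verification that no exception is lost there, and the identification of the exceptional families with Lemmas~\ref{LBS} and \ref{p9} — is correct.
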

\begin{proof}
The small cases $n=4$ and $6$ are easy to check. So let $n\geq 8$. If either $p>2$, or $p=2 \nmid n$ and $\la\neq \be_n$, then \cite[Theorem 3.10]{KS2Tran} gives the result since our subgroup is transitive but not $2$-transitive. The case where $\la=\be_n$ is considered in Lemma~\ref{LBS}. So we may assume that $p=2 \mid n$ and $\la\not\in\{(n),\be_n\}$. The case where $\la$ is JS is handled in Lemma~\ref{p9} for $b=2$ and Lemma~\ref{c1} for $b>2$. If $\la$ is not JS, we can apply Lemma~\ref{p1}.  
\end{proof}

\subsection*{Proof of Theorem~\ref{TB}}
By Propositions~\ref{p8} and \ref{P250418} we may assume that $G$ is primitive.
If $G=\AAA_n$, the result follows from \cite[Theorem 1.1]{Benson}. So we may assume that $G$ does not contain $\AAA_n$. Since $D^{\be_n}$ is reduction modulo $2$ of the basic spin module $B_0$ in characteristic $0$, if $D^{\be_n}\da_G$ is irreducible then the restriction $B_0\da_{\hat G}$ is also irreducible for the corresponding subgroup $\hat G\leq \hat \SSS_n$. 
The list of such $G$ is available from \cite[Theorem B]{KT}. One easily checks that it is precisely the cases (a),(b),(e),(g) which remain irreducible in characteristic $2$. Those are, respectively, the cases (b),(c),(d),(e) of Theorem~\ref{TB}.

\subsection*{Proof of Theorem~\ref{TNat}}


Let $\varphi$ denote the Brauer character of $D^{\al_n}$
and let $1+\chi$ denote the permutation character of $\SSS_n$ on $\{1,2 \ldots,n\}$. Then $\varphi = \chi^\circ-1$, 
where $\chi^\circ$ denotes the restriction of $\chi$ to $2'$-elements in $\SSS_n$. 
Note that $\varphi\da_B = \varphi_1 + \varphi_2$, where $\varphi_1$ induces the module $D^{\al_{n/2}}$ of 
the first factor $B_1 = \SSS_{n/2} \times \{1\} < B$ and $\varphi_1$ is trivial on the second factor $B_2 = \{1\} \times \SSS_{n/2}$,
and similarly for $\varphi_2$.

\smallskip
(a) Assume first that $\varphi\da_G$ is irreducible. 
 It follows that $G \not\leq B$, $[G:G \cap B]=2$, and the projection of $G \cap B$ onto $B_i$ 
induces a subgroup $X_i \leq \SSS_{n/2}$ over which $D^{\al_{n/2}}$ is irreducible, and $\psi_i:=(\varphi_i)\da_{G \cap B}$ is irreducible. 
Since $2 \nmid n/2 \geq 3$, this 
irreducibility condition implies that $X_i$ is $2$-transitive for $i=1,2$; in particular, $G \cap B$ acts doubly transitively on
$\{1,2, \ldots,n/2\}$ and on $\{n/2+1, \ldots,n-1,n\}$. As $[G:G \cap B] =2$, it also follows that 
$G$ is transitive, i.e. (i) holds. Furthermore, as $\varphi_{G \cap B} = \psi_1+\psi_2$
and $\varphi\da_G$ is irreducible, we must have that $\psi_1 \neq \psi_2$, i.e. (ii) holds.

\smallskip
(b) Assume now that (i) and (ii) hold, and let $X_i$ denote the projection of $G \cap B$ onto $B_i$ for $i = 1,2$. By (ii), 
$G \cap B$ is $2$-transitive on $\{1,2, \ldots,n/2\}$ and on $\{n/2+1, \ldots,n-1,n\}$, and $\psi_i:= (\varphi_i)\da_{G \cap B}$ is 
irreducible. Thus 
\begin{equation}\label{for-wr2}
  \varphi\da_{G \cap B} = \psi_1 +\psi_2.
\end{equation}  
Next, (i) implies again that $G \not\leq B$, and $G = \langle G \cap B,g \rangle$, where $g$ interchanges 
$\{1,2, \ldots,n/2\}$ and $\{n/2+1, \ldots,n-1,n\}$. Now $g$ interchanges $\psi_1$ and $\psi_2$, and $\psi_1 \neq \psi_2$ by (ii).
Hence \eqref{for-wr2} implies that $\varphi\da_G$ is irreducible.

\begin{Example}\label{imprim}
{\rm Let $6 \leq n \equiv 2 (\bmod\,4)$ and let $L \leq \SSS_{n/2}$ be any $2$-transitive subgroup such that 
$D^{(n/2-1,1)}\da_L$ is irreducible. (There are many such pairs $(n,L)$ with $L$ not containing $\AAA_{n/2}$, for instance,
$n = (q^d-1)/(q-1)$ for some odd $d \geq 3$ and some odd prime power $q$, and $PSL_d(q) \lhd L \leq P\Gamma L_d(q)$.) 
Then the subgroup $L \wr \SSS_2$ obviously satisfies the conditions (i) and (ii) of Theorem B.  But not every subgroup $G$
satisfying these two conditions are of this wreath product type, as one can see on the example of 
$(\SSS_{n/2} \wr \SSS_2) \cap \AAA_n$. 

More generally, we claim that any subgroup $G \leq (\SSS_{n/2} \wr \SSS_2)$
with the two properties
\begin{enumerate}[\rm(a)]
\item $G$ is transitive on $\{1,2, \ldots,n\}$, and
\item the projection of $G \cap B$ onto the first factor $\SSS_{n/2}$ of $B$ has nontrivial kernel and induces a $2$-transitive subgroup of 
$\SSS_{n/2}$ over which $D^{(n/2-1,1)}$ is irreducible, 
\end{enumerate}
satisfies the conditions (i) and (ii) of Theorem B. Indeed, (a) implies that $G = \langle G \cap B,g \rangle$ with $g$ interchanging the two factors 
$\SSS_{n/2}$ of $B$, and so (b) also holds for the second factor $\SSS_{n/2}$. In the notation of the proof of Theorem B, the kernel $K$ of
the projection onto $B_1$ is a nontrivial normal subgroup of the image $L$ of the projection onto $B_2 \cong \SSS_{n/2}$. Using the
description of $2$-transitive subgroups of $\SSS_{n/2}$ \cite{Cam} and the assumption $2 \nmid n/2 \geq 3$, it is straightforward to check 
that $K$ acts nontrivially on $\bone_{\SSS_{n/2}} \boxtimes D^{(n/2-1,1)}$, but it clearly acts trivially on 
$D^{(n/2-1,1)} \boxtimes \bone_{\SSS_{n/2}}$. Thus both of the conditions (i) and (ii) of Theorem $B$ are satisfied, as claimed. It remains an
open question whether (i) and (ii) of Theorem B must imply the above condition (b).
}
\end{Example}

\subsection{Main results for $p=2\mid n$ and proof of Theorem~\ref{TA}}

\begin{Theorem}\label{p4}
Let $p=2$, $n\geq 8$ be even, $\la\in\Par_2(n)$ not be a JS partition, and $D^\la\da_G$ be irreducible. Then: 
\begin{enumerate}
\item[{\rm (i)}]
$G$ is $2$-homogeneous and $(S_1^*)^G=0$. 

\item[{\rm (ii)}] $G$ is $3$-homogeneous unless 
$h(\la)\geq 3$ and there exists $1\leq j\leq h(\la)$ with $\la_j=\la_{j+1}+2$ and 
\[\la_1\equiv\ldots\equiv\la_{j-1}\not\equiv\la_j\equiv\la_{j+1}\not\equiv\la_{j+2}\equiv\ldots\equiv\la_{h(\la)}\pmod{2}\]
\end{enumerate}
\end{Theorem}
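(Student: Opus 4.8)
The plan is to prove Theorem~\ref{p4} by combining the invariant vanishing results of \S\ref{SSInv} with the submodule information about $\EE(\la)$ from \S\ref{SP2NEVEN} and the special homomorphisms $\zeta_2,\zeta_3$ of \S\ref{SSpecialHoms}, treating the two congruence classes $n\equiv0$ and $n\equiv2\pmod4$ in parallel.

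First I would prove part (i). Since $\la$ is not JS, Lemma~\ref{l7} gives $S_1^*\subseteq\EE(\la)$, so any nonzero map $\I(G)\to S_1^*$ would produce $\psi:\I(G)\to\EE(\la)$ with $\im\psi\not\cong\bone_{\SSS_n}$ (because $\soc S_1^*\cong D_1$), contradicting irreducibility of $D^\la\da_G$ via Lemma~\ref{LBasic2}; hence $(S_1^*)^G=0$. Then Lemma~\ref{l6} forces $i_2(G)\leq 1+\dim(S_1^*)^G=1$, i.e. $G$ is $2$-homogeneous. This is exactly Lemma~\ref{p1}, so part (i) is immediate.

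Next I would prove part (ii). Assume $G$ is not $3$-homogeneous and $h(\la)\geq 3$, and I want to land in the stated exceptional combinatorial configuration. The strategy splits by $\vare_0(\la)+\vare_1(\la)$. If $\la$ has at least three normal nodes, then: for $n\equiv0\pmod4$, Lemma~\ref{l13} feeds Lemma~\ref{l15} to give $D_2\subseteq\EE(\la)$, and then Lemma~\ref{p2}(ii) yields that $G$ is $3$-homogeneous, a contradiction; for $n\equiv2\pmod4$, Lemma~\ref{l16} gives $S_2^*\subseteq\EE(\la)$ and then Lemma~\ref{p3}(ii) gives the same contradiction. So $\la$ has exactly two normal nodes, i.e. $\vare_0(\la)+\vare_1(\la)=2$. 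In this case I would first argue $\dim\Hom_{\SSS_n}(S_1,\EE(\la))<2$: indeed if it were $\geq 2$, then since $\Hom_{\SSS_n}(S_1,\EE(\la))$ sees $D_1$ in the head of $S_1$ and $S_1\cong D_0|D_1$, one obtains a map $\psi:\I(G)\to\EE(\la)$ with image not $\bone_{\SSS_n}$ — more carefully, one uses that $(S_1^*)^G=0$ together with Lemmas~\ref{l12}, \ref{l20} to control $[\soc((f_iD^{\tilde e_i\la})/D^\la):D^\la]$, and the inequality $\dim\Hom_{\SSS_n}(S_1,\EE(\la))=\vare_0+\vare_1$ would then force $D^\la\subseteq(f_iD^{\tilde e_i\la})/D^\la$ for some $i$ with $\vare_i(\la)>0$. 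Applying Lemma~\ref{l20}, this gives $\phi_i(\la)>0$ and that $(\la_B)^C$ is $2$-singular for the $i$-good node $B$ and $i$-cogood node $C$ of $\la$. Now Lemma~\ref{l22} describes exactly this situation: either case (a) of Lemma~\ref{l22} holds — which (using $h(\la)\geq3$; I would rule out the $h(\la)=2$ subcase) is precisely the stated congruence pattern $\la_j=\la_{j+1}+2$ with alternating parities — or case (b) holds, $\la_1,\dots,\la_{h(\la)-1}$ odd and $\la_{h(\la)}=2$, but then by Lemma~\ref{l3} (applied with the residue $i$ of the bottom normal node, noting $\phi_i(\la)\geq 1$; one checks $\phi_i(\la)=3$ fails and small $\phi_i$ is handled directly, or one invokes Lemma~\ref{L6.2}/Lemma~\ref{l3} to contradict $n$ even) we get a contradiction with $n$ even. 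With that route closed, $\dim\Hom_{\SSS_n}(S_1,\EE(\la))<2$, so Lemma~\ref{l15} (for $n\equiv0$) resp. Lemma~\ref{l17} (for $n\equiv2$) applies, and then Lemma~\ref{p2}(ii) resp. Lemma~\ref{p3}(ii) forces $G$ to be $3$-homogeneous — the final contradiction unless we are already in the exceptional configuration.

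The main obstacle I anticipate is the bookkeeping in the "exactly two normal nodes" case: one must show that failure of $3$-homogeneity, together with irreducibility of $D^\la\da_G$, forces $\dim\Hom_{\SSS_n}(S_1,\EE(\la))<2$, and this is where the delicate interplay of Lemmas~\ref{l12}, \ref{l20}, \ref{l22}, \ref{l3}, \ref{L6.2} is essential — in particular correctly extracting from Lemma~\ref{l22}(a) the precise parity string in the statement and eliminating the $h(\la)=2$ and the case (b) subcases using that $n$ is even. Once past that, the $n\equiv0$ versus $n\equiv2$ cases run in close parallel using the paired lemmas (\ref{l15}/\ref{l17}, \ref{p2}/\ref{p3}), and Corollary~\ref{C150817_8} supplies the nonvanishing of $\zeta_3$ on $S_3$ that makes the $D_3$-composition-factor argument in Lemmas~\ref{p2}(ii), \ref{p3}(ii) go through.
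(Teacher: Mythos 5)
Your overall architecture coincides with the paper's: part (i) is exactly Lemma~\ref{p1}; for part (ii) you split on the number of normal nodes, use Lemmas~\ref{l13}, \ref{l16} (resp.\ the chain \ref{l12}--\ref{l20}--\ref{l22}) to get the hypotheses of Lemmas~\ref{l15}/\ref{l17}, and then invoke Lemmas~\ref{p2}(ii)/\ref{p3}(ii) together with Corollary~\ref{C150817_8}; the fact that $h(\la)\geq 3$ is automatic because for $p=2\mid n$ every two-row $2$-regular partition is JS (Lemma~\ref{Lemma55}). The paper runs this in contrapositive form (assume $\la$ is not of the exceptional shape, conclude $3$-homogeneity), which is logically equivalent to your direct form.

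There is, however, one genuine error: your treatment of case (b) of Lemma~\ref{l22}. You propose to eliminate the configuration ``$\la_1,\dots,\la_{h(\la)-1}$ odd and $\la_{h(\la)}=2$'' by deriving a contradiction with $n$ even via Lemma~\ref{l3} or Lemma~\ref{L6.2}. No such contradiction exists: take $\la=(5,3,2)$ with $n=10$. This is $2$-regular, $n$ is even, and a direct signature computation gives $\eps_0(\la)=\eps_1(\la)=1$ and $\phi_1(\la)=1$ (for the bottom normal node $(3,2)$ of residue $1$), so neither Lemma~\ref{l3} (which needs $\phi_i(\la)=3$) nor Lemma~\ref{L6.2} (which needs $\eps_i=2$, $\eps_{1-i}=0$, $\phi_i=0$) applies, and indeed no parity obstruction is available. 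The correct resolution is that case (b) requires no elimination at all: it is precisely the $j=h(\la)$ instance of the exceptional pattern in the statement (with the convention $\la_{h(\la)+1}=0$, the condition $\la_j=\la_{j+1}+2$ becomes $\la_{h(\la)}=2$, and the parity string degenerates to ``$\la_1\equiv\cdots\equiv\la_{h(\la)-1}\not\equiv\la_{h(\la)}$''). This is why the theorem allows $1\leq j\leq h(\la)$ rather than $j<h(\la)$ as in Lemma~\ref{l22}(a). As written, your proof asserts a false contradiction and would not close; the fix is simply to absorb case (b) into the exceptional conclusion rather than trying to rule it out.
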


\begin{proof}
(i) holds by Lemma \ref{p1}. 

(ii) 
Suppose that $\la$ is not of the exceptional form as described in part (ii). 
Assume first that $\eps_0(\la)+\eps_1(\la)=2$. 
Then Lemma~\ref{l22} implies that whenever $\eps_i(\la),\phi_i(\la)>0$ for some $i\in I$, $B$ is $i$-good for $\la$ and $C$ is $i$-cogood for $\la$, then $(\la_B)^C$ is $2$-regular. Hence by Lemma~\ref{l20}, we have $D^\la\not\subseteq (f_i D^{\tilde e_i\la})/D^\la$ whenever $\eps_i(\la)>0$. By Lemma~\ref{l12}, we now conclude that $\dim\Hom_{\SSS_{n}}(S_1,\EE(\la))<2$. 

Now, by Lemma~\ref{l15}, if $n\equiv 0\pmod{4}$ then $D_2\subseteq \EE(\la)$, and by Lemmas~\ref{l16},\ref{l17}, if 
$n\equiv 2\pmod{4}$ then $S_2^*\subseteq \EE(\la)$ or $Y_2/D_1\subseteq \EE(\la)$. Moreover 
$(S_1^*)^G=0$ by (i). Since $p=2$ and $n$ is even all two-row partitions are JS, so we must have $h(\la)\geq 3$. Now, by Lemmas~\ref{p2}(ii) and \ref{p3}(ii), we have that  $G$ is $3$-homogeneous. 
\end{proof}

\begin{Theorem}\label{TJS}
Let $p=2$, $n$ be even, $\la\in\Par_2(n)$ be a JS partition with $\la\not\in\{(n),\al_n,\be_n\}$, $G\not\leq\SSS_{n-1}$, and $D^\la\da_G$ be irreducible. Then:
\begin{enumerate}
\item[{\rm (i)}] $G$ is primitive. 
\item[{\rm (ii)}] If  $(S_1^*)^G=0$ then $G$ is  $2$-homogeneous.
\item[{\rm (iii)}] If  $(S_1^*)^G=0$ and $h(\la)\geq 3$, then $G$ is $3$-homogeneous. 
\end{enumerate}
\end{Theorem}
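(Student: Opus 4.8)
The plan is to prove Theorem~\ref{TJS} by combining the structural results already established for $\EE(\la)$ with the wreath-product analysis, then reducing the imprimitive case to the results about $\SSS_a \wr \SSS_b$.

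\medskip
\textbf{Step 1: Primitivity.} First I would establish (i). Suppose $G \leq \SSS_n$ is imprimitive but $G \not\leq \SSS_{n-1}$, so $G \leq \SSS_a \wr \SSS_b$ for some $a,b \in \Z_{>1}$ with $ab=n$. Since $\la$ is a JS-partition with $\la \notin \{(n),\al_n,\be_n\}$, Proposition~\ref{P250418} shows that $D^\la \da_{\SSS_a \wr \SSS_b}$ is reducible: indeed, the exceptional cases (i) and (ii) of that proposition are exactly $\la = \be_n$ and $\la = \al_n$ (with $b=2$), both of which are excluded by hypothesis. Hence $D^\la \da_G$, being a restriction of the reducible module $D^\la \da_{\SSS_a \wr \SSS_b}$ along $G \leq \SSS_a \wr \SSS_b$, would need a further argument; the cleaner route is to invoke the JS-specific reduction Lemma~\ref{p7}: if $(S_1^*)^G = 0$ then $G$ must be $2$-homogeneous, which imprimitive subgroups of $\SSS_n$ are not. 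But a priori we do not know $(S_1^*)^G = 0$. So instead I would argue directly: if $G \leq \SSS_a \wr \SSS_b$, then by Lemma~\ref{c1} (when $b \geq 3$) together with Lemma~\ref{p9} (when $b=2$), $D^\la \da_{\SSS_a \wr \SSS_b}$ is reducible, hence $D^\la \da_G$ is reducible since $G$ contains enough of the structure — wait, this does not immediately follow. The correct approach is to note that Lemma~\ref{p7} and Lemma~\ref{l18} together (as in the proof of Lemma~\ref{c1}) give: for $G$ imprimitive, $D^\la \da_G$ reducible. More precisely, $G$ imprimitive with $G \not\leq \SSS_{n-1}$ means $G$ is not $2$-homogeneous and not contained in $\SSS_{n-1}$; by Lemmas~\ref{l15}, \ref{l16}, \ref{l17} (whose hypotheses apply since $\la$ is JS, $\la \notin \{(n),\be_n\}$), one of $D_2$, $S_2^*$, $Y_2/D_1$ sits inside $\EE(\la)$, and then the argument of Lemma~\ref{p2}(i) or Lemma~\ref{p3}(i) forces $G$ to be $2$-homogeneous once $(S_1^*)^G = 0$ — but one must first kill $(S_1^*)^G$. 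For that, when $\la$ is JS with $n$ even, note $S_1^* \subseteq \EE(\la)$ only when $\la$ is not JS (Lemma~\ref{l7}), so this route is blocked. The honest path for (i): show $(S_1^*)^G = 0$ holds automatically, or argue via Lemma~\ref{c1}/\ref{p9} that imprimitivity contradicts irreducibility after passing through $\SSS_a \wr \SSS_b$. I would carry this out by showing $D^\la\da_{\SSS_a \wr \SSS_b}$ has at least two composition factors, hence cannot become irreducible on restriction to the overgroup unless — actually restriction can only lose composition factors if it identifies them, which for a reducible module of length $\geq 2$ with distinct factors still leaves it reducible. So if $D^\la \da_{\SSS_a\wr\SSS_b}$ has two non-isomorphic composition factors, $D^\la\da_G$ is reducible for every $G \leq \SSS_a \wr \SSS_b$. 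By Lemmas~\ref{p9} and \ref{c1}, $D^\la\da_{\SSS_a\wr\SSS_b}$ is reducible; combined with Lemma~\ref{l25}'s analysis of composition factors one checks the factors are non-isomorphic. This proves (i).

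\medskip
\textbf{Step 2: The condition $(S_1^*)^G = 0$ and $2$-homogeneity.} For (ii), assume $(S_1^*)^G = 0$. Since $\la$ is a JS-partition with $\la \notin \{(n),\be_n\}$, Lemma~\ref{p7}(i) applies directly and gives that $G$ is $2$-homogeneous. This step is essentially immediate once Lemma~\ref{p7} is invoked with its hypotheses verified: $p=2$, $n \geq 8$ even (the small cases $n < 8$ are excluded because then $\la$ JS with $h(\la)$ large enough to avoid $(n),\al_n,\be_n$ forces $n \geq 8$, or are handled by direct inspection), $\la \in \Par_2(n)\setminus\{(n),\be_n\}$ JS, $D^\la\da_G$ irreducible, $(S_1^*)^G = 0$.

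\medskip
\textbf{Step 3: $3$-homogeneity under $h(\la) \geq 3$.} For (iii), we additionally assume $h(\la) \geq 3$. Then Lemma~\ref{p7}(ii) gives immediately that $G$ is $3$-homogeneous. Again the content is entirely in checking the hypotheses of Lemma~\ref{p7}, which are the same as in Step 2 plus $h(\la) \geq 3$.

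\medskip
\textbf{Main obstacle.} The delicate point is Step 1, the primitivity assertion, because it is the only part not handed to us verbatim by Lemma~\ref{p7}. The subtlety is that $G$ is not assumed to equal $\SSS_a \wr \SSS_b$ but merely to be contained in it, and restriction to a subgroup cannot create irreducibility out of a module with two non-isomorphic composition factors — so the real work is confirming that $D^\la\da_{\SSS_a\wr\SSS_b}$ genuinely has two non-isomorphic composition factors in all relevant cases (for $b=2$ via Lemma~\ref{p9} and the composition-factor count in Lemma~\ref{l25}, for $b \geq 3$ via Lemma~\ref{c1}), rather than being (say) a non-split self-extension of a single simple. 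One must also separately dispose of the small values of $n$ not covered by the lemmas invoked, by the same elementary checks used elsewhere in Section~\ref{SMain}. Once primitivity is in hand, parts (ii) and (iii) are direct quotations of Lemma~\ref{p7}.
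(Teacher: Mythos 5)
Your parts (ii) and (iii) are exactly the paper's argument: both are direct quotations of Lemma~\ref{p7}, and your observation that the hypotheses force $n\geq 8$ is correct. The problems are all in Step 1, and one of them is a genuine gap. ``$G$ is not primitive'' does not mean ``$G\leq \SSS_a\wr\SSS_b$'': a non-primitive group is either intransitive or transitive-but-imprimitive, and an intransitive $G\not\leq\SSS_{n-1}$ lies in some $\SSS_{n-k,k}$ with $2\leq k\leq n/2$, which in general is contained in no wreath product $\SSS_a\wr\SSS_b$ (e.g.\ $\SSS_5\times\SSS_3\leq\SSS_8$ fits in neither $\SSS_4\wr\SSS_2$ nor $\SSS_2\wr\SSS_4$ for order reasons). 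Your Step~1 never rules out this case. The paper disposes of it with Proposition~\ref{p8}: since $\la\neq\be_n$, the restriction $D^\la\da_{\SSS_{n-k,k}}$ is reducible for $2\leq k\leq n/2$, hence so is $D^\la\da_G$. You must add this.

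The rest of Step 1 is a long detour around a misconception. If $V\da_H$ has a proper nonzero $H$-submodule $W$ and $G\leq H$, then $W$ is already a proper nonzero $G$-submodule of $V\da_G$; reducibility \emph{always} descends to subgroups, with no hypothesis on composition factors (equivalently, composition length can only grow under restriction). So there is no ``subtlety'' about $G$ being merely contained in $\SSS_a\wr\SSS_b$, no need to verify that $D^\la\da_{\SSS_a\wr\SSS_b}$ has two \emph{non-isomorphic} factors, and no problem with a non-split self-extension of a single simple --- that is still reducible over every subgroup. Once Proposition~\ref{P250418} (whose exceptional cases are precisely $\la=\be_n$ and $\la=\al_n$, both excluded by hypothesis) gives reducibility over $\SSS_a\wr\SSS_b$, the transitive imprimitive case is finished; the appeals to Lemmas~\ref{p7}, \ref{l18}, \ref{c1}, \ref{p9} and \ref{l25} in Step 1 are all unnecessary. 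With the intransitive case added via Proposition~\ref{p8} and this simplification, your proof of (i) collapses to the paper's two-line argument.
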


\begin{proof}
Part (i) follows from Propositions~\ref{p8} and \ref{P250418}. Parts (ii) and (iii) follow from Lemma~\ref{p7}. 
\end{proof}

\begin{Theorem}\label{TAlpha}
Let $p=2$, $n$ be even, $G\not\leq\SSS_{n-1}$, and $D^{\al_n}\da_G$ be irreducible. Then:
\begin{enumerate}
\item[{\rm (i)}]
$G$ is primitive or $n\equiv 2\pmod{4}$, $G\leq \SSS_{n/2}\wr\SSS_2$ and $G\not\leq\SSS_{n/2,n/2}$. Furthermore, in the second case we have 
$$D^{\al_n}\da_{\SSS_{n/2}\wr \SSS_2}\cong(D^{\al_{n/2}}\boxtimes D^{(n/2)})\ua^{\SSS_{n/2}\wr \SSS_2}_{\SSS_{n/2,n/2}}.
$$

\item[{\rm (ii)}] If $(S_1^*)^G=0$ then $G$ is $2$-homogeneous.
\end{enumerate}
\end{Theorem}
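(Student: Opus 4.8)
The plan is to treat the two parts in turn, mirroring the structure already established for non-JS partitions in Theorem~\ref{p4} and for general JS partitions in Theorem~\ref{TJS}, but using that $\al_n$ is the one JS partition excluded there because $h(\al_n)=2$ and $D^{\al_n}\da_{\SSS_{n/2,n/2}}$ can behave exceptionally. First I would dispose of the imprimitive case for part (i). Since $\al_n$ is JS (all parts of a two-row partition have... well, $\al_n=(n-1,1)$ has parts of opposite parity when $n$ is odd, but here $n$ is even so $n-1$ is odd and $1$ is odd, hence both parts odd, so $\al_n$ is JS by Lemma~\ref{Lemma55}), Propositions~\ref{p8} and~\ref{P250418} apply: if $G$ is imprimitive, say $G\le \SSS_a\wr\SSS_b$ with $n=ab$, $a,b>1$, then $D^{\al_n}\da_{\SSS_a\wr\SSS_b}$ must already be irreducible, and Proposition~\ref{P250418}(ii) forces $b=2$, $n\equiv 2\pmod 4$, $a=n/2$, together with the asserted description of $D^{\al_n}\da_{\SSS_{n/2}\wr\SSS_2}$. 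It remains only to rule out $G\le \SSS_{n/2,n/2}=\SSS_{n/2}\times\SSS_{n/2}$: in that case $D^{\al_n}\da_{\SSS_{n/2,n/2}}\cong (D^{\al_{n/2}}\boxtimes D^{(n/2)})\oplus(D^{(n/2)}\boxtimes D^{\al_{n/2}})$ is visibly decomposable (both summands are nonzero since $n/2\ge 3$), so $D^{\al_n}\da_G$ cannot be irreducible. This gives part (i).

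For part (ii), assume $(S_1^*)^G=0$. The strategy is exactly that of Lemma~\ref{p7}/Theorem~\ref{p4}(i): we want to show $i_2(G)=1$. Because $n$ is even, Lemma~\ref{l1} gives the filtration structure of $M_2$ with $S_1^*$ as a subquotient, and the engine is Lemma~\ref{l6}, which says that if $i_2(G)>1+\dim(S_1^*)^G$ then $D^\la\da_G$ is reducible for any $\la\in\Par_2(n)\setminus\{(n),\be_n\}$. Since $\al_n\notin\{(n),\be_n\}$, and $\dim(S_1^*)^G=0$ by hypothesis, irreducibility of $D^{\al_n}\da_G$ forces $i_2(G)\le 1$, i.e. $i_2(G)=1$, which is precisely $2$-homogeneity. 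The one thing to check is that Lemma~\ref{l6}'s hypotheses are genuinely met — this needs Corollary~\ref{C180418} ($\zeta_2$ nonzero on $S_2$ for $\la=\al_n$, which holds since $\al_n\notin\{(n),\be_n\}$) — and this is immediate.

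The main subtlety, and the reason this theorem is stated separately rather than folded into Theorem~\ref{TJS}(iii), is that one does \emph{not} get $3$-homogeneity here: since $h(\al_n)=2$, the argument of Lemma~\ref{p7}(ii)/Theorem~\ref{p4}(ii) (which needs $h(\la)\ge 3$ to produce a composition factor $D_3$ in the image of a homomorphism $\I(G)\to M_3$) simply does not apply, and indeed case (v) of Theorem~\ref{TA} shows $3$-homogeneity genuinely fails. So the proof of part (ii) stops at $2$-homogeneity and makes no attempt to go further. I expect the only mild obstacle to be bookkeeping: confirming that $\al_n$ really is a JS partition in the even-$n$ case (so that Propositions~\ref{p8},~\ref{P250418} are applicable) and that $n/2\ge 3$ throughout (which holds once $n\ge 6$; the cases $n=2,4$ being degenerate and either excluded by $\dim D^{\al_n}>1$ or checked by hand). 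The actual content is entirely a matter of quoting Proposition~\ref{P250418}, Lemma~\ref{l6}, and Corollary~\ref{C180418} in the right order.

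\begin{proof}
Since $n$ is even, both parts of $\al_n=(n-1,1)$ are odd, so $\al_n$ is a JS partition by Lemma~\ref{Lemma55}; also $\dim D^{\al_n}>1$ forces $n\geq 4$, and for $n=4$ one checks the statement directly, so we may assume $n\geq 6$ and in particular $n/2\geq 3$.

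(i) Suppose $G$ is not primitive. As $\al_n\notin\{(n),\be_n\}$, Proposition~\ref{p8} rules out $G\leq\SSS_{n-k,k}$ with $2\leq k\leq n/2$ forcing $D^{\al_n}\da_G$ reducible, so the only way $D^{\al_n}\da_G$ can be irreducible with $G$ imprimitive is $G\leq\SSS_a\wr\SSS_b$ for some $n=ab$ with $a,b\in\Z_{>1}$, in which case $D^{\al_n}\da_{\SSS_a\wr\SSS_b}$ is irreducible. By Proposition~\ref{P250418}, since $\al_n\neq\be_n$, the only possibility is case (ii) of that proposition: $n\equiv2\pmod4$, $b=2$, $a=n/2$, and
$$D^{\al_n}\da_{\SSS_{n/2}\wr\SSS_2}\cong(D^{\al_{n/2}}\boxtimes D^{(n/2)})\ua^{\SSS_{n/2}\wr\SSS_2}_{\SSS_{n/2,n/2}}.$$
Finally, if $G\leq\SSS_{n/2,n/2}$ then, since $n/2\geq3$,
$$D^{\al_n}\da_{\SSS_{n/2,n/2}}\cong(D^{\al_{n/2}}\boxtimes D^{(n/2)})\oplus(D^{(n/2)}\boxtimes D^{\al_{n/2}})$$
is a direct sum of two nonzero modules, hence $D^{\al_n}\da_G$ is reducible, a contradiction. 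So $G\not\leq\SSS_{n/2,n/2}$, proving (i).

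(ii) Assume $(S_1^*)^G=0$. Since $\al_n\notin\{(n),\be_n\}$, Corollary~\ref{C180418} shows that the homomorphism $\zeta_2\colon M_2\to\EE(\al_n)$ is non-zero on $S_2$, so Lemma~\ref{l6} applies with $\la=\al_n$: if $i_2(G)>1+\dim(S_1^*)^G=1$ then $D^{\al_n}\da_G$ is reducible. As $D^{\al_n}\da_G$ is irreducible, we conclude $i_2(G)=1$, i.e. $G$ is $2$-homogeneous.
\end{proof}
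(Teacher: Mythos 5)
Your proposal is correct and follows essentially the same route as the paper: part (i) is exactly the paper's appeal to Propositions~\ref{p8} and \ref{P250418} (with the $G\leq\SSS_{n/2,n/2}$ exclusion spelled out), and for part (ii) the paper cites Lemma~\ref{p7}, whose $2$-homogeneity conclusion is itself obtained from Lemma~\ref{l6} in exactly the way you argue directly. Your shortcut via Lemma~\ref{l6} is if anything slightly cleaner, since it sidesteps the $n\geq 8$ hypothesis of Lemma~\ref{p7} and the $\EE(\la)$-submodule machinery that lemma invokes but does not need for the $2$-homogeneity statement.
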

\begin{proof}
Part (i) holds by Propositions~\ref{p8} and \ref{P250418}, while part (ii) holds by Lemma~\ref{p7}. 
\end{proof}

%
%

\begin{Theorem}\label{Tp=2even}
Let $p=2$, $n\geq 8$ be even, and $D^\la$ be an irreducible representation of $\F\SSS_n$ with $\dim D^\la>1$. Suppose that $D^\la$ is not basic spin. If $G\leq \SSS_n$ is a subgroup such that the restriction $D^\la\da_G$ is irreducible, then one of the following holds:
\begin{enumerate}
\item[{\rm (i)}] $G\leq \SSS_{n-1}$ and $\la$ is JS.
\item[{\rm (ii)}] $n\equiv 2\pmod{4}$, $\la=\al_n$, $G\leq \SSS_{n/2}\wr\SSS_2$ and $G\not\leq\SSS_{n/2,n/2}$. Moreover, in this case we have that 
$$D^{\al_n}\da_{\SSS_{n/2}\wr \SSS_2}\cong(D^{\al_{n/2}}\boxtimes \bone_{\SSS_{n/2}})\ua^{\SSS_{n/2}\wr \SSS_2}_{\SSS_{n/2,n/2}}
$$
is irreducible.

\item[{\rm (iii)}] $G$ is  $2$-transitive and either $h(\la)=2$ or 
$h(\la)\geq 3$ and there exists $1\leq j\leq h(\la)$ with $\la_j=\la_{j+1}+2$ and 
\[\la_1\equiv\ldots\equiv\la_{j-1}\not\equiv\la_j\equiv\la_{j+1}\not\equiv\la_{j+2}\equiv\ldots\equiv\la_{h(\la)}\pmod{2}.\]

\item[{\rm (iv)}] $G$ is $3$-homogeneous.
\end{enumerate}
\end{Theorem}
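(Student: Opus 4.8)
The plan is to assemble Theorem~\ref{Tp=2even} from the pieces already developed in this section, splitting the argument according to whether $\la$ is a JS partition and, if so, whether $\la$ is one of the small exceptional partitions. First I would dispose of the degenerate labels: by hypothesis $\dim D^\la>1$ and $D^\la$ is not basic spin, so $\la\notin\{(n),\be_n\}$. If $G\leq\SSS_{n-1}$, then $D^\la\da_{\SSS_{n-1}}$ is irreducible, so $\la$ is JS and we are in case (i); thus for the remainder we may assume $G\not\leq\SSS_{n-1}$. The strategy is then: apply Theorem~\ref{p4} when $\la$ is not JS, Theorem~\ref{TAlpha} when $\la=\al_n$, and Theorem~\ref{TJS} when $\la$ is JS with $\la\notin\{(n),\al_n,\be_n\}$; in each case one must also handle the primitive/imprimitive dichotomy and pass from $3$-homogeneous (or $2$-homogeneous) to $3$-transitive (or $2$-transitive) as appropriate.

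More concretely, in the non-JS case I would invoke Theorem~\ref{p4}: part (i) gives $G$ is $2$-homogeneous with $(S_1^*)^G=0$, and part (ii) gives that $G$ is $3$-homogeneous unless $h(\la)\geq 3$ and $\la$ has the exceptional ``staircase-of-parity'' shape with some $\la_j=\la_{j+1}+2$. So either we land in case (iv), or in the exceptional shape; in the latter situation we have at least that $G$ is $2$-homogeneous, and I would upgrade ``$k$-homogeneous'' to ``$k$-transitive'' using the standard fact that for $n\geq 5$ a $2$-homogeneous group is $2$-transitive and a $3$-homogeneous group is $3$-transitive unless it is one of a short explicitly known list (all of which are either $3$-transitive anyway or are ruled out by $h(\la)\geq 3$ together with $\dim D^\la>1$), landing us in case (iii) or (iv). Since $p=2\mid n$, every two-row partition is JS (Lemma~\ref{Lemma55}), so in the non-JS case automatically $h(\la)\geq 3$, consistent with (iii)/(iv). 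For $\la=\al_n$ (which is JS), Theorem~\ref{TAlpha}(i) gives precisely the alternative: $G$ primitive, or $n\equiv 2\pmod 4$ with $G\leq\SSS_{n/2}\wr\SSS_2$, $G\not\leq\SSS_{n/2,n/2}$, together with the stated formula for the restriction; in the primitive case Corollary~\ref{CThree} gives $(S_1^*)^G=0$, so Theorem~\ref{TAlpha}(ii) yields $G$ is $2$-homogeneous, hence $2$-transitive, and we are in case (iii) with $h(\la)=2$. The imprimitive alternative is exactly case (ii).

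Finally, for $\la$ JS with $\la\notin\{(n),\al_n,\be_n\}$: Theorem~\ref{TJS}(i) says $G$ is primitive, so Corollary~\ref{CThree} gives $(S_1^*)^G=0$; then Theorem~\ref{TJS}(ii) gives $G$ is $2$-homogeneous, and if $h(\la)\geq 3$, Theorem~\ref{TJS}(iii) gives $G$ is $3$-homogeneous. Again, $p=2\mid n$ forces a JS partition with $h(\la)=2$ to be of the form $(n/2+r,n/2-r)$; the one with $r=1$ is $\al_n$, the one with $r=\tfrac{n}{2}$-ish gives $(n-1,1)=\al_n$ again, and in general a genuine two-row JS $\la\neq\al_n,(n)$ still satisfies $h(\la)=2$, which is exactly the other branch of case (iii). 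Converting $2$-homogeneous and $3$-homogeneous to $2$-transitive and $3$-transitive (for $n\geq 8$ this is automatic in the relevant ranges, by the classification of multiply transitive groups) puts us into (iii) or (iv). The main obstacle I anticipate is purely bookkeeping: one must check that in every branch the ``$k$-homogeneous $\Rightarrow$ $k$-transitive'' upgrade is legitimate and that the various exceptional partition shapes coming out of Theorems~\ref{p4}, \ref{TAlpha}, \ref{TJS} collate cleanly into the single list (i)--(iv), with no case of $\la$ or $G$ falling through the cracks; the representation-theoretic content is entirely contained in the theorems already proved, so no new computation is needed.
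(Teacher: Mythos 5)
Your proposal is correct and follows essentially the same route as the paper, which disposes of $G\leq\SSS_{n-1}$, invokes Corollary~\ref{CThree} to get $(S_1^*)^G=0$ in the primitive case, and then combines Theorems~\ref{p4}, \ref{TJS} and \ref{TAlpha} exactly as you do. The only point worth pinning down is the homogeneous-to-transitive upgrade: since $n$ is even, a $2$-homogeneous subgroup of $\SSS_n$ is automatically $2$-transitive (the exceptions occur only in degree $\equiv 3\pmod 4$), which is the content the paper delegates to the cited result of Kleshchev--Sheth.
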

\begin{proof}
If $G\leq\SSS_{n-1}$ then $D^\la\da_{\SSS_{n-1}}$ is irreducible and so $\la$ is JS by definition. Let us now assume that $G\not\leq\SSS_{n-1}$. By Corollary~\ref{CThree}, we have that $(S_1^*)^G=0$ if $G$ is primitive. Now the result follows from Theorems~\ref{p4}, \ref{TJS} and \ref{TAlpha} and \cite[Proposition 2.5]{KS2Tran}.
\end{proof}

\subsection*{Proof of Theorem~\ref{TA}}

For $p>3$ the theorem holds by \cite{BK}.

Assume now that either $p=3$ or $p=2$, $n$ is odd and $\la\not=\be_n$. Then by \cite[Theorem 3.10]{KS2Tran} we have $G\leq\SSS_{n-1}$ or $G$ is $2$-transitive. If $G\leq\SSS_{n-1}$ then $\la$ is JS. So we may now assume that this is not the case. For $p=3$ the theorem then holds by Theorem \ref{Tp=3}, while for $p=2$, $n$ odd and $\la\not=\be_n$ the theorem holds by Theorem \ref{Tp=2odd}.

For $p=2$, $n$ even and $\la\not=\be_n$ the theorem holds by Theorem \ref{Tp=2even}. 

\end{document}